\journal{Journal of \LaTeX\ Templates}
\newtheorem{thm}{Theorem}[section]
\newtheorem{cor}[thm]{Corollary}
\newtheorem{lem}[thm]{Lemma}
\newtheorem{prop}[thm]{Proposition}
\newtheorem{defn}{Definition}[section]
\newtheorem{rem}{Remark}[section]
\newtheorem{standass}{Standing Assumption}[section]
\newtheorem{exmp}{Example}[section]
\numberwithin{equation}{section}
\newcommand{\RR}{{\mathbb{R}}}
\newcommand{\NN}{{\mathbb{N}}}
\newcommand{\EE}{{\mathbb{E}}}
\newcommand{\FF}{{\mathbb{F}}}
\newcommand{\PP}{{\mathbb{P}}}
\newcommand{\BB}{{\mathbb{B}}}
\newcommand{\mc}{\mathcal}
\newcommand{\norm}[1]{\|#1\|}
\newcommand{\normsq}[1]{\left\|#1\right\|^2}
\newcommand{\EEk}[1]{\EE\left[#1|\mc F_k\right]}
\newcommand{\EEx}[1]{\EE\left[#1\right]}
\newcommand{\op}{\operatorname}
\newcommand{\bs}{\boldsymbol}
\newcommand{\cmark}{\ding{51}}%
\newcommand{\xmark}{\ding{55}}%
\begin{document}

\begin{frontmatter}

\title{Convergence of sequences: A survey\tnoteref{mytitlenote}}
\tnotetext[mytitlenote]{This work was partially supported by NWO under research projects OMEGA (613.001.702) and P2P-TALES (647.003.003), and by the ERC under research project COSMOS (802348).}

%
%
%

\author[UM]{Barbara Franci}\ead{b.franci@maastrichtuniversity.nl}     
\author[Delft]{Sergio Grammatico}\ead{s.grammatico@tudelft.nl}               

\address[UM]{Department of Data Science and Knowledge Engineering, Maastricht University, Maastricht, The Netherlands}  
\address[Delft]{Delft Center for Systems and Control, Delft University of Technology, Delft, The Netherlands}

\begin{abstract}
Convergent sequences of real numbers play a fundamental role in many different problems in system theory, e.g., in Lyapunov stability analysis, as well as in optimization theory and computational game theory. In this survey, we provide an overview of the literature on convergence theorems and their connection with F\'ejer monotonicity in the deterministic and stochastic settings, and we show how to exploit these results.
\end{abstract}

\begin{keyword}
Convergence. 
\end{keyword}

\end{frontmatter}


\section{Introduction}

\begin{flushright}
\textit{Why Are Convergence Theorems Necessary?\\
The answer to this ``naive'' question is not simple.}\\
cit. Boris T. Polyak, 1987 \cite[Section 1.6.2]{polyak1987}. 
\end{flushright}

While the answer may have become clearer through the years, since many problems in applied mathematics rely on convergence theorems, it is still not simple. Besides the theoretical investigation, in fact, one fundamental aspect is how convergence theorems can be of practical use, i.e., if the assumptions are plausible for a variety of applications, for instance, in systems theory. Moreover, convergence theorems may also give qualitative information, e.g., if convergence is guaranteed for any initial point and in what sense (strongly, weakly, almost surely, in probability), which affects the range of application. The aim of this paper is to collect these results towards a complete overview, thus to be able to find the one that most suits the application at hand. In fact, many convergence results find their use in theoretical applications, such as Lyapunov stability analysis \cite{polyak1987,khalil2002,benaim1996,benaim1999}, variational analysis \cite{malitsky2015, malitsky2019, iusem2017, iusem2019, yousefian2017, yousefian2014} and game equilibrium seeking \cite{franci2020fb, koshal2013, facchinei2007, franci2019fbhf}, in automatic control, such as model predictive control \cite{lee2015} and network control problems \cite{shi2013}, as well as in other engineering areas, e.g., training and learning in generative adversarial networks \cite{franci2020gan,franci2020gen, bot2020gan}, vehicle flow control in traffic networks \cite{duvocelle2019} and in modeling the prosumer behavior in smart power grids \cite{franci2020fb, franci2019fbhf, yi2019, kannan2013}.

\subsection{Lyapunov decrease and F\'ejer monotonicity}
In the mathematical literature, many convergence results hold for sequences of numbers while in system and control theory, the state and decision variables are usually \textit{vectors} of real numbers. It is therefore important to understand the deep connection between the two theories. The bridging idea is to associate a real number to the state vector, i.e., via a function, and then prove convergence exploiting the properties of such a function. The most common example of this approach is that of Lyapunov theory where a suitable Lyapunov function is shown to be decreasing along the evolution of the state variable, thus obtaining convergence of the state vector to a target set \cite{polyak1987,khalil2002,benaim1996}. An alternative approach is to consider the distance from a target set and show that such a distance vanishes eventually via a suitable technical result on the convergence of the distance-valued sequence of real numbers.

In this work, we focus mostly on the latter methodology. To explain our choice, let us note that solving an optimization problem consist of \textit{designing a sequence} of vectors that converge to the solution, the minimum of a given cost function. Similarly, in algorithmic game theory, one usually aims at constructing a \textit{sequence} that converge to an equilibrium, e.g., a Nash equilibrium, the optimum for each player given the actions of the other players.
The key point here is that, in general, the target set is not known a priori, yet the distance of the constructed sequence from such set can be analyzed anyways. On the contrary, in Lyapunov stability analysis, the target set is usually known a priori.

By exploiting the relation between the iterations and a suitable distance-like function, we show in this paper that convergence theorems represent a key ingredient for a wide variety of system-theoretic problems in fixed-point theory, game theory and optimization \cite{polyak1987,bau2011,facchinei2007,combettes2001,eremin2009}.
In many cases, the study of iterative algorithms allows for a systematic analysis that follows from the concept of F\'ejer monotone sequence. The basic idea behind F\'ejer monotonicity is that at each step, each iterate is closer to the target set than the previous one. In a sense, the distance used for F\'ejer sequences can be seen as a specific class of Lyapunov function and F\'ejer monotonicity shows that it is decreasing along the iterates. The concept was first introduced in 1922 \cite{fejer1922}, but the term F\'ejer monotone sequence was first used thirty years later in 1954 \cite{motzkin1954} and a huge part of the studies on its properties was made in the 60s \cite{eremin2009,eremin1969,eremin1968,eremin1968speed} and still continues \cite{combettes2001,combettes2013,combettes2015,combettes2000,kohlenbach2018}.

Unfortunately, F\'ejer monotonicity is hard to obtain, therefore the concept is typically relaxed to a quasi-F\'ejer property, where a vanishing error must be considered. Such an error term in the distance inequality is common in many equilibrium problems \cite{franci2020fb,iusem2017,malitsky2020siam,duvocelle2019,kannan2013,nguyen2017,polyak1987,duflo2013,bau2011}, especially in the stochastic case where the concept of quasi-F\'ejer monotone sequence was first introduced \cite{ermoliev1988,ermol1969}. However, these properties are not necessarily enough to ensure convergence, hence, (quasi) F\'ejer monotonicity is often used in combination with convergence results on sequences of real numbers. These technical results have been used in many theoretical and computational applications that range from stochastic Nash equilibrium seeking \cite{franci2020fb, koshal2013,franci2019fbhf} to machine learning \cite{franci2020gan,bot2020gan, duvocelle2019}.


\begin{table*}
\begin{center}
\begin{tabular}{cccc}
\toprule
 {\bf{Result}} & {\bf{Reference}} & {\bf{Application}} &  {\bf{Reference}}\\
\midrule
Proposition \ref{prop_bau} & \cite[Proposition 5.4]{bau2011} & &\\
Theorem \ref{theo_qfm_comb} & \cite[Theorem 3.8]{combettes2001}& &\\
 Lemma \ref{lemma_opial} & \cite{opial1967} (Opial) & MI - Theorem \ref{malitzky_theo} & \cite[Theorem 2.5]{malitsky2020siam}\\
 & & VI - Theorem \ref{theo_mali2}& \cite[Theorem 1]{malitsky2019}\\
\midrule
 Lemma \ref{lemma_comb} & \cite[Lemma 3.1]{combettes2001}&&\\
 Lemma \ref{lemma_det_rs}& \cite[Lemma 5.31]{bau2011}& VI - Theorem \ref{theo_mali2}& \cite[Theorem 1]{malitsky2019}\\
  Corollary \ref{cor_mali1}    &  \cite[Lemma 2.8]{malitsky2015}    & VI - Theorem \ref{theo_mali3}& \cite[Theorem 3.2]{malitsky2015} \\
 & & LYAP - Theorem \ref{thoe_lyap}& \cite[Theorem 1.4.1]{polyak1987}\\
   Corollary \ref{cor_polyak}  &    \cite[Lemma 2.2.2]{polyak1987}  & &\\
   Lemma \ref{lemma_polyak1}  &   \cite[Lemma 2.2.3]{polyak1987}   & NE - Theorem \ref{theo_kannan}& \cite[Theorem 2.4]{kannan2012} \\
 Lemma \ref{lemma_neg_rs}    &      & &\\
  Lemma \ref{lemma_xu03}   &    \cite[Lemma 2.1]{xu2003}  & &\\
  Lemma \ref{lemma_xu02}   &   Extension of \cite[Lemma 2.5]{xu2002}   & NE - Theorem \ref{thoe_duvo}& \cite[Theorem 3.1]{duvocelle2019}  \\
Corollary \ref{cor_rem} & \cite[Proposition 3]{lei2020cdc} \\
  Corollary \ref{cor_qin}   &  \cite[Lemma 1.1]{qin2008}   & &\\
   Corollary \ref{cor_liu}  &  \cite[Lemma 3]{xu1998}    & MI - Theorem \ref{theo_dadashi}& \cite[Theorem 3.1]{dadashi2019} \\

   Proposition \ref{prop_alber}  &    \cite[Proposition 2]{alber1998}  & &\\
   Lemma \ref{lemma_he} & \cite[Lemma 7]{he2013} &&\\
   Lemma \ref{lemma_meno} & \cite[Lemma 2.2]{mainge2008}&&\\
  Lemma \ref{lemma_rate}   &  \cite[Lemma 2.7]{malitsky2018fbf}]   & MI - Theorem \ref{theo_mali_rate}& \cite[Theorem 2.9]{malitsky2020siam}\\
\midrule
  Proposition \ref{prop_vm}   &  \cite[Proposition 3.2]{combettes2013}     & MI - Theorem \ref{theo_vu}& \cite[Theorem 3.1]{vu2013} \\
  Theorem \ref{theo_comb_var}   &  \cite[Theorem 3.3]{combettes2013}    & MI - Theorem \ref{theo_vu}& \cite[Theorem 3.1]{vu2013}\\
 Proposition  \ref{prop_comb_vm}  &   \cite[Proposition 4.1]{combettes2013}    & &\\
\bottomrule
\end{tabular}
\caption{Convergence results for F\'ejer monotone sequences, deterministic sequences of real numbers and with variable metric (separated by the horizontal lines, respectively). For the applications, MI stands for Monotone Inclusion, VI for variational inequalities, NE for Nash Equilibrium problems and LYAP for Lyapunov analysis.}\label{table_det}
\end{center}
\end{table*}
\begin{table*}
\begin{center}
\begin{tabular}{cccc}
\toprule
 {\bf{Result}} & {\bf{Reference}} & {\bf{Application}} &  {\bf{Reference}}\\
\midrule
   Lemma \ref{lemma_rs}  & \cite{RS1971}  (Robbins--Siegmund) &  VI - Theorem \ref{theo_bot}& \cite[Theorem 4.5]{bot2020}\\
 & & VI - Theorem \ref{theo_iusem}& \cite[Theorem 3.18]{iusem2017}\\
 & & NE - Proposition \ref{prop_koshal}& \cite[Proposition 3]{koshal2013}\\
&& MPC - Proposition \ref{prop_mpc}&  \cite[Proposition 1]{lee2015}\\
 Lemma \ref{lemma_glad} & \cite[Lemma 2.2.9]{polyak1987}  (Gladyshev)     &     & \\
 Corollary \ref{cor_poggio}    &   \cite[Theorem B.2]{poggio2011}  & & \\
   Corollary \ref{cor_duflo}  &    \cite[Corollary 1.3.13]{duflo2013}  & LLN - Corollary \ref{lln}& \cite[Theorem 1.3.15]{duflo2013}\\
Lemma \ref{lemma_rs_comb} & \cite[Lemma 2.1]{combettes2019}&&\\
   Lemma \ref{lemma_fake_rs}  &   \cite[Lemma 2.2.10]{polyak1987}   & VI - Theorem \ref{theo_yousef} &\\
&&NET - Theorem \ref{theo_op_dyn} & \cite[Theorem 5]{shi2013}\\
\midrule
   Proposition \ref{prop_gen_rs}  &  \cite[Proposition 2.3]{combettes2015}   & &\\
Proposition \ref{prov_vm_rs} & \cite[Proposition 2.4]{vu2016} & &\\
\bottomrule
\end{tabular}
\caption{Convergence results for stochastic sequences of real random variables and stochastic F\'ejer monotone sequences (separated by double horizontal lines, respectively). For the applications, VI stands for variational inequalities, NE for Nash Equilibrium problems, MPC for Model Predictive Control, LLN for Law of Large Numbers and NET for Network control problems.}\label{table_stoc}
\end{center}
\end{table*}
\subsection{What this survey is about}
In this survey, we present a number of convergence theorems for sequences of real (random) numbers. We show how they can be used in combination with (quasi) F\'ejer monotone sequences or Lyapunov functions to obtain convergence of an iterative algorithm, essentially a discrete-time dynamical system, to a desired solution. Moreover, we present some applications to show how they can be adopted in a variety of settings. Specifically, we present convergence results for both deterministic and stochastic sequences of real numbers and we also include some results on F\'ejer monotone sequences and with variable metric. We show that these results help proving not only convergence of an iterative algorithm but also the Law of Large Numbers, with applications in model predictive control \cite{lee2015} and opinion dynamics \cite{shi2013} among others.

We report in Tables \ref{table_det} and \ref{table_stoc} the results for deterministic and stochastic sequences respectively, with the corresponding bibliographic source and application. 

The paper is organized as follows. In the next section, we recall some preliminary notions on the concept of ``convergence'' and of random variables. Section \ref{sec_det} is devoted to deterministic convergence results while the stochastic case is discussed in Section \ref{sec_stoc}. An extension with variable metric is considered in Section \ref{sec_vm}. Sections \ref{sec_app_det}, \ref{sec_app_stoc} and \ref{sec_app_vm} propose applications of the convergence lemmas for deterministic, stochastic, and variable metric sequences, respectively.

\subsection{What this survey is not about}

This is not a survey on solution algorithms for optimization problems and variational inequalities. Some relevant references on iterative methods include \cite{bau2011,facchinei2007,combettes2020,polyak1987,doob1953,rockafellar1970} and the references therein.

We also remark that, despite the notion of F\'ejer sequence is used throughout the paper, this is not a survey on the properties of F\'ejer monotone sequences. The interested reader may refer to \cite{combettes2001,berg1995,bau2011,combettes2015,combettes2013,combettes2000,kohlenbach2018}.

\section{Notation and Preliminaries}\label{sec_notation}
We use the nomenclature and notation from \cite{bau2011,rockafellar1970}.

$\NN$ indicates the set of natural numbers and
$\RR$ ($\bar\RR=\RR\cup\{\infty\}$) is the set of (extended) real numbers.
$\langle\cdot,\cdot\rangle:\RR^n\times\RR^n\to\RR$ denotes the standard inner product and $\norm{\cdot}$ is the associated Euclidean norm. $\BB=\{x\in\RR^n\mid\norm{x}\leq1\}$ represents the unit ball.
Let $d_{\mc X}(x)=\min_{y\in\mc X}\norm{x-y}$ be the distance between $x$ and the set $\mc X$.

We indicate that a matrix $A$ is positive definite, i.e., $x^\top Ax>0$, with $A\succ0$. 
Given a symmetric $W\succ0$, the $W$-induced inner product is $\langle x, y\rangle_{W}=\langle W x, y\rangle$ and the associated norm is defined as $\norm{x}_{W}=\sqrt{\langle W x, x\rangle}$. $\op{Id}$ is the identity operator. 
Given a continuous linear operator $T:\RR^n\to\RR^n$, the \textit{adjoint} of $T$ is the unique continuous linear operator $T^*$ such that $\forall x,y\in\RR^n$ $\langle Tx,y\rangle=\langle x,T^*y\rangle.$
Let $S(\RR^n)$ be the set of self-adjoint bounded linear operators of $\RR^n$ and let the Loewner partial order be defined for all $T_1,T_2\in S(\RR^n)$ as $T_{1} \succeq T_{2} \Leftrightarrow\forall x \in \RR^n$ $\langle T_{1} x | x\rangle \geq\langle T_{2} x | x\rangle$.
Let $\beta\geq0$ and $\mathcal{P}_{\beta}=\{L \in S(\RR^n) | L \succeq \beta \mathrm{Id}\}.$ Positive semidefinite matrices belongs to $\mathcal{P}_{\beta}$.

Unless otherwise mentioned, we use $v$, $u$ and $w$ for (real or random) numbers while we use $x$, $y$, $z$ to indicate vectors (of real numbers or random variables), i.e., $v,u,w\in\RR$ and $x,y,z\in\RR^n$, respectively. 
Capital letters indicate operators or matrices. Letters from the Greek alphabet are also used for real numbers but they mostly represent errors ($\varepsilon$), step size sequences ($\alpha$) or coefficients ($\delta$, $\gamma$); $\xi$ often indicates random quantities. Since it may be dependent on the context, when necessary, the meaning is introduced along with the symbol. In general, calligraphic capital letters indicate sets, $\mc C$ indicates a convex set and $\mc S$ a target or solution set.
Throughout the survey, we suppose that the sequence $(x^k)_{k\in\NN}$ belongs to a set $\mc X\subseteq\RR^n$. Further assumptions will be made when necessary.

Given a vector $x\in\RR^n$, we indicate the maximum entry as $x_{\max }=\max _{1 \leq i \leq N}\{x_{i}\}$ and, analogously, the minimum entry as $x_{\min }=\min _{1 \leq i \leq N}\{x_{i}\}$. Most often, the superscript $^*$, e.g., $x^*$, indicates a solution of the problem, while the bar, i.e., $\bar x$, indicates an accumulation point of an iterative process.

With reference to the application sections, we use Standing Assumptions to state technical conditions that implicitly hold throughout the paper, while Assumptions are postulated only when explicitly used.

More notation and definitions related to monotone operator theory, functional to the application sections, are postponed to Appendix \ref{appendix}.


\subsection{Convergence notions}
Let us first recall some definitions related to the notion of convergence itself.
\begin{defn}
A sequence $(x^{k})_{k\in\NN}\subseteq\RR^n$ is said to converge weakly to a point $\bar x\in \mc X$ if, for all $y \in \mc X$,
$$\langle x^{k}, y\rangle \to\langle \bar x, y\rangle \text{ as } k\to\infty.$$ 
A sequence $(x^{k})_{k\in\NN}\subseteq\RR^n$ is said to converge strongly to a point $x\in \mc X$ if 
$$\lim_{k\to\infty}\norm{x^k-x}=0.$$ 
\end{defn}
In general, strong convergence implies weak convergence. In finite dimension, the two notions are equivalent \cite[Lemma 2.51]{bau2011}, hence, in this paper, we generally talk about convergence.

Given the definition of convergence, let us define the concept of cluster point.
 \begin{figure}[t]
\centering
\includegraphics[width=\columnwidth]{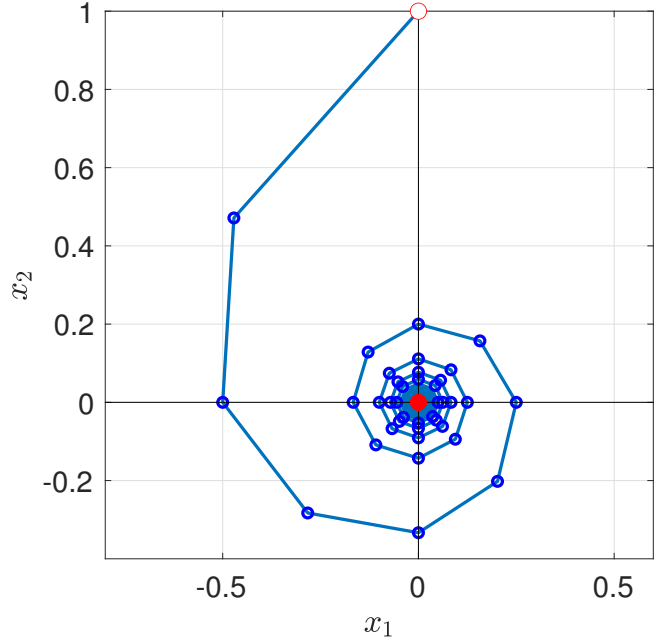}
\caption{Sequence converging to a cluster point which is also a sequential cluster point (Example \ref{ex_cluster}). The empty dot represents the initial point and the red dot is the cluster point.}\label{fig_cluster}
\end{figure}

\begin{defn}
A point $\bar x \in \RR^n$ is said to be a cluster point (or limit point or accumulation point) of a sequence $(x^k)_{k \in \NN} $ if, for every $\epsilon>0$ and for $\bar k\in\NN$ there exists $k\geq \bar k$ such that $x^k\in\{\bar x\}+\epsilon \BB$.
In other words, there is at least one $\bar k \in \NN$ such that $x^k$ lies in a neighborhood of $\bar x$ for all $k\geq \bar k$.\\
The set of all cluster points is called limit set.\\
If a sequence $(x^k)_{k \in \NN}$ in $\RR^n$ has a subsequence that converges to a point $\bar x\in \RR^n$, then $ \bar x$ is called a sequential cluster point of $(x^k)_{k \in \NN}$.
 \end{defn}

\begin{exmp}[A cluster point is also a sequential cluster point]\label{ex_cluster}
Consider the sequence $(x^k)_{k\in\NN}\subseteq\RR^2$ defined as $x^k=\frac{1}{k}\left(cos\left(k\frac{\pi}{2}\right),sin\left(k\frac{\pi}{2}\right)\right)$. The sequence converges to $\bar x=(0,0)$ as $k\to\infty$, which is a cluster point and a sequential cluster point, as shown in Figure \ref{fig_cluster}. The limit set is the singleton $\{(0,0)\}$. 
\end{exmp}

 \begin{figure}[t]
\centering
\includegraphics[width=\columnwidth]{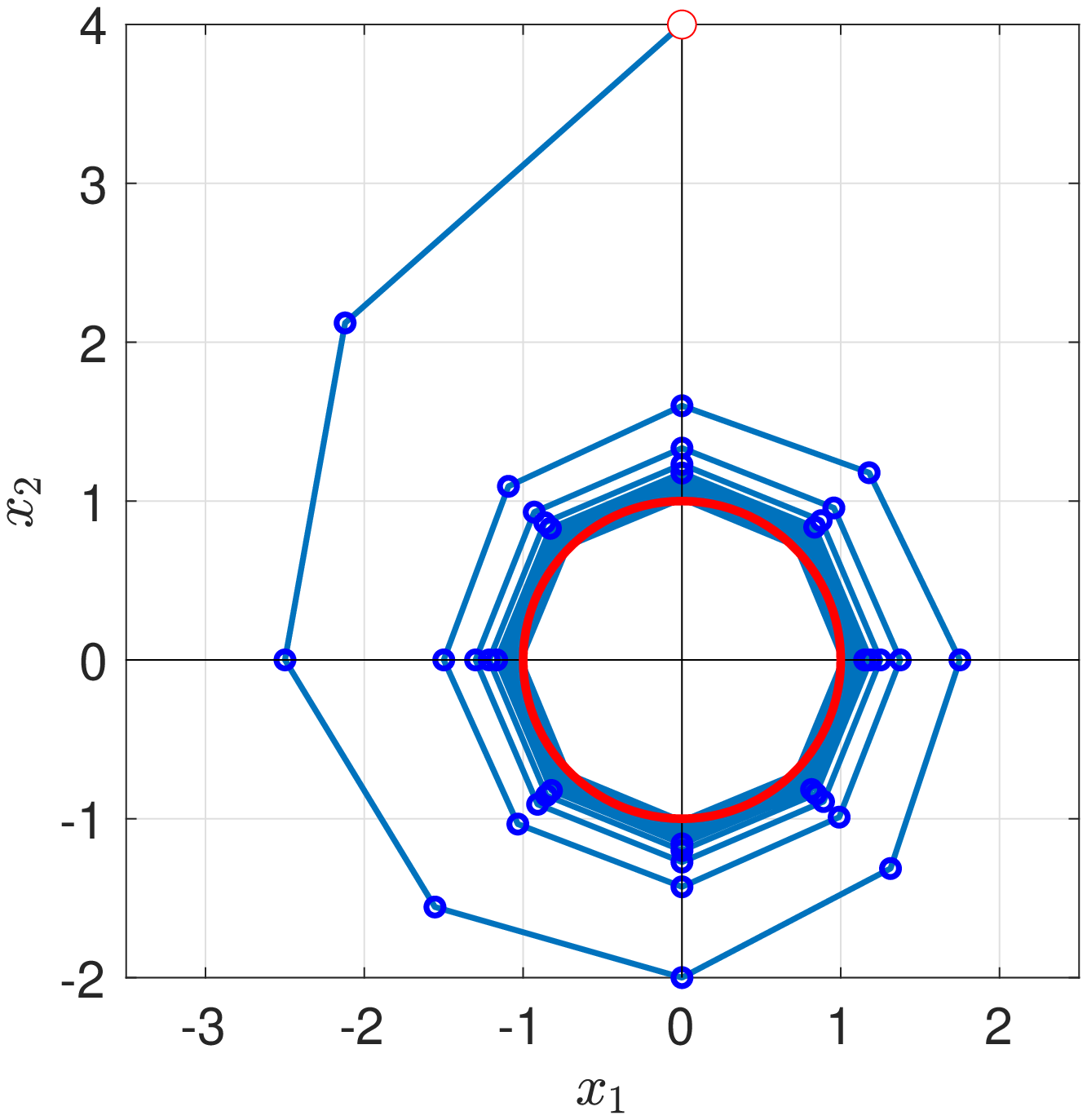}
\caption{Sequence converging to a set of sequential cluster points where none of them is a cluster point (Example \ref{ex_no_cluster}). The empty dot represents the initial point and the red circle is the limit set.}\label{fig_no_cluster}
\end{figure}

\begin{exmp}[A sequential cluster point is not necessarily a cluster point]\label{ex_no_cluster}
Consider the sequence $(x^k)_{k\in\NN}\subseteq\RR^2$ defined as $x^k=\frac{k+3}{k}\left(cos\left(k\frac{\pi}{2}\right),sin\left(k\frac{\pi}{2}\right)\right)$. The sequence does not converge but it has many sequential cluster points (see Figure \ref{fig_no_cluster}). For instance, consider $\bar x=(1,0)=(cos(2\pi),sen(2\pi))$. Then, the subsequence $(x^{k_n})$ with $k_n=4n$, $n\in\NN$ converges to $\bar x$ which in turn is a sequential cluster point. However, the limit set is given by the circumference $\{(x_1,x_2)\in\RR^2:x_1^2+x_2^2=1\}$, in red in Figure \ref{fig_no_cluster}.
 \end{exmp}

\begin{exmp}[$\omega$-limit set]
The concept of limit set reminds that of $\omega$-limit set \cite{benaim1996,benaim1999}.
Given a continuous function $f:\RR\to \RR$, the $\omega$-limit set is the set of cluster points of the forward orbit of the iterated function $f$ at $x\in \RR$, namely, 
$$\omega(x,f)=\bigcap_{n\in\NN}\{f^k(x):k>n\}.$$
In particular, given a dynamical system with flow $\phi:\RR\times \RR\to \RR$, $y$ is a $\omega$-limit point of $x$ if there exist $(t^k)_{k\in\NN}\subseteq\RR$ such that $\lim_{k\to\infty} t^k=\infty$ and $\lim_{k\to\infty} \phi(t^k,x)=y$. 
\end{exmp}

Let us conclude this section with some preliminary results related to the convergence properties of a given sequence. We consider these results common knowledge and we refer to them throughout the paper, even without a specific reference.

\begin{lem}\label{lemma_bounded}
\cite[Lemma 2.45]{bau2011} Let $(x^k)_{k \in \NN}$ be a bounded sequence in $\RR^n$. Then, $(x^k)_{k \in \NN}$ possesses a convergent subsequence.
\end{lem}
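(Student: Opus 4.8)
The plan is to reduce the multidimensional statement to the one-dimensional Bolzano--Weierstrass theorem and then recover the $\RR^n$ case by extracting subsequences one coordinate at a time.

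First I would establish the scalar case. Since $(x^k)_{k\in\NN}$ is bounded, each of its coordinate sequences is bounded as well, so it suffices to show that a bounded real sequence $(v^k)_{k\in\NN}\subseteq[a,b]$ admits a convergent subsequence. For this I would use the bisection method: split $[a,b]$ into two halves; at least one half must contain $v^k$ for infinitely many indices $k$. Call it $[a_1,b_1]$ and repeat the argument on it, obtaining a nested sequence of intervals $[a_m,b_m]$, each containing infinitely many terms of the sequence, with lengths $b_m-a_m=(b-a)/2^m\to0$. By completeness of $\RR$ (the nested interval property), $\bigcap_{m}[a_m,b_m]=\{\bar v\}$ for a unique $\bar v$. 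Choosing strictly increasing indices $k_m$ with $v^{k_m}\in[a_m,b_m]$ then gives $|v^{k_m}-\bar v|\leq b_m-a_m\to0$, so the subsequence $(v^{k_m})_m$ converges to $\bar v$.

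Next I would iterate this extraction over the $n$ coordinates. Writing $x^k=(x^k_1,\dots,x^k_n)$, apply the scalar result to $(x^k_1)_k$ to obtain a subsequence along which the first coordinate converges; then apply it to the second coordinate of that subsequence, and so on, passing to a further subsequence at each step. After $n$ steps I obtain a single subsequence $(x^{k_j})_j$ along which every coordinate converges, say $x^{k_j}_i\to\bar x_i$ for $i=1,\dots,n$. In finite dimension coordinatewise convergence is equivalent to convergence in norm, since $\norm{x^{k_j}-\bar x}^2=\sum_{i=1}^{n}(x^{k_j}_i-\bar x_i)^2\to0$; hence $(x^{k_j})_j$ converges to $\bar x=(\bar x_1,\dots,\bar x_n)\in\RR^n$, which proves the claim.

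The only genuinely delicate point is the scalar case, which rests on the completeness of $\RR$: the nested interval argument is precisely where the topological structure of the reals enters, and without it (for instance over $\mathbb{Q}$) the statement would fail. The coordinatewise extraction is routine once one observes that passing to a subsequence preserves convergence of the coordinates already handled, since every subsequence of a convergent sequence converges to the same limit.
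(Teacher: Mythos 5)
Your proof is correct and complete. Note, however, that the paper does not actually prove this statement: it is stated as a preliminary result, cited from \cite[Lemma 2.45]{bau2011}, and explicitly treated as ``common knowledge'' to be used throughout without further justification. So there is no internal proof to compare against; what you have done is supply the missing classical argument. Your route is the standard Bolzano--Weierstrass proof: the bisection/nested-interval argument correctly isolates where completeness of $\RR$ enters, and the iterated coordinatewise extraction is handled properly, including the point that is most often glossed over --- that each further subsequence preserves the convergence of the coordinates already secured. One remark on scope: the cited source \cite{bau2011} works in a general Hilbert space, where the analogous statement concerns \emph{weakly} convergent subsequences and rests on weak sequential compactness of bounded sets rather than on a bisection argument; your proof is genuinely finite-dimensional, which is exactly the setting of this survey, where weak and strong convergence coincide (as the paper notes via \cite[Lemma 2.51]{bau2011}). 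In exchange for that loss of generality you get a fully elementary, self-contained argument, which is arguably preferable here.
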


\begin{lem}\label{lemma_cluster}
\cite[Lemma 2.46]{bau2011} Let $(x^k)_{k \in \NN}$ be a sequence in $\RR^n$. Then, $(x^k)_{k \in \NN}$ converges if and only if it is bounded and possesses at most one sequential cluster point.
\end{lem}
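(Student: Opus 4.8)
The plan is to prove the two implications separately, leaning on Lemma \ref{lemma_bounded} (Bolzano--Weierstrass in $\RR^n$) for the nontrivial direction.

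For the forward implication ($\Rightarrow$), I would suppose $x^k \to \bar x$. Boundedness is immediate: taking $\epsilon = 1$, all but finitely many terms lie in $\{\bar x\} + \BB$, and the finitely many remaining terms form a bounded set, so the whole sequence is bounded. For the cluster-point count, I would use the elementary fact that every subsequence of a convergent sequence converges to the same limit $\bar x$. Hence $\bar x$ is the only possible sequential cluster point, and the sequence has exactly one, in particular at most one.

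For the reverse implication ($\Leftarrow$), suppose $(x^k)_{k\in\NN}$ is bounded and has at most one sequential cluster point. Since it is bounded, Lemma \ref{lemma_bounded} furnishes a convergent subsequence, so there is \emph{at least} one sequential cluster point; combined with the hypothesis, there is exactly one, call it $\bar x$. The task is then to promote convergence of a single subsequence to convergence of the full sequence, and I would do this by contradiction. If $x^k \not\to \bar x$, then there exist $\epsilon > 0$ and a subsequence $(x^{k_n})_{n\in\NN}$ with $\norm{x^{k_n} - \bar x} \geq \epsilon$ for all $n$. This subsequence is itself bounded, so Lemma \ref{lemma_bounded} applied once more yields a further convergent sub-subsequence whose limit $\bar y$ is again a sequential cluster point of $(x^k)_{k\in\NN}$. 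Passing to the limit in the inequality $\norm{x^{k_n} - \bar x} \geq \epsilon$ along that sub-subsequence gives $\norm{\bar y - \bar x} \geq \epsilon > 0$, so $\bar y \neq \bar x$ is a second, distinct sequential cluster point, contradicting uniqueness. Therefore $x^k \to \bar x$.

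The main obstacle—indeed the only delicate point—is the reverse direction, where the crux is invoking Bolzano--Weierstrass \emph{twice}: first to extract a cluster point from boundedness alone, and then inside the hypothetical ``escaping'' subsequence to manufacture a second cluster point that is bounded away from the first. The uniqueness hypothesis does the rest. The remaining steps are routine.
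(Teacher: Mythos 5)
Your proof is correct. The paper itself does not prove this lemma---it is stated as a known result, cited from \cite[Lemma 2.46]{bau2011} and declared ``common knowledge'' in Section \ref{sec_notation}---so there is no in-paper argument to compare against; your two-sided argument, invoking Lemma \ref{lemma_bounded} twice in the reverse direction (once to produce a cluster point from boundedness, once inside the escaping subsequence to produce a second cluster point at distance at least $\epsilon$ from the first), is exactly the standard proof given in the cited reference.
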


\begin{lem}\label{lemma247}
\cite[Lemma 2.47]{bau2011} Let $(x^k)_{k \in \NN}$ be a sequence in $\RR^n$ and let $\mc X$ be a nonempty subset of $\RR^n$. Suppose that, for every $x\in \mc X$, $(\|x^k-x\|)_{k\in\NN}$ converges and that every sequential cluster point of $(x^k)_{k \in \NN}$ belongs to $\mc X$. Then, $(x^k)_{k \in \NN}$ converges to a point in $\mc X$. \end{lem}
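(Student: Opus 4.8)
The plan is to prove Lemma \ref{lemma247} by combining the two preceding lemmas, so the argument reduces to verifying their hypotheses. The key observation is that the assumption ``$(\|x^k - x\|)_{k\in\NN}$ converges for every $x \in \mc X$'' is strong enough to force both boundedness of $(x^k)_{k\in\NN}$ and uniqueness of its sequential cluster point, after which Lemma \ref{lemma_cluster} delivers convergence and the second hypothesis pins the limit down to lie in $\mc X$.

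First I would establish boundedness. Fix any single point $x_0 \in \mc X$ (nonempty by assumption). By hypothesis $(\|x^k - x_0\|)_{k\in\NN}$ converges, hence is bounded, so there is $M$ with $\|x^k - x_0\| \leq M$ for all $k$; the triangle inequality then gives $\|x^k\| \leq \|x_0\| + M$, so $(x^k)_{k\in\NN}$ is bounded. By Lemma \ref{lemma_bounded} it therefore possesses at least one convergent subsequence, i.e., at least one sequential cluster point exists.

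Next I would show there is at most one sequential cluster point. Suppose $\bar x$ and $\bar y$ are both sequential cluster points; by the second hypothesis both lie in $\mc X$. The trick is to evaluate the convergent real sequences at these specific points. Since $\bar y \in \mc X$, the sequence $(\|x^k - \bar y\|)_{k\in\NN}$ converges to some limit $\ell$; but a subsequence of $(x^k)$ converges to $\bar x$, along which $\|x^k - \bar y\| \to \|\bar x - \bar y\|$, forcing $\ell = \|\bar x - \bar y\|$. Symmetrically, evaluating the convergent sequence $(\|x^k - \bar x\|)_{k\in\NN}$ along a subsequence converging to $\bar y$ gives its limit as $\|\bar y - \bar x\|$. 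A cleaner route is to expand $\|x^k - \bar x\|^2 - \|x^k - \bar y\|^2 = 2\langle x^k, \bar y - \bar x\rangle + \|\bar x\|^2 - \|\bar y\|^2$; the left side converges (difference of two convergent sequences), so $\langle x^k, \bar y - \bar x\rangle$ converges. Taking the limit along the subsequence tending to $\bar x$ and along the one tending to $\bar y$ yields $\langle \bar x, \bar y - \bar x\rangle = \langle \bar y, \bar y - \bar x\rangle$, i.e. $\|\bar x - \bar y\|^2 = 0$, so $\bar x = \bar y$.

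Finally, having shown $(x^k)_{k\in\NN}$ is bounded with exactly one sequential cluster point, Lemma \ref{lemma_cluster} gives that $(x^k)_{k\in\NN}$ converges. Its limit is necessarily its unique sequential cluster point, which by the second hypothesis belongs to $\mc X$, completing the proof. The main obstacle I anticipate is the uniqueness step: one must be careful to use the \emph{full}-sequence convergence of each $(\|x^k - x\|)_{k\in\NN}$ (guaranteed only for $x \in \mc X$, which is why the hypothesis that cluster points lie in $\mc X$ is essential) while extracting limits along different subsequences, and the polarization identity is the convenient device that avoids any hand-waving about comparing two distinct limits.
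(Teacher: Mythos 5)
Your proposal is correct. The paper itself states Lemma \ref{lemma247} without proof (it only cites \cite[Lemma 2.47]{bau2011}), but your argument is exactly the standard one and coincides with the paper's own proof of the closely related Opial Lemma (Lemma \ref{lemma_opial}): the same polarization identity $\|x^k-\bar x\|^2-\|x^k-\bar y\|^2 = 2\langle x^k,\bar y-\bar x\rangle+\|\bar x\|^2-\|\bar y\|^2$ to force uniqueness of the sequential cluster point, followed by Lemma \ref{lemma_cluster}; your only addition is deriving boundedness from the convergence of $(\|x^k-x_0\|)_{k\in\NN}$, which the Opial Lemma instead takes as a hypothesis.
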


\begin{exmp}[Assumptions of Lemmas \ref{lemma_bounded}, \ref{lemma_cluster} and \ref{lemma247}]\label{ex_lemmi}
Consider the sequence defined by $x^k=(-1)^k$ for all $k\in\NN$. Both $1$ and $-1$ are sequential cluster points but not cluster points. The sequence does not converge. Let us use $(x^k)_{k\in\NN}$ to verify Lemmas \ref{lemma_bounded}, \ref{lemma_cluster} and \ref{lemma247}. \\
The sequence is bounded in $[-1,1]$ and it has (at least) two convergent subsequences: $x^{k_n}=-1$ and $x^{k_m}=1$, $n,m\in\NN$. Hence, Lemma \ref{lemma_bounded} holds. However, the sequence is not convergent. In fact, contrary to Lemma \ref{lemma_cluster}, it has two sequential cluster points. Concerning Lemma \ref{lemma247}, we note that the sequence $(\norm{x^k-\bar x})_{k\in\NN}$ does not converge for any $\bar x\in[-1,1]\setminus\{0\}$. On the other hand, it converges for $\bar x=0$ which is not a cluster point of the sequence $(x^k)_{k\in\NN}$.
 \end{exmp}

\subsection{Probability theory}

Concerning the stochastic case, we focus on almost sure convergence. Let us first introduce the probability space $(\Omega, \mc F,\PP)$ where $\Omega$ is the sample space, $\mc F$ is the event space, and $\PP$ is the probability function defined on the event space. The symbol $\EE$ is used for the associated expected values.
\begin{defn}
A sequence $(x^k)_{k \in \NN}$ of random variables converges almost surely (a.s.) towards $\bar x\in \mc X$ if
$$\PP \!\left[\lim _{n\to \infty }\!x^{k}=\bar x\right]=1. $$
\end{defn}

From now on, results involving random variables are supposed to hold almost surely, even if it is not explicitly mentioned.
\begin{figure}
\begin{center}
\includegraphics[width=.7\textwidth]{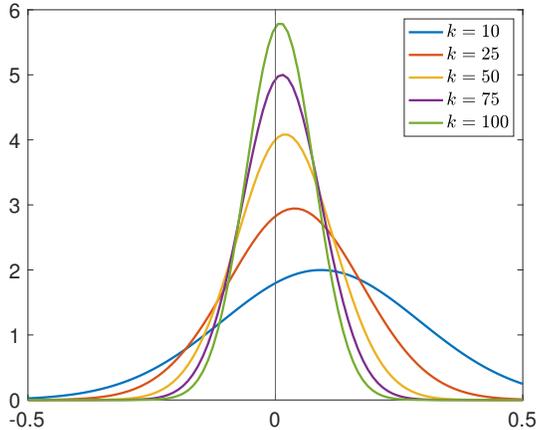}
\caption{Graphical representation of a.s. convergence. By increasing the number of iterations, the mass of the probability distribution concentrates on the limit point (Example \ref{ex_as}).}\label{fig_as}
\end{center}
\end{figure}
\begin{exmp}[a.s. convergence]\label{ex_as}
Let $\Omega=[0,1]$ be a continuous sample space with the uniform probability distribution. For $\omega\in[0,1]$, let us define the sequence of random variables $v^k(\omega)=\omega+\omega^k$ and the random variable $\bar v(\omega)=\omega$. Then, for all $\omega\in[0,1)$, $v^k(\omega)$ converges to $\bar v(\omega)$ as $k\to\infty$. Instead, if $\omega=1$, $v^k(\omega)=2$ for all $k\in\NN$ and $v^k(\omega)$ does not converge to $\bar v(\omega)$. However, $\PP[\omega\in[0,1)]=1$, hence $v^k(\omega)$ converges a.s. to $\bar v(\omega)$ as $k\to\infty$. In Figure \ref{fig_as}, we show how the distance from the limit point move toward zero with high probability increasing the number of iterations, i.e., $\PP[\lim_{k\to\infty}v^k(\omega)-\bar v(\omega)=0]=1$.

\end{exmp}

Let us recall some probabilistic and stochastic definitions that will be useful later on. We start with the definition of filtration.

\begin{defn}\label{def_marti}
Let $(\xi^k)_{k\in\NN}\subseteq\Omega$ be a sequence of random variables and let $ \mathcal F_k $, $k\in\NN$ be the $\sigma$-algebra of $ \Omega$, generated by the events prior to $k$, that is, $\mathcal{F}_{0} = \sigma\left(X_{0}\right)$ and 
$\mathcal{F}_{k} = \sigma\left(X_{0}, \xi_{1}, \xi_{2}, \ldots, \xi_{k}\right)$ for all $k \geq 1.$
Then $\mc F= (\mathcal F_k)_{k \in \NN}$ is called a filtration, if $\mathcal F_k \subseteq \mathcal F_\ell \subseteq \Omega$ for all $ k \leq \ell $. 
 \end{defn}
In words, a filtration is a family of $\sigma$-algebras non-decreasingly ordered that collects the history of $\xi^k$. Given a filtration, a subsequent important concept is that of martingale \cite[Chapter 7]{doob1953}, \cite[Section 1.9]{chung1990}, \cite[Section 4.1]{kushner2003}.

\begin{defn}\label{def_marti}
A sequence of random variables $(v^k)_{k \in \NN}$ is said to be a martingale adapted to $\mc F=(\mathcal F_k)_{k \in \NN}$ if it is integrable and for all $k\in\NN$,
$$\EEk{v^{k+1}}=v^k.$$
It is a supermartingale if for all $k\in\NN$
$$\EEk{v^{k+1}}\leq v^k,$$
and a submartingale if for all $k\in\NN$
$$\EEk{v^{k+1}}\geq v^k. $$
\end{defn}
These notions are the stochastic generalization of the notion of monotone (decreasing or increasing) sequences. Moreover, we note that every martingale is a submartingale and a supermartingale, while every sequence which is \textit{both} a submartingale and a supermartingale is also a martingale.
\begin{exmp}[Martingales]
Let $(x^k)_{k\in\NN}$ be the sequence generated by the fortune of a gambler after $k$ tosses of a fair coin. The gambler wins $1$ if the coin comes up head (with probability $p=\frac{1}{2}$) and loses $1$ otherwise. The expected fortune after the next toss is equal to the present fortune, i.e., $\EE[x^{k+1}]=x^k$, hence the sequence is a martingale. \\
Let us now consider the toss of a biased coin, with head coming up with probability $p\neq \frac{1}{2}$. If $p>\frac{1}{2}$, on average the gambler wins money, i.e., $\EE[x^{k+1}]\geq x^k$ and the sequence is a supermartingale. On the other hand, if $p<\frac{1}{2}$ the gambler loses and the sequence is a submartingale. See \cite{stroock2010,duflo2013,polyak1987,doob1953,borkar1995} for other examples.
 \end{exmp}
We conclude this section with the following result, due to Doob \cite[Theorem 7.4.1]{doob1953}, \cite[Lemma 2.2.7]{polyak1987}, \cite[Theorem 3.3.1]{borkar1995}.
\begin{thm}[Martingale convergence theorem]\label{teo_doob}
Suppose $(x^k)_{k \in \NN}$ is a nonnegative super-martingale which satisfies
$\sup _{k\in\NN} \EE[|x^k|]<\infty.$
Then, almost surely, there exists $\bar x\geq 0$ such that $\lim_{k\to\infty}x^k = \bar x$ and $\EE[|\bar x|]<\infty$.
\end{thm}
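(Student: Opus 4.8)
The plan is to prove the Martingale Convergence Theorem via the classical \emph{upcrossing inequality} of Doob, which is the standard route and the one I would follow here. First I would fix two rationals $a<b$ and count, up to time $n$, the number $U_n([a,b])$ of ``upcrossings'' of the interval $[a,b]$ by the trajectory $(x^k)_{k\le n}$: the number of times the sequence travels from below $a$ to above $b$. The key technical estimate is Doob's upcrossing inequality, which for a nonnegative supermartingale reads
$$(b-a)\,\EE[U_n([a,b])]\le \EE[(x^0-a)^+]\le \EE[x^0]+|a|.$$
The proof of this inequality is a gambling argument: define a predictable $\{0,1\}$-valued process $H^k$ that equals $1$ exactly while the sequence is in the middle of an upcrossing attempt (it switches on when $x$ drops below $a$ and switches off when $x$ rises above $b$), and observe that the ``winnings'' $\sum_{k} H^k(x^{k+1}-x^k)$ dominate $(b-a)U_n$ from below, while the supermartingale property forces the expected winnings of this predictable strategy to be nonpositive. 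Rearranging yields the bound above.

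Next I would pass to the limit in $n$. Since $U_n([a,b])$ is nondecreasing in $n$, monotone convergence gives a total upcrossing count $U_\infty([a,b])$ with $(b-a)\EE[U_\infty([a,b])]\le \EE[x^0]+|a|<\infty$, using the hypothesis $\sup_k\EE[|x^k|]<\infty$ (indeed for a nonnegative supermartingale $\EE[x^k]\le\EE[x^0]$, so the right-hand side is finite). In particular $U_\infty([a,b])<\infty$ almost surely. The deterministic heart of the argument is then the observation that a real sequence fails to converge in $\bar\RR$ if and only if $\liminf_k x^k<\limsup_k x^k$, which happens precisely when there exist rationals $a<b$ strictly between the $\liminf$ and the $\limsup$, forcing infinitely many upcrossings of $[a,b]$. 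Taking a union over the countably many rational pairs $(a,b)$, the event $\{\liminf x^k<\limsup x^k\}$ is contained in $\bigcup_{a<b,\,a,b\in\mathbb Q}\{U_\infty([a,b])=\infty\}$, which has probability zero. Hence $\bar x:=\lim_{k\to\infty}x^k$ exists almost surely in $[0,\infty]$, and $\bar x\ge0$ since each $x^k\ge0$.

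It remains to show $\bar x$ is almost surely finite with $\EE[|\bar x|]<\infty$. Here I would invoke Fatou's lemma: since $x^k\ge0$ and $x^k\to\bar x$ a.s., we get $\EE[\bar x]=\EE[\liminf_k x^k]\le\liminf_k\EE[x^k]\le\EE[x^0]=\sup_k\EE[|x^k|]<\infty$. A random variable with finite expectation is finite almost surely, so $\bar x<\infty$ a.s.\ and $\EE[|\bar x|]=\EE[\bar x]<\infty$, as required.

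The main obstacle is genuinely establishing the upcrossing inequality: setting up the predictable stopping-time structure of the indicator process $H^k$ correctly and verifying, via the supermartingale property applied to the discrete stochastic integral $(H\cdot x)^n=\sum_{k<n}H^k(x^{k+1}-x^k)$, that $\EE[(H\cdot x)^n]\le0$ while simultaneously $(H\cdot x)^n\ge (b-a)U_n([a,b])-(x^n-a)^-$. Once this inequality is in hand, the remaining steps (monotone convergence, the rational-interval covering of the non-convergence event, and the Fatou bound) are routine. Since the paper treats this as a cited ``common knowledge'' result attributed to Doob, one could alternatively simply refer to \cite[Theorem 7.4.1]{doob1953}, but the upcrossing argument is the self-contained proof I would present.
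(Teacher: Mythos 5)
First, a point of comparison: the paper does not actually prove this theorem — it is stated as classical background with citations to Doob, Polyak and Borkar — so your self-contained upcrossing argument is necessarily a different (and more informative) route than anything in the paper; the overall strategy you chose is the standard and correct one.

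However, there is a concrete error in your key displayed estimate. The inequality
$$(b-a)\,\EE[U_n([a,b])]\le \EE[(x^0-a)^+]$$
is false for (nonnegative) supermartingales. Counterexample: let $x^0=\tfrac12$ deterministically, and let the process be the nonnegative martingale that at each step either doubles or is absorbed at $0$, each with probability $\tfrac12$. With $a=1$, $b=\tfrac32$ one has $\EE[(x^0-a)^+]=0$, yet the path $\tfrac12\to 1\to 2$ occurs with probability $\tfrac14$ and completes an upcrossing of $[a,b]$, so $(b-a)\,\EE[U_2]\ge\tfrac18>0$. The correct form of Doob's upcrossing lemma for a supermartingale — and exactly what your own gambling argument in the final paragraph yields, via $\EE[(H\cdot x)^n]\le 0$ together with $(H\cdot x)^n\ge (b-a)U_n([a,b])-(x^n-a)^-$ — is
$$(b-a)\,\EE[U_n([a,b])]\le \EE[(x^n-a)^-],$$
with the terminal value $x^n$, not the initial one. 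For a nonnegative process and $0\le a<b$ (the only pairs that matter here, since $\liminf_k x^k\ge 0$ forces the relevant rationals to satisfy $a>0$), one has $(x^n-a)^-\le a$, so $\EE[U_n([a,b])]\le a/(b-a)$ uniformly in $n$. With this replacement the remainder of your proof — monotone convergence to $U_\infty([a,b])$, the countable union over rational pairs to rule out $\liminf_k x^k<\limsup_k x^k$, and Fatou's lemma to get $\EE[\bar x]\le \liminf_k\EE[x^k]\le\EE[x^0]<\infty$ and hence almost sure finiteness of $\bar x$ — is correct and complete. So the approach is right; only the statement of the upcrossing inequality must be corrected to match the bound your own strategy argument actually delivers.
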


\subsection{Distance from a target set}

The basic idea for proving convergence of a sequence is that the distance from the solution should vanish or at least decrease at each iteration. This is particularly important when we consider vectors, i.e., when convergence results for sequences of real numbers cannot be applied directly.

The most used concept in this direction is that of F\'ejer monotone sequence. 
The term was coined in \cite{motzkin1954} but the concept was first proposed by F\'ejer in \cite{fejer1922}.
These processes have been widely studied in the literature \cite{combettes2001,combettes2015,bauschke2003,eremin1969,combettes2000} since they can be applied in solving classical problems as systems of equations or inequalities, operator equations with a priori information, equilibrium problems, and many others \cite{eremin2009,combettes2001,combettes2000,iusem2017,bot2020}. The key point is that one can take the target set to be the solution set of the problem of interest (even if it is unknown). Then, since the distance from the target decreases, the sequence will eventually reach (a point close to) the solution.

\begin{defn}\label{def_Fejer}
A sequence $(x^k)_{k \in \NN}$ is F\'ejer monotone with respect to a target set $\mc S\neq\varnothing$ if for every $\bar x\in \mc S$, it holds that for all $k\in\NN$
$$\norm{x^{k+1}-\bar x}\leq\norm{x^k-\bar x}. $$
\end{defn}
In words, Definition \ref{def_Fejer} states that the distance between the iterates and any point $\bar x\in S$ does not increase. 
\begin{exmp}[F\'ejer monotone sequence of numbers]
Let us consider the sequence $v^k=\frac{(-1)^k}{k}$. Though the sequence is oscillating, it is convergent to $\bar v=0$ and it is F\'ejer monotone with respect to $\mc S=\{0\}$.
 \end{exmp}
\begin{exmp}[F\'ejer monotone sequence of vectors]\label{ex_fejer_proj}
An example of a F\'ejer monotone sequence is the one generated by the projection (Definition \ref{def_projproxres}) onto a nonempty, closed and convex set $\mc C$ \cite{eremin2009,combettes2001,berg1995,combettes2000,gubin1967}, i.e.,
$$x^{k+1}=\op{proj}_{\mc C}(x^k).$$
The claim follows immediately from the fact that the projection operator is firmly nonexpansive \cite[Proposition 4.16]{bau2011}, hence nonexpansive (Definition \ref{def_lip}). In fact, any sequence generated by an iteration of the form $x^{k+1}=T(x^k)$ where $T$ is a nonexpansive operator is a F\'ejer monotone sequence \cite[Equation (2)]{combettes2000}.
 \end{exmp}

We remark that the diminishing distance from a target point does not necessarily imply convergence to such point. Specifically, we note that a F\'ejer monotone sequence $(x^k)_{k \in \NN}$ with respect to a nonempty set $\mc S$ may not converge even if the limit set is not empty. 
\begin{exmp}[Non-convergent F\'ejer monotone sequence]\label{fm_no_conv}
The sequence defined as $x^k=(-1)^{k} x_0$ for all $k\in\NN$ is F\'ejer monotone with respect to $\mc S=\{0\}$ but it does not converge for any $x_0 \notin \mc S$ \cite[page 9]{combettes2001}, \cite[page 1]{combettes2000}. 
 \end{exmp}

The notion of F\'ejer monotonicity can be extended in various directions \cite{berg1995,combettes2001,eremin1969,combettes2013,combettes2015,lin2018}. Here we recall only the concept of quasi-F\'ejer monotone sequence, first introduced in the stochastic case \cite{ermoliev1988,ermol1969} (see also Definition \ref{def_Fejer_stoc}) and later in several (deterministic) variants \cite{combettes2001,combettes2004,lin2018,berg1995}. 
\begin{defn}\label{def_qfm}
Let $\phi:\RR_{\geq0}\to\RR_{\geq0}$.
A sequence $(x^k)_{k \in \NN}\subseteq\RR^n$ is quasi-F\'ejer monotone with respect to a target set $\mc S\neq \varnothing$ if for every $\bar x \in \mc S$ there exists a nonnegative sequence $(\varepsilon^k)_{k\in\NN}$ such that $\sum_{k=0}^\infty\varepsilon^k<\infty$ and it holds that
$$\phi(\|x^{k+1}-\bar x\|) \leq\phi(\|x^k-\bar x\|)+\varepsilon^k \text{ for all } k\in\NN.  $$
\end{defn}

\begin{rem}
Definition \ref{def_qfm} is perhaps the most general definition of quasi-F\'ejer monotone sequence, as there are no restrictions on the function $\phi$. However, besides some general results (see, e.g., Proposition \ref{prop_vm} and Theorem \ref{theo_comb_var}), many convergence theorems hold for a given choice of the function, i.e., $\phi=|\cdot|$ or $\phi=|\cdot|^2$. For details, see Section \ref{subsec_Fejer} or \cite{alber1998,ermol1969,ermoliev1988,combettes2001,combettes2013}.
 \end{rem}

Next, we give a definition of F\'ejer monotone sequence in the stochastic case. Stochastic quasi-F\'ejer monotone sequences were first introduced in \cite{ermol1969} and later discussed in \cite{barty2007, combettes2015}. The interpretation is that the expected value of the distance from the target set is non-increasing, which reminds the definition of (super)martingale \cite{combettes2015,ermol1969}.

\begin{defn}\label{def_Fejer_stoc}
Let $\phi:\RR_{\geq0}\to\RR_{\geq0}$.
A sequence $(x^k)_{k \in \NN}$ of random variables is stochastic F\'ejer monotone with respect to a target set $\mc S\neq \varnothing$ if for every $\bar x \in \mc S$ it holds that, for all $k\in\NN$,
$$\EEk{\phi(\norm{x^{k+1}-\bar x})} \leq\phi(\norm{x^k-\bar x}).$$
It is called stochastic quasi-F\'ejer monotone relative to a target set $\mc S\neq \varnothing$ if for every $\bar x \in \mc S$ there exists a nonnegative sequence $(\varepsilon^k)_{k\in\NN}$ such that $\sum_{k=0}^\infty\varepsilon^k<\infty$ and it holds that, for all $k\in\NN$,
$$\EEk{\phi(\norm{x^{k+1}-\bar x})} \leq\phi(\norm{x^k-\bar x})+\varepsilon^k. $$
\end{defn}

Definitions \ref{def_qfm} and \ref{def_Fejer_stoc} hold true for any norm of choice, yet other metrics can be considered (see Remark \ref{remark_bregman}). Moreover, variable metrics have been considered as well \cite{combettes2013,vu2013,nguyen2016}.

\begin{defn}
Let $\beta\geq0$ and $\phi:\RR_{\geq0}\to\RR_{\geq0}$ and let $(W_k)_{k\in\NN}$ be a sequence in $\mc P_\beta$. A sequence $(x^k)_{k \in \NN}$ of random variables is quasi-F\'ejer monotone with respect to a target set $\mc S\neq \varnothing$ and relative to $(W_k)_{k\in\NN}$, if, given a nonnegative sequence $(\eta^k)_{k\in\NN}$ such that $\sum_{k=0}^\infty\eta^k<\infty$, for every $\bar x \in \mc S$ there exists a nonnegative sequence $(\varepsilon_k)_{k\in\NN}$ such that $\sum_{k=0}^\infty\varepsilon^k<\infty$ and for all $k\in\NN$
$$
\phi(\|x^{k+1}-\bar x\|_{W_{k+1}}) \leq(1+\eta_{k})\,\phi(\|x^k- \bar x\|_{W_k})+\varepsilon^k. 
$$
\end{defn}

There are many results on (stochastic, quasi) F\'ejer monotone sequences but they lie outside the scope of this survey. For a deeper insight on this topic we refer to \cite{combettes2001, bau2011,combettes2013,combettes2015,bau2015}. 

\begin{rem}\label{remark_bregman}
An important generalization of F\'ejer monotonicity is that of Bregman monotonicity \cite{facchinei2007,bauschke2003,bregman1967,nguyen2016,nguyen2017}. The concept has received a rising interest recently in the system and control community \cite{ananduta2021,bravo2018,benning2017,alacaoglu2021,mertikopoulos2018}. For the sake of completeness, we report here the definition, and later on we recall when some results hold also with the Bregman distance.

Let $\mc C\subseteq\RR^n$ be a closed convex set and let $f:\mc C\to\RR$ be a strictly convex continuous function which is continuously differentiable on $int\mc C$. The Bregman distance associated with $f$ is
\begin{equation}
D_{f}(x, y) = f(x)-f(y)-\langle \nabla f(y),x-y\rangle
\end{equation}
and it has the following geometric interpretation: $D_f (x, y)$ is the difference between $f (x)$ and the value at $x$ of the linearized approximation of $f(x)$ at $y$. $D_f (x, y)$ is nonnegative and it is zero if and only if $x = y$. 

We note that in general the Bregman distance is not a ``real" distance, since it may fail to satisfy, for instance, the triangular inequality.

An example of a Bregman function is $f=\normsq{\cdot}$ whose associated distance is $D_f(x, y) =\|x-y\|^{2} / 2$.
Another example is given by $g(x) = \sum_{i=1}^{n} x_{i} \log x_{i}$ with the convention that $0 \log 0=0$. The associated distance is $D_{g}(x, y)=\sum_{i=1}^{n}(x_{i} \log \frac{x_{i}}{y_{i}}+y_{i}-x_{i})$ \cite[Example 12.7.4]{facchinei2007}, i.e., the Kullback--Leibler divergence \cite{kullback1951,kullback1997}, widely used in machine learning and generative adversarial networks \cite{goodfellow2014,goodfellow2016}.

A sequence $(x^k)_{k \in \mathbb{N}}$ in $\mc C$ is Bregman monotone with respect to a set $\mc S$ if the following conditions hold:
\begin{itemize}
\item[(i)] $\mc S \cap \mc C\neq \varnothing$,
\item[(ii)] $(x^k)_{k \in \mathbb{N}}$ lies in $int (\mc C)$,
\item[(iii)] for every $ \bar x \in \mc S \cap \mc C$, $ D_f(\bar x, x^{k+1}) \leq D_f(\bar x, x^k)$ for all $k \in \mathbb{N}$. 
\end{itemize}
\end{rem}

\section{Convergence of deterministic sequences}\label{sec_det}
In this section, we walk through a number of convergence results for deterministic sequences of real numbers. When possible, we propose first the most general result and then show its consequences. We start with some results on F\'ejer monotone sequences and then move to general sequences of real numbers.

\subsection{F\'ejer monotone convergent sequences}\label{subsec_Fejer}

The first result we present is related to the concept of F\'ejer monotone sequences and it was originally proposed in \cite{bau2011}. Parts of this result are also in \cite[Theorems 2.7 and 2.10]{berg1995} while in \cite[Propositions 1--4]{combettes2000} a distinction between strong and weak convergence is made. Other properties of F\'ejer monotone sequences can be found in \cite{bau2011,berg1995,alber1998,combettes2000, combettes2001} and reference therein.

\begin{prop}[Proposition 5.4, \cite{bau2011}]\label{prop_bau}
Let $\mc X$ be a nonempty subset of $\RR^n$ and let $(x^k)_{k \in \mathbb{N}}$ be a sequence in $\RR^n$. Suppose that $(x^k)_{k \in \mathbb{N}}$ is Fejér monotone with respect to $\mc X$. Then, the following statements hold:

\begin{itemize}
\item[(i)] $(x^k)_{k \in \mathbb{N}}$ is bounded;
\item[(ii)] For every $\bar x \in \mc X,(\|x^k-\bar x\|)_{k \in \mathbb{N}}$ converges;
\item[(iii)] $(d_{\mc X}(x^k))_{k \in \mathbb{N}}$ is decreasing and converges;
\item[(iv)] $\left\|x^{k+m}-x^k\right\| \leq 2 d_{\mc X}(x^k)$ for all $m,k \in \mathbb{N}$;
\end{itemize}
\end{prop}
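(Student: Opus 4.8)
The plan is to prove the four items in the order (ii), (i), (iii), (iv), since each later statement leans on the scalar monotonicity established first. The single recurring tool is the one-step Fejér inequality of Definition \ref{def_Fejer}, iterated to a multi-step form: fixing any $\bar x\in\mc X$ and chaining $\norm{x^{k+1}-\bar x}\leq\norm{x^k-\bar x}$ over consecutive indices yields $\norm{x^{k+m}-\bar x}\leq\norm{x^k-\bar x}$ for all $k,m\in\NN$. I would record this multi-step bound at the outset, as it is used in both (i) and (iv).

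For (ii), fix $\bar x\in\mc X$: the real sequence $(\norm{x^k-\bar x})_{k\in\NN}$ is nonincreasing by the one-step inequality and bounded below by $0$, hence it converges by monotone convergence for real numbers. Item (i) then follows immediately, because $\mc X\neq\varnothing$ allows one to pick some $\bar x\in\mc X$, and (ii) gives $\norm{x^k-\bar x}\leq\norm{x^0-\bar x}$ for all $k$, so that $\norm{x^k}\leq\norm{x^0-\bar x}+\norm{\bar x}$ uniformly in $k$. For (iii), I would bound the distance through any fixed target point: for every $\bar x\in\mc X$ one has $d_{\mc X}(x^{k+1})\leq\norm{x^{k+1}-\bar x}\leq\norm{x^k-\bar x}$, and taking the infimum over $\bar x\in\mc X$ on the right-hand side gives $d_{\mc X}(x^{k+1})\leq d_{\mc X}(x^k)$; the sequence $(d_{\mc X}(x^k))_{k\in\NN}$ is thus decreasing and, being bounded below by $0$, converges.

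For (iv), the key step combines the triangle inequality with the multi-step Fejér inequality: for every $\bar x\in\mc X$,
$$\norm{x^{k+m}-x^k}\leq\norm{x^{k+m}-\bar x}+\norm{\bar x-x^k}\leq 2\norm{x^k-\bar x}.$$
Since the left-hand side does not depend on $\bar x$, taking the infimum over $\bar x\in\mc X$ on the right yields $\norm{x^{k+m}-x^k}\leq 2\,d_{\mc X}(x^k)$. There is no serious obstacle in this proposition; the only point requiring a moment's care is precisely this last passage from a family of pointwise bounds (one for each $\bar x\in\mc X$) to a bound phrased through $d_{\mc X}$. The infimum may legitimately be taken on the side depending on $\bar x$ while the bounded quantity stays fixed, and this works whether the distance is realized as a minimum or only as an infimum, so no closedness of $\mc X$ is needed for the inequality itself.
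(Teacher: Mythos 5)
Your proof is correct and takes the same route the paper intends: the paper's own proof is just the one-line remark that all four statements ``follow from the definition of F\'ejer monotone sequence,'' and your argument is precisely that definition-chasing (monotone bounded real sequences for (i)--(ii), infimum over $\bar x\in\mc X$ for (iii)--(iv)) spelled out in full. Your closing observation that the passage to $d_{\mc X}$ needs no closedness of $\mc X$ is a nice touch, since the paper's notation $d_{\mc X}(x)=\min_{y\in\mc X}\norm{x-y}$ tacitly presumes attainment that the proposition's hypotheses do not guarantee.
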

\begin{proof}
The statements follow from the definition of F\'ejer monotone sequence (Definition \ref{def_Fejer}).
 \end{proof}

\begin{rem}\label{remark_qfm}
A similar result holds also for quasi-F\'ejer sequences \cite[Proposition 3.3]{combettes2001}, \cite[Proposition 1]{alber1998}. However, in such a case it is not possible to prove that the distance from the target set is decreasing as in Proposition \ref{prop_bau}(iii). 

Formally, let $\mc X\subseteq \RR^n$ be nonempty closed convex and let $(x^k)_{k \in \mathbb{N}}$ be a sequence in $\RR^n$. Suppose that $(x^k)_{k \in \mathbb{N}}$ is quasi-Fejér monotone with respect to $\mc X$. Then, the following statements hold:
\begin{itemize}
\item[(i)] $(x^k)_{k \in \mathbb{N}}$ is bounded;
\item[(ii)] For every $\bar x \in \mc X,(\|x^k-\bar x\|)_{k \in \mathbb{N}}$ converges. 
\end{itemize}
\end{rem}

We note that having convergence of the sequence as in Proposition \ref{prop_bau}(ii) does not necessarily mean that the sequence $(x^k)_{k\in\NN}$ converges to a point in $\mc X$ (see Examples \ref{fm_no_conv}). On the other hand, the latter result can be obtained under slightly stronger assumptions (see also Examples \ref{ex_lemmi}).

\begin{thm}[Theorem 3.8, \cite{combettes2001}]\label{theo_qfm_comb}
Let $\mc X$ be a nonempty set and let $(x^k)_{k \in \NN}$ be a sequence in $\RR^n$. Suppose that $(x^k)_{k \in \NN}$ is quasi-F\'ejer monotone with respect to $\mc X$. Then, $(x^k)_{k \in \NN}$ converges to a point in $\mc X$ if and only if every sequential cluster point of $(x^k)_{k \in \NN}$ belongs to $\mc X$. 
\end{thm}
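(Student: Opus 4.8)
The plan is to prove the two implications separately, with the forward direction being immediate and the reverse direction reducing to an application of Lemma \ref{lemma247}.

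For the ``only if'' direction, suppose $(x^k)_{k\in\NN}$ converges to some $\bar x\in\mc X$. Every subsequence of a convergent sequence converges to the same limit, so $\bar x$ is the unique sequential cluster point of $(x^k)_{k\in\NN}$; since $\bar x\in\mc X$ by assumption, every sequential cluster point trivially belongs to $\mc X$.

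For the ``if'' direction, the key observation is that quasi-F\'ejer monotonicity already supplies one of the two hypotheses of Lemma \ref{lemma247}, namely the convergence of $(\|x^k-x\|)_{k\in\NN}$ for every $x\in\mc X$. Indeed, fixing $\bar x\in\mc X$ and writing $a_k=\phi(\|x^k-\bar x\|)$, Definition \ref{def_qfm} gives $a_{k+1}\le a_k+\varepsilon^k$ with $\sum_{k}\varepsilon^k<\infty$. Setting $c_k=a_k+\sum_{j\ge k}\varepsilon^j$ produces a nonincreasing nonnegative sequence, hence $(c_k)_{k\in\NN}$ converges, and since the tail $\sum_{j\ge k}\varepsilon^j\to0$ the sequence $(a_k)_{k\in\NN}$ converges as well. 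Taking $\phi$ continuous and strictly increasing (as in the standard cases $\phi=|\cdot|$ or $\phi=|\cdot|^2$), this transfers to convergence of $(\|x^k-\bar x\|)_{k\in\NN}$. This is the computation underlying Remark \ref{remark_qfm}(ii), which for a single fixed $\bar x$ requires only the quasi-F\'ejer inequality and not convexity of $\mc X$. A byproduct is that $(x^k)_{k\in\NN}$ is bounded, so by Lemma \ref{lemma_bounded} it admits at least one sequential cluster point.

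With the standing hypothesis that every sequential cluster point of $(x^k)_{k\in\NN}$ lies in $\mc X$, both hypotheses of Lemma \ref{lemma247} are now met, and I would conclude that $(x^k)_{k\in\NN}$ converges to a point in $\mc X$. The only genuinely delicate point is the convergence of $(\|x^k-\bar x\|)_{k\in\NN}$: it rests on the summable-error estimate above and on $\phi$ being well-behaved enough that convergence of $\phi(\|x^k-\bar x\|)$ forces convergence of $\|x^k-\bar x\|$. Finally, I would note that the cluster-point hypothesis cannot be dropped, since Example \ref{fm_no_conv} exhibits a F\'ejer (hence quasi-F\'ejer) monotone sequence whose sequential cluster points lie outside the target set and which fails to converge.
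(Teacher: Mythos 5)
Your proof is correct and follows essentially the same route as the paper: necessity is immediate, and sufficiency combines the quasi-F\'ejer distance-convergence property (the content of Remark \ref{remark_qfm}) with Lemma \ref{lemma247}. The only difference is presentational --- the paper cites \cite[Proposition 3.3]{combettes2001} for the distance-convergence step, while you derive it inline via the summable-perturbation argument and explicitly flag the caveat (left implicit in the paper) that $\phi$ must be well-behaved, e.g.\ $\phi=|\cdot|$ or $\phi=|\cdot|^2$, for convergence of $\phi(\|x^k-\bar x\|)$ to transfer to $\|x^k-\bar x\|$.
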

\begin{proof}
Necessity is straightforward. Sufficiency follows from Remark \ref{remark_qfm} (specifically from \cite[Proposition 3.3]{combettes2001}).
 \end{proof}
\begin{rem}
Since Theorem \ref{theo_qfm_comb} holds for quasi-F\'ejer monotone sequences, it holds also for F\'ejer monotone ones \cite[Theorem 5.5]{bau2011}. In this case, the proof follows by the fact that for every $x\in \mc X$ the sequence $(\|{x^k-x}\|)_{k\in\NN}$ converges by Proposition \ref{prop_bau} and that if every sequential cluster point $x$ belongs to $\mc X$, then the sequence converges to a point in $\mc X$ by Lemma \ref{lemma247}. The result in Theorem \ref{theo_qfm_comb} has been obtained many times in the literature, for weak and strong convergence \cite{gubin1967,alber1998,combettes2000,combettes2001}, but it seems to originate in \cite{schaefer1957}.
 \end{rem}

\begin{rem}
Under suitable conditions, convergence results as in Proposition \ref{prop_bau} and Theorem \ref{theo_qfm_comb} can be obtained also for Bregman monotone sequences \cite[Proposition 4.1 and Theorem 4.11]{bauschke2003}.
 \end{rem}

The following result is known as the Opial Lemma \cite{opial1967} and it can be found in many works and with different applications \cite{malitsky2020siam,malitsky2019,bot2020gan,abbas2014,abbas2015,bau2011,mainge2007}, since it often relate to convergence of sequences generated by nonexpansive operators \cite{opial1967, naraghirad2020,peypouquet2009} (see also Example \ref{ex_fejer_proj}).
We here show a proof which follows from some results in \cite{bau2011} and we report the discrete time formulation \cite{boct2016,csetnek2019,attouch2019}, but it can be found also in continuous time \cite{csetnek2019,bot2016,attouch2019}. For a different proof see \cite{peypouquet2009,opial1967}.

\begin{lem}[Opial Lemma]\label{lemma_opial}
Let $(x^k)_{k\in\NN}$ be a bounded sequence and let $\mc X\subseteq\RR^n$. If
\begin{enumerate}
\item for all $z\in\mc X$ $\lim_{k\to\infty}||x^k-z||$ exists;
\item every sequential cluster point of $(x^k)_{k\in\NN}$ is in $\mc X$ as $k\to\infty$;
\end{enumerate}
then, $(x^k)_{k\in\NN}$ is convergent to a point in $\mc X$.
\end{lem}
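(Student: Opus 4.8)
The plan is to reduce the statement to Lemma~\ref{lemma_cluster}, which asserts that a sequence converges if and only if it is bounded and possesses at most one sequential cluster point. Boundedness holds by assumption; in fact, provided $\mc X\neq\varnothing$, hypothesis~1 alone already forces it, since for any fixed $z\in\mc X$ the sequence $(\norm{x^k-z})_{k\in\NN}$ converges and is therefore bounded, whence $\norm{x^k}\leq\norm{x^k-z}+\norm{z}$ is bounded too. It then remains to show that $(x^k)_{k\in\NN}$ has \emph{exactly one} sequential cluster point and that it lies in $\mc X$; the limit will automatically belong to $\mc X$. I note that these hypotheses coincide with those of Lemma~\ref{lemma247}, so one could simply invoke that result; below I sketch instead the self-contained Opial-type argument, which is the informative route.

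Existence of a sequential cluster point is immediate: by boundedness and Lemma~\ref{lemma_bounded} some subsequence converges, and by hypothesis~2 its limit lies in $\mc X$. The real content is uniqueness. Suppose $\bar x_1$ and $\bar x_2$ are two sequential cluster points; by hypothesis~2 both lie in $\mc X$, so by hypothesis~1 the limits $\ell_i=\lim_{k\to\infty}\norm{x^k-\bar x_i}$ exist for $i=1,2$. The key device is the elementary expansion
$$\normsq{x^k-\bar x_1}-\normsq{x^k-\bar x_2}=2\langle x^k,\bar x_2-\bar x_1\rangle+\normsq{\bar x_1}-\normsq{\bar x_2},$$
whose left-hand side converges to $\ell_1^2-\ell_2^2$. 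Consequently the scalar sequence $(\langle x^k,\bar x_2-\bar x_1\rangle)_{k\in\NN}$ converges to some limit $c$. Reading $c$ off along a subsequence $x^{k_n}\to\bar x_1$ gives $c=\langle\bar x_1,\bar x_2-\bar x_1\rangle$, while along a subsequence $x^{m_j}\to\bar x_2$ it gives $c=\langle\bar x_2,\bar x_2-\bar x_1\rangle$. Subtracting these two evaluations yields $\langle\bar x_2-\bar x_1,\bar x_2-\bar x_1\rangle=0$, i.e. $\normsq{\bar x_2-\bar x_1}=0$, so $\bar x_1=\bar x_2$.

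Having established that $(x^k)_{k\in\NN}$ is bounded with a unique sequential cluster point $\bar x\in\mc X$, Lemma~\ref{lemma_cluster} gives convergence, and the limit is necessarily $\bar x\in\mc X$, which is the claim. The main obstacle is exactly this uniqueness step: hypothesis~1 controls the distance to each individual point of $\mc X$ but does not by itself rule out oscillation between two distinct cluster points — as Example~\ref{fm_no_conv} illustrates for Fejér monotone sequences. The expansion identity above is the precise mechanism that resolves this, collapsing the two convergent distance sequences into a single linear functional whose limit, when evaluated along the two subsequences, is forced to agree and thereby pins down a single accumulation point.
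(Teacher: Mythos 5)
Your proposal is correct and follows essentially the same route as the paper's proof: uniqueness of the sequential cluster point via the polarization-type expansion of $\normsq{x^k-\bar x_1}-\normsq{x^k-\bar x_2}$, evaluation of the resulting convergent inner-product sequence along the two subsequences, and then convergence from boundedness plus uniqueness of the cluster point. Your write-up is in fact slightly more careful than the paper's (you explicitly justify via hypotheses 1 and 2 why the distance sequences converge, and you note the overlap with Lemma~\ref{lemma247}), but the core argument is identical.
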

\begin{proof}
Since the sequence $(x^k)_{k \in \NN}$ is bounded, it has at least one sequential cluster point. We show that, under this assumption, there cannot be two. The proof follows by contradiction. Suppose that $\bar x$ and $\bar y$ are two sequential cluster points, that is, $x_{k_n}\to \bar x$ and $x_{k_l}\to \bar y$, for $n, l\in\NN$. Since $\bar x$ and $\bar y$ are sequential cluster points, the sequences $(\|x^k-\bar x\|)_{k\in\NN}$ and $(\|x^k-\bar y\|)_{k\in\NN}$ converge. Moreover, it holds that, for all $k\in\NN$
$$2\langle x^k,\bar x-\bar y\rangle=\normsq{x^k-\bar y}-\normsq{x^k-\bar x}+\normsq{\bar x}-\normsq{\bar y}.$$
Therefore, $\langle x^k,\bar x-\bar y\rangle$ converges to some point $w$. Taking the limit along $x_{k_n}$ and $x_{k_l}$ we have
$$w=\langle \bar x,\bar x-\bar y\rangle=\langle \bar y,\bar x-\bar y\rangle.$$
It follows that $\normsq{\bar x-\bar y}=0$ hence $\bar x=\bar y$.  
 \end{proof}

\begin{rem}
The Opial Lemma provides a powerful tool to derive convergence of an iterative process. In fact, condition 2. has been already mentioned in many previous results in this survey. Interestingly, similar results can be extended to the Bregman distance \cite{naraghirad2020,naraghirad2014,huang2011}.
\end{rem}

\subsection{Convergent sequences of real numbers}
%
We now introduce a number of results on sequences of real numbers. 
We note that even if the following results are for general sequences of real numbers, their importance for system theory lies on the fact that they can be paired with (quasi) F\'ejer monotonicity (see Remark \ref{remark_Fejer}). In Table \ref{table_lemmi_det}, we summarize the results presented in this section, with emphasis on the auxiliary sequences that may affect convergence.
\begin{table*}
\begin{center}
\begin{tabular}{ccccccc}
\toprule
& Seq($k+1$)& & Coeff. &Seq($k$) & Negative & Noise\\
& $v^{k+1}$ & $\leq$ & $C^k$ & $v^k$ & $-\theta^k$ & $+\varepsilon^k$\\
\midrule
Lemma \ref{lemma_comb} & NN & & $\gamma$ && \cmark & \cmark\\
Lemma \ref{lemma_det_rs} & NN && $(1+\delta^k)$ && \cmark &\cmark\\
Corollary \ref{cor_mali1} & NN && 1 && \cmark & \xmark\\
Corollary \ref{cor_polyak} & NN && $(1+\delta^k)$ && \xmark &\cmark\\
Lemma \ref{lemma_polyak1} & Real  && $\gamma^k$ && \xmark & \cmark\\
Lemma \ref{lemma_neg_rs} & NN && $(1-\delta^k)$ &&\cmark & \cmark\\
Lemma \ref{lemma_xu03} & NN & & $(1-\delta^k)$ && \xmark & $\delta^k\beta^k$\\
Lemma \ref{lemma_xu02} & NN && $(1-\delta^k)$ && \xmark& $\delta^k\beta^k+\varepsilon^k$\\
Corollary \ref{cor_rem} & NN && $(1-\delta^k)$ && \xmark & $\delta^k(\beta^k+\eta^k)$\\
Corollary \ref{cor_qin} & NN & & $(1-\delta^k)$ && \xmark& \cmark\\
Corollary \ref{cor_liu} &  NN && $(1-\delta^k)$ && \xmark& $\eta^k+\varepsilon^k$\\
Proposition \ref{prop_alber} & NN & & 1 && \xmark & $a\beta^k$\\
Lemma \ref{lemma_he} & NN && $(1-\delta^k)$ && \xmark & $\delta^k \gamma^k+\beta^k$\\
&&&1&&\cmark&\cmark\\
Lemma \ref{lemma_meno} & Real & & $(1+\delta^k)$ && $\delta^kv^{k-1}$ & \cmark\\
Lemma \ref{lemma_rate} & NN && $1/\gamma$&& $\beta^{k+1}/\gamma$ & $\beta^k/\gamma$\\
\bottomrule
\end{tabular}
\caption{Convergence results for deterministic sequences of real numbers divided by their form. In the first line, the most general inequality is presented. NN stands for a sequence of \textit{nonnegative} real numbers, while \cmark (\xmark) indicates if the inequality in the corresponding lemma contains (or not) a term of that column type. $C^k$ is a general  ``coefficient", whose specific form can be retrieved from the column.}\label{table_lemmi_det}
\end{center}
\end{table*}

Let us note that, in the first line of Table \ref{table_lemmi_det}, $C^k$ is a coefficient which, depending on the form, represents the level of expansion or contraction, $\varepsilon^k$ can be seen as an additive noise and $\theta^k$ is a ``negative term", because of the minus sign, which decreases the value of the sequence $v^k$. For a graphical interpretation of the effects of those sequences, we also refer to Figure \ref{fig_lemma} later on, which is specifically related to Lemma \ref{lemma_det_rs}.

The first lemma that we report is widely used and it has a number of consequences that are widely used as well. We do not include the proof since it is very similar to the proof of the forthcoming Lemma \ref{lemma_neg_rs}.

\begin{lem}[Lemma 3.1, \cite{combettes2001}]\label{lemma_comb}
Let $\gamma\in (0,1]$ and let $(v^k)_{k\in\NN}$, $(\theta^k)_{k\in\NN}$ and $(\varepsilon^k)_{k\in\NN}$ be nonnegative sequences such that $\sum_{k=0}^\infty\varepsilon^k<\infty$ and
\begin{equation}\label{eq_combette}
v^{k+1}\leq \gamma v^k-\theta^k+\varepsilon^k \text{ for all } k\in\NN.
\end{equation}
Then, the following statements hold:
\begin{itemize}
    \item[(i)] $(v^k)_{k\in\NN}$ is bounded;
    \item[(ii)] $(v^k)_{k\in\NN}$ converges;
    \item[(iii)] $\sum_{k=0}^\infty\theta^k<\infty$;
    \item[(iv)] If $\gamma\neq1$, then $\sum_{k=0}^\infty v^k<\infty$.
\end{itemize}
\end{lem}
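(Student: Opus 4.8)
The plan is to exploit the fact that, because $\theta^k\geq 0$ and $\gamma\in(0,1]$, the governing inequality \eqref{eq_combette} relaxes to the much simpler bound $v^{k+1}\leq v^k+\varepsilon^k$, from which boundedness follows by telescoping. Iterating this relaxed inequality gives $v^{k+1}\leq v^0+\sum_{j=0}^k\varepsilon^j\leq v^0+\sum_{j=0}^\infty\varepsilon^j$, and since $(\varepsilon^k)_{k\in\NN}$ is summable by assumption, the right-hand side is a finite constant independent of $k$. Together with the nonnegativity of $v^k$, this settles part (i).

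For part (ii) I expect the convergence of $(v^k)_{k\in\NN}$ to be the main obstacle, since $v^k$ itself need not be monotone: the noise $\varepsilon^k$ may push it up and down. The idea I would use is to absorb the summable tail into an auxiliary sequence so as to restore monotonicity. I would define $w^k=v^k+\sum_{j=k}^\infty\varepsilon^j$, which is well-defined because the series converges. Using $v^{k+1}\leq v^k+\varepsilon^k$ one checks that $w^{k+1}\leq w^k$, so $(w^k)_{k\in\NN}$ is nonincreasing and bounded below by $0$, hence convergent by the monotone convergence theorem. Since the tail $\sum_{j=k}^\infty\varepsilon^j\to 0$ as $k\to\infty$, it follows that $v^k=w^k-\sum_{j=k}^\infty\varepsilon^j$ converges to the same limit.

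For part (iii) I would again discard the contraction by using $\gamma\leq 1$ to write $\theta^k\leq \gamma v^k-v^{k+1}+\varepsilon^k\leq v^k-v^{k+1}+\varepsilon^k$, and then sum over $k=0,\dots,N$. The differences of $v^k$ telescope to $v^0-v^{N+1}$, and discarding $-v^{N+1}\leq 0$ together with the summability of $(\varepsilon^k)_{k\in\NN}$ bounds $\sum_{k=0}^N\theta^k$ uniformly in $N$, which yields $\sum_{k=0}^\infty\theta^k<\infty$.

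For part (iv), when $\gamma\neq 1$ (so $\gamma\in(0,1)$), the genuine contraction factor becomes useful: dropping $-\theta^k\leq 0$ gives $v^{k+1}\leq\gamma v^k+\varepsilon^k$, hence $(1-\gamma)\,v^k\leq v^k-v^{k+1}+\varepsilon^k$. Summing and telescoping exactly as in (iii) yields $(1-\gamma)\sum_{k=0}^N v^k\leq v^0+\sum_{j=0}^\infty\varepsilon^j$, and dividing by the strictly positive constant $1-\gamma$ proves $\sum_{k=0}^\infty v^k<\infty$. The only genuinely delicate point in the whole argument is the tail-absorption trick in part (ii); every other step is a telescoping estimate combined with the sign of the discarded terms.
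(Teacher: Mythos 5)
Your proof is correct. Note first that the paper does not prove Lemma \ref{lemma_comb} directly: it omits the proof, stating it is ``very similar'' to that of Lemma \ref{lemma_neg_rs}, so the comparison is against that argument. Parts (i) and (iii) of your proposal coincide with the paper's strategy: relax \eqref{eq_combette} to $v^{k+1}\leq v^k+\varepsilon^k$ using $\gamma\leq 1$ and $\theta^k\geq 0$, then telescope. The genuine difference is in part (ii). The paper's argument (in Lemma \ref{lemma_neg_rs}) sets $\bar v=\liminf_{k} v^k$, extracts a subsequence converging to $\bar v$, and uses the summable tail of $(\varepsilon^k)_{k\in\NN}$ to show $\limsup_{k} v^k\leq \bar v+\eta$ for every $\eta>0$; you instead absorb the tail into the auxiliary sequence $w^k=v^k+\sum_{j\geq k}\varepsilon^j$, which is nonincreasing and bounded below, hence convergent, and you conclude because the tail vanishes. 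Both arguments are valid; yours is slightly more compact and is the deterministic analogue of the supermartingale construction underlying the stochastic counterpart (Lemma \ref{lemma_rs}), whereas the paper's liminf/limsup route avoids introducing any auxiliary sequence. Finally, part (iv) has no analogue in Lemma \ref{lemma_neg_rs}, so your contraction-plus-telescoping argument supplies a step the paper leaves entirely implicit.
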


\begin{rem}\label{remark_Fejer}
If $v^k=\norm{x^k-\bar x}$, for some sequence $(x^k)_{k \in \NN}$ and a given $\bar x\in\mc X$, having that $(v^k)_{k\in\NN}$ satisfies the inequality \eqref{eq_combette} implies that $(x^k)_{k \in \NN}$ is a quasi-F\'ejer monotone sequence relative to the set $\mc X$.\\
We also note that the function $V(x^k)=\norm{x^k-\bar x}=v^k$ can also be seen as a decreasing Lyapunov function associated to the sequence that tends toward zero \cite[Section 2.2]{polyak1987}.
 \end{rem}
\begin{rem}\label{remark_kannan}
Ffor a specific choice of the noise term, the following result can be proven \cite[Lemma 3.3]{kannan2012}.
Suppose 
$$v^{k+1} \leq \gamma v^{k}+\eta^{k} \beta,\text{ for all } k\in\NN$$ 
where $\gamma \in(0,1)$, $(\eta^{k})_{k\in\NN}$ is a decreasing positive sequence such that $\sum_{k=0}^{\infty}(\eta^{k})^{2}<\infty,$ and let $0 \leq v^{k} \leq \bar{v}<\infty$ for all $k\in\NN$. Then, $\sum_{k=1}^{\infty} \eta^{k} v^{k}<\infty$.
 \end{rem}

The next lemma is a consequence and a generalization of Lemma \ref{lemma_comb}. It has its stochastic counterpart in the well know Robbins--Siegmund Lemma (Lemma \ref{lemma_rs}) \cite{RS1971}. It is taken from \cite{bau2011} yet here we provide a different proof. For a graphical interpretation, we refer to Figure \ref{fig_lemma}.

\begin{lem}[Lemma 5.31, \cite{bau2011}]\label{lemma_det_rs}
Let $(v^k)_{k\in\NN}$, $(\theta^k)_{k\in\NN}$, $(\varepsilon^k)_{k\in\NN}$ and $(\delta^k)_{k\in\NN}$ be nonnegative sequences such that $\sum_{k=0}^\infty\varepsilon^k<\infty$ and $\sum_{k=0}^\infty\delta^k<\infty$ and
\begin{equation}\label{eq_det_rs}
v^{k+1}\leq (1+\delta^k)v^k-\theta^k+\varepsilon^k, \text{ for all } k \in \NN.
\end{equation}
Then, $\sum_{k=0}^\infty\theta^k<\infty$ and $(v^k)_{k \in \NN}$ is bounded and converges to a nonnegative variable.
\end{lem}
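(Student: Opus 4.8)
The plan is to eliminate the multiplicative factor $(1+\delta^k)$ by a change of scale, thereby reducing the inequality \eqref{eq_det_rs} to the form already treated in Lemma \ref{lemma_comb} with $\gamma=1$. First I would introduce the partial products $t_k=\prod_{j=0}^{k-1}(1+\delta^j)$, with the convention $t_0=1$. Since all $\delta^j\geq 0$ and $\log(1+\delta^j)\leq \delta^j$, the partial sums $\sum_j \log(1+\delta^j)$ are bounded above by $\sum_j\delta^j<\infty$; hence $(t_k)_{k\in\NN}$ is nondecreasing and converges to a finite limit $t_\infty\in[1,\infty)$, with $1\leq t_k\leq t_\infty$ for every $k$.

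Next I would define the rescaled nonnegative sequences $w^k=v^k/t_k$, $\tilde\theta^k=\theta^k/t_{k+1}$ and $\tilde\varepsilon^k=\varepsilon^k/t_{k+1}$. Dividing \eqref{eq_det_rs} by $t_{k+1}=(1+\delta^k)\,t_k$ yields
$$w^{k+1}\leq w^k-\tilde\theta^k+\tilde\varepsilon^k \quad\text{for all }k\in\NN.$$
Because $t_{k+1}\geq 1$, we have $\tilde\varepsilon^k\leq\varepsilon^k$, so $\sum_k\tilde\varepsilon^k\leq\sum_k\varepsilon^k<\infty$. This is exactly inequality \eqref{eq_combette} with coefficient $\gamma=1$, so Lemma \ref{lemma_comb} applies directly: $(w^k)_{k\in\NN}$ is bounded and convergent, and $\sum_k\tilde\theta^k<\infty$.

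It then remains to transport these conclusions back to the original sequences. Summability transfers since $\theta^k=t_{k+1}\tilde\theta^k\leq t_\infty\tilde\theta^k$, giving $\sum_k\theta^k\leq t_\infty\sum_k\tilde\theta^k<\infty$; boundedness follows from $v^k=t_k w^k\leq t_\infty w^k$; and because $t_k\to t_\infty>0$ while $(w^k)$ converges, the product $v^k=t_k w^k$ converges to $t_\infty\lim_k w^k\geq 0$, a nonnegative limit. The main obstacle I anticipate is the handling of the infinite product: one must verify carefully that $\sum_k\delta^k<\infty$ forces $t_\infty$ to be both finite and strictly positive, so that dividing by $t_k$ is legitimate and the limit is not killed in the rescaling, and one must check that the change of scale keeps all auxiliary sequences nonnegative and preserves the summability of the noise term. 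Everything else is routine telescoping and bookkeeping.

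As an alternative that avoids invoking Lemma \ref{lemma_comb}, I could argue directly on $(w^k)$ by considering the tail-corrected sequence $a_k=w^k+\sum_{j\geq k}\tilde\varepsilon^j$, which satisfies $a_{k+1}\leq a_k-\tilde\theta^k\leq a_k$ and $a_k\geq 0$; being nonincreasing and bounded below it converges, and since $\sum_{j\geq k}\tilde\varepsilon^j\to 0$ the sequence $w^k$ converges to the same limit, while telescoping $\tilde\theta^k\leq a_k-a_{k+1}$ gives $\sum_k\tilde\theta^k<\infty$. This renders the argument self-contained, but for brevity I would prefer the reduction to Lemma \ref{lemma_comb}.
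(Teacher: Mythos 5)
Your proposal is correct and follows essentially the same route as the paper's own proof: rescaling by the partial products $\prod_j(1+\delta^j)$ to reduce \eqref{eq_det_rs} to the inequality of Lemma \ref{lemma_comb} with $\gamma=1$, then transporting boundedness, convergence, and summability of $(\theta^k)_{k\in\NN}$ back through the (finite, positive) limit of the products. Your write-up is in fact slightly more careful than the paper's, e.g.\ in justifying via $\log(1+\delta^j)\leq\delta^j$ that the product converges to a finite limit, so no changes are needed.
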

\begin{center}
\begin{figure*}[t]
\centering
\includegraphics[scale=.3]{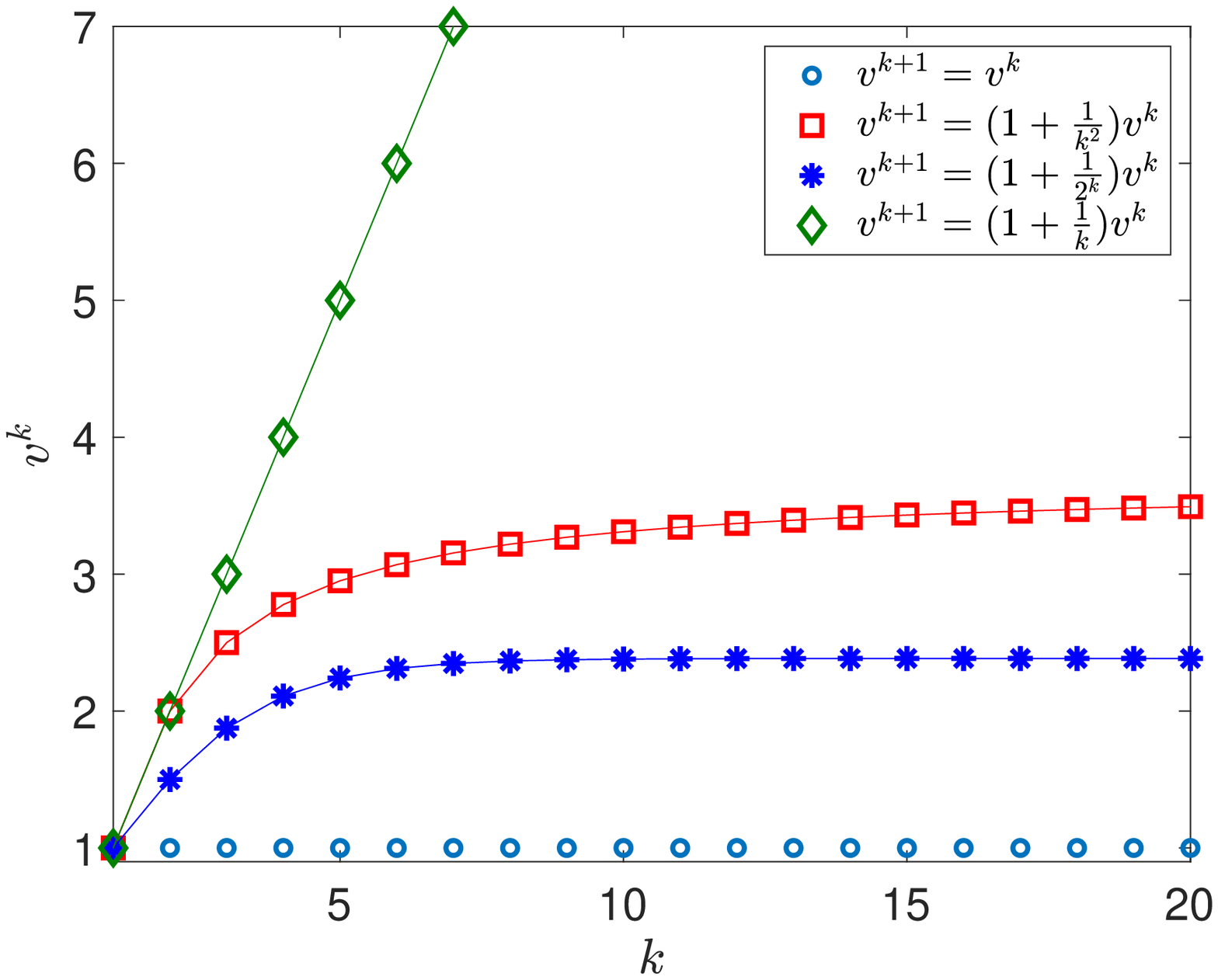}
\includegraphics[scale=.3]{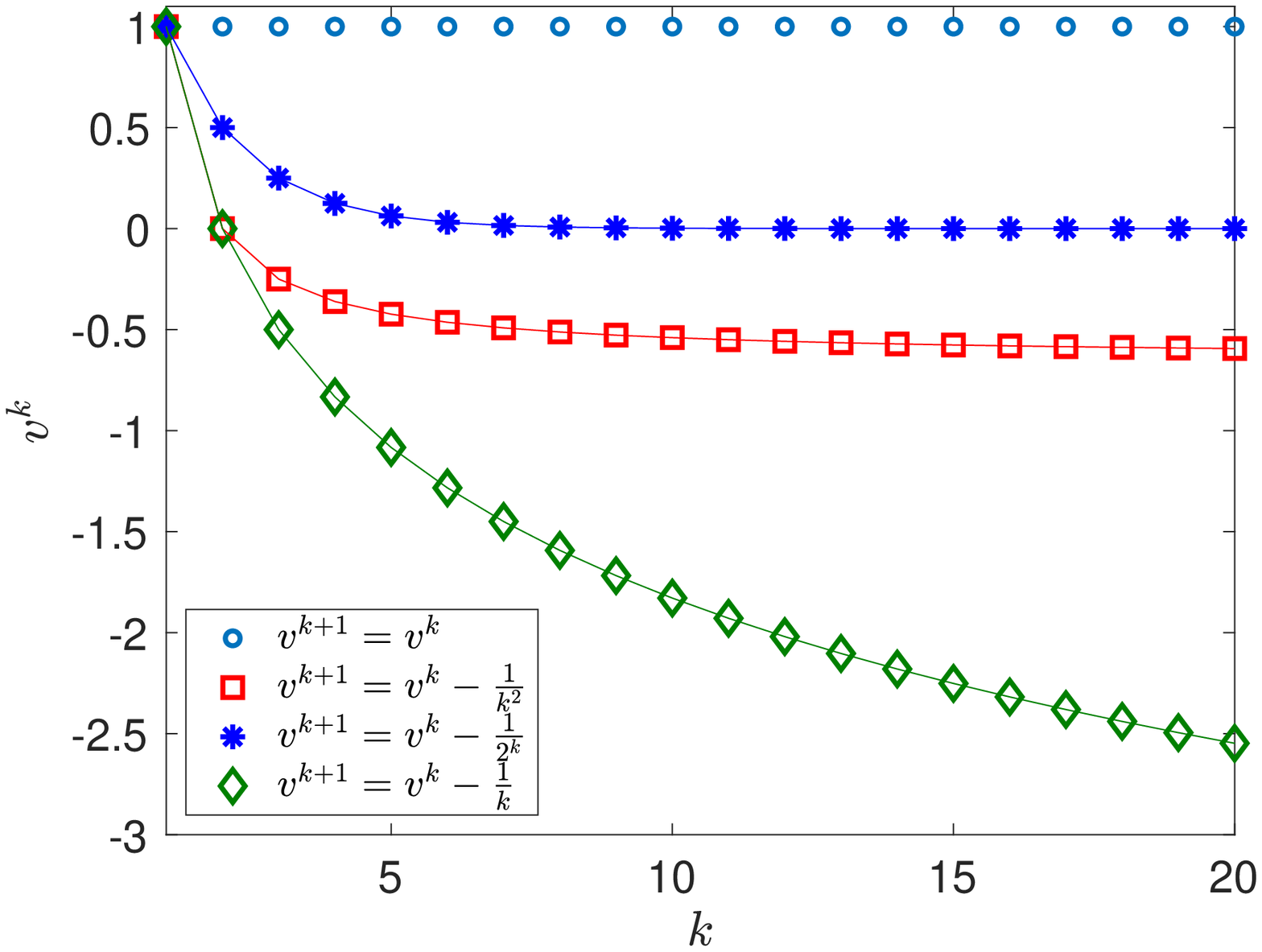}
\includegraphics[scale=.3]{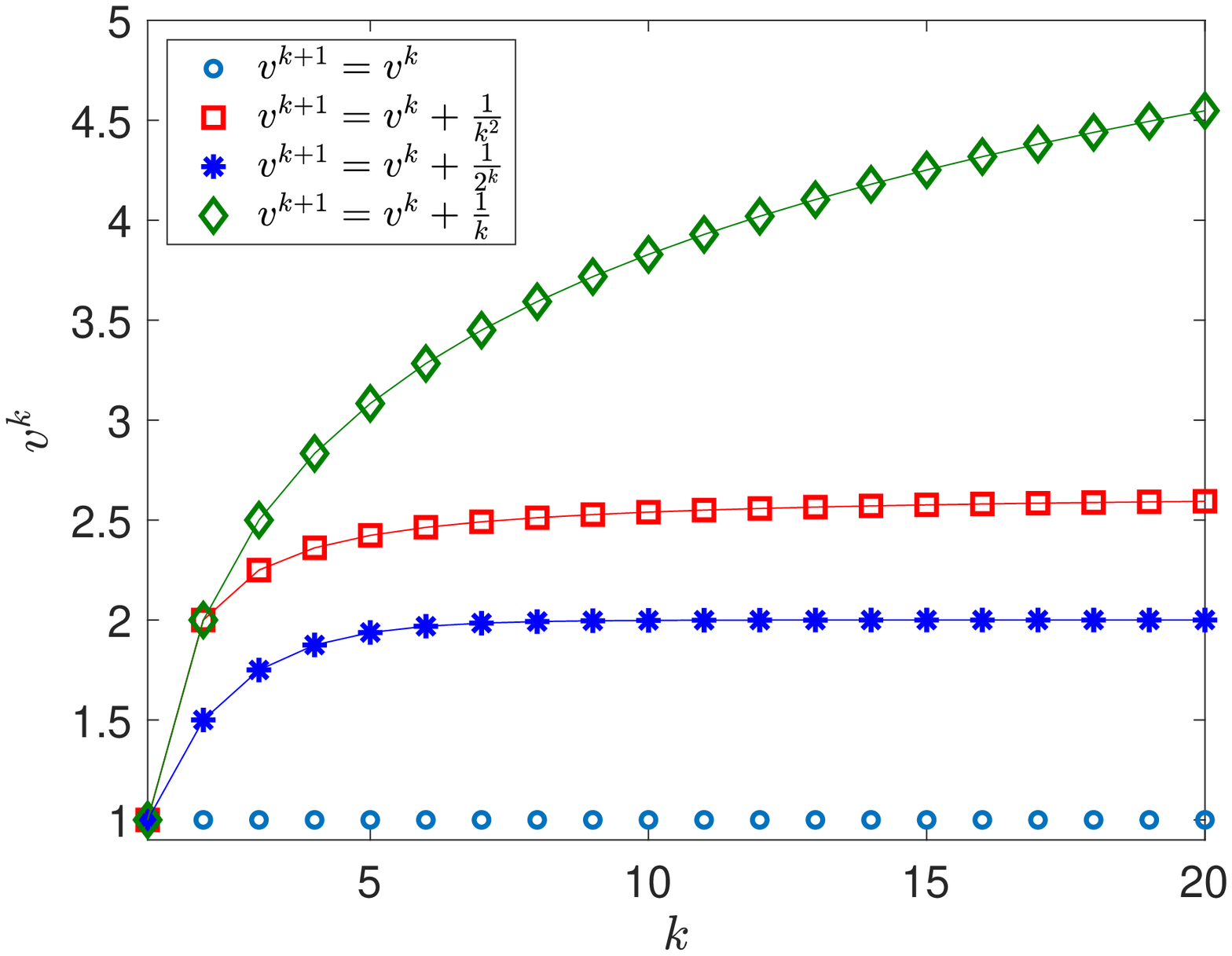}
\includegraphics[scale=.3]{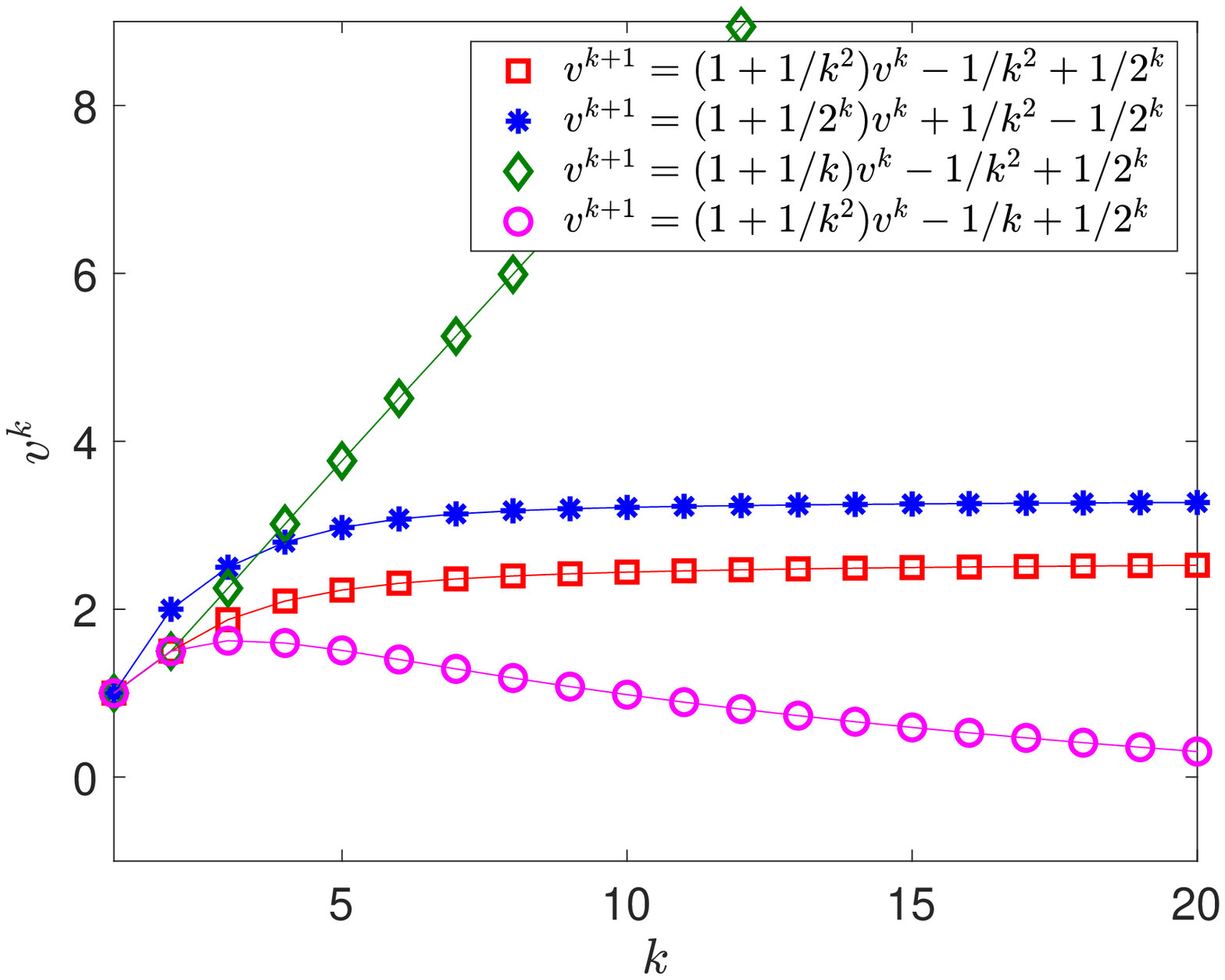}
\caption{Influence of the auxiliary sequences of Lemma \ref{lemma_det_rs} in the behavior of the convergent sequence $(v^k)_{k\in\NN}$. If the sequences are non-summable (green lines in the first three plots, green and magenta in the last), then convergence does not necessarily hold.}\label{fig_lemma}
\end{figure*}
\end{center}
\begin{proof}
Define
$\beta^k=\prod_{i=1}^k(1+\delta^i)$ and note that
$\beta^k$ converges to some $\bar \beta$ since $(\delta^k)_{k\in\NN}$ is summable. Moreover, it holds that
$$1+\delta^k=\frac{\beta^k}{\beta^{k-1}}$$
and, for all $k \in \NN$
$$v^{k+1}\leq\frac{\beta^k}{\beta^{k-1}} v^k+\varepsilon^k-\theta^k.$$
Since $\beta^k> 0$ for all $k \in \NN$, we have
$$\frac{v^{k+1}}{\beta^k}\leq\frac{v^k}{\beta^{k-1}}+\frac{\varepsilon^k}{\beta^k}-\frac{\theta^k}{\beta^k}.$$
Now, let
$$\tilde v^k=\frac{v^k}{\beta^{k-1}},\;\;\tilde \varepsilon^k=\frac{\varepsilon^k}{\beta^k},\;\;\tilde \theta^k=\frac{\theta^k}{\beta^k}$$
and rewrite the inequality as
$$\tilde v^{k+1}\leq \tilde v^k+\tilde \varepsilon^k-\tilde \theta^k.$$
Note that $\tilde v^k$, $\tilde \varepsilon^k$ and $\tilde \theta^k$ are nonnegative and $\sum_{k=1}^\infty\tilde \varepsilon^k \leq\sum_{k=1}^\infty\varepsilon^k<\infty$, hence we can apply Lemma \ref{lemma_comb}. It follows that $\tilde v^k$ is bounded by $\bar\alpha$ and convergent to some $\bar v$ and that $\sum_{k=1}^\infty\tilde \theta^k<\infty$. Therefore $v^k$ is convergent, i.e.,
$$v^k=\frac{v^k}{\beta^{k-1}}\beta^{k-1}=v^k_1\beta_{k_1}\to\bar\alpha\bar\beta \text{ as } k\to\infty,$$
and bounded 
$$\frac{v^k}{\beta^{k-1}}<A \Rightarrow v^k<A\beta^{k-1}\to A\beta_\infty  \text{ as } k\to\infty.$$
Since 
$\theta^k=\tilde\theta^k\beta^k<\tilde \theta^k\beta^\infty$ for all $k\in\NN$,
we conclude that $(\theta^k)_{k\in\NN}$ is summable.  
\end{proof}

We note that there is a slight difference between Lemma \ref{lemma_comb} and Lemma \ref{lemma_det_rs}. Specifically, in the former, the sequence converges if the coefficient $\gamma$ is in the interval $(0,1]$ while in Lemma \ref{lemma_det_rs} the coefficient can be taken larger than 1 and time varying.

The following results are immediate consequences of Lemmas \ref{lemma_comb} and \ref{lemma_det_rs}. Let us start with removing the noise term.

\begin{cor}[Lemma 2.8, \cite{malitsky2015}]\label{cor_mali1} 
Let $(v^k)_{k\in\NN}$ and $(\theta^k)_{k\in\NN}$ be nonnegative sequences such that 
$$v^{k+1}\leq v^k-\theta^k, \text{ for all } k\in\NN.$$
Then, $(v^k)_{k\in\NN}$ is bounded and $\lim\limits_{k\to\infty}\theta^k=0$.
\end{cor}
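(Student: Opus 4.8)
The plan is to recognize this statement as the degenerate case of Lemma \ref{lemma_comb} obtained by setting $\gamma=1$ and $\varepsilon^k\equiv0$. With these choices the governing inequality $v^{k+1}\leq\gamma v^k-\theta^k+\varepsilon^k$ collapses to the hypothesis $v^{k+1}\leq v^k-\theta^k$, the constant $\gamma=1$ lies in the admissible range $(0,1]$, and the zero noise sequence is trivially summable, so all requirements of the lemma are met. Invoking Lemma \ref{lemma_comb}(i) then yields boundedness of $(v^k)_{k\in\NN}$ directly, and Lemma \ref{lemma_comb}(iii) gives $\sum_{k=0}^\infty\theta^k<\infty$. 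Since the general term of a convergent series necessarily tends to zero, I would conclude $\lim_{k\to\infty}\theta^k=0$, which is exactly the claim.

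Should a self-contained argument be preferred over citing the earlier lemma, I would proceed by a telescoping estimate. First, because $\theta^k\geq0$, the inequality forces $v^{k+1}\leq v^k$, so $(v^k)_{k\in\NN}$ is non-increasing; being bounded below by $0$, it is bounded and converges to some $\bar v\geq0$. Next, rearranging to $\theta^k\leq v^k-v^{k+1}$ and summing over $k=0,\dots,N$ produces $\sum_{k=0}^N\theta^k\leq v^0-v^{N+1}\leq v^0$. Letting $N\to\infty$ shows $\sum_{k=0}^\infty\theta^k\leq v^0<\infty$, and the vanishing of $\theta^k$ follows as the terms of a convergent series.

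The honest assessment is that there is no genuine obstacle here: the result is a true corollary, and the only point worth verifying is that the chosen specialization ($\gamma=1$, $\varepsilon^k\equiv0$) is legitimate within Lemma \ref{lemma_comb}, which it is. The single substantive idea—shared by both routes—is the telescoping that converts the per-step decrease inequality into summability of $(\theta^k)_{k\in\NN}$; everything else is bookkeeping on nonnegativity and the elementary fact that summable sequences vanish.
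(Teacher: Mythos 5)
Your proposal is correct and takes essentially the same route as the paper, which also obtains the corollary by specializing Lemma \ref{lemma_comb} (and Lemma \ref{lemma_det_rs}) with the error and coefficient sequences set to their trivial values. Your explicit note that summability of $(\theta^k)_{k\in\NN}$ forces $\theta^k\to0$, together with the self-contained telescoping argument, only fills in bookkeeping that the paper leaves implicit.
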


\begin{proof}
It follows from Lemmas \ref{lemma_comb} and \ref{lemma_det_rs} by taking $\delta^k$ and $\varepsilon^k$ equal to 0. 
 \end{proof}

Similarly, this result from \cite{polyak1987} can be obtained as a consequence of Lemma \ref{lemma_det_rs} by removing the negative term.
\begin{cor}[Lemma 2.2.2, \cite{polyak1987}]\label{cor_polyak}
Let $(v^k)_{k\in\NN}$, $(\varepsilon^k)_{k\in\NN}$ and $(\delta^k)_{k\in\NN}$ be nonnegative sequences such that
$$v^{k+1} \leq\left(1+\delta^k\right) v^k+\varepsilon^k, \text{ for all } k\in\NN$$
and $\sum_{k=0}^{\infty} \delta^k<\infty$, $\sum_{k=0}^{\infty} \varepsilon^k<\infty$. Then $v^k$ converges to some $\bar v\geq 0$.
\end{cor}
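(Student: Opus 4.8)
The plan is to obtain this statement as a direct specialization of Lemma \ref{lemma_det_rs}, exactly as the remark preceding the corollary suggests (``by removing the negative term''). The hypothesis here, $v^{k+1} \le (1+\delta^k)v^k + \varepsilon^k$, is precisely the inequality \eqref{eq_det_rs} with the negative term switched off, i.e.\ with $\theta^k = 0$ for every $k$. So the first and essentially only step is to set $\theta^k := 0$ and check that the premises of Lemma \ref{lemma_det_rs} are all in force: the constant-zero sequence $(\theta^k)_{k\in\NN}$ is trivially nonnegative, so together with the assumed nonnegativity of $(v^k)$, $(\varepsilon^k)$, $(\delta^k)$ the four-sequence nonnegativity requirement is met; and the two summability conditions $\sum_{k=0}^\infty \varepsilon^k < \infty$ and $\sum_{k=0}^\infty \delta^k < \infty$ are assumed here verbatim.

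With every premise verified, Lemma \ref{lemma_det_rs} applies and delivers at once that $(v^k)_{k\in\NN}$ is bounded and converges to a nonnegative limit $\bar v$, which is exactly the claim. The auxiliary conclusion $\sum_{k=0}^\infty \theta^k < \infty$ of the lemma is vacuous in this specialization, so nothing further needs to be extracted. No nonnegativity of $\bar v$ has to be argued separately, since the lemma already states convergence to a nonnegative variable.

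Because this is a pure specialization, I do not anticipate any real mathematical obstacle; the only ``work'' is the bookkeeping of matching hypotheses, in particular noticing that dropping the negative term is legitimate precisely because $\theta^k \equiv 0$ still satisfies the nonnegativity assumption of the parent lemma. If one instead wished to make the argument self-contained, the natural alternative would be to reproduce the $\beta^k=\prod_{i=1}^k(1+\delta^i)$ rescaling from the proof of Lemma \ref{lemma_det_rs} and then invoke Lemma \ref{lemma_comb} with $\gamma=1$ and $\theta^k=0$; but invoking Lemma \ref{lemma_det_rs} directly is the cleaner route and is what I would present.
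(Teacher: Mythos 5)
Your proposal is correct and matches the paper's own proof exactly: the paper derives Corollary \ref{cor_polyak} by invoking Lemma \ref{lemma_det_rs} with $\theta^k=0$ for all $k\in\NN$, which is precisely your specialization. The hypothesis-checking you carry out (nonnegativity of the zero sequence, the two summability conditions) is the right bookkeeping and nothing further is needed.
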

\begin{proof}
It follows from Lemma \ref{lemma_det_rs} by taking $\theta^k=0$ for all $k\in\NN$. See \cite{polyak1987} for a different proof.
 \end{proof}

%
%
%

Concerning the coefficient sequence, other options can be considered.
In the next result, the coefficient should be strictly smaller than 1, compared to Lemma \ref{lemma_det_rs}, but need not be constant as in Lemma \ref{lemma_comb}.
\begin{lem}[Lemma 2.2.3, \cite{polyak1987}]\label{lemma_polyak1}
Let $(v^k)_{k\in\NN}$ be a sequence of real numbers such that
$$v^{k+1} \leq \gamma_{k} v^{k}+\varepsilon^k,\text{ for all } k\in\NN,$$
where $(\varepsilon^k)_{k\in\NN}$ and $(\gamma_k)_{k\in\NN}$ are nonnegative sequences such that
\begin{itemize}
\item[1.] $0 \leq \gamma_{k}<1$ 
\item[2.] $\sum_{k=0}^{\infty}\left(1-\gamma_{k}\right)=\infty,$ 
\item[3.] $\lim_{k\to\infty}\frac{\varepsilon^k}{1-\gamma_{k}} = 0$.
\end{itemize}
Then, $lim_{k\to\infty} v^k=\bar v\leq 0$. Moreover, if $v^k>0$ then $\lim_{k\to\infty}v^k=0$.
\end{lem}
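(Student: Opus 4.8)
The plan is to recast the hypothesis as a convex-combination recursion and then show that $\limsup_{k\to\infty} v^k \le 0$, from which both the stated bound and the ``moreover'' clause follow. Set $a_k := 1-\gamma_k$, so that hypothesis 1 gives $a_k\in(0,1]$, hypothesis 2 reads $\sum_{k=0}^\infty a_k=\infty$, and the quantity $\eta^k := \varepsilon^k/(1-\gamma_k)$ satisfies $\eta^k\to 0$ by hypothesis 3. With this notation the assumed inequality becomes
$$v^{k+1}\le \gamma_k v^k+\varepsilon^k=(1-a_k)\,v^k+a_k\,\eta^k,\qquad\text{for all }k\in\NN.$$

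Next I would fix an arbitrary $\rho>0$ and pick $K\in\NN$ with $\eta^k\le\rho$ for all $k\ge K$. Subtracting $\rho$ and using $a_k\eta^k\le a_k\rho$ yields, for every $k\ge K$,
$$v^{k+1}-\rho\le(1-a_k)\,(v^k-\rho)=\gamma_k\,(v^k-\rho).$$
Writing $w^k:=v^k-\rho$ and passing to the positive part $w^k_+:=\max\{w^k,0\}$, a short case check (distinguishing $w^k\ge0$ from $w^k<0$ and using $\gamma_k\ge0$) shows that the same contraction survives, namely $w^{k+1}_+\le\gamma_k\,w^k_+$ for all $k\ge K$. This positive-part reduction sidesteps the sign indefiniteness of $v^k$ and turns the estimate into one for a genuinely nonnegative sequence.

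Iterating gives $0\le w^k_+\le w^K_+\prod_{i=K}^{k-1}\gamma_i$ for $k>K$, and the elementary bound $\prod_{i=K}^{k-1}\gamma_i=\prod_{i=K}^{k-1}(1-a_i)\le\exp\!\big(-\textstyle\sum_{i=K}^{k-1}a_i\big)$ together with $\sum_i a_i=\infty$ forces $\prod_{i=K}^{k-1}\gamma_i\to0$. Hence $w^k_+\to0$, i.e.\ $\limsup_{k\to\infty}v^k\le\rho$; since $\rho>0$ was arbitrary, $\limsup_{k\to\infty}v^k\le0$, which is the content of the first assertion. For the ``moreover'' part, if $v^k>0$ for all $k$ then $\liminf_{k\to\infty}v^k\ge0$, and combining with $\limsup_{k\to\infty}v^k\le0$ gives $\lim_{k\to\infty}v^k=0$.

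A remark on the difficulty: the computation is routine, and the only genuine care is in the positive-part reduction, where $v^k$ is merely real so the naive iteration of $v^{k+1}-\rho\le\gamma_k(v^k-\rho)$ is not sign-consistent; the map to $w^k_+$ (or, equivalently, a case split on the first index where $v^k-\rho$ turns nonpositive) is what legitimizes the argument. I would also flag that the recursion is a one-sided (upper) estimate, so it controls only $\limsup_k v^k$; the symbol $\lim$ in the statement should be read as $\limsup$ unless $(v^k)_{k\in\NN}$ is bounded below (e.g.\ nonnegative, as in the applications where $v^k=\norm{x^k-\bar x}$), in which case the upper limit is an honest limit.
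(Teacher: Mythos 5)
Your proof is correct and takes essentially the same route as the paper's: both fix an arbitrary tolerance, use hypothesis 3 to get a tail index beyond which the noise is dominated by $(1-\gamma_k)$ times the tolerance, iterate the resulting contraction, and invoke $\sum_{k}(1-\gamma_k)=\infty \Rightarrow \prod_k\gamma_k= 0$ to conclude that $\limsup_{k\to\infty} v^k$ is at most the tolerance, hence at most $0$. Your shift-and-positive-part bookkeeping is a cosmetic variant of the paper's convex-combination iteration $v^{k+1}\leq \prod_{i=k_0}^{k}\gamma_i\, v^{k_0}+\varepsilon\bigl(1-\prod_{i=k_0}^{k}\gamma_i\bigr)$, and your closing remark that the argument genuinely controls only $\limsup_k v^k$ (not an honest limit, absent extra information such as nonnegativity) applies equally to the paper's own proof, which silently passes from $\limsup v^k\leq\varepsilon$ to the stated conclusion.
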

\begin{proof}
By definition, given $\varepsilon>0$ there exists $k_0\in\NN$ such that
$$\frac{\varepsilon^k}{(1-\gamma_k)}\leq \varepsilon \quad \text{ for all } k\geq k_0.$$
Then
$$\begin{aligned}
v^{k+1}&\leq \gamma_kv^k+\varepsilon^k\\
&\leq \gamma_kv^k+(1-\gamma_k)\varepsilon\\
&\leq \gamma_k\gamma_{k-1}v^{k-1}+\left[\gamma_k(1-\gamma_{k-1})+(1-\gamma_k)\right]\varepsilon\\
&\leq\prod_{i=k_0}^k\gamma_iv^{k_0}+\varepsilon\left(1-\prod_{j=k_0}^k\gamma_i\right).
\end{aligned}$$
Note that  $\sum_{k=0}^{\infty}\left(1-\gamma_{k}\right)=\infty$ implies that $\prod_{k=0}^\infty \gamma_k=0$ therefore taking the $\limsup$ as $k\to\infty$ leads to $\limsup v^k\leq\varepsilon$, which proves the claim.  
 \end{proof}

\begin{rem}\label{remark_kannan}
In \cite[Lemma 2.1]{kannan2012}, the result in Lemma \ref{lemma_polyak1} is proven also for a different condition than 1., i.e., 
\begin{itemize}
\item[1.*] there exists $\bar k\in\NN$ such that $0<\gamma_k<1$ for all $k\geq\bar k$ and $q_k<\infty$ for all $k\leq\bar k$.
\end{itemize}
The proof follows considering the shifted process starting from $\bar k$ and using Lemma \ref{lemma_polyak1} on the resulting sequence.
%
 \end{rem}

Many of the previous results have the coefficient $(1+\delta^k)$, therefore, we now consider what happens if we change it to $(1-\delta^k)$ (see also Figure \ref{fig_neg} for a graphical interpretation).
This might be a special case of Lemma \ref{lemma_comb} but, in some cases, it allows to study convergence to zero (see Remark \ref{remark_tozero}), which relates to the standard Lyapunov based approach for stability analysis. In fact, we have already had a glimpse of the effect of a coefficient smaller than 1 in Lemma \ref{lemma_comb}(iv) and Lemma \ref{lemma_polyak1} and its connection with Lyapunov analysis (Remark \ref{remark_Fejer}).

The first result of this type extends the previous lemmas to this case. This result is new as we provide a proof that does not follow from previous results.

\begin{lem}\label{lemma_neg_rs}
Let $(v^k)_{k\in\NN}$, $(\theta^k)_{k\in\NN}$, $(\varepsilon^k)_{k\in\NN}$ and $(\delta^k)_{k\in\NN}$ be nonnegative sequences such that $\sum_{k=0}^\infty\varepsilon^k<\infty$, $\sum_{k=0}^\infty\delta^k<\infty$, $\delta^k\in[0,1)$ for all $k\in\NN$ and
$$v^{k+1}\leq (1-\delta^k)v^k+\varepsilon^k-\theta^k \text{ for all }k \in \NN.$$
Then, $(v^k)_{k \in \NN}$ is bounded and converges to some $\bar v\geq 0$ and $\sum_{k=0}^\infty\theta^k<\infty$.
\end{lem}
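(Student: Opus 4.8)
The plan is to exploit the fact that, because every sequence is nonnegative and $\delta^k\geq0$, the coefficient $(1-\delta^k)$ is at most $1$; hence $(1-\delta^k)v^k\leq v^k$ and the recursion collapses to the simpler inequality $v^{k+1}\leq v^k+\varepsilon^k-\theta^k$ for all $k\in\NN$. It is worth noting that the summability of $(\delta^k)_{k\in\NN}$ plays no role here: unlike the expansive case $(1+\delta^k)$ of Lemma \ref{lemma_det_rs}, a contractive coefficient can only help, so only $\delta^k\geq0$ is actually used. From here the strategy is to absorb the summable noise $(\varepsilon^k)_{k\in\NN}$ into an auxiliary sequence that is genuinely non-increasing, thereby reducing the whole statement to plain monotone convergence.

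Concretely, I would introduce the tail sum $E^k=\sum_{i=k}^\infty\varepsilon^i$, which is finite since $\sum_{k=0}^\infty\varepsilon^k<\infty$, satisfies $E^k\to0$, and obeys $\varepsilon^k+E^{k+1}=E^k$. Setting $b^k=v^k+E^k\geq0$, the simplified inequality yields $b^{k+1}=v^{k+1}+E^{k+1}\leq v^k+\varepsilon^k-\theta^k+E^{k+1}=v^k+E^k-\theta^k=b^k-\theta^k$. Since $\theta^k\geq0$, the sequence $(b^k)_{k\in\NN}$ is non-increasing and bounded below by $0$, hence it converges to some $\bar b\geq0$.

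The three conclusions then follow at once. Boundedness: $v^k\leq b^k\leq b^0=v^0+E^0<\infty$. Convergence: since $E^k\to0$, we obtain $v^k=b^k-E^k\to\bar b=:\bar v\geq0$. Summability of $(\theta^k)_{k\in\NN}$: telescoping $b^{k+1}\leq b^k-\theta^k$ over $k=0,\dots,N$ gives $\sum_{k=0}^N\theta^k\leq b^0-b^{N+1}\leq b^0<\infty$, and letting $N\to\infty$ yields $\sum_{k=0}^\infty\theta^k<\infty$.

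I expect essentially no hard obstacle: the only real idea is the correct bookkeeping of the noise through the tail sum $E^k$, which turns the noisy sub-recursion into a monotone one, after which everything is a monotone-convergence argument. The one point worth flagging is that the claim uses none of the hypothesis $\sum_{k=0}^\infty\delta^k<\infty$, so that assumption is in fact redundant for this lemma; I would either remark on this explicitly or retain it merely for uniformity with the $(1+\delta^k)$ results. As a one-line alternative, one could note $v^{k+1}\leq v^k+\varepsilon^k-\theta^k$ and invoke Lemma \ref{lemma_comb} with $\gamma=1$, but the self-contained argument above is cleaner and keeps the proof independent of the earlier results.
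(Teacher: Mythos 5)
Your proof is correct, and it reaches the same conclusions as the paper by a genuinely different (and cleaner) route. Both arguments start from the same collapse $(1-\delta^k)v^k\leq v^k$, but then diverge: the paper proves boundedness via the product--sum expansion $0\leq v^{k+1}\leq\prod_{i=0}^{k}(1-\delta^i)v^0+\sum_{i=0}^{k}\prod_{j}(1-\delta^j)\varepsilon^i\leq v^0+\sum_{i}\varepsilon^i$, then proves convergence by a subsequence argument (take $v^{k_n}\to\liminf_k v^k$ and use the summable tail of $(\varepsilon^k)_{k\in\NN}$ to show $\limsup_k v^k\leq\liminf_k v^k+\eta$ for arbitrary $\eta>0$), and finally telescopes separately to get $\sum_k\theta^k<\infty$. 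Your tail-sum correction $b^k=v^k+E^k$ replaces all of this bookkeeping with monotone convergence of a single nonnegative, non-increasing auxiliary sequence, from which the three conclusions drop out of the one inequality $b^{k+1}\leq b^k-\theta^k$; this is shorter and less error-prone (the paper's final step actually contains a slip, writing $v^0$ where $v^k$ is meant in the bound $\theta^k\leq(1-\delta^k)v^k-v^{k+1}+\varepsilon^k$, and the subsequent ``telescopic sum'' is not stated correctly). Your two side remarks are also accurate: the hypothesis $\sum_{k=0}^\infty\delta^k<\infty$ is never used --- the paper's own proof does not use it either, though this is much less visible there --- and after the collapse the lemma is indeed an instance of Lemma \ref{lemma_comb} with $\gamma=1$, which is worth flagging given that the paper presents the result as one whose proof ``does not follow from previous results'' (the reduction is not circular, since Lemma \ref{lemma_comb} is quoted from the literature with an independent proof).
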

\begin{proof}
To prove that $(v^k)_{k\in\NN}$ is bounded, let $\varepsilon=\sum_{k=0}^\infty\varepsilon^k$. Then,
$$0\leq v^{k+1}\leq\prod_{i=0}^k(1-\delta^i)v^0+\sum_{i=0}^k\prod_{j=0}^{k-i}(1-\delta_j)\varepsilon_i\leq v^0+\varepsilon.$$
Therefore $v^k\in[0,v^0+\varepsilon]$ and the first claim is proven. Now we prove convergence. Let $\bar v= \liminf v^k\in[0,v^0+\varepsilon]$. Then there exists a subsequence $v^{k_n}$ such that $\lim v^{k_n}=\bar v$. Then, for every $\eta>0$ there exists $n_0\in\NN$ such that $v^{k_{n_0}}\leq\bar v+\eta/2$. Since $\sum_{k=0}^\infty\varepsilon^k<\infty$, there exists $n_1$ such that $\sum_{m>n_1}\varepsilon^m\leq\eta/2.$ Set $n=\max\{n_0,n_1\}$, then, iterating, for every $k\geq k_n$
$$v^k\leq v^{k_n}+\sum_{m\geq n}\varepsilon^m\leq\frac{\eta}{2}+\bar v+\frac{\eta}{2}=\bar v+\eta.$$
Hence, $\limsup v^k\leq\liminf v^k+\eta$ and, since $\eta$ can be arbitrarily small, $(v^k)_{k\in\NN}$ converges to $\bar v$.
Lastly, we show that $(\theta^k)_{k\in\NN}$ is summable. Since
$$\theta^k\leq(1-\delta^k)v^0-v^{k+1}+\varepsilon^k,$$
we can do a telescopic sum to obtain
$$\sum_{k=1}^\infty\theta^k\leq(1-\delta_0)v^0-v^{k+1}+\sum_{k=1}^\infty\varepsilon^k\leq v^0+\varepsilon.  $$
\end{proof}
\begin{center}
\begin{figure*}[t]
\centering
\includegraphics[scale=.3]{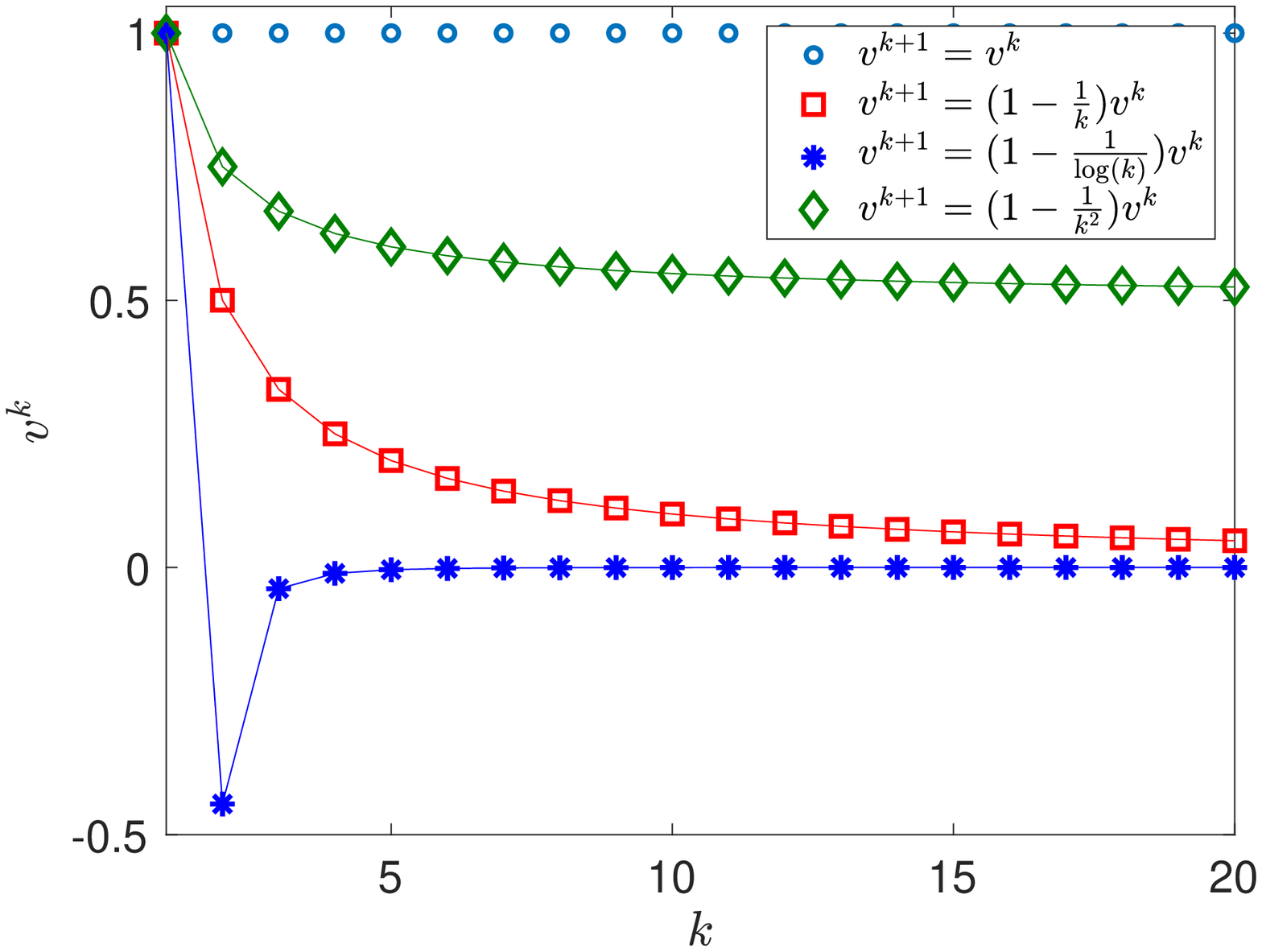}
\includegraphics[scale=.3]{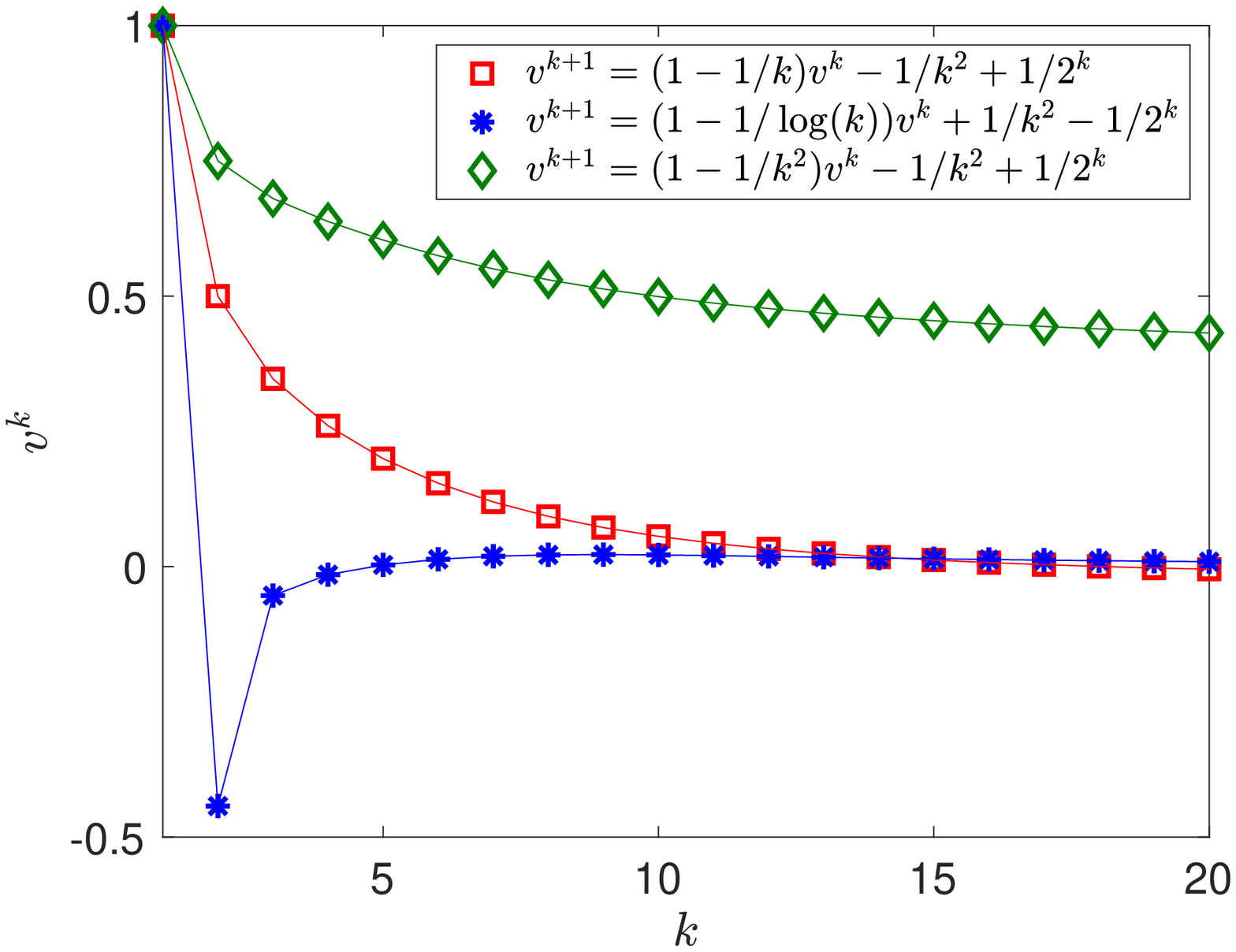}
\caption{Influence of the the coefficient $(1-\delta^k)$ in the behavior of the convergence of a sequence $v^{k+1}=(1-\delta^k)v^k-\theta^k+\varepsilon^k$. We note that if the sequence $\delta^k$ is summable (green lines in the plots), the convergence to zero is not guaranteed.}\label{fig_neg}
\end{figure*}
\end{center}

The following lemmas are taken from various works \cite{qin2008,xu2002,xu2003,xu1998} and they are quite similar to each other. We here establish the relations and difference between them. 
Let us remark that in the following results, the sequence $\delta^k$ in the coefficient is not summable, i.e., from now on $\sum_{k=1}^\infty\delta^k=\infty$. The advantage of this choice is that convergence to zero can be obtained, as shown in Figure \ref{fig_neg}.

The first result considers real (not only positive) noise sequences $\varepsilon^k$ and it also provides two alternative conditions on the auxiliary sequences.
\begin{lem}[Lemma 2.1, \cite{xu2003}]\label{lemma_xu03}
Let $(v^k)_{k\in\NN}$ be a sequence of nonnegative real numbers such that
$$v^{k+1} \leq\left(1-\delta^k\right) v^k+\delta^k \beta^k,\text{ for all } k\in\NN$$
where $(\delta^k)_{k\in\NN}$ and $(\beta^k)_{k\in\NN}$ are sequences of real numbers such that:
\begin{itemize}
\item[1.] $\delta^k\in [0,1]$ and $\sum_{k=0}^{\infty} \delta_{k}=\infty$, or equivalently, $\prod_{k=0}^{\infty}\left(1-\delta^k\right)=0$,
\item[2a.] $\lim \sup _{k \to \infty} \beta^k \leq 0$,
\item[2b.] $\sum_{k=0}^\infty \delta^k \beta^k<\infty$.
\end{itemize}
Then, $\lim_{k\to\infty} v^k=0$. 
\end{lem}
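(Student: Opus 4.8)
The plan is to treat the two alternative hypotheses \textbf{2a} and \textbf{2b} separately, since they call for genuinely different mechanisms, though both exploit that the factor $(1-\delta^k)$ together with the divergence $\sum_k\delta^k=\infty$ acts as a persistent contraction dragging $v^k$ toward the ``floor'' dictated by the noise $\delta^k\beta^k$. Throughout I would use the elementary estimate $\prod_{i=N}^{k}(1-\delta^i)\le\exp\!\big(-\sum_{i=N}^{k}\delta^i\big)\to0$ as $k\to\infty$, which underlies the equivalence recorded in condition 1, and I would repeatedly invoke $v^k\ge0$.

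\emph{Case 2a.} First I fix $\varepsilon>0$. By $\limsup_{k\to\infty}\beta^k\le0$ there is $N\in\NN$ with $\beta^k\le\varepsilon$ for all $k\ge N$, so that $v^{k+1}\le(1-\delta^k)v^k+\delta^k\varepsilon$. Subtracting $\varepsilon$ and factoring gives the key one-step bound
$$v^{k+1}-\varepsilon\le(1-\delta^k)(v^k-\varepsilon),\qquad k\ge N.$$
Iterating this from $N$ to $k$ yields $v^{k+1}-\varepsilon\le\big(\prod_{i=N}^{k}(1-\delta^i)\big)(v^N-\varepsilon)$. Since the product tends to $0$, the right-hand side has a nonpositive $\limsup$ (it is $\le0$ for every $k$ when $v^N\le\varepsilon$, and tends to $0$ otherwise), whence $\limsup_{k\to\infty}v^k\le\varepsilon$. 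As $\varepsilon>0$ is arbitrary and $v^k\ge0$, this forces $\lim_{k\to\infty}v^k=0$.

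\emph{Case 2b.} Now set $s^k=\delta^k\beta^k$ and read the hypothesis as $\sum_k|s^k|<\infty$. Because $\delta^k\ge0$ and $v^k\ge0$, the recursion gives $v^{k+1}\le v^k+|s^k|$, which is \eqref{eq_combette} with $\gamma=1$, $\theta^k=0$ and summable nonnegative noise $|s^k|$; hence Lemma \ref{lemma_comb}(ii) shows $(v^k)_{k\in\NN}$ converges to some $\bar v\ge0$. It remains to identify $\bar v=0$. For this I rearrange the recursion as $\delta^k v^k\le v^k-v^{k+1}+|s^k|$ and telescope from $0$ to $N$, obtaining $\sum_{k=0}^{N}\delta^k v^k\le v^0-v^{N+1}+\sum_{k=0}^{N}|s^k|\le v^0+\sum_{k=0}^{\infty}|s^k|<\infty$, so $\sum_{k=0}^\infty\delta^k v^k<\infty$. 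If $\bar v>0$, then $v^k\ge\bar v/2$ for large $k$, forcing $\sum_k\delta^k v^k\ge(\bar v/2)\sum_k\delta^k=\infty$, a contradiction; therefore $\bar v=0$ and $\lim_{k\to\infty}v^k=0$.

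The contraction estimate in Case 2a is routine; the delicate point is Case 2b, where $\beta^k$ (hence $s^k$) need not be sign-definite, so one cannot simply iterate a contraction toward a nonpositive target. The resolution is to decouple the two features of the hypotheses: absolute summability of the noise alone yields \emph{convergence} of $v^k$ via Lemma \ref{lemma_comb}, while the \emph{divergence} $\sum_k\delta^k=\infty$, combined with $\sum_k\delta^k v^k<\infty$, pins the limit at zero. I would also flag that the statement ``$\sum_k\delta^k\beta^k<\infty$'' must be understood as absolute summability (as in the cited source), which is precisely what legitimizes both the telescoping bound on $\sum_k\delta^k v^k$ and the application of Lemma \ref{lemma_comb}.
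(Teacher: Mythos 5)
Your proof is correct, and while your case 2a mirrors the paper, your case 2b takes a genuinely different route. For 2a, the paper simply invokes the argument of Lemma \ref{lemma_polyak1} with $\gamma_k=1-\delta^k$ and $\varepsilon^k=\delta^k\beta^k$, which is exactly your contraction iteration $v^{k+1}-\varepsilon\le(1-\delta^k)(v^k-\varepsilon)$ followed by letting the product $\prod_{i=N}^k(1-\delta^i)$ vanish. For 2b, the paper instead unrolls the recursion directly, obtaining $v^{k+1}\leq\prod_{i=m}^k(1-\delta^i)v^{m}+\sum_{i=m}^k\delta^i\beta^i$, and then takes $k\to\infty$ (killing the product term) followed by $m\to\infty$ (killing the tail sum). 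You decouple the two hypotheses differently: absolute summability of the noise alone gives convergence of $(v^k)_{k\in\NN}$ via Lemma \ref{lemma_comb}, and then the telescoped bound $\sum_{k}\delta^k v^k<\infty$ combined with $\sum_k\delta^k=\infty$ pins the limit at zero. Your route is slightly longer but buys a real advantage: the paper's displayed inequality silently discards the factors $\prod_{j=i+1}^k(1-\delta^j)$ multiplying $\delta^i\beta^i$, a step that is only valid when those terms are nonnegative, so the paper's proof also tacitly relies on reading 2b as absolute summability; your version makes that requirement explicit and never has to compare weighted and unweighted sums of sign-indefinite terms. Your closing remark that condition 2b must be understood as $\sum_k|\delta^k\beta^k|<\infty$ (as in the cited source) is a correct and worthwhile observation that applies equally to both proofs.
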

\begin{proof}
If $1.$ and $2a.$ hold, then the result can be proven with the same arguments as the proof of Lemma \ref{lemma_polyak1} by setting $\varepsilon^k=\delta^k\beta^k$. On the other hand, if $1.$ and $2b.$ hold, we have for all $k>m$
$$\begin{aligned}
v^{k+1}&\leq (1-\delta^k)v^k+\delta^k \beta^k\\
&\leq\prod_{i=m}^k(1-\delta^i)v^{m}+\sum_{i=m}^k\delta^i\beta^i.
\end{aligned}$$
Taking the limit for $k\to\infty$ and $m\to\infty$ we have $\limsup v^k\leq 0$.  
 \end{proof}
We note that if we set $\gamma_k=1-\delta^k$ and $\varepsilon^k=\delta^k\beta^k$, we obtain the same statement as Lemma \ref{lemma_polyak1}. Moreover, condition $2b.$ provides an alternative assumption, similar to most results in the literature. 

\begin{rem}\label{remark_tozero}
Convergence to zero is of particular interest in combination with a F\'ejer-like property. Specifically, if $v^k=\norm{x^k-\bar x}$ for some sequence $(x^k)_{k\in\NN}$ and $\bar x\in\mc X$, Lemma \ref{lemma_xu03} states that $\lim_{k\in\NN}\norm{x^k-\bar x}=0$, i.e., $\lim_{k\to\infty}x^k= \bar x$. We note, however, that the sequence is not quasi-F\'ejer monotone because of the term $\delta^k$ which cannot be 0 (contrary to Remark \ref{remark_Fejer}). We refer to Section \ref{sec_app_det} for more details. 
 \end{rem}

Assumption $2a.$ in Lemma \ref{lemma_xu03} is used in a previous paper by the same authors \cite[Lemma 2.5]{xu2002}. Since also Assumption $2b.$ in Lemma \ref{lemma_xu03} can be used to prove the following result, let us extend \cite[Lemma 2.5]{xu2002} next.

\begin{lem}[Extension of Lemma 2.5, \cite{xu2002}]\label{lemma_xu02}
Let $(v^k)_{k\in\NN}$ be a sequence of nonnegative real numbers satisfying
$$v^{k+1} \leq\left(1-\delta^k\right) v^k+\delta^k \beta^k+\varepsilon^k,\text{ for all } k\in\NN$$
where $(\delta^k)_{k\in\NN}$, $(\beta^k)_{k\in\NN}$ and $(\varepsilon^k)_{k\in\NN}$ satisfy the following conditions: 
\begin{enumerate}
\item[1.] $\delta^k\in[0,1]$, $\sum_{k=0}^\infty \delta^k=\infty$, or equivalently, $\prod_{k=1}^{\infty}(1-\delta^k)=0$,
\item[2a.] $\lim \sup _{k \to \infty} \beta^k \leq 0$,
\item[2b.] $\sum_{k=0}^\infty \delta^k \beta^k<\infty$,
\item[3.] $\varepsilon^k \geq 0$ and $\sum_{k=0}^\infty \varepsilon^k<\infty$.
\end{enumerate}
Then, $\lim_{k\to\infty} v^k=0.$
\end{lem}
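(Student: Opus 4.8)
The plan is to show that $\limsup_{k\to\infty} v^k\le 0$; since every $v^k\ge0$, this forces $\lim_{k\to\infty}v^k=0$. The statement is the analogue of Lemma \ref{lemma_xu03} with an extra perturbation $\varepsilon^k$ that is nonnegative and summable, so the strategy is to reproduce the two arguments of that lemma (one under $2a.$, one under $2b.$) after neutralising $\varepsilon^k$.

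Under $1.$ and $2a.$ I would absorb the perturbation into the sequence itself. Set $e^k=\sum_{i=k}^\infty\varepsilon^i$, which is well defined by $3.$, nonnegative, nonincreasing, and satisfies $e^k\to0$ together with $e^k-e^{k+1}=\varepsilon^k$. Define $\hat v^k=v^k+e^k\ge0$. A one-line computation (using $\varepsilon^k+e^{k+1}=e^k$ and $v^k=\hat v^k-e^k$) turns the hypothesis into
$$\hat v^{k+1}\le(1-\delta^k)\hat v^k+\delta^k(\beta^k+e^k),$$
i.e.\ the recursion of Lemma \ref{lemma_xu03} with modified coefficient $\hat\beta^k=\beta^k+e^k$. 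Since $e^k\to0$, $\limsup_{k\to\infty}\hat\beta^k=\limsup_{k\to\infty}\beta^k\le0$, so hypothesis $2a.$ of Lemma \ref{lemma_xu03} holds for $(\hat v^k)_{k\in\NN}$ and that lemma yields $\hat v^k\to0$; as $e^k\to0$ and $v^k=\hat v^k-e^k$, I conclude $v^k\to0$.

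Under $1.$ and $2b.$ the same substitution is not available, because $\sum_k\delta^k e^k$ need not converge. Instead I would iterate the inequality directly: fixing $m\in\NN$ and writing $P_{i,k}=\prod_{j=i+1}^k(1-\delta^j)\in[0,1]$, one gets for $k>m$
$$v^{k+1}\le\Big(\prod_{i=m}^k(1-\delta^i)\Big)v^m+\sum_{i=m}^kP_{i,k}\,\delta^i\beta^i+\sum_{i=m}^kP_{i,k}\,\varepsilon^i.$$
The first term tends to $0$ as $k\to\infty$ because $\prod_{i=m}^\infty(1-\delta^i)=0$ by $1.$; the last sum is bounded by the tail $\sum_{i\ge m}\varepsilon^i$; and the middle sum is controlled by the tail of the convergent series $\sum_k\delta^k\beta^k$. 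Taking $\limsup_{k\to\infty}$ and then letting $m\to\infty$ (so that both tails vanish) gives $\limsup_{k\to\infty}v^k\le0$, hence $v^k\to0$.

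The routine parts are the telescoping and the elementary bounds $0\le1-\delta^k\le1$ and $P_{i,k}\le1$. The one delicate point, and the main obstacle, is the middle sum in case $2b.$: the factors $P_{i,k}$ multiply the possibly negative terms $\delta^i\beta^i$, so bounding $\big|\sum_{i=m}^kP_{i,k}\delta^i\beta^i\big|$ by the vanishing tail of $\sum_k\delta^k\beta^k$ requires some care with the signs (a summation-by-parts estimate, or reading $2b.$ as absolute summability). The interchange of the two limits $k\to\infty$ and $m\to\infty$ must also be justified, but this is standard once the two tails are seen to vanish uniformly in $k$.
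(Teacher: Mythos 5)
Your proof is correct, but it takes a genuinely different route from the paper's in case 2a, and in case 2b it is actually more careful than the source. For case 2a the paper does not introduce tail sums: it re-runs the product argument of Lemma \ref{lemma_polyak1}, choosing $k_0$ so that (implicitly) $\beta^k\leq\varepsilon$ for $k\geq k_0$ and $\sum_{i\geq k_0}\varepsilon^i<\varepsilon$, and telescopes to
$v^{k+1}\leq\prod_{i=k_0}^k(1-\delta^i)\,v^{k_0}+\varepsilon\bigl(1-\prod_{i=k_0}^k(1-\delta^i)\bigr)+\sum_{i=k_0}^k\varepsilon^i$,
whose $\limsup$ is at most $2\varepsilon$. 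Your substitution $\hat v^k=v^k+e^k$ with $e^k=\sum_{i\geq k}\varepsilon^i$ is instead a clean reduction to Lemma \ref{lemma_xu03} itself: the one-line computation $\hat v^{k+1}\leq(1-\delta^k)\hat v^k+\delta^k(\beta^k+e^k)$ is correct, and it buys modularity, since nothing about the product iteration has to be repeated. For case 2b your treatment is sharper than the paper's: the proof of Lemma \ref{lemma_xu03} (to which the paper defers via ``the proof is similar'') telescopes the recursion but silently drops the weights $P_{i,k}=\prod_{j=i+1}^k(1-\delta^j)$ in front of $\delta^i\beta^i$, which is legitimate only when those terms are nonnegative. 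The delicate point you flag is therefore real, and your suggested fix works: for fixed $k$ the weights $P_{i,k}$ are nondecreasing in $i$ and bounded by $1$, so Abel summation gives $\bigl|\sum_{i=m}^k P_{i,k}\delta^i\beta^i\bigr|\leq 2\sup_{n\geq m}\bigl|\sum_{i=m}^n\delta^i\beta^i\bigr|$, which vanishes as $m\to\infty$ by the Cauchy criterion for the convergent series in condition 2b; alternatively, reading 2b as absolute summability makes the crude bound $\sum_{i\geq m}|\delta^i\beta^i|$ sufficient. With either patch both of your cases close, so your argument is complete.
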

\begin{proof}
The proof is similar to the proof of Lemma \ref{lemma_xu03}. Since $\varepsilon^k$ is summable, $\sum_{k=k_0}^\infty\varepsilon^k<\varepsilon$ for some $k_0\in\NN$ and $\varepsilon>0$ arbitrarily small, and
$$\begin{aligned}
v^{k+1}&\leq \prod_{i=k_0}^k(1-\delta^i)v^{k_0}+\varepsilon\left(1-\prod_{i=k_0}^k(1-\delta^i)\right)+\sum_{i=k_0}^k\varepsilon^i\\
&\leq2\varepsilon. 
\end{aligned}$$
\end{proof}

A particular case of Lemma \ref{lemma_xu02} is proposed in \cite{lei2020cdc} as a consequence of \cite[Theorem 3.3.1]{chen2006}. Let us note that the assumptions in the following result imply those in Lemma \ref{lemma_xu02} which is, in turn, more general.
\begin{cor}[Proposition 3, \cite{lei2020cdc}]\label{cor_rem}
Let $(v^k)_{k\in\NN}$ be a nonnegative sequence such that
$$v^{k+1}\leq(1-\delta_k)v^k+\delta^k(\beta^k+\eta^k)$$
where $(\delta^k)_{k\in\NN}$, $(\beta^k)_{k\in\NN}$, $(\eta^k)_{k\in\NN}$ are nonnegative sequences such that
\begin{itemize}
\item[1.] $\sum_{k=1}^\infty\delta^k=\infty$ and $\lim_{k\to\infty}\delta^k=0$
\item[2.] $\lim_{k\to\infty}\beta^k=0$
\item[3.] $\sum_{k=1}^\infty\delta^k\eta^k<\infty$
\end{itemize}
Then, $\lim_{k\to\infty} v^k=0$.
\end{cor}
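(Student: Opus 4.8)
The plan is to recognize the statement as a direct special case of Lemma \ref{lemma_xu02} and to verify that its hypotheses are met, exactly as the remark preceding the corollary suggests. First I would rewrite the recursion by distributing $\delta^k$ over the two terms in the bracket,
$$v^{k+1}\leq(1-\delta^k)v^k+\delta^k\beta^k+\delta^k\eta^k,$$
which matches the inequality of Lemma \ref{lemma_xu02} upon identifying the $\delta^k$ and $\beta^k$ sequences with those of the lemma and setting the additive noise term to be $\varepsilon^k:=\delta^k\eta^k$. The entire argument then reduces to checking the three conditions of Lemma \ref{lemma_xu02} against the three conditions assumed here.

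For the coefficient, condition 1 of the corollary supplies $\sum_{k=1}^\infty\delta^k=\infty$ directly, which is the non-summability required by condition 1 of Lemma \ref{lemma_xu02}. For the noise term, $\varepsilon^k=\delta^k\eta^k\geq 0$ by nonnegativity of both factors, and $\sum_{k=1}^\infty\varepsilon^k=\sum_{k=1}^\infty\delta^k\eta^k<\infty$ by condition 3 of the corollary, so condition 3 of Lemma \ref{lemma_xu02} holds. For the $\beta^k$ term, $\lim_{k\to\infty}\beta^k=0$ gives $\limsup_{k\to\infty}\beta^k=0\leq 0$, which is precisely alternative 2a of Lemma \ref{lemma_xu02}. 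With all hypotheses verified, the conclusion $\lim_{k\to\infty}v^k=0$ follows immediately.

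The only delicate point I anticipate concerns the requirement $\delta^k\in[0,1]$ for every $k$ in Lemma \ref{lemma_xu02}: the corollary only postulates $\delta^k\geq 0$ together with $\lim_{k\to\infty}\delta^k=0$, so a priori finitely many early terms could exceed $1$ and make the factor $(1-\delta^k)$ negative. Since $\delta^k\to 0$, however, there exists $\bar k\in\NN$ with $\delta^k\in[0,1]$ for all $k\geq\bar k$, and because convergence to zero is a tail property I would apply Lemma \ref{lemma_xu02} to the shifted sequence $(v^{k+\bar k})_{k\in\NN}$, in the same spirit as the shift argument used in Remark \ref{remark_kannan}. This disposes of the finitely many exceptional indices and completes the proof; I would also note in passing that the regime $\sum_{k}\delta^k=\infty$ is exactly the one that yields convergence to zero rather than to a generic limit, consistent with the earlier discussion.
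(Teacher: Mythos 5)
Your proposal is correct and follows exactly the paper's route: the paper's entire proof is the one-line observation that the hypotheses of Lemma \ref{lemma_xu02} are satisfied (with $\varepsilon^k=\delta^k\eta^k$ and alternative 2a supplied by $\beta^k\to0$), which is what you verify in detail. Your extra tail-shift argument to handle the finitely many indices where $\delta^k$ might exceed $1$ is a legitimate refinement of a point the paper glosses over, not a deviation in approach.
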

\begin{proof}
The sequence satisfies the assumptions of Lemma \ref{lemma_xu02}, hence the result holds.
\end{proof}

A very recent result of this type is a consequence of both Lemma \ref{lemma_polyak1} and Lemma \ref{lemma_xu03}.
\begin{cor}[Lemma 1.1, \cite{qin2008}]\label{cor_qin}
Assume that $(v^k)_{k\in\NN}$ is a sequence of nonnegative real numbers such that
$$v^{k+1}\leq\left(1-\delta^k\right) v^k+\varepsilon^k,\text{ for all } k\in\NN$$
where $(\delta^k)_{k\in\NN}$ and $(\varepsilon^k)_{k\in\NN}$ are sequences such that
\begin{enumerate}
\item[1.] $\delta^k\in(0, 1)$ and $\sum_{k=1}^\infty \delta^k=\infty$,
\item[2a.] $\lim \sup _{k \to \infty} \frac{\varepsilon^k}{\delta^k} \leq 0$ ,
\item[2b.] $\sum_{k=1}^{\infty}|\varepsilon^k|<\infty$.
\end{enumerate}
Then, $\lim_{k \to \infty} v^k=0$.
\end{cor}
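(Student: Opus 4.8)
The plan is to show that Corollary~\ref{cor_qin} follows by verifying that its hypotheses imply those of one of the preceding results, namely Lemma~\ref{lemma_polyak1} under condition~$2a.$ and Lemma~\ref{lemma_xu03} under condition~$2b.$ First I would treat the two cases separately, matching notation. Setting $\gamma_k = 1-\delta^k$, the recursion $v^{k+1}\leq(1-\delta^k)v^k+\varepsilon^k$ becomes exactly $v^{k+1}\leq \gamma_k v^k+\varepsilon^k$, which is the form appearing in Lemma~\ref{lemma_polyak1}. Condition~$1.$ here, $\delta^k\in(0,1)$ with $\sum_k\delta^k=\infty$, translates into $0\leq\gamma_k<1$ and $\sum_k(1-\gamma_k)=\infty$, so conditions~$1.$ and~$2.$ of Lemma~\ref{lemma_polyak1} hold.

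Under assumption~$2a.$, that $\limsup_{k\to\infty}\varepsilon^k/\delta^k\leq0$, I would observe that $\varepsilon^k/(1-\gamma_k)=\varepsilon^k/\delta^k$, so $\limsup_{k\to\infty}\varepsilon^k/(1-\gamma_k)\leq0$. This is slightly weaker than condition~$3.$ of Lemma~\ref{lemma_polyak1}, which asks for the limit to equal $0$; however, inspecting the proof of Lemma~\ref{lemma_polyak1} shows that only an upper bound on $\varepsilon^k/(1-\gamma_k)$ is used to produce $\limsup v^k\leq\varepsilon$ for arbitrary $\varepsilon>0$, so a nonpositive $\limsup$ yields $\limsup_{k\to\infty}v^k\leq0$ directly. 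Since $(v^k)_{k\in\NN}$ is nonnegative, this forces $\lim_{k\to\infty}v^k=0$. Equivalently, one may appeal to Lemma~\ref{lemma_xu03} with $\beta^k=\varepsilon^k/\delta^k$, whose hypothesis~$2a.$ is precisely $\limsup_k\beta^k\leq0$.

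Under assumption~$2b.$, that $\sum_k|\varepsilon^k|<\infty$, I would again set $\beta^k=\varepsilon^k/\delta^k$ so that $\delta^k\beta^k=\varepsilon^k$, and then $\sum_k\delta^k\beta^k=\sum_k\varepsilon^k$ converges absolutely by~$2b.$ Thus hypothesis~$2b.$ of Lemma~\ref{lemma_xu03} is met, together with hypothesis~$1.$, and the conclusion $\lim_{k\to\infty}v^k=0$ follows. The only subtlety worth flagging is that Corollary~\ref{cor_qin} provides $2a.$ and $2b.$ as \emph{alternative} sufficient conditions (each alone suffices), exactly mirroring the alternative structure of Lemma~\ref{lemma_xu03}, so the proof naturally splits into these two independent branches.

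The main obstacle I anticipate is the mild mismatch in case~$2a.$ between the strict limit-equals-zero hypothesis of Lemma~\ref{lemma_polyak1} and the $\limsup\leq0$ hypothesis offered here. Rather than reprove anything, the cleanest route is to route both branches through Lemma~\ref{lemma_xu03} via the substitution $\beta^k=\varepsilon^k/\delta^k$, since Lemma~\ref{lemma_xu03} already states its conclusion under either $\limsup_k\beta^k\leq0$ or $\sum_k\delta^k\beta^k<\infty$; this makes the correspondence exact and avoids any appeal to the internal workings of an earlier proof.
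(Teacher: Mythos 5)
Your proposal is correct and takes essentially the same route as the paper: case~2a.\ via Lemma~\ref{lemma_polyak1} with $\gamma_k=1-\delta^k$, and case~2b.\ via Lemma~\ref{lemma_xu03} with $\beta^k=\varepsilon^k/\delta^k$. Your extra care about the mismatch between the $\limsup\leq 0$ hypothesis here and the limit-equals-zero condition of Lemma~\ref{lemma_polyak1} (resolved by routing through Lemma~\ref{lemma_xu03}, whose hypothesis and real-valued noise match exactly) is a refinement the paper's two-line proof glosses over, but it does not alter the substance of the argument.
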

\begin{proof}
Suppose that $2a.$ holds. Then, the proof follows from Lemma \ref{lemma_polyak1} by setting $\gamma_k=(1-\delta^k)$. When $2b.$ holds instead, the proof follows applying Lemma \ref{lemma_xu03} by defining $\varepsilon^k=\delta^k\frac{\varepsilon^k}{\delta^k}=\delta^k\beta^k$.
 \end{proof}

A consequence of Corollary \ref{cor_qin} is the following result that presents a slightly different notation.
\begin{cor}[Lemma 3, \cite{xu1998}]\label{cor_liu}
Let $(v^k)_{k\in\NN}$, $(\eta^k)_{k\in\NN}$, $(\varepsilon_k)_{k\in\NN}$ and $(\delta^k)_{k\in\NN}$ be nonnegative real sequences such that 
$$v^{k+1} \leq(1-\delta^k) v^k+\eta^k+\varepsilon^k \text{ for all } k\in\NN$$
and such that
\begin{enumerate}
\item[1.] $\delta^k \in[0,1]$ and $\sum_{k=0}^{\infty} \delta^k=\infty$,
\item[2.] $\varepsilon^k=o(\delta^k),$
\item[3.] $\sum_{k=0}^{\infty} \eta^k<\infty .$ 
\end{enumerate}
Then, $\lim _{k \rightarrow \infty} v^k=0 .$
\end{cor}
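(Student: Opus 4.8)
The plan is to recognize the stated recursion as a special case of Lemma~\ref{lemma_xu02} after relabelling the two error terms. In Lemma~\ref{lemma_xu02} the perturbation has the shape $\delta^k\beta^k+\varepsilon^k$, where $\beta^k$ is only required to vanish in $\limsup$ while the second error must be nonnegative and summable. Here the perturbation is $\eta^k+\varepsilon^k$, so I would assign the \emph{summable} piece $\eta^k$ the role of the summable error of Lemma~\ref{lemma_xu02}, and absorb the remaining piece $\varepsilon^k$ into the $\delta^k\beta^k$ slot by setting $\beta^k:=\varepsilon^k/\delta^k$. With this identification the inequality $v^{k+1}\leq(1-\delta^k)v^k+\delta^k\beta^k+\eta^k$ is exactly of the form required.

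Next I would verify the three hypotheses of Lemma~\ref{lemma_xu02}. Condition~1 is identical to assumption~1 here, namely $\delta^k\in[0,1]$ and $\sum_{k=0}^{\infty}\delta^k=\infty$. Condition~2a asks for $\limsup_{k\to\infty}\beta^k\leq0$; since assumption~2 states $\varepsilon^k=o(\delta^k)$, i.e. $\beta^k=\varepsilon^k/\delta^k\to0$, and all sequences are nonnegative, one gets $\limsup_{k\to\infty}\beta^k=0$, so 2a holds. Finally, condition~3 requires a nonnegative summable error, which is precisely assumption~3 on $(\eta^k)_{k\in\NN}$. Lemma~\ref{lemma_xu02} then yields $\lim_{k\to\infty}v^k=0$, as claimed.

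I expect the only real obstacle to be the harmless degeneracy $\delta^k=0$, for which $\beta^k=\varepsilon^k/\delta^k$ is undefined. This is cosmetic: the hypothesis $\varepsilon^k=o(\delta^k)$ forces $\varepsilon^k=0$ for large $k$ at every index where $\delta^k=0$, so I would simply set $\beta^k:=0$ at such indices, preserving both $\delta^k\beta^k=\varepsilon^k$ and $\beta^k\to0$. Alternatively, the statement can be proved from scratch in the spirit of Lemma~\ref{lemma_polyak1}: fixing $\epsilon>0$, use $\varepsilon^k\leq\epsilon\delta^k$ for $k\geq k_0$ together with the summability of $(\eta^k)_{k\in\NN}$ to obtain, after iterating the recursion and using $\prod_{i=0}^{\infty}(1-\delta^i)=0$, the bound $\limsup_{k\to\infty}v^k\leq\epsilon$; letting $\epsilon\downarrow0$ and recalling $v^k\geq0$ gives $v^k\to0$.

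One could also try to route the argument through Corollary~\ref{cor_qin} by merging the perturbation into a single noise $\eta^k+\varepsilon^k$, but neither its condition 2a nor its condition 2b survives this merge (the ratio $\eta^k/\delta^k$ need not vanish, and $\varepsilon^k$ need not be summable), so a case split would be unavoidable. This is why I regard the reduction to Lemma~\ref{lemma_xu02} as the cleanest and most economical route.
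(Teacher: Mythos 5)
Your reduction to Lemma~\ref{lemma_xu02} via $\beta^k:=\varepsilon^k/\delta^k$ (with the summable piece played by $\eta^k$) is exactly the paper's route: its proof simply notes that $\varepsilon^k=o(\delta^k)$ means $\varepsilon^k/\delta^k\to0$ and then invokes Corollary~\ref{cor_qin} and Lemma~\ref{lemma_xu02}. Your write-up is in fact more careful than the paper's one-liner, since you patch the harmless $\delta^k=0$ degeneracy and correctly observe that Corollary~\ref{cor_qin} alone cannot handle the merged noise $\eta^k+\varepsilon^k$, a point the paper glosses over.
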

\begin{proof}
We note that $\varepsilon^k=o(\delta^k)$ is equivalent to $\lim_{k\to\infty}\varepsilon^k/\delta^k=0$. Then, the result follows by applying Corollary \ref{cor_qin} and Lemma \ref{lemma_xu02}. 
 \end{proof}

\begin{rem}
A result similar to the last corollaries where the boundedness of the sequence is shown, is presented also in \cite[Lemma 2.5]{cholamjiak2018} and reads as follows.\\
Let $(v^k)_{k\in\NN}$ and $(\eta^k)_{k\in\NN}$ be sequences of nonnegative real numbers such that $\sum_{k=1}^\infty \eta^k<\infty$ and such that
$$
v^{k+1} \leq(1-\delta^k) v^k+\eta^k+\varepsilon^k, \text{ for all }k\in\NN
$$
where $(\delta^k)_{k\in\NN}\subseteq(0,1)$ and $(\varepsilon^k)_{k\in\NN}$ is a sequence of real numbers. Then, the following results hold:
\begin{itemize}
\item[(i)] If $\varepsilon^k \leq \delta^k M$ for some $M \geq 0$, then $(v^k)$ is a bounded sequence.
\item[(ii)] If $\sum_{k=1}^\infty \delta^k=\infty$ and $\lim \sup _{n \rightarrow \infty} \frac{\varepsilon^k}{\delta^k} \leq 0$, then $\lim _{n \rightarrow \infty} v^k=0$.
\end{itemize}
We note that (ii) is a consequence of Lemma \ref{lemma_xu02} or Corollaries \ref{cor_qin} and \ref{cor_liu}.
\end{rem}

We now consider three results whose conditions for convergence are more involved than the results proposed until now \cite{alber1998,cholamjiak2018,he2013,mainge2008}. 
The first one is proposed in \cite{alber1998}. It allows for non-summable additive noise but requires a condition that couple the sequences involved.
\begin{prop}[Proposition 2, \cite{alber1998}]\label{prop_alber}
Let $(v^k)_{k\in\NN}$ and $(\beta^{k})_{k\in\NN}$ be two nonnegative sequences such that $\sum_{k=0}^{\infty} \beta_{k}=\infty$ and $\sum_{k=0}^{\infty} \beta_{k} v^{k}<\infty$. Then:
\begin{itemize}
\item[(i)] there exists a subsequence $(v^{k_n})$, $n\in\NN$ such that $\lim _{k \rightarrow \infty} v^{k_n}=0$.
\item[(ii)] Moreover, if there exists $a>0$ such that
$$v^{k+1}\leq v^{k} + a \beta^{k}, \text{  for all }k\in\NN$$ 
then $\lim _{k \rightarrow \infty} v^{k}=0.$
\end{itemize}
\end{prop}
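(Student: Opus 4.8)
The plan is to treat the two statements separately: I would derive (i) from a direct contradiction against the divergence of $\sum_k \beta^k$, and then obtain (ii) by upgrading the $\liminf$ information from (i) to a $\limsup$ bound, using the growth condition to control how fast the sequence can climb back up.

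For (i) I would argue by contradiction. Since $v^k \geq 0$, the absence of a subsequence tending to $0$ is equivalent to $\liminf_{k\to\infty} v^k = 2c > 0$. Then there is $K \in \NN$ with $v^k > c$ for all $k \geq K$, whence
\[
\sum_{k=K}^\infty \beta^k v^k \geq c \sum_{k=K}^\infty \beta^k = \infty ,
\]
because a tail of the divergent series $\sum_k \beta^k$ is again divergent. This contradicts $\sum_k \beta^k v^k < \infty$, so $\liminf_{k\to\infty} v^k = 0$, and any subsequence realizing the $\liminf$ is the desired $(v^{k_n})$.

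For (ii) it suffices to show $\limsup_{k\to\infty} v^k = 0$, since $v^k \geq 0$ and (i) already gives $\liminf_{k\to\infty} v^k = 0$. Suppose instead $\limsup_{k\to\infty} v^k = L > 0$ and fix $\epsilon \in (0, L/3)$. By (i) the set $\{k : v^k < \epsilon\}$ is infinite, while $v^k > L - \epsilon$ for infinitely many $k$. For such a large index $m$, let $p < m$ be the last index with $v^p < \epsilon$; then $v^j \geq \epsilon$ for all $j \in \{p+1,\dots,m\}$, i.e., $(v^k)$ performs an \emph{excursion} above level $\epsilon$ on $[p+1,m]$. Since $v^j \geq \epsilon$ there, the $\beta$-mass of the excursion is controlled by a tail of the convergent series,
\[
\sum_{j=p+1}^{m} \beta^j \leq \frac{1}{\epsilon}\sum_{j=p+1}^{m} \beta^j v^j \leq \frac{1}{\epsilon}\sum_{j \geq p+1} \beta^j v^j ,
\]
and because $\{k : v^k < \epsilon\}$ is infinite one has $p = p(m) \to \infty$ as $m \to \infty$, so this bound tends to $0$.

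The remaining step is the lower bound, obtained by telescoping the growth condition $v^{k+1} \leq v^k + a\beta^k$ from $p$ to $m-1$:
\[
L - \epsilon < v^m \leq v^p + a\sum_{j=p}^{m-1}\beta^j < \epsilon + a\,\beta^p + a\sum_{j=p+1}^{m-1}\beta^j ,
\]
so that $a\,\beta^p + a\sum_{j=p+1}^{m-1}\beta^j > L - 2\epsilon$. Combining this with the upper bound above, which forces $\sum_{j=p+1}^{m-1}\beta^j \to 0$, yields $\beta^{p(m)} > (L-2\epsilon)/(2a) > 0$ for infinitely many $m$. \emph{Here lies the main obstacle}: a single upward jump at the step $p \to p+1$ can carry essentially all of the growth, and the telescoping alone does not rule out a large $\beta^p$ (indeed, without further control the conclusion can fail). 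To close the contradiction one uses that the increments vanish, $\beta^k \to 0$ — the natural step-size condition of the source setting \cite{alber1998} — for then $p(m)\to\infty$ gives $\beta^{p(m)} \to 0$, contradicting the positive lower bound and forcing $L = 0$, i.e. $v^k \to 0$.
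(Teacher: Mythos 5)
Part (i) of your proposal is correct, and it is the standard contradiction argument; this is consistent with the paper, whose entire ``proof'' of the proposition is a one-line remark that both claims follow by contradiction, with the details deferred to \cite{alber1998}. So for (i) there is nothing to compare beyond noting agreement.

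For part (ii) you have put your finger on the right issue, and it deserves to be stated more strongly: with the one-sided condition $v^{k+1}\le v^k+a\beta^k$ exactly as printed in the survey, the conclusion is false, so no argument could close the gap you identified. Concretely, take $a=1$ and, for all $n\in\NN$, set
\begin{equation*}
v^{2n}=2^{-n},\qquad v^{2n+1}=1,\qquad \beta^{2n}=1,\qquad \beta^{2n+1}=0 .
\end{equation*}
Every step satisfies $v^{k+1}\le v^k+\beta^k$ (the climb to $1$ is paid by $\beta^{2n}=1$, and decreases are free), $\sum_k\beta^k=\infty$, and $\sum_k\beta^kv^k=\sum_n2^{-n}<\infty$, yet $v^{2n+1}\equiv1$, so $v^k\not\to0$. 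This realizes exactly the obstruction in your proof: the step leaving the sub-$\epsilon$ level carries all the growth through a single large $\beta^p$, which is invisible to $\sum_k\beta^kv^k$ because $v^p$ is small there. Your repair, assuming $\beta^k\to0$, is legitimate, and with it your excursion argument is complete: the tail bound forces $\sum_{j=p+1}^{m-1}\beta^j\to0$, while $\beta^{p(m)}\to0$ kills the remaining term, contradicting $L-2\epsilon>0$. An alternative repair, which is the form of the hypothesis in the source \cite{alber1998} (there $v^k$ is a function-value gap and bounded subgradients control increments in both directions), is the two-sided condition $|v^{k+1}-v^k|\le a\beta^k$; under it no vanishing of $\beta^k$ is needed, because descents also cost $\beta$-mass: in each excursion the passage from level $2\epsilon$ back below $\epsilon$ occurs over indices where $v^j\ge\epsilon$ and requires $\sum_j\beta^j\ge\epsilon/a$, so each excursion contributes at least $\epsilon^2/a$ to $\sum_k\beta^kv^k$, contradicting summability. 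In short: (i) is fine; for (ii) your proof is correct for a corrected statement, and you were right that the statement as transcribed cannot be proved --- the flaw is in the survey's statement (and hidden by its citation-only proof), not in your reasoning.
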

\begin{proof}
Both claims can be proven by contradiction. See \cite{alber1998} for more details.
 \end{proof}

In the next result, the sequence should satisfy two interdependent inequalities \cite{cholamjiak2018,he2013}.
\begin{lem}[Lemma 7, \cite{he2013}]\label{lemma_he}
Let $(v^k)_{k\in\NN}$ and $(\eta^k)_{k\in\NN}$ be nonnegative sequences of real numbers, let $(\delta^k)_{k\in\NN}\subseteq(0,1)$ and let $(\gamma^k)_{k\in\NN}$, $(\varepsilon^k)_{k\in\NN}$, and $(\beta^k)_{k\in\NN}$ be three sequences of real numbers such that
\begin{equation}\label{eq_he}
\begin{aligned}
v^{k+1} \leq &(1-\delta^k) v^k+\delta^k \gamma^k+\beta^k,  \text{ and}, \\
v^{k+1} \leq &v^k-\eta^k+\varepsilon^k, \text{ for all } k\in\NN 
\end{aligned}
\end{equation}
and such that
\begin{itemize}
\item[1.] $\sum_{k=0}^{\infty} \delta^k=\infty$
\item[2.] $\lim _{k \rightarrow \infty} \varepsilon^k=0$
\item[3.] $\lim _{k \rightarrow \infty} \eta^{k_n}=0$ implies that $\lim \sup _{k \rightarrow \infty} \gamma^{k_{n}} \leq 0$ for any
subsequence $(k_{n}) \subset(k)$
\item[4.] $\lim \sup _{n \rightarrow \infty}\frac{\beta^k}{\delta^k} \leq 0$.
\end{itemize}
Then, $\lim _{n \rightarrow \infty} v^k=0$.
\end{lem}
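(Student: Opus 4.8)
The plan is to split the analysis into two cases according to the eventual monotonicity of $(v^k)_{k\in\NN}$, using the two inequalities of \eqref{eq_he} for complementary roles. The descent-type second inequality serves to force $\eta^k$ to zero along appropriate indices; Condition~3 then converts this into $\limsup\gamma^k\leq 0$ along those indices; finally the first inequality, combined with Condition~4 and the non-summability of $(\delta^k)_{k\in\NN}$, drives $v^k$ to zero. In the first case, suppose $(v^k)_{k\in\NN}$ is non-increasing for $k$ large. Being nonnegative, it converges to some $\bar v\geq 0$, so $v^k-v^{k+1}\to 0$; rearranging the second inequality as $\eta^k\leq v^k-v^{k+1}+\varepsilon^k$ and using Condition~2 gives $\eta^k\to 0$, whence Condition~3 applied to the whole index sequence yields $\limsup_k\gamma^k\leq 0$. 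I would then rewrite the first inequality as $v^{k+1}\leq(1-\delta^k)v^k+\delta^k\big(\gamma^k+\beta^k/\delta^k\big)$ and set $\tilde\beta^k=\gamma^k+\beta^k/\delta^k$, so that $\limsup_k\tilde\beta^k\leq 0$ by Condition~4; since $\delta^k\in(0,1)$ and $\sum_k\delta^k=\infty$, Lemma \ref{lemma_xu03} (assumptions~1 and~2a) applies and gives $v^k\to 0$.

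The main obstacle is the second case, where $(v^k)_{k\in\NN}$ is not eventually non-increasing, which I would treat by the Maingé technique. For each large $k$ define $\tau(k)=\max\{\,j\leq k : v^j<v^{j+1}\,\}$; this index is well defined, satisfies $\tau(k)\to\infty$, and enjoys the two structural properties $v^{\tau(k)}\leq v^{\tau(k)+1}$ and $v^k\leq v^{\tau(k)+1}$, the latter because $(v^k)_{k\in\NN}$ is non-increasing on the block $\tau(k)+1,\dots,k$. Evaluating the second inequality at $\tau(k)$ and using $v^{\tau(k)}\leq v^{\tau(k)+1}$ yields $\eta^{\tau(k)}\leq\varepsilon^{\tau(k)}\to 0$, so Condition~3 applied to the subsequence $(\tau(k))$ gives $\limsup_k\gamma^{\tau(k)}\leq 0$. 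Evaluating the first inequality at $\tau(k)$ and cancelling the $(1-\delta^{\tau(k)})v^{\tau(k)}$ term against $v^{\tau(k)}\leq v^{\tau(k)+1}$ produces $v^{\tau(k)}\leq\gamma^{\tau(k)}+\beta^{\tau(k)}/\delta^{\tau(k)}$; taking $\limsup$ and invoking Conditions~3 and~4 forces $v^{\tau(k)}\to 0$.

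The concluding step is short: from the second inequality, $v^{\tau(k)+1}\leq v^{\tau(k)}+\varepsilon^{\tau(k)}\to 0$, and then the dominating property $v^k\leq v^{\tau(k)+1}$ gives $v^k\to 0$, which combined with the first case proves $\lim_{k\to\infty}v^k=0$. The only delicate points are verifying the two structural inequalities for $\tau(k)$ and checking that Condition~3 is genuinely usable along the thinned index set $(\tau(k))$ — which it is, precisely because Condition~3 is stated for \emph{arbitrary} subsequences, a hypothesis whose strength is exactly what makes the non-monotone case tractable.
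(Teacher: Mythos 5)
Your proposal is correct and follows essentially the same route as the paper's proof: the same two-case split (eventually non-increasing versus not), with the first case resolved by a Xu-type lemma after absorbing $\beta^k$ into the term $\delta^k\bigl(\gamma^k+\beta^k/\delta^k\bigr)$, and the second case handled by the Maing\'e index $\tau(k)$ yielding $v^{\tau(k)}\leq\gamma^{\tau(k)}+\beta^{\tau(k)}/\delta^{\tau(k)}$ and hence $v^k\to 0$. The only cosmetic difference is that you invoke Lemma \ref{lemma_xu03} (condition 2a) where the paper cites Lemma \ref{lemma_xu02}, which is arguably the more apt citation since $(\beta^k)_{k\in\NN}$ is neither nonnegative nor summable here; your write-up also supplies the details the paper defers to \cite{he2013}.
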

\begin{proof}
The proof is divided in two cases.\\
Case 1: $(v^k)_{k\in\NN}$ is eventually decreasing and the result follows from Lemma \ref{lemma_xu02}.\\
Case 2: $(v^k)_{k\in\NN}$ is not eventually decreasing. Then, there exists $k_0$ such that $v^{k_0}\leq v^{k_0+1}$. Let $J_{k}=\{k_{0} \leq n \leq k: v_{n} \leq v_{n+1}\}$, $k>k_{0}$ and let $\tau(k)=\max J_k$. Then, $\tau(k)\to\infty$ as $k\to\infty$ by definition. It follows that $v^k\leq v^{\tau(k)+1}$. Then, using \eqref{eq_he} and the assumptions it follows that 
$$v^{\tau(k)}\leq\gamma^{\tau(k)}+\frac{\beta^{\tau(k)}}{\delta^{\tau(k)}},$$
and $\op{limsup}_{k\to\infty} v^{\tau(k)}\leq0$. Hence, $\lim_{k\to\infty}v^k= 0$. For more details we refer to \cite{he2013}.
\end{proof}
\begin{rem}
By removing $\beta^k$ in Equation \eqref{eq_he}, the result holds as a particular case of Lemma \ref{lemma_he} and can be proven similarly, using Lemma \ref{lemma_xu03} instead of Lemma \ref{lemma_xu02}.
\end{rem}
The next result, instead, uses the sequence at two steps backwards.
\begin{lem}[Lemma 2.2, \cite{mainge2008}]\label{lemma_meno}
Let $(v^k)_{k\in\NN}$ and $(\varepsilon^k)_{k\in\NN}$ be nonnegative sequences such that:
\begin{equation}\label{eq_meno}
v^{k+1}-v^k \leqslant \delta^k(v^k-v^{k-1})+\varepsilon^k,\text{ for all } k\in\NN
\end{equation}
and such that
\begin{itemize}
\item[1.] $\sum_{k=1}^\infty \varepsilon^k<\infty$
\item[2.] $(\delta^k) \subset[0, \delta]$, where $\delta \in[0,1)$.
\end{itemize}
Then $(v^k)_{k\in\NN}$ converges and $\sum_{k=1}^\infty[v^{k+1}-v^k]_{+}<\infty$, where $[t]_{+}:=\max \{t, 0\}$ (for any $t \in \mathbb{R}$).
\end{lem}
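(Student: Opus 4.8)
The plan is to reduce the second-order inequality \eqref{eq_meno} to a first-order recurrence for the positive parts of the increments. Define $a^k := [v^k-v^{k-1}]_+ = \max\{v^k-v^{k-1},0\}$ for $k\geq 1$. The first step is to show that
$$a^{k+1}\leq \delta\, a^k+\varepsilon^k \text{ for all } k\geq 1.$$
To obtain this, I would distinguish two cases. If $v^k-v^{k-1}\geq 0$, then $\delta^k(v^k-v^{k-1})\leq \delta(v^k-v^{k-1})=\delta a^k$ since $\delta^k\leq\delta$; if $v^k-v^{k-1}<0$, then $a^k=0$ and $\delta^k(v^k-v^{k-1})\leq 0=\delta a^k$. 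In either case \eqref{eq_meno} gives $v^{k+1}-v^k\leq \delta a^k+\varepsilon^k$, whose right-hand side is nonnegative; taking positive parts then preserves the inequality and yields the claimed recurrence.

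The second step is to deduce $\sum_k a^k<\infty$, which is exactly the asserted summability $\sum_{k=1}^\infty [v^{k+1}-v^k]_+<\infty$. Setting $\gamma:=\max\{\delta,\tfrac12\}\in(0,1)$, the recurrence gives $a^{k+1}\leq \gamma a^k+\varepsilon^k$, so $(a^k)$ satisfies the hypothesis of Lemma \ref{lemma_comb} with $\theta^k=0$ and coefficient $\gamma\neq 1$; part (iv) of that lemma then delivers $\sum_k a^k<\infty$. (Equivalently, one can iterate $a^{k+1}\leq\delta a^k+\varepsilon^k$ and swap the order of summation, using $\sum_{k\geq i}\delta^{k-i}=(1-\delta)^{-1}$.)

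The final step is to upgrade summability of the positive increments to convergence of $(v^k)$ itself. I would introduce the auxiliary sequence $w^k:=v^k-\sum_{j=1}^{k-1}a^{j+1}$. Then $w^{k+1}-w^k=(v^{k+1}-v^k)-a^{k+1}=\min\{v^{k+1}-v^k,0\}\leq 0$, so $(w^k)$ is nonincreasing; it is also bounded below, since $v^k\geq 0$ and $\sum_{j=1}^{k-1}a^{j+1}\leq\sum_{j=1}^\infty a^{j+1}<\infty$ by the previous step, giving $w^k\geq -\sum_{j\geq 1}a^{j+1}>-\infty$. A nonincreasing sequence bounded below converges, and since the correction term $\sum_{j=1}^{k-1}a^{j+1}$ also converges to a finite limit, $v^k=w^k+\sum_{j=1}^{k-1}a^{j+1}$ converges as well.

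I expect the main obstacle to be the first step: the reduction to a first-order recurrence rests on handling the sign of $v^k-v^{k-1}$ so that the coupling coefficient $\delta^k$ is absorbed into $\delta a^k$, and on observing that the right-hand side is nonnegative so that passing to positive parts is legitimate. Once $a^{k+1}\leq\delta a^k+\varepsilon^k$ is established, the remaining arguments are standard telescoping and monotone-convergence reasoning.
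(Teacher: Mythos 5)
Your proposal is correct and follows essentially the same route as the paper's proof: the same first-order recurrence $[v^{k+1}-v^k]_+\leq\delta[v^k-v^{k-1}]_++\varepsilon^k$, followed by the same auxiliary nonincreasing sequence $w^k=v^k-\sum_j[v^{j+1}-v^j]_+$ and a monotone-convergence argument. If anything, your version is tighter than the paper's sketch, which only asserts boundedness of $([v^k-v^{k-1}]_+)_{k\in\NN}$, whereas your explicit summability step (via Lemma \ref{lemma_comb}(iv) or direct iteration) is what is actually needed to bound $w^k$ below and to obtain the claimed $\sum_{k=1}^\infty[v^{k+1}-v^k]_+<\infty$.
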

\begin{proof}
Let $u^k=v^k-v^{k-1}$. Then, $[u^{k+1}]_+\leq\delta[u^k]_++\varepsilon^k$ and $([u^k]_+)_{k\in\NN}$ is bounded. It follows that the sequence $(w^k=v^k-\sum_{j=1}^k[u_j]_+)_{k\in\NN}$ is bounded and non increasing, hence, convergent. Hence, $(v^k)_{k\in\NN}$ is convergent.
\end{proof}

\begin{rem}
The result can be extended to the case with a negative term, namely, Equation \eqref{eq_meno} becomes
$$v^{k+1}-v^k \leqslant \delta^k(v^k-v^{k-1})+\varepsilon^k-\theta^k,\text{ for all } k\in\NN$$
where $\theta^k$ is a nonnegative sequence. The conclusions are the same as Lemma \ref{lemma_meno} and, moreover, it holds that $\sum_{k=1}^\infty \theta^k<\infty$ \cite[Lemma 2]{boct2016}.
\end{rem}

\begin{rem}
The inequality in Equation \eqref{eq_meno} can be rewritten as
$$v^{k+1}\leq(1+\delta^k)v^k-\delta^kv^{k-1}+\varepsilon^k,\text{ for all } k\in\NN$$
which is similar to the form of the results presented until now. However, we note that in Lemma \ref{lemma_meno}, the sequence $v^k$ need not be nonnegative.
\end{rem}

We conclude this section with the following result on the convergence rate which guarantees convergence to zero.
However, the study of the convergence rates lays outside the scopes of this survey. For similar results, we refer to \cite[Lemma 2.9]{malitsky2015}, \cite[Lemma 3]{lei2018} and, more generally, to \cite{polyak1987}. 

\begin{lem}[Lemma 2.7, \cite{malitsky2018fbf}] \label{lemma_rate}
Let $(v^{k})_{k \in \mathbb{N}}$ and $(\beta^{k})_{k \in \mathbb{N}}$ be two nonnegative sequences of real numbers. Suppose there exist constants $\gamma>1$ and $\delta>0$ such that
$$
\gamma v^{k+1}+\beta^{k+1} \leq v^{k}+\beta^{k} \quad \text { and } \quad \delta \beta^{k} \leq v^{k} \text{ for all } k \in \mathbb{N}
$$
Then $(v^{k})_{k \in \mathbb{N}}$ and $(\beta^{k})_{k \in \mathbb{N}}$ converge to zero with $R$-linear rate.
\end{lem}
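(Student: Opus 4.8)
The plan is to collapse the two coupled sequences into a single Lyapunov-type quantity and show that it decays geometrically. First I would set $w^k := v^k + \beta^k \geq 0$, so that the first hypothesis reads $\gamma v^{k+1} + \beta^{k+1} \leq w^k$. The left-hand side is not quite $w^{k+1}$, because the factor $\gamma$ multiplies $v^{k+1}$ alone; the whole point will be to convert this asymmetry, together with the coupling $\delta \beta^k \leq v^k$, into a genuine contraction $w^{k+1} \leq \rho^{-1} w^k$ with $\rho > 1$. Note that the second inequality is used only through the equivalent form $\beta^k \leq v^k/\delta$, which lets me trade the $\beta$-component for a multiple of the $v$-component.

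Second, I would search for the largest $\rho$ for which $\gamma v^{k+1} + \beta^{k+1} \geq \rho\, w^{k+1}$ holds for every admissible pair. Writing the difference as $(\gamma - \rho) v^{k+1} + (1-\rho)\beta^{k+1}$ and invoking $\beta^{k+1} \leq v^{k+1}/\delta$ (whose sign contribution reverses once $\rho > 1$), the extremal case is $\beta^{k+1} = v^{k+1}/\delta$, and a short computation shows the difference is nonnegative precisely when $\rho \leq \frac{\gamma\delta + 1}{\delta + 1}$. Choosing $\rho = \frac{\gamma\delta + 1}{\delta + 1}$ makes the coefficient of $v^{k+1}$ vanish exactly, and one checks that $\rho > 1 \iff \gamma > 1$. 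Combining with the first hypothesis then gives the desired contraction $w^k \geq \gamma v^{k+1} + \beta^{k+1} \geq \rho\, w^{k+1}$.

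Third, iterating $w^{k+1} \leq \rho^{-1} w^k$ yields $w^k \leq \rho^{-k} w^0$ with $\rho^{-1} \in (0,1)$. Since $0 \leq v^k \leq w^k$ and $0 \leq \beta^k \leq w^k$, both sequences are dominated by the geometric sequence $w^0 \rho^{-k}$, which is exactly $R$-linear convergence to zero. In particular no separate monotonicity argument is needed: the geometric majorant handles boundedness, convergence, and the rate simultaneously.

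The part I expect to require the most care is pinning down the contraction factor $\rho$ and recognizing that the sole role of the coupling $\delta \beta^k \leq v^k$ is to bound $\beta^{k+1}$ by a multiple of $v^{k+1}$ in the worst case. Arranging the algebra so that the coefficient of $v^{k+1}$ lands at exactly zero, rather than merely nonnegative, is what confirms that the choice $\rho = \frac{\gamma\delta+1}{\delta+1}$ is sharp and that $\gamma > 1$ is precisely the condition making $\rho > 1$; weakening either hypothesis would break the geometric estimate.
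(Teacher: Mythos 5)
Your proof is correct. It is worth noting that the survey itself gives no proof of this statement: Lemma~\ref{lemma_rate} is stated without proof, attributed to \cite[Lemma 2.7]{malitsky2018fbf}, with the surrounding text explicitly placing rate analysis outside the survey's scope; so your argument has to stand on its own, and it does. The crucial algebra checks out: with $\rho=\frac{\gamma\delta+1}{\delta+1}$ one has $\gamma-\rho=\frac{\gamma-1}{\delta+1}>0$ and $\rho-1=\frac{\delta(\gamma-1)}{\delta+1}>0$, so that $\delta(\gamma-\rho)=\rho-1$ and, using the coupling inequality at index $k+1$,
$$(\gamma-\rho)v^{k+1}-(\rho-1)\beta^{k+1}\;\ge\;\bigl(\delta(\gamma-\rho)-(\rho-1)\bigr)\beta^{k+1}=0,$$
which yields $\rho\bigl(v^{k+1}+\beta^{k+1}\bigr)\le\gamma v^{k+1}+\beta^{k+1}\le v^{k}+\beta^{k}$, hence $\max\{v^k,\beta^k\}\le v^k+\beta^k\le\rho^{-k}\bigl(v^0+\beta^0\bigr)$ with $\rho^{-1}=\frac{\delta+1}{\gamma\delta+1}\in(0,1)$, i.e., $R$-linear convergence of both sequences to zero. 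Your side remarks are also accurate: the hypothesis $\delta\beta^k\le v^k$ is invoked only at index $k+1$ (legitimate, since it is assumed for all $k\in\mathbb{N}$), the chosen $\rho$ is sharp (equality when $\delta\beta^{k+1}=v^{k+1}$), and $\rho>1$ if and only if $\gamma>1$, so no step hides an extra assumption.
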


\begin{table*}[t]
\begin{center}
\begin{tabular}{cccccc}
\toprule
& Seq($k+1$)& & Coeff. Seq($k$) & Negative & Noise\\
& $\EE[v^{k+1}]$ & $\leq$ & $C^kv^k$ & $-\theta^k$ & $+\varepsilon^k$\\
\midrule
Lemma \ref{lemma_rs} & NN & & $(1+\delta^k)$ & \cmark & \cmark\\
Corollary \ref{lemma_glad} & NN && $(1+\delta^k)$ & \xmark &\cmark\\
Corollary \ref{cor_poggio} & NN && 1 & \cmark & \xmark\\
Corollary \ref{cor_duflo} & NN && $1$ & \cmark &\cmark\\
Lemma \ref{lemma_rs_comb} & NN & & $\gamma^k$ & \cmark & \cmark\\
Lemma \ref{lemma_fake_rs} & NN && $(1-\delta^k)$ & \xmark & \cmark\\
\bottomrule
\end{tabular}
\caption{Convergence results for stochastic sequences of real numbers divided by their form. In the first line, the most general inequality is presented. NN stands for a sequence of \textit{nonnegative} real numbers, while \cmark (\xmark) indicates if the inequality in the corresponding lemma contains (or not) a term of that column type. $C^k$ is a general  ``coefficient", whose specific form can be retrieved from the column.}\label{table_lemmi_stoc}
\end{center}
\end{table*}

\section{Convergence of stochastic sequences}\label{sec_stoc}

In this section, we report the convergence results available for sequences of random variables, summarized in Table \ref{table_lemmi_stoc}.
We recall that the probability space is $(\Omega, \mc F,\PP)$ where $\Omega$ is the sample space, $\mc F$ is the event space, and $\PP$ is the probability function defined on the event space. The symbol $\EE$ indicates the associated expected values. We also recall that $\mc F=(\mc F_k)_{k\in\NN}$ is a filtration.

\subsection{Convergent sequences of random variables}

Firstly, we recall some results on convergent random sequences. We start with a result by Robbins and Siegmund, first appeared in \cite{RS1971}, which is the most used in the stochastic literature. In Figure \ref{fig_lemma_rs}, we provide a graphical interpretation.

\begin{lem}[Robbins--Siegmund Lemma]\label{lemma_rs}
Let $(v^k)_{k\in\NN}$, $(\theta^k)_{k\in\NN}$, $(\varepsilon^k)_{k\in\NN}$ and $(\delta^k)_{k\in\NN}$ be nonnegative sequences such that $\sum_{k=0}^\infty\varepsilon^k<\infty$, $\sum_{k=0}^\infty\delta^k<\infty$ and
\begin{equation}\label{eq_rs}
\EE[v^{k+1}|\mc F_k]\leq (1+\delta^k)v^k+\varepsilon^k-\theta^k  \text{ a.s., for all } k\in\NN
\end{equation}
Then, $\sum_{k=0}^\infty \theta^k<\infty$ and $(v^k)_{k\in\NN}$ converges a.s. to a nonnegative random variable.
\end{lem}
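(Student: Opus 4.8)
The plan is to mirror the deterministic argument of Lemma \ref{lemma_det_rs} in order to absorb the multiplicative factor $(1+\delta^k)$, and then to recognize the resulting object as a nonnegative supermartingale so that the Martingale Convergence Theorem (Theorem \ref{teo_doob}) applies.

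First I would set $\beta^k=\prod_{i=0}^{k-1}(1+\delta^i)$, with $\beta^0=1$. Since $\sum_k\delta^k<\infty$, the sequence $(\beta^k)_{k\in\NN}$ is nondecreasing and bounded, converging to some finite $\beta^\infty\geq 1$; in particular $\beta^k\geq 1$ for all $k$. Dividing \eqref{eq_rs} by $\beta^{k+1}=(1+\delta^k)\beta^k$ and writing $\tilde v^k=v^k/\beta^k$, $\tilde\varepsilon^k=\varepsilon^k/\beta^{k+1}$ and $\tilde\theta^k=\theta^k/\beta^{k+1}$, I obtain
$$\EEk{\tilde v^{k+1}}\leq \tilde v^k+\tilde\varepsilon^k-\tilde\theta^k,$$
where the new sequences are still nonnegative and $\sum_k\tilde\varepsilon^k\leq\sum_k\varepsilon^k<\infty$ because $\beta^{k+1}\geq 1$. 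This is exactly the reduced Robbins--Siegmund inequality with unit coefficient.

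The key step is to build a supermartingale out of $(\tilde v^k)_{k\in\NN}$. I would define
$$w^k=\tilde v^k+\sum_{i\geq k}\tilde\varepsilon^i,$$
which is finite and nonnegative since $(\tilde\varepsilon^i)$ is summable and all summands are nonnegative. Taking the conditional expectation and substituting the reduced inequality gives $\EEk{w^{k+1}}\leq w^k-\tilde\theta^k\leq w^k$, so $(w^k)_{k\in\NN}$ is a nonnegative supermartingale. Applying Theorem \ref{teo_doob}, $(w^k)$ converges a.s. to a finite nonnegative limit; since the tail $\sum_{i\geq k}\tilde\varepsilon^i\to 0$, it follows that $\tilde v^k$ converges a.s., and therefore $v^k=\beta^k\tilde v^k$ converges a.s. to a nonnegative random variable because $\beta^k\to\beta^\infty$. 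Summability of $(\theta^k)$ then follows by telescoping: taking full expectations in $\EEk{w^{k+1}}\leq w^k-\tilde\theta^k$ and summing yields $\sum_k\EE[\tilde\theta^k]\leq\EE[w^0]<\infty$, so $\sum_k\tilde\theta^k<\infty$ a.s., and since $\tilde\theta^k=\theta^k/\beta^{k+1}\geq\theta^k/\beta^\infty$ we also get $\sum_k\theta^k<\infty$.

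The main obstacle I anticipate is measurability together with integrability. For $(w^k)$ to be a legitimate supermartingale, the tail sum $\sum_{i\geq k}\tilde\varepsilon^i$ must be handled correctly under the conditioning on $\mc F_k$; this is transparent when $(\varepsilon^k),(\delta^k)$ are deterministic, but requires the more careful $\mc F_k$-adapted formulation of \cite{RS1971} when they are random. Likewise, the integrability hypothesis $\sup_k\EE[|w^k|]<\infty$ demanded by Theorem \ref{teo_doob} must be argued (for instance from $\EE[w^k]\leq\EE[w^0]$, using $\EE[v^0]<\infty$ and $\sum_i\EE[\varepsilon^i]<\infty$) rather than taken for granted. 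A secondary, purely bookkeeping point is that the passage from summability of the deflated $\tilde\theta^k$ back to $\theta^k$ rests on the uniform bound $\beta^k\leq\beta^\infty<\infty$, which is precisely what summability of $(\delta^k)$ guarantees.
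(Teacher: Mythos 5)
Your proposal follows essentially the same route as the paper's proof: deflate the inequality by $\beta^k=\prod_i(1+\delta^i)$ exactly as in Lemma \ref{lemma_det_rs}, build a supermartingale from the reduced sequence, and conclude with the Martingale Convergence Theorem (Theorem \ref{teo_doob}). The one substantive difference is the choice of compensator. The paper subtracts the \emph{past} partial sums, $y^n=\tilde v^n-\sum_{k=0}^{n-1}(\tilde\varepsilon^k-\tilde\theta^k)$, which keeps the process adapted to the filtration even when $(\varepsilon^k)$, $(\delta^k)$, $(\theta^k)$ are random $\mc F_k$-measurable sequences (the setting of the original Robbins--Siegmund result), at the price of losing nonnegativity: that process is only bounded below by $-\sum_k\tilde\varepsilon^k$. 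You instead add the \emph{future} tail sums, $w^k=\tilde v^k+\sum_{i\geq k}\tilde\varepsilon^i$, which gives nonnegativity for free but makes $w^k$ fail to be $\mc F_k$-measurable unless the $\varepsilon^i$ are deterministic --- precisely the caveat you raise yourself. The two constructions thus have complementary deficiencies relative to Theorem \ref{teo_doob} as stated, and both your proof and the paper's sketch defer the same remaining technicalities (stopping-time localization for random adapted sequences, and the absence of integrability hypotheses such as $\EE[v^0]<\infty$ in the statement) to \cite{RS1971}. Within that shared level of rigor your argument is correct, and your telescoping derivation of $\sum_k\theta^k<\infty$ from $\sum_k\EE[\tilde\theta^k]\leq\EE[w^0]$ together with the uniform bound $\beta^k\leq\beta^\infty$ is sound.
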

\begin{center}
\begin{figure*}[t]
\centering
\includegraphics[width=\textwidth]{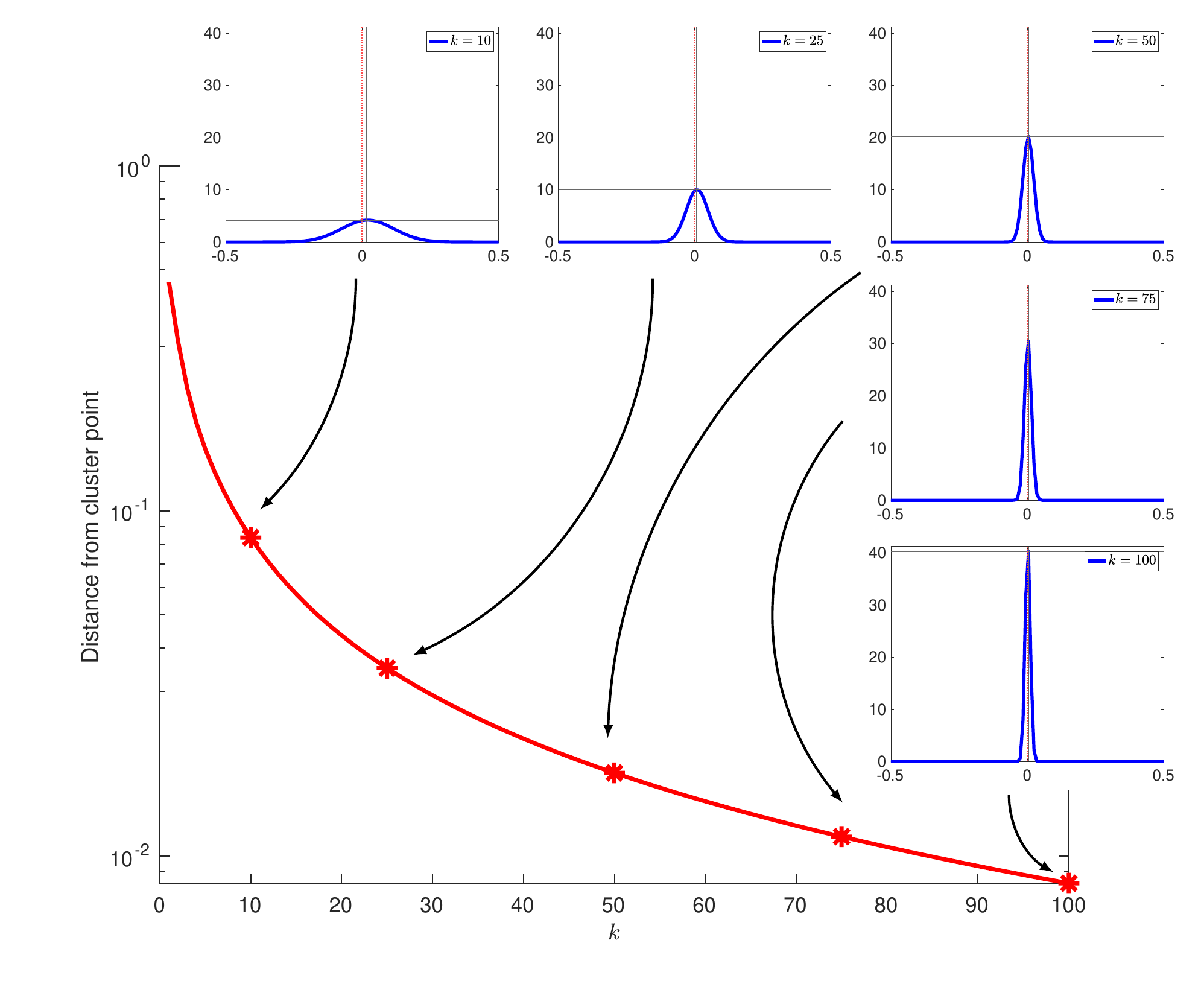}
\caption{The big plot shows how the average distance from a cluster point of the sequence $\EE[v^{k+1}|\mc F_k]= (1+\delta^k)v^k+\varepsilon^k-\theta^k$, generated by Robbins--Siegmund Lemma (Lemma \ref{lemma_rs}), goes to zero as the number of iterations increases.
The small plots show how the distribution of the distance from a cluster point varies with the iteration towards a probability distribution centered at 0, i.e., the sequence converges a.s..}\label{fig_lemma_rs}
\end{figure*}
\end{center}
\begin{proof}
The proof follows by rewriting the sequence as in Lemma \ref{lemma_det_rs}. Then, it is possible to show that the sequence
$$y^n=\tilde v^k-\sum_{k=0}^{n-1}(\tilde\varepsilon^k-\tilde\theta^k)$$
is a supermartingale. The claim then follows by the Martingale Convergence Theorem (Theorem \ref{teo_doob}). See \cite{RS1971} for technical details.
 \end{proof}
 \begin{rem}\label{remark_theta}
 Besides the convergence of the sequence $(v^k)_{k\in\NN}$, it is of particular interest also the fact that the sequence $(\theta^k)_{k\in\NN}$ is summable. Specifically, this result can be used to obtain more information once related with a (quasi-)F\'ejer property. In the stochastic case, this term is particularly useful, compared to the deterministic case, because there are not as many results on stochastic F\'ejer monotone sequences and the techniques available for the deterministic case cannot be used here. We refer to the application sections to see how the negative term is exploited.
  \end{rem}


The following results are consequences of Robbins--Siegmund Lemma. 
The first one is attributed to Gladyshev \cite{polyak1987,ljung2012}. In fact, it came implicitly in a work by Gladyshev \cite{glad1965} in which the author provides a proof of the convergence of Robbins--Monro algorithm \cite{robbins1951,ljung2012}. Even if it was published prior than the result by Robbins-Siegmund, it is a particular case of Lemma \ref{lemma_rs} \cite[Application 2]{RS1971}. 

\begin{cor}[Gladyshev \cite{glad1965}, Lemma 2.2.9, \cite{polyak1987}]\label{lemma_glad}
Let $(v^k)_{k\in\NN}$ be a nonnegative sequence of random variables. Let $\EE[v^0]<\infty$, and let $(\delta^k)_{k\in\NN}$ and $(\varepsilon^k)_{k\in\NN}$ be such that $\sum_{k=0}^\infty\delta^k<\infty$ and $\sum_{k=0}^\infty\varepsilon^k<\infty$
\begin{equation}\label{eq_glad}
\EE[v^{k+1}|\mc F_k]\leq(1+\delta^k)v^k+\varepsilon^k \text{ for all }k\in\NN
\end{equation}
Then $\lim_{k\to\infty}v^k= \bar v\geq 0$ a.s. where $\bar v$ is a random variable.
\end{cor}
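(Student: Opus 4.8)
The plan is to obtain this statement as an immediate corollary of the Robbins--Siegmund Lemma (Lemma \ref{lemma_rs}) by specializing its negative term. The key observation is that the recursion \eqref{eq_glad} coincides with the Robbins--Siegmund inequality \eqref{eq_rs} in the particular case $\theta^k=0$ for all $k\in\NN$: substituting $\theta^k\equiv0$ into
$$\EEk{v^{k+1}}\leq(1+\delta^k)v^k+\varepsilon^k-\theta^k$$
returns exactly \eqref{eq_glad}. Thus the whole task reduces to checking that the hypotheses of Lemma \ref{lemma_rs} hold in this specialization.

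First I would verify the nonnegativity and summability requirements. The sequences $(v^k)_{k\in\NN}$, $(\delta^k)_{k\in\NN}$ and $(\varepsilon^k)_{k\in\NN}$ are nonnegative by assumption, the constant sequence $\theta^k\equiv0$ is trivially nonnegative, and the summability conditions $\sum_{k=0}^\infty\delta^k<\infty$ and $\sum_{k=0}^\infty\varepsilon^k<\infty$ are part of the hypotheses. The remaining point is integrability, which is precisely the role of the extra assumption $\EE[v^0]<\infty$: taking unconditional expectations in \eqref{eq_glad} gives $\EE[v^{k+1}]\leq(1+\delta^k)\EE[v^k]+\varepsilon^k$, and iterating yields
$$\EE[v^{k}]\leq\Big(\prod_{i=0}^{k-1}(1+\delta^i)\Big)\EE[v^0]+\sum_{i=0}^{k-1}\varepsilon^i.$$
Since $\sum_k\delta^k<\infty$ forces $\prod_{i}(1+\delta^i)$ to converge to a finite limit and $\sum_k\varepsilon^k<\infty$, every $v^k$ is integrable, so the conditional expectations and the supermartingale constructed in the proof of Lemma \ref{lemma_rs} are well defined.

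With all hypotheses in place, I would simply invoke Lemma \ref{lemma_rs}: its conclusion asserts that $(v^k)_{k\in\NN}$ converges almost surely to a nonnegative random variable, while the companion statement $\sum_k\theta^k<\infty$ is vacuous here because $\theta^k\equiv0$. This is exactly the claim $\lim_{k\to\infty}v^k=\bar v\geq0$ a.s. There is no genuine obstacle in this argument; the only subtlety worth flagging is that $\EE[v^0]<\infty$ is not decorative but exactly the ingredient that guarantees the integrability implicitly used inside Lemma \ref{lemma_rs}, as made explicit by the iterated bound above.
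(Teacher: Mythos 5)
Your proposal is correct and follows essentially the same route as the paper, which likewise obtains the result by applying the Robbins--Siegmund Lemma (Lemma \ref{lemma_rs}) with $\theta^k\equiv0$. The only addition is your explicit verification via the iterated bound that $\EE[v^0]<\infty$ yields integrability of every $v^k$, a detail the paper leaves implicit but which does not change the argument.
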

\begin{proof}
It follows from Lemma \ref{lemma_rs} letting $\theta_k=0$. Different proofs can be found in \cite[Application 2]{RS1971}, \cite[Lemma 2.2.9]{polyak1987} or \cite[Theorem 3.3.6]{borkar1995}.
 \end{proof}
\begin{rem}
In \cite[Lemma 2.2]{barty2007}, it is shown that if Equation \eqref{eq_glad} holds, then the sequence $(v^k)_{k\in\NN}$ is bounded.
 \end{rem}
Similarly to Lemma \ref{lemma_comb} and Lemma \ref{lemma_det_rs} in the deterministic case, many results can be obtained removing or changing the sequences in \eqref{eq_rs}. In fact, the next corollary is straightforward from Lemma \ref{lemma_rs}.

\begin{cor}[Theorem B.2, \cite{poggio2011}]\label{cor_poggio}
Let $(v^k)_{k\in\NN}$ and $(\theta^k)_{k\in\NN}$ be positive sequences adapted to $\mc F=(\mc F_k)_{k\in\NN}$ and let
$$\EE[v^{k+1} |\mc F_k]\leq v^k-\theta^k \text{ for all } k\in\NN.$$
Then, $(v^k)_{k\in\NN}$ converges a.s. to a finite random variable $\bar v$ and $\sum_{k=0}^\infty\theta^k<\infty$.
\end{cor}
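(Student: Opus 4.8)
The plan is to obtain this corollary as an immediate specialization of the Robbins--Siegmund Lemma (Lemma \ref{lemma_rs}), which has already been established. The hypothesis here is
$$\EE[v^{k+1}\mid\mc F_k]\leq v^k-\theta^k\quad\text{for all }k\in\NN,$$
and the point to recognize is that this is exactly the Robbins--Siegmund inequality \eqref{eq_rs}, namely $\EE[v^{k+1}\mid\mc F_k]\leq (1+\delta^k)v^k+\varepsilon^k-\theta^k$, in the degenerate case $\delta^k=0$ and $\varepsilon^k=0$ for every $k\in\NN$. So the entire proof reduces to checking that the hypotheses of Lemma \ref{lemma_rs} are met with this choice.

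First I would verify nonnegativity: since $(v^k)_{k\in\NN}$ and $(\theta^k)_{k\in\NN}$ are assumed positive, they are in particular nonnegative, and the constant sequences $\delta^k\equiv 0$ and $\varepsilon^k\equiv 0$ are trivially nonnegative. Next, the summability requirements $\sum_{k=0}^\infty\delta^k<\infty$ and $\sum_{k=0}^\infty\varepsilon^k<\infty$ hold vacuously, since both sums equal $0$. With all the hypotheses of Lemma \ref{lemma_rs} verified, I would simply invoke it to conclude that $\sum_{k=0}^\infty\theta^k<\infty$ and that $(v^k)_{k\in\NN}$ converges almost surely to a nonnegative random variable $\bar v$.

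The final step is to observe that ``nonnegative random variable'' in the conclusion of Lemma \ref{lemma_rs} is precisely the ``finite random variable $\bar v$'' claimed here: the almost sure limit exists and is nonnegative, and since each $v^k$ is a genuine (finite) random variable and the limit is attained pathwise, $\bar v$ is finite almost surely. There is essentially no obstacle in this argument; the only thing that requires any care is confirming that the positivity assumption on the sequences is enough to satisfy the nonnegativity hypothesis of Lemma \ref{lemma_rs}, which it is. I would therefore present the proof as a one-line reduction, noting only that the result follows from Lemma \ref{lemma_rs} by setting $\delta^k=\varepsilon^k=0$ for all $k\in\NN$.
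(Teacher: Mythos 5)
Your proof is correct and follows exactly the paper's own route: the paper likewise dispatches this corollary as an immediate specialization of the Robbins--Siegmund Lemma (Lemma \ref{lemma_rs}) with the noise term set to zero, and your version is in fact slightly more careful in explicitly noting that $\delta^k=0$ as well as $\varepsilon^k=0$ and that the summability hypotheses hold vacuously.
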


\begin{proof}
It follows from Robbins--Siegmund Lemma by taking $\varepsilon^k=0$. For a different proof, see \cite{poggio2011}.
 \end{proof}

Interestingly, we note that besides convergence of the sequence, there is additional information to be derived from Robbins--Siegmund Lemma. For instance, the next corollary is used in \cite{duflo2013} to prove the Law of Large Numbers for martingales \cite[Theorem 1.3.15]{duflo2013} (see also Section \ref{sec_app_stoc}). 

\begin{cor}[Corollary 1.3.13, \cite{duflo2013}]\label{cor_duflo}
Let $(v^k)_{k\in\NN}$, $(\theta^k)_{k\in\NN}$ and $(\varepsilon^k)_{k\in\NN}$ be positive sequences adapted to $\mc F=(\mc F_k)_{k\in\NN}$ and let $(a_k)_{k\in\NN}$ be a strictly positive, increasing sequence adapted to $\mc F$ such that
$$\EE[v^{k+1} |\mc F_k]\leq v^k+\varepsilon^k-\theta^k \text{ for all } k\in\NN.$$
Then, if $\sum_{k=1}^\infty a_k^{-1}\varepsilon^k<\infty$ the following hold a.s.:
\begin{itemize}
    \item[(i)] $\sum_{k=1}^\infty a_k^{-1}(v^{k+1}-v^k)$ converges and $\sum_{k=1}^\infty a_k\theta^k<\infty$;
    \item[(ii)] $(v^k)_{k\in\NN}$ converges if $(a_k)_{k\in\NN}$ is convergent;
    \item[(iii)] $\lim_{k\to\infty}a_k^{-1}v^k= 0$ and $\lim_{k\to\infty}a_k^{-1}v^{k+1}= 0$ if $(a_k)_{k\in\NN}$ is divergent.
\end{itemize}
\end{cor}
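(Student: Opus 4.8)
The plan is to apply the Robbins--Siegmund Lemma (Lemma \ref{lemma_rs}) not to $(v^k)_{k\in\NN}$ itself---for which we control only the weighted noise $\sum_k a_k^{-1}\varepsilon^k$ and \emph{not} $\sum_k\varepsilon^k$---but to the sequence of weighted increments
$$u^n=\sum_{k=1}^{n-1}a_k^{-1}(v^{k+1}-v^k).$$
Since each $a_k$ is $\mc F_k$-measurable, $u^n$ is $\mc F_n$-measurable and $a_n^{-1}$ may be pulled out of the conditional expectation, so the standing inequality gives
$$\EE[u^{n+1}\,|\,\mc F_n]=u^n+a_n^{-1}\,\EE[v^{n+1}-v^n\,|\,\mc F_n]\le u^n+a_n^{-1}\varepsilon^n-a_n^{-1}\theta^n.$$
This already has the Robbins--Siegmund shape, with unit coefficient, summable noise $a_n^{-1}\varepsilon^n$ (by hypothesis) and negative term $a_n^{-1}\theta^n$.

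The main obstacle is that Lemma \ref{lemma_rs} requires a \emph{nonnegative} sequence, whereas $u^n$ is a priori signed. I would resolve this by summation by parts: regrouping yields
$$u^n=a_{n-1}^{-1}v^n-a_1^{-1}v^1+\sum_{j=2}^{n-1}\bigl(a_{j-1}^{-1}-a_j^{-1}\bigr)v^j.$$
Because $(a_k)_{k\in\NN}$ is increasing, the weights $a_{j-1}^{-1}-a_j^{-1}$ are nonnegative, and since every $v^j\ge0$, all terms except $-a_1^{-1}v^1$ are nonnegative; hence $u^n\ge-a_1^{-1}v^1$. Shifting by this finite $\mc F_1$-measurable constant, $\tilde u^n:=u^n+a_1^{-1}v^1\ge0$ obeys the same recursion, and applying Lemma \ref{lemma_rs} to $\tilde u^n$ with $\delta^k\equiv0$ shows that $\tilde u^n$---equivalently $u^n$---converges a.s.\ and that $\sum_k a_k^{-1}\theta^k<\infty$. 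This is statement (i); I read the ``$\sum_k a_k\theta^k$'' there as a typo for $\sum_k a_k^{-1}\theta^k$, which is exactly what this argument delivers.

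For (ii), if $(a_k)_{k\in\NN}$ converges then, being increasing and positive, it is bounded, $a_1\le a_k\uparrow a_\infty<\infty$; hence $\sum_k\varepsilon^k\le a_\infty\sum_k a_k^{-1}\varepsilon^k<\infty$, and the \emph{original} inequality is of Robbins--Siegmund form with $\delta^k\equiv0$ and summable noise, so Lemma \ref{lemma_rs} gives a.s.\ convergence of $(v^k)_{k\in\NN}$. For (iii) I would invoke the classical Kronecker lemma: by (i) the series $\sum_k a_k^{-1}(v^{k+1}-v^k)$ converges, its summands being $a_k^{-1}x_k$ with $x_k=v^{k+1}-v^k$ whose partial sums telescope to $v^{n+1}-v^1$; when $a_n\to\infty$, Kronecker's lemma yields $a_n^{-1}(v^{n+1}-v^1)\to0$, i.e.\ $a_n^{-1}v^{n+1}\to0$. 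Finally $0\le a_n^{-1}v^n\le a_{n-1}^{-1}v^n\to0$ (the last limit being the shifted statement $a_m^{-1}v^{m+1}\to0$), so $a_n^{-1}v^n\to0$ as well.

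In summary, the only genuinely delicate step is the second one: converting the signed weighted-increment sequence into a nonnegative one so that Robbins--Siegmund applies, which hinges precisely on the monotonicity of $(a_k)_{k\in\NN}$ and the positivity of $(v^k)_{k\in\NN}$ via the summation-by-parts identity, while the $\mc F_k$-measurability of $a_k$ is what legitimizes the conditional-expectation computation. Everything after that is bookkeeping together with the Kronecker lemma.
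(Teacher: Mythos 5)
Your proof is correct and follows essentially the paper's own route: your weighted-increment sequence $u^n$ coincides, after your summation-by-parts identity, with the paper's auxiliary sequence $u^k=\sum_{i=1}^k v^i\left(a_{i-1}^{-1}-a_i^{-1}\right)+v^ka_k^{-1}$ up to an additive $\mc F_1$-measurable constant, and both arguments apply Robbins--Siegmund (Lemma \ref{lemma_rs}) to the very same inequality $\EE[u^{k+1}|\mc F_k]\leq u^k+a_k^{-1}(\varepsilon^k-\theta^k)$, with (ii) and (iii) then completed exactly as you do (Kronecker's lemma is also how the cited reference \cite{duflo2013} finishes the divergent case). Your reading of the conclusion $\sum_{k} a_k\theta^k<\infty$ as a typo for $\sum_{k} a_k^{-1}\theta^k<\infty$ is the correct interpretation, consistent with \cite[Corollary 1.3.13]{duflo2013} and with what your Robbins--Siegmund application actually delivers.
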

\begin{proof}
Let
$$u^k=\sum_{i=1}^kv^i\left(a_{i-1}^{-1}-a_i^{-1}\right)+v^ka_k^{-1}.$$
Then we can apply Robbins--Siegmund Lemma to the inequality
$$\EEk{u_{k+1}}\leq u^k+a_k^{-1}(\varepsilon^k-\theta^k)$$
and conclude the proof. For technical details, we refer to \cite{duflo2013}.  
 \end{proof}

The following proposition explicitly connects stochastic quasi-F\'ejer monotone sequences to Robbins--Siegmund Lemma.

\begin{prop}[Proposition 2.3, \cite{combettes2015}]\label{prop_gen_rs}
Let $\mc X\subseteq\RR^n$ be nonempty and closed, let $\phi:\:\RR_{\geq0} \to \RR_{\geq0}$ be a strictly increasing function such that $\lim_{t \to \infty} \phi(t)=\infty$, and let $(x^k)_{k\in\mathbb{N}}$ be a sequence of random variables. Let $\mc C(x^k)$ be the set of sequential cluster points of $(x^k)_{k \in \NN}$.
Suppose that, for every $\bar x\in\mc X$, there exist $(\delta^k)_{k\in\mathbb{N}}$, $(\theta^k)_{k\in \mathbb{N}}$, and $(\varepsilon^k)_{k\in\mathbb{N}}$ positive sequences such that $\sum_{k=0}^\infty\delta^k<\infty$, $\sum_{k=0}^\infty \varepsilon^k<\infty$ and
$$\begin{aligned}
\mathbb{E}(&\phi(||x^{k+1}-\bar x||)|\mc{F}_n)+\theta^k(\bar x) \\
&\leq(1+\delta^k(\bar x))\phi(||x^k-\bar x||)+\varepsilon^k(\bar x) \text{ a.s. for all }k\in\NN
\end{aligned}$$
Then the following hold:
\begin{enumerate}
    \item[(i)] $\sum_{k=1}^\infty \theta^k< \infty$ a.s.;
    \item[(ii)] $(x^k)_{k\in\mathbb{N}}$ is bounded a.s.;
    \item[(iii)] $(||x^k-\bar x||)_{k\in \mathbb{N}}$ converges a.s.;
    \item[(iv)] Let $\mc C(x^k) \subset \mc X$ a.s., then $(x^k)_{k\in\mathbb{N}}$ converges a.s..
\end{enumerate}
\end{prop}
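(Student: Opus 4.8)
The plan is to recognize that, for each fixed $\bar x\in\mc X$, the scalar sequence $v^k=\phi(\norm{x^k-\bar x})$ satisfies exactly the hypotheses of the Robbins--Siegmund Lemma (Lemma \ref{lemma_rs}). Indeed, the postulated inequality reads $\EEk{v^{k+1}}\leq(1+\delta^k)v^k+\varepsilon^k-\theta^k$ with $(v^k),(\theta^k),(\varepsilon^k),(\delta^k)$ nonnegative and $\sum_k\varepsilon^k<\infty$, $\sum_k\delta^k<\infty$. Applying Lemma \ref{lemma_rs} immediately yields $\sum_k\theta^k<\infty$ a.s., which is (i), together with the a.s. convergence of $(\phi(\norm{x^k-\bar x}))_{k\in\NN}$ to some nonnegative random variable.

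Next I would pass from convergence of $\phi(\norm{x^k-\bar x})$ to convergence of $\norm{x^k-\bar x}$ itself, using only that $\phi$ is strictly increasing with $\lim_{t\to\infty}\phi(t)=\infty$. Boundedness of the convergent sequence $\phi(\norm{x^k-\bar x})$ together with $\phi\to\infty$ forces $\norm{x^k-\bar x}$ to be bounded; since $\mc X\neq\varnothing$, fixing one $\bar x$ already establishes (ii). For convergence, I argue that $(\norm{x^k-\bar x})_k$ cannot have two distinct subsequential limits $a<b$: choosing $c\in(a,b)$ and evaluating $\phi$ slightly below $c$ along the first subsequence and slightly above $c$ along the second, strict monotonicity of $\phi$ produces incompatible bounds on the single limit $L$ of $\phi(\norm{x^k-\bar x})$, namely $L\leq\phi\!\left(\tfrac{a+c}{2}\right)<\phi\!\left(\tfrac{b+c}{2}\right)\leq L$, a contradiction. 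Hence the bounded sequence $\norm{x^k-\bar x}$ has a unique cluster point and converges, which is (iii); note that no continuity of $\phi$ is required.

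The delicate point is (iv): statement (iii) holds a.s. for each $\bar x$ separately, but the exceptional null set may depend on $\bar x$ and $\mc X$ may be uncountable, so I cannot apply the deterministic Lemma \ref{lemma247} pathwise on a naive intersection. I would circumvent this via separability. Choose a countable dense set $D=\{z_j\}_{j\in\NN}\subseteq\mc X$ (which exists since $\mc X\subseteq\RR^n$), let $\Omega_j$ be the full-measure event on which $(\norm{x^k-z_j})_k$ converges, and set $\Omega_0=\bigcap_j\Omega_j$, still of full measure. On $\Omega_0$, intersected with the full-measure events where $(x^k)$ is bounded and where $\mc C(x^k)\subseteq\mc X$, I would upgrade convergence to \emph{every} $\bar x\in\mc X$: given $\bar x$ and $\epsilon>0$, density yields $z_j$ with $\norm{\bar x-z_j}<\epsilon$, and the triangle inequality $|\norm{x^k-\bar x}-\norm{x^k-z_j}|\leq\norm{\bar x-z_j}$ bounds $\limsup_k\norm{x^k-\bar x}-\liminf_k\norm{x^k-\bar x}$ by $2\epsilon$; letting $\epsilon\to0$ shows $(\norm{x^k-\bar x})_k$ converges for all $\bar x\in\mc X$ on this single event.

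Having secured, on one full-measure event, that $(\norm{x^k-\bar x})_k$ converges for every $\bar x\in\mc X$ and that every sequential cluster point of $(x^k)$ lies in $\mc X$, I would finish by invoking the deterministic Lemma \ref{lemma247} pathwise, concluding that $(x^k)$ converges a.s. to a point in $\mc X$. The main obstacle is precisely this transfer from a per-$\bar x$ almost-sure statement to a uniform-in-$\bar x$ one; the separability argument is exactly what legitimizes the pathwise application of Lemma \ref{lemma247}.
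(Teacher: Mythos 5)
Your proposal is correct, and for parts (i)--(iii) it follows essentially the paper's own route: apply the Robbins--Siegmund Lemma (Lemma \ref{lemma_rs}) to $v^k=\phi(\norm{x^k-\bar x})$ to get (i) and a.s. convergence of $\phi(\norm{x^k-\bar x})$, then use $\lim_{t\to\infty}\phi(t)=\infty$ for boundedness; your two-subsequential-limit argument for (iii) simply fills in the details that the paper defers to the original reference, and your observation that no continuity of $\phi$ is needed is a nice touch. Where you genuinely diverge is (iv). The paper takes two sequential cluster points $\bar x,\bar y$ and runs an Opial-type inner-product computation ($2\langle x^k,\bar x-\bar y\rangle=\normsq{x^k-\bar y}-\normsq{x^k-\bar x}+\normsq{\bar x}-\normsq{\bar y}$, limits along both subsequences, conclude $\bar x=\bar y$), but in doing so it invokes (iii) at the \emph{random} points $\bar x,\bar y$, even though (iii) was established almost surely for each \emph{fixed} $\bar x\in\mc X$ with a null set that may depend on $\bar x$ --- exactly the uncountability issue you flag. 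Your separability argument (countable dense $D\subseteq\mc X$, intersection of countably many full-measure events, triangle inequality to upgrade to all of $\mc X$, then pathwise Lemma \ref{lemma247}) resolves this rigorously and is in fact closer to the proof in the cited source \cite{combettes2015}. So your route buys rigor on precisely the step the paper's condensed proof glosses over, at the cost of a slightly longer argument; the paper's inner-product computation is shorter and would be complete once one prepends your uniform-in-$\bar x$ convergence step to legitimize evaluating (iii) at random cluster points.
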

\begin{proof}
(i) It follows from Lemma \ref{lemma_rs}.\\
(ii) Let $v^k=\norm{x^k-\bar x}$. Then, $\lim_{k\to\infty}\phi(v^k)= \bar v\in\RR_{\geq0}$ by Lemma \ref{lemma_rs}. Since $\lim_{t\to\infty}\phi(t)=\infty$, $v^k$ is bounded and, therefore, also $x^k$ is bounded.\\
(iii) It follows from (ii), for more details see \cite[Proposition 2.3]{combettes2015}.\\
(iv) Let $\bar x,\bar y\in\mc C(x^k)$. Then, there exist two subsequences $(x_{k_n})$ and $(x_{k_m})$ such that $x_{k_n}\to \bar x$ and $x_{k_m}\to\bar y$ as $k\to \infty$. By (iii) the sequences $(\norm{x^k-\bar x})_{k\in\NN}$ and $(\|x^k-\bar y\|)_{k\in\NN}$ converge and it holds that $\langle x^k,\bar x-\bar y\rangle\to \rho$ for some $\rho\in\RR^n$. Then, $\langle \bar x,\bar x-\bar y\rangle= \rho$, $\langle \bar y,\bar x-\bar y\rangle= \rho$ and
$$0=\langle \bar x, \bar x-\bar y\rangle-\langle \bar y,\bar x-\bar y\rangle=\|\bar x-\bar y\|^{2}.$$ 
Therefore, $\bar x=\bar y$ and $\lim_{k\to\infty}x^k=\bar x$.
 \end{proof}

\begin{rem}
A specific case of Proposition \ref{prop_gen_rs} was also presented in \cite[Lemma 2.3]{barty2007} without the negative term $\theta^k$, using $\phi=|\cdot|^2$ and setting $\delta^k=0$.\\
More generally, an analogous result holds with $\phi=|\cdot|^p$, $p>0$ \cite[Lemma 2.2]{combettes2019}.
 \end{rem}

Analogously to the deterministic case, also for sequences of random variables, we can find results for sequences with a coefficient strictly smaller than 1. This is the case of the following results.

\begin{lem}[Lemma 2.1, \cite{combettes2019}]\label{lemma_rs_comb}
Let $(v^k)_{k \in \mathbb{N}}$, $(\theta^k)_{k \in \mathbb{N}}$ and $(\varepsilon^k)_{k \in \mathbb{N}}$ be sequences of nonnegative random variables and suppose that there exists a nonnegative sequence $(\gamma^k)_{k \in \mathbb{N}}$
such that $\lim_{k\to\infty} \gamma^k<1$ and
$$
\EE[v^{k+1} \mid \mathcal{F}_{k}] \leq \gamma^k v^k+\varepsilon^k -\theta^k\text{ for all } k\in\NN.
$$
Moreover, let $\EE[v^0]<\infty$ and $\sum_{k=1}^\infty \EE[\varepsilon^k]<\infty .$ Then $\sum_{k=1}^\infty \EE[v^k]<\infty$ and
$\sum_{k=1}^\infty \EE[\theta^k]<\infty .$
\end{lem}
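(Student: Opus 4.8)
The plan is to reduce the stochastic inequality to a purely deterministic recursion for the expected values $a^k := \EEx{v^k}$ and then invoke the deterministic Lemma \ref{lemma_comb}. First I would take total expectations on both sides of the conditional inequality. Since $(\gamma^k)_{k\in\NN}$ is a deterministic scalar sequence and all the random variables involved are nonnegative, the tower property and linearity give $\EEx{v^{k+1}} = \EEx{\EEk{v^{k+1}}} \leq \gamma^k \EEx{v^k} + \EEx{\varepsilon^k} - \EEx{\theta^k}$. Writing $a^k=\EEx{v^k}$, $e^k=\EEx{\varepsilon^k}$ and $t^k=\EEx{\theta^k}$, all nonnegative, this reads
$$a^{k+1}\leq \gamma^k a^k + e^k - t^k \text{ for all } k\in\NN,$$
a deterministic recursion with $a^0=\EEx{v^0}<\infty$ and $\sum_{k=1}^\infty e^k<\infty$. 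Along the way I would verify, by induction on this inequality, that every $a^k$ is finite (using $0\leq\theta^k\leq\gamma^k v^k+\varepsilon^k$ a.s.\ to control $t^k$), so that the subtraction of $\EEx{\theta^k}$ and all subsequent manipulations are well defined.

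Second, I would exploit the strict bound $\lim_{k\to\infty}\gamma^k<1$ to pass to a constant contraction factor on the tail. Choosing $\gamma$ with $\lim_{k\to\infty}\gamma^k<\gamma<1$, there exists $k_0\in\NN$ such that $\gamma^k\leq\gamma$ for all $k\geq k_0$, and since $a^k\geq0$ we obtain $a^{k+1}\leq\gamma a^k+e^k-t^k$ for all $k\geq k_0$. This is precisely the hypothesis of Lemma \ref{lemma_comb}, applied to the sequences shifted to start at index $k_0$, with the constant coefficient $\gamma\in(0,1)$.

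Third, I would apply Lemma \ref{lemma_comb} to this tail. Part (iii) yields $\sum_{k\geq k_0}t^k<\infty$, and, crucially because $\gamma\neq1$, part (iv) yields $\sum_{k\geq k_0}a^k<\infty$. Since only finitely many indices were discarded and each of $a^1,\dots,a^{k_0-1}$ and $t^1,\dots,t^{k_0-1}$ is finite, I add them back to conclude $\sum_{k=1}^\infty\EEx{v^k}<\infty$ and $\sum_{k=1}^\infty\EEx{\theta^k}<\infty$, which is the claim. As an alternative to citing Lemma \ref{lemma_comb}, the tail bounds follow directly by summing $a^{k+1}\leq\gamma a^k+e^k$ over $k_0\leq k\leq N$ and rearranging into $(1-\gamma)\sum_{k=k_0}^N a^k\leq a^{k_0}+\sum_{k\geq k_0}e^k$, a bound uniform in $N$; an analogous rearrangement then controls $\sum t^k$.

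The main obstacle I anticipate is not any single estimate but the bookkeeping that justifies the reduction to expectations: confirming that each $\EEx{v^k}$ is finite, and cleanly isolating the tail $k\geq k_0$, where the coefficient is genuinely below $1$, from the finitely many initial indices. The summability of $(\varepsilon^k)$ from $k=1$ onward together with $\EEx{v^0}<\infty$ is exactly what keeps every partial sum finite, while the strict inequality $\lim_{k\to\infty}\gamma^k<1$ (rather than $\leq1$) is what upgrades mere convergence of $(a^k)$ to \emph{summability} through Lemma \ref{lemma_comb}(iv).
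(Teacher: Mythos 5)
Your proof is correct, and it is considerably more explicit than the paper's, which does not actually carry out the argument: it only remarks that the result ``follows with arguments similar to Lemma \ref{lemma_xu02} and Lemma \ref{lemma_neg_rs}'' or can be obtained ``as a consequence of Lemma \ref{lemma_rs}'', and defers all technical details to \cite{combettes2019}. Your route is genuinely different from either suggestion: by taking total expectations you collapse the stochastic inequality into a deterministic recursion for $a^k=\EEx{v^k}$, isolate the tail $k\geq k_0$ on which $\gamma^k\leq\gamma<1$, and read off both conclusions from the constant-coefficient Lemma \ref{lemma_comb}, parts (iii) and (iv). This buys two things. First, the argument is self-contained and elementary, and your bookkeeping (the induction giving $a^k<\infty$, using $0\leq\theta^k\leq\gamma^k v^k+\varepsilon^k$ a.s.\ so that subtracting $\EEx{\theta^k}$ is legitimate) is precisely what makes the reduction to expectations rigorous. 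Second, it delivers the conclusions directly \emph{in expectation}, which is what the lemma asserts; the Robbins--Siegmund route hinted at in the paper (writing $\gamma^k v^k=v^k-(1-\gamma^k)v^k$ and applying Lemma \ref{lemma_rs}) naturally yields almost-sure summability of $\sum_k\bigl((1-\gamma^k)v^k+\theta^k\bigr)$, and converting that into $\sum_k\EEx{v^k}<\infty$ requires an extra step --- essentially the expectation recursion you wrote down anyway. One small caveat, which is an artifact of the statement's indexing rather than a gap in your argument: the hypothesis only controls $\sum_{k\geq1}\EEx{\varepsilon^k}$, so your induction implicitly needs $\EEx{\varepsilon^0}<\infty$ to get $a^1<\infty$ started; it is worth flagging this when invoking the lemma.
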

\begin{proof}
The proof follows with arguments similar to Lemma \ref{lemma_xu02} and Lemma \ref{lemma_neg_rs} but it can be proven also as a consequence of Lemma \ref{lemma_rs}. For technical details we refer to \cite{combettes2019}.
\end{proof}

We conclude this section with a lemma that is quite popular in the literature \cite{yousefian2017, koshal2013,polyak1987} and cited along with Robbins--Siegmund Lemma. It is the stochastic counterpart of Lemma \ref{lemma_polyak1} even if it has a slightly different notation.

\begin{lem}[Lemma 2.2.10, \cite{polyak1987}]\label{lemma_fake_rs}
Let $(v^k)_{k\in\NN}$ be a sequence of nonnegative random variables such that $\EE[v^0]< \infty$ and let $(\delta^k)_{k\in\NN}$ and $(\varepsilon^k)_{k\in\NN}$ be deterministic nonnegative sequences such that $0\leq \delta^k\leq 1$ for all $k\in\NN$, $\sum_{k=0}^\infty\delta^k=\infty$, $\sum_{k=0}^\infty\varepsilon^k<\infty$, $\lim_{k\to\infty}\frac{\varepsilon^k}{\delta^k}=0$ and 
$$\EE[v^{k+1}|\mc F_k]\leq (1-\delta^k)v^k+\varepsilon^k \text{ a.s., for all }k\in\NN.$$
Then, $\lim_{k\to\infty} v^k = 0$ a.s..
\end{lem}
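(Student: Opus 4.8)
The plan is to recast the hypothesis into the exact form required by the Robbins--Siegmund Lemma (Lemma \ref{lemma_rs}) and then to squeeze the extra quantitative information that lemma provides. Since $v^k\geq 0$ and $\delta^k\geq 0$, I would absorb the contractive part into a \emph{negative} term and write the recursion as
\[
\EEk{v^{k+1}}\leq v^k+\varepsilon^k-\theta^k,\qquad \theta^k:=\delta^k v^k,
\]
so that the coefficient multiplying $v^k$ is exactly $1$ (equivalently, the summable sequence in the role of $\delta^k$ in Lemma \ref{lemma_rs} is taken identically zero). The term $\theta^k=\delta^k v^k$ is nonnegative and $\mc F_k$-measurable, and $\sum_k\varepsilon^k<\infty$ holds by hypothesis, so every assumption of Lemma \ref{lemma_rs} is satisfied.

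Applying Lemma \ref{lemma_rs} then delivers two conclusions at once: first, $(v^k)_{k\in\NN}$ converges a.s.\ to some nonnegative random variable $\bar v$; and second, the negative term is summable, that is, $\sum_{k=0}^\infty \delta^k v^k<\infty$ a.s. This second fact is the pivot of the argument, and it is where the \emph{non}-summability of $(\delta^k)_{k\in\NN}$ finally enters, in the same spirit as Lemma \ref{lemma_comb}(iv): a strictly positive limit $\bar v$ simply cannot coexist with $\sum_k\delta^k v^k<\infty$ once $\sum_k\delta^k=\infty$.

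It remains to upgrade convergence of $(v^k)_{k\in\NN}$ to convergence \emph{to zero}, which I expect to be the main (if elementary) obstacle. I would argue pathwise on the almost sure event on which both $v^k\to\bar v$ and $\sum_k \delta^k v^k<\infty$ hold. Suppose, for contradiction, that $\bar v>0$ there. Then there is $K$ with $v^k\geq \bar v/2$ for all $k\geq K$, whence
\[
\sum_{k\geq K}\delta^k v^k\;\geq\;\frac{\bar v}{2}\sum_{k\geq K}\delta^k\;=\;\infty,
\]
because the tail of $\sum_k\delta^k$ diverges. This contradicts $\sum_k\delta^k v^k<\infty$, so $\bar v=0$ and therefore $v^k\to 0$ a.s.

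Finally, I would remark that this martingale-based route never invokes the condition $\lim_{k\to\infty}\varepsilon^k/\delta^k=0$: it is precisely the \emph{nonnegativity} of $(v^k)_{k\in\NN}$ that lets us legitimately set $\theta^k=\delta^k v^k\geq 0$ and lean on Lemma \ref{lemma_rs}. That ratio condition is the faithful stochastic transcription of condition~3 of the deterministic Lemma \ref{lemma_polyak1}, where $v^k$ may be real-valued and the same trick is unavailable, so there it does real work; an alternative proof mimicking Lemma \ref{lemma_polyak1} (iterating the bound and using $\prod_k(1-\delta^k)=0$) would employ it directly. I would nonetheless present the Robbins--Siegmund argument, since it absorbs the randomness transparently and the summability of $\theta^k$ substitutes for that coupling hypothesis.
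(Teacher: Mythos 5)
Your proof is correct, but it is not the route the paper takes. The paper's own proof works in two layers: it first takes total expectations, so that $\EE[v^{k+1}]\leq(1-\delta^k)\EE[v^k]+\varepsilon^k$, and applies the deterministic Lemma \ref{lemma_polyak1} --- this is exactly where the ratio hypothesis $\lim_{k\to\infty}\varepsilon^k/\delta^k=0$ does its work --- to conclude $\EE[v^k]\to 0$; it then shows that $u^k=v^k-\sum_{i=k}^\infty\varepsilon^i$ is a supermartingale (here $\delta^k\in[0,1]$ and $v^k\geq0$ are used), bounded below by $-\sum_{i=0}^\infty\varepsilon^i$, so Doob's martingale convergence theorem (Theorem \ref{teo_doob}) gives a.s.\ convergence of $(v^k)_{k\in\NN}$ to some $\bar v\geq 0$, and combining with $\EE[v^k]\to0$ forces $\bar v=0$ a.s. Your argument instead stays entirely inside Robbins--Siegmund: taking $\theta^k=\delta^k v^k$ as the negative term is legitimate (it is adapted and nonnegative precisely because $v^k\geq0$), Lemma \ref{lemma_rs} then gives both a.s.\ convergence and $\sum_k\delta^k v^k<\infty$ a.s., and your pathwise contradiction with $\sum_k\delta^k=\infty$ cleanly rules out a positive limit. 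This is sound --- and in fact the paper itself records this alternative in the remark immediately following the lemma, though only as a two-line sketch; your version supplies the missing pathwise step that upgrades ``$\delta^k$ not summable'' into $\bar v=0$. The trade-off between the two: your route is shorter, avoids the deterministic lemma entirely, and shows that the hypothesis $\varepsilon^k/\delta^k\to0$ is superfluous for the almost-sure conclusion (so you prove a slightly stronger statement); the paper's route does use that hypothesis but is rewarded with the additional conclusion $\EE[v^k]\to0$, i.e., convergence in mean as well as almost surely, which your argument does not yield without an extra uniform-integrability step.
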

\begin{proof}
The proof follows by applying Lemma \ref{lemma_polyak1} to
\begin{equation}\label{eq_fake_rs}
\EE[v^{k+1}]\leq(1-\delta^k)\EE[v^k]+\varepsilon^k
\end{equation}
and showing that
$$u^k=v^k-\sum_{i=k}^\infty\varepsilon_i$$
is a supermartingale. Then, the claim follows by the Martingale Convergence Theorem (Theorem \ref{teo_doob}). See \cite{polyak1987} for technical details.  
 \end{proof}

\begin{rem}
To retrieve the same form of Lemma \ref{lemma_polyak1}, one can refer to \cite[Lemma 1(a)]{lei2020br} where the following statement is provided. \\
Let $(v_{k})_{k \in\NN}$ and $(\varepsilon^k)_{k\in\NN}$ be sequences of nonnegative random variables such that 
$$\mathbb{E}[v^{k+1} \mid \mathcal{F}_k] \leq \gamma^{k} v^{k}+\varepsilon^{k} \text{ for all } k\in\NN,$$ 
$\mathbb{E}[v_{0}]<\infty$, $0 \leq \gamma^{k}<1$, $\sum_{k=0}^{\infty}(1-\gamma^{k})=\infty$, $\sum_{k=0}^{\infty} \varepsilon^{k}<\infty$, and $\lim _{k \rightarrow \infty} \frac{\varepsilon^{k}}{1-\gamma^{k}}=0 .$ Then,
$\lim_{k\to\infty} v_{k} = 0$ a.s..
\end{rem}

\begin{rem}
Convergence to zero in Lemma \ref{lemma_fake_rs} can also be derived from Lemma \ref{lemma_rs} by exploiting the properties of the negative term in Equation \eqref{eq_fake_rs}. In fact, from Lemma \ref{lemma_rs} and Equation \eqref{eq_fake_rs} we have that $\sum_{k=1}^\infty\delta^kv^k<\infty$ and since $\delta^k$ is not summable, it must be $\lim_{k\to\infty}v^k= 0$.
\end{rem}

\section{Convergence with variable metric}\label{sec_vm}

Let us consider in this section the more general setting with variable metric, i.e., cases in which the metric is allowed to change at each iteration. Applications of these results involve theoretical problems as monotone inclusions \cite{combettes2014,vu2013}, as well as inverse problems \cite{combettes2013}, convex feasibility problems \cite{nguyen2016,combettes2013} and constrained convex minimization \cite{cui2019vm}.  All the results in this section concern F\'ejer properties and we consider mostly the deterministic case. The first result that we propose is an extension of Proposition \ref{prop_bau}. 

\begin{prop}[Proposition 3.2, \cite{combettes2013}]\label{prop_vm} 
Let $\beta>0$ and let $(W_k)_{k \in \mathbb{N}}$ be in $\mathcal{P}_{\beta}$. Let $\phi:\RR_{\geq0}\to\RR_{\geq0}$ be strictly increasing and such that $\lim_t\phi(t)=\infty$. Let $\mc S\subseteq\RR^n$ be nonempty, and $(x^k)_{k\in\NN}$ be a quasi-F\'ejer monotone sequence in $\RR^n$ with respect to $\mc S$ relative to $(W_k)_{k\in\NN}$. Then the following hold:
\begin{itemize}
\item[(i)] $(x^k)_{k\in\NN}$ is bounded;
\item[(ii)] Let $\bar x\in \mc S$. Then $(\|x^k-\bar x\|_{W_k})_{k\in\NN}$ converges. 
\end{itemize}
\end{prop}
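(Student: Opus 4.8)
The plan is to reduce statement (ii) to the scalar convergence result of Corollary \ref{cor_polyak}, applied to the real sequence $v^k := \phi(\|x^k-\bar x\|_{W_k})$, and then to recover both conclusions from the coercivity of $\phi$ together with the uniform lower bound $W_k\succeq\beta\,\mathrm{Id}$.

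First I would fix $\bar x\in\mc S$. By the definition of quasi-F\'ejer monotonicity relative to $(W_k)_{k\in\NN}$, there exist the given summable sequence $(\eta^k)_{k\in\NN}$ and a nonnegative summable sequence $(\varepsilon^k)_{k\in\NN}$ such that, writing $v^k=\phi(\|x^k-\bar x\|_{W_k})$,
$$v^{k+1}\leq(1+\eta^k)v^k+\varepsilon^k\quad\text{for all }k\in\NN.$$
Since $\phi$ maps into $\RR_{\geq0}$, the sequence $(v^k)_{k\in\NN}$ is nonnegative, and as $\sum_k\eta^k<\infty$ and $\sum_k\varepsilon^k<\infty$, Corollary \ref{cor_polyak} applies and yields that $(v^k)_{k\in\NN}$ converges to some $\bar v\geq0$.

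Next I would translate this back to the $W_k$-norms. Because $\lim_{t\to\infty}\phi(t)=\infty$ and $(v^k)_{k\in\NN}$ is bounded (being convergent), there is $T>0$ with $\phi(T)>\sup_k v^k$; strict monotonicity of $\phi$ then forces $\|x^k-\bar x\|_{W_k}\leq T$ for every $k$, so $(\|x^k-\bar x\|_{W_k})_{k\in\NN}$ is bounded. Upgrading this boundedness to convergence is the step I expect to be the main obstacle, since $\phi$ is only assumed strictly increasing, not continuous, so $\phi^{-1}$ cannot be applied directly to pass from $v^k\to\bar v$ to convergence of the norms. I would argue by contradiction: if $(\|x^k-\bar x\|_{W_k})_{k\in\NN}$ had two distinct cluster points $a<b$, I would choose $c,c'$ with $a<c<c'<b$; along a subsequence converging to $a$ one eventually has $\|x^k-\bar x\|_{W_k}<c$, hence $v^k<\phi(c)$ and so $\bar v\leq\phi(c)$, while along a subsequence converging to $b$ one eventually has $\|x^k-\bar x\|_{W_k}>c'$, hence $v^k>\phi(c')$ and so $\bar v\geq\phi(c')$. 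Strict monotonicity gives $\phi(c)<\phi(c')$, whence $\bar v\leq\phi(c)<\phi(c')\leq\bar v$, a contradiction. Therefore $(\|x^k-\bar x\|_{W_k})_{k\in\NN}$ converges, which proves (ii).

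Finally, for (i) I would exploit $W_k\in\mc P_\beta$: from $W_k\succeq\beta\,\mathrm{Id}$ we obtain $\|x^k-\bar x\|_{W_k}^2=\langle W_k(x^k-\bar x),x^k-\bar x\rangle\geq\beta\|x^k-\bar x\|^2$, so that $\|x^k-\bar x\|\leq\beta^{-1/2}\|x^k-\bar x\|_{W_k}$. Since the right-hand side is bounded by the previous step, $\|x^k\|\leq\|\bar x\|+\beta^{-1/2}\sup_k\|x^k-\bar x\|_{W_k}<\infty$, and hence $(x^k)_{k\in\NN}$ is bounded, establishing (i).
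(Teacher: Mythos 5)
Your proposal is correct and takes essentially the same route as the paper's (very terse) proof: part (ii) is obtained by applying Corollary \ref{cor_polyak} to $v^k=\phi(\|x^k-\bar x\|_{W_k})$ and then ruling out two distinct cluster points of $(\|x^k-\bar x\|_{W_k})_{k\in\NN}$, and part (i) follows from $W_k\succeq\beta\operatorname{Id}$. You merely fill in the details the paper leaves implicit (boundedness of the $W_k$-norms via the coercivity of $\phi$, and the $a<c<c'<b$ contradiction that correctly sidesteps the possible discontinuity of $\phi$), so no gap remains.
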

\begin{proof}
(i) follows by the fact that $(W_k)_{k\in\NN}$ is in $\mathcal P_\beta$. (ii) follows from Corollary \ref{cor_polyak} and by showing that there cannot be two cluster points.
 \end{proof}

\begin{rem}
A similar result holds also for quasi-Bregman monotone sequences and it can be proven analogously by applying Corollary \ref{cor_polyak} \cite{nguyen2017,nguyen2016}.
 \end{rem}

Analogously to Section \ref{sec_det}, under stronger assumptions, we can obtain stronger convergence results. In fact, the next result is a generalization of Theorem \ref{theo_qfm_comb}.

\begin{thm}[Theorem 3.3, \cite{combettes2013}] \label{theo_comb_var}
Let $\beta>0$ and let $(W_k)_{k \in \mathbb{N}}$ and W be operators in $\mathcal{P}_{\beta}$ such that $W_k \to W$ pointwise. Let $\phi:\RR_{\geq0}\to\RR_{\geq0}$ be strictly increasing and such that $\lim_t\phi(t)=\infty$. Let $\mc S\subseteq\RR^n$ be nonempty and let $(x^k)_{k\in\NN}$ be a quasi-F\'ejer monotone sequence in $\RR^n$ with respect to $\mc S$ and relative to $(W_k)_{k\in\NN}$. Then, $(x^k)_{k\in\NN}$ converges to a point in $\mc S$ if and only if every sequential cluster point of $(x^k)_{k\in\NN}$ is in $\mc S$.
\end{thm}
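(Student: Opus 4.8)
The plan is to mirror the argument for Theorem \ref{theo_qfm_comb}, reducing everything to the fixed metric induced by the limit operator $W$. Necessity is immediate: if $(x^k)_{k\in\NN}$ converges to some $\bar x\in\mc S$, then $\bar x$ is its only sequential cluster point, so trivially every sequential cluster point lies in $\mc S$.

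For sufficiency, suppose every sequential cluster point of $(x^k)_{k\in\NN}$ belongs to $\mc S$. First I would invoke Proposition \ref{prop_vm}: part (i) guarantees that $(x^k)_{k\in\NN}$ is bounded, so by Lemma \ref{lemma_bounded} it has at least one sequential cluster point (which lies in $\mc S$ by hypothesis), and part (ii) guarantees that for every $\bar x\in\mc S$ the sequence $(\norm{x^k-\bar x}_{W_k})_{k\in\NN}$ converges. The next, and key, step is to upgrade this to convergence in the \emph{fixed} $W$-metric. For every fixed $\bar x$ one has
$$\left|\norm{x^k-\bar x}_{W_k}^2-\norm{x^k-\bar x}_{W}^2\right|=\left|\langle (W_k-W)(x^k-\bar x),x^k-\bar x\rangle\right|\leq \norm{W_k-W}\,\norm{x^k-\bar x}^2.$$
Since $\RR^n$ is finite-dimensional, pointwise convergence $W_k\to W$ forces $\norm{W_k-W}\to 0$, and boundedness of $(x^k)_{k\in\NN}$ keeps $\norm{x^k-\bar x}^2$ bounded; hence the right-hand side vanishes and $(\norm{x^k-\bar x}_{W}^2)_{k\in\NN}$ converges as well, for every $\bar x\in\mc S$.

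With a genuine inner product $\langle\cdot,\cdot\rangle_{W}$ in hand (guaranteed because $\beta>0$ makes $W\succeq\beta\,\op{Id}$ positive definite), I would then run the Opial argument of Lemma \ref{lemma_opial} in the Hilbert space $(\RR^n,\langle\cdot,\cdot\rangle_W)$. Given two sequential cluster points $\bar x,\bar y\in\mc S$, realized as limits of subsequences $x^{k_n}\to\bar x$ and $x^{k_l}\to\bar y$, the polarization identity
$$2\langle x^k,\bar x-\bar y\rangle_W=\norm{x^k-\bar y}_W^2-\norm{x^k-\bar x}_W^2+\norm{\bar x}_W^2-\norm{\bar y}_W^2$$
shows that $\langle x^k,\bar x-\bar y\rangle_W$ converges; evaluating the limit along the two subsequences yields $\langle\bar x,\bar x-\bar y\rangle_W=\langle\bar y,\bar x-\bar y\rangle_W$, whence $\norm{\bar x-\bar y}_W^2=0$ and $\bar x=\bar y$. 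Thus $(x^k)_{k\in\NN}$ is bounded with a unique sequential cluster point, so it converges by Lemma \ref{lemma_cluster}, and its limit lies in $\mc S$ by hypothesis.

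I expect the main obstacle to be precisely the transfer step from the variable metric to the fixed one: this is where the pointwise convergence $W_k\to W$ is indispensable, since convergence of $(\norm{x^k-\bar x}_{W_k})_{k\in\NN}$ alone carries no information about a single limiting geometry in which the Opial identity can be applied. One must also be careful that it is finite-dimensionality that promotes pointwise operator convergence to the operator-norm estimate used above. Everything else is a routine rerun of the Opial/cluster-point machinery already employed for Theorem \ref{theo_qfm_comb}.
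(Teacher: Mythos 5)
Your proof is correct and follows essentially the same route as the paper, which handles necessity as trivial and derives sufficiency from Proposition \ref{prop_vm} together with Lemma \ref{lemma_cluster}. The paper's proof is only a two-line sketch; your metric-transfer step (using pointwise convergence $W_k \to W$, promoted to an operator-norm estimate by finite-dimensionality) and the subsequent Opial-type uniqueness argument supply precisely the details that sketch leaves implicit, and they are exactly where the hypothesis $W_k \to W$ is genuinely used.
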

\begin{proof}
Necessity is straightforward while sufficiency follows by Proposition \ref{prop_vm} and Lemma \ref{lemma_cluster}.
 \end{proof}

Let us conclude this section with a result that is particularly interesting for the conditions on the sequence that induce the metric. 
\begin{prop}[Proposition 4.1, \cite{combettes2013}] \label{prop_comb_vm}
Let $\beta>0$. Let $(\eta^k)_{k\in\NN}$ be a nonnegative sequence such that $\sum_{k=1}^\infty\eta^k<\infty$, and let $(W_k)_{k \in \mathbb{N}}$ be a sequence in $\mathcal{P}_{\beta}$ such that 
\begin{equation}\label{eq_cond_vm}
\begin{aligned}
&\mu=\sup _{k\in\NN}\left\|W_k\right\|<\infty \quad \text { and } \\
&(1+\eta^k) W_k \succeq W_{k+1} \text{ for all }k\in\NN
\end{aligned}
\end{equation}
Let $\mc S\subseteq\RR^n$ be nonempty, closed and convex and let $(x^k)_{k\in\NN}$ be a quasi-Fejér monotone sequence with respect to $\mc S$ relative to $(W_k)_{k \in \mathbb{N}}$. Then, for every $\bar x \in \mc S$, the sequence $(\|x^k-\bar x\|_{W_k})_{k\in\NN}$ converges.
\end{prop}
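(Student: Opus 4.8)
The plan is to collapse the vector statement into a scalar recursion and invoke Corollary \ref{cor_polyak}. Fix $\bar x \in \mc S$. By the definition of quasi-F\'ejer monotonicity relative to $(W_k)_{k\in\NN}$ there is a summable nonnegative sequence $(\varepsilon^k)_{k\in\NN}$ with
$$\phi(\|x^{k+1} - \bar x\|_{W_{k+1}}) \leq (1 + \eta^k)\,\phi(\|x^k - \bar x\|_{W_k}) + \varepsilon^k \quad \text{for all } k\in\NN.$$
I would set $v^k := \phi(\|x^k - \bar x\|_{W_k}) \geq 0$, so that this reads $v^{k+1} \leq (1+\eta^k)v^k + \varepsilon^k$ with both $(\eta^k)_{k\in\NN}$ and $(\varepsilon^k)_{k\in\NN}$ nonnegative and summable. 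Corollary \ref{cor_polyak} then applies directly and gives that $(v^k)_{k\in\NN}$ converges to some $\bar v \geq 0$.

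The next step is to record the geometric meaning of the two structural hypotheses in \eqref{eq_cond_vm}, which is what makes this reduction legitimate. Since $(W_k)_{k\in\NN} \subseteq \mathcal{P}_\beta$ with $\beta > 0$ and $\mu = \sup_k \|W_k\| < \infty$, each operator obeys $\beta\,\mathrm{Id} \preceq W_k \preceq \mu\,\mathrm{Id}$, so every $\|\cdot\|_{W_k}$ is a bona fide norm uniformly comparable to the Euclidean one. The Loewner inequality $(1+\eta^k)W_k \succeq W_{k+1}$ is nothing but the pointwise estimate $\|y\|_{W_{k+1}}^2 \leq (1+\eta^k)\|y\|_{W_k}^2$ for all $y\in\RR^n$; evaluated at $y = x^{k+1} - \bar x$ this is exactly the mechanism generating the factor $(1+\eta^k)$ above and tying the metric at step $k+1$ to the one at step $k$.

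The last and only delicate step is to transfer the convergence of $(v^k)_{k\in\NN}$ back to convergence of $(\|x^k - \bar x\|_{W_k})_{k\in\NN}$. Here I would assume, as in Proposition \ref{prop_vm} and Theorem \ref{theo_comb_var}, that $\phi$ is strictly increasing and continuous on $\RR_{\geq0}$, hence a homeomorphism onto its range; then $\|x^k - \bar x\|_{W_k} = \phi^{-1}(v^k) \to \phi^{-1}(\bar v)$ and the claim follows for every $\bar x\in\mc S$. I expect this inversion to be the main obstacle, since it is the one point where the regularity of $\phi$ is genuinely needed: for the standard choices $\phi = |\cdot|$ or $\phi = |\cdot|^2$ it is immediate, but for a general admissible $\phi$ one must ensure continuity of $\phi^{-1}$ at $\bar v$ (e.g.\ that $\bar v$ is not an isolated boundary value of the range). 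Everything else is a routine verification of the hypotheses of Corollary \ref{cor_polyak}.
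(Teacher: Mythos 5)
Your proposal is correct and takes essentially the paper's route: the paper's proof of this proposition is precisely ``Corollary \ref{cor_polyak} plus Proposition \ref{prop_vm}'', and your reduction of the defining quasi-F\'ejer inequality to the scalar recursion $v^{k+1}\leq(1+\eta^k)v^k+\varepsilon^k$ followed by an application of Corollary \ref{cor_polyak} is exactly that argument, including your (accurate) observation that hypotheses on $\phi$ absent from the statement must be imported from Proposition \ref{prop_vm} (strictly increasing, $\lim_{t\to\infty}\phi(t)=\infty$) for the conclusion to make sense. Your only deviation---inverting $\phi$ explicitly, which forces you to add a continuity assumption---is avoidable: as in the proof of Proposition \ref{prop_vm}, $\lim_{t\to\infty}\phi(t)=\infty$ makes $(\|x^k-\bar x\|_{W_k})_{k\in\NN}$ bounded, and strict monotonicity alone rules out two distinct cluster points $s<s'$ of this sequence, since passing to the limit along the two subsequences in the monotonicity inequalities would force $\phi$ to be constant on $(s,s')$; boundedness plus uniqueness of the cluster point (Lemma \ref{lemma_cluster}) then gives convergence without ever invoking continuity of $\phi^{-1}$.
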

\begin{proof}
It follows from Corollary \ref{cor_polyak} and Proposition \ref{prop_vm}. For technical details we refer to \cite{combettes2013}.
 \end{proof}
The condition in \eqref{eq_cond_vm} is not hard to check on the problem data and it can be helpful for application purposes.

We conclude this section with an adaptation of Proposition \ref{prop_gen_rs} to the variable metric setup, i.e., an extension of Robbins-Siegmund Lemma (Lemma \ref{lemma_rs}) to variable metric quasi-Fejer monotone sequences.
\begin{prop}[Proposition 2.4, \cite{vu2016}]\label{prov_vm_rs}
Let $\mc X\subseteq\RR^n$ be a non-empty closed set and let $\phi:\RR_{\geq0}\rightarrow\RR_{\geq0}$. Let $\beta>0$, let $W$ and $(W_{k})_{k \in \mathbb{N}}$ be operators in $\mathcal{P}_{\beta}$ such that $W_{k} \rightarrow W$ pointwise. Let $(x_{k})_{k \in \mathbb{N}}$ be a sequence of random vectors. Suppose that, for every $\bar x \in \mc X$, there exist $(\delta^k)_{k\in\mathbb{N}}$ and $(\varepsilon^k)_{k\in\mathbb{N}}$ nonnegative sequences such that $\sum_{k=0}^\infty\delta^k<\infty$ and $\sum_{k=0}^\infty \varepsilon^k<\infty$ and
such that,
$$
\begin{aligned}
\EE[\phi(\|x_{k+1}&-\bar x\|_{W_{k+1}}) \mid \mathcal{F}_{k}]\\
& \leq(1+\delta_{k}(\bar x)) \phi(\|x_{k}-\bar x\|_{W_{k}})+\varepsilon_{k}(\bar x) \text{ a.s., for all } k\in\NN
\end{aligned}
$$
Suppose that $\phi$ is strictly increasing and $\lim _{t \rightarrow \infty} \phi(t)=+\infty .$ Then, the following hold.
\begin{itemize}
\item[(i)] $(\|x_{k}-\bar x\|_{W_{k}})_{k \in \mathbb{N}}$ is bounded and converges a.s.;
\item[(ii)] $(x_{k})_{k \in \mathbb{N}}$ converges a.s. to random vector in $\mc X$ if and only if every cluster point is in $\mc X$ a.s..
\end{itemize}
\end{prop}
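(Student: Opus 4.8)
The plan is to read the hypothesis as a (metric-dependent) Robbins--Siegmund inequality and to follow the two-step pattern used for Proposition~\ref{prop_gen_rs}: first extract almost sure convergence of the distances, then run an Opial-type argument to isolate a single cluster point. Fix $\bar x\in\mc X$ and set $v^k=\phi(\norm{x_k-\bar x}_{W_k})$. The assumed inequality is exactly of the form $\EEk{v^{k+1}}\leq(1+\delta^k)v^k+\varepsilon^k$ with the negative term absent, so Lemma~\ref{lemma_rs} (equivalently Corollary~\ref{lemma_glad}) applies and yields that $(v^k)_{k\in\NN}$ converges a.s.\ to some $\bar v\geq0$.

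I would then convert this into statement (i). Since $\lim_{t\to\infty}\phi(t)=\infty$, a convergent (hence bounded) sequence $\phi(\norm{x_k-\bar x}_{W_k})$ forces $\norm{x_k-\bar x}_{W_k}$ to be bounded, exactly as in the proof of Proposition~\ref{prop_gen_rs}(ii); because $W_k\succeq\beta\,\mathrm{Id}$ gives $\norm{x_k-\bar x}_{W_k}\geq\sqrt{\beta}\,\norm{x_k-\bar x}$, boundedness transfers to $(x_k)_{k\in\NN}$. To upgrade convergence of $\phi(\norm{x_k-\bar x}_{W_k})$ to convergence of $t_k:=\norm{x_k-\bar x}_{W_k}$ itself, I would argue with $\liminf$/$\limsup$ using only strict monotonicity (no continuity of $\phi$ is needed): if $a:=\liminf_k t_k<\limsup_k t_k=:b$, then choosing $a<c'<c''<b$ gives $\phi(t_k)<\phi(c')$ infinitely often and $\phi(t_k)>\phi(c'')>\phi(c')$ infinitely often, contradicting convergence of $\phi(t_k)$. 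This proves (i).

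For (ii), necessity is immediate, since a convergent sequence has a single cluster point, which by hypothesis lies in $\mc X$. For sufficiency the real work is bookkeeping over the uncountably many test points $\bar x$, each carrying its own null set. I would fix a countable dense subset $D\subseteq\mc X$ and intersect the corresponding null sets to obtain a probability-one event on which, simultaneously, $(x_k)$ is bounded, every sequential cluster point lies in $\mc X$, and $(\norm{x_k-\bar x}_{W_k})_{k\in\NN}$ converges for every $\bar x\in D$. Since we are in finite dimension, $W_k\to W$ pointwise is operator-norm convergence, so on the bounded orbit $\norm{x_k-\bar x}_{W_k}^2-\norm{x_k-\bar x}_W^2=\langle(W_k-W)(x_k-\bar x),x_k-\bar x\rangle\to0$; hence $\norm{x_k-\bar x}_W$ converges for every $\bar x\in D$, and the uniform-in-$k$ estimate $|\,\norm{x_k-\bar x}_W-\norm{x_k-\bar y}_W\,|\leq\norm{\bar x-\bar y}_W$ together with density extends this convergence to every $\bar x\in\mc X$.

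Finally I would close with the Opial computation in the fixed metric $W$, exactly as in the proof of Lemma~\ref{lemma_opial} and Proposition~\ref{prop_gen_rs}(iv). If $\bar x_1,\bar x_2\in\mc X$ are two cluster points of the bounded sequence, expanding
$$2\langle W x_k,\bar x_1-\bar x_2\rangle=\norm{x_k-\bar x_2}_W^2-\norm{x_k-\bar x_1}_W^2+\norm{\bar x_1}_W^2-\norm{\bar x_2}_W^2$$
shows the left-hand side converges; passing to the two subsequences converging to $\bar x_1$ and $\bar x_2$ forces $\norm{\bar x_1-\bar x_2}_W^2=0$, so $\bar x_1=\bar x_2$. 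Boundedness together with a unique sequential cluster point (Lemma~\ref{lemma_cluster}) then gives a.s.\ convergence of $(x_k)$ to a point of $\mc X$. I expect the main obstacle to be precisely the measure-theoretic step of making the ``for every $\bar x$'' convergence hold on one common probability-one event, together with the metric transfer from the variable $W_k$ to the limit $W$; once those are in place, the deterministic Opial argument does the rest.
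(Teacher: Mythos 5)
Your proposal is correct and takes essentially the same route as the paper's own (very terse) proof: Robbins--Siegmund (Lemma \ref{lemma_rs}) plus strict monotonicity of $\phi$ for (i), and uniqueness of the sequential cluster point via an Opial-type identity combined with Lemma \ref{lemma_cluster} for (ii). The countable-dense-subset bookkeeping and the $W_k\to W$ metric-transfer step that you spell out are exactly the details the paper defers to \cite{vu2016}, so nothing in your argument diverges from the intended proof.
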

\begin{proof}
(i) follows from Lemma \ref{lemma_rs} and by the fact the $\phi$ is strictly increasing.\\
(ii) Necessity is straightforward while sufficiency follows from the properties of a cluster point and Lemma \ref{lemma_cluster}. For more details we refer to \cite{vu2016}.
\end{proof}

\section{Applications of convergent deterministic sequences}\label{sec_app_det}

Since variational inequalities are the mathematical foundations of optimization-related problems, such as Nash equilibrium seeking \cite{facchinei2007,franci2020fb, yi2019}, convex optimization \cite{facchinei2007,jofre2019,bau2011} and machine learning \cite{gidel2018, franci2020gan}, many works in the literature rely on the results presented in the previous sections to prove convergence of a given algorithm to a solution of a variational equilibrium problem. Specifically, they are applied to prove that a given algorithm converges to the solution of a variational inequality or to a zero of the sum of (monotone) operators. Thus, let us first describe the variational problem, starting by the definition of variational inequality \cite{bau2011,facchinei2007}.
\begin{defn}
Given a set $\mc X\subseteq\RR^n$ and a mapping $F : \mc X \to \RR^{n}$, a variational inequality, denoted $\op{VI}(\mc X, F)$, is the problem 
\begin{equation}\label{eq_vi}
\text{find } x^*\in \mc X \text{ such that }\langle F(x^*),y-x^*\rangle \geq 0, \text{ for all } y \in \mc X
\end{equation}
The set of solutions to this problem is denoted by $\op{SOL}(\mc X, F).$
 \end{defn}

The geometric interpretation of \eqref{eq_vi} is that a point $x^*\in \mc X$ is a solution of $\op{VI} (\mc X, F)$ if and only if $F(x^*)$ forms an acute angle with every vector of the form $y - x^*$ for all $y \in \mc X$. 
In other words, (\ref{eq_vi}) also says that a vector $x\in \mc X$ solves $\op{VI}(\mc X, F)$ if
and only if $-F (x^*)$ is a vector in the normal cone of $\mc X$ at $x^*$ (see Appendix \ref{appendix} for the definition), i.e.,
\begin{equation}\label{cone_incl}
0 \in F(x^*)+\op{N}_{\mc X}(x^*).
\end{equation}
Sometimes, instead of problem (\ref{eq_vi}), a more general definition is proposed: 
\begin{equation}\label{gen_vi}
\text { find } x^* \in \mc X \text { s.t. } \langle F(x^*), y-x^*\rangle+ g(y)-g(x^*) \geq 0, \ \ \text{for all } y \in \mc X
\end{equation}
where $g$ is a proper lower semi-continuous and convex function.
Examples for the function $g$ are indicator functions to enforce the set constraints, or penalty functions that promote sparsity, or other desirable structure. 

Similarly to \eqref{eq_vi} and \eqref{cone_incl}, the problem in \eqref{gen_vi} can be rewritten as 
\begin{equation}\label{eq_cone_gen}
\text { find } x^* \in \mc X \text { s.t. }0\in (F +\partial g)(x^*),
\end{equation}
where $\partial g$ is the subdifferential of $g$ (definition in Appendix \ref{appendix}).
In fact, if in \eqref{gen_vi} we take $g$ as the indicator function, i.e., $g(x)=\iota_{\mc X}(x)$, we obtain the standard variation inequality (\ref{eq_vi}), and instead of \eqref{eq_cone_gen} we obtain the inclusion in \eqref{cone_incl} since $\partial g=\partial \iota_{\mc X}=\op{N}_{\mc X}$ \cite[Equation (14)]{combettes2020}.

Problems of the form (\ref{cone_incl}) and \eqref{eq_cone_gen} are usually called (monotone) inclusion problems, which aim, in the general form, at finding $x^*\in \mc X$ such that $0\in T(x^*)$ with $T:\RR^n\to\RR^n$. Moreover, in many cases it is possible to write a mapping as the summation of two (monotone) operators through an operator splitting technique \cite{ryu2016,bau2011}. In this case, the problem of finding a zero of a monotone operator $T=A + B$ can be rewritten as
\begin{equation}\label{mono_incl}
\text {find } x^* \in \mc X \text { such that } 0 \in(A+B)(x^*).
\end{equation}
Usually, $A : \RR^n \rightrightarrows \RR^n$ and $B : \RR^n \to \RR^n$ are a set valued and a single valued monotone operator, respectively. Inclusions as the above arise systematically in convex optimization \cite{malitsky2015,bot2020gan,malitsky2020siam,csetnek2019} and generalized Nash equilibrium problems in convex-monotone games  \cite{franci2020fb,franci2019fbhf,yi2019,pavel2019,gadjov2019,gadjov2020}.

\begin{exmp}[Inclusion problem]
Consider the minimization problem
\begin{equation}\label{eq_ex_min}
\min _{x \in \mc{X}} f(x)+g(x)
\end{equation}
where $g : \mc{X} \to\bar\RR$ is proper, lower semicontinuous and convex and $f : \mc{X} \to \RR$ is convex with Lipschitz continuous gradient. The solutions of the minimization problem in \eqref{eq_ex_min} are the points $x \in \mc X$ such that
\begin{equation}\label{minimization}
0 \in(\nabla f+\partial g)(x).
\end{equation}
where $\partial g$ denotes the subdifferential of $g$ and $\nabla f$ is the gradient of $f$. Equation (\ref{minimization}) is a monotone inclusion and it is equivalent to the generalized VI in \eqref{gen_vi} with $F=\nabla f$.
 \end{exmp}

We are now ready to present some algorithms where the lemmas of Section \ref{sec_det} are used. The algorithms often rely on the monotonicity properties of the operators involved (see Definitions \ref{def_mono} and \ref{def_lip}) and, unless otherwise mentioned, we suppose the following assumption to hold.

\begin{standass}\label{ass_sol}
The solution set of $\op{VI}(\mc X,F)$ is not empty, i.e., $\op{SOL}(\mc X, F)\neq\varnothing$, and $x_0\in\mc X$, i.e., the sequence starts in the set $\mc X$ which is closed and convex.
\end{standass}

For every algorithm, we also propose a sketch of the convergence proof to show how the lemmas are used.
A schematic representation of the necessary steps is provided in Figure \ref{fig_app_det}.
The main idea to prove convergence of an algorithm is to obtain a (quasi) F\'ejer inequality with respect to the solution set and then apply one of the lemmas to the sequence $v^k=\|x^k-x^*\|^2$ where $x^*\in \op{SOL}(\mc X,F)$ (see also Remark \ref{remark_tozero}). Analogously, one can show that a suitable Lyapunov function asymptotically goes to zero. 

We list the application depending on the type of problem but we name them after the convergence result that is used. We start with monotone inclusions, then move to VIs and Nash equilibrium problems, and finally consider an example of Lyapunov decrease.

\begin{figure}
\centering
 \begin{tikzpicture}[scale=1,every node/.style={circle,draw=black,scale=.5,minimum size=.8cm}]
 \LARGE
\draw[rounded corners=2ex,draw=black,dashed,thick,fill=white,fill=none] (-3,3) rectangle (3,5.5);
\draw[rounded corners=2ex,draw=black,dashed,thick,fill=white,fill=none] (-3,2.7) rectangle (3,-2);

\node[rectangle, rounded corners=.5ex, minimum size=1.5cm, fill=white](m1) at (0,5){iterative process $(x^k)_{k\in\NN}$};
\node[rectangle, rounded corners=.5ex, minimum size=1.5cm, fill=white](m2) at (0,3.5){find $V(x^k)=:v^k$};
\node[rectangle, rounded corners=.5ex, minimum size=1.5cm, fill=white](m3) at (0,1.75){$\begin{array}{c}\text{find inequality:}\\v^{k+1}\leq C^kv^k+\varepsilon^k-\theta^k\end{array}$};
\node[rectangle, rounded corners=.5ex, minimum size=1.5cm, fill=white](m4) at (0,0){see Table \ref{table_lemmi_det}};
\node[rectangle, rounded corners=.5ex, minimum size=1.5cm, fill=white](m5) at (0,-1.5){apply convergence lemma};
\node[rectangle, rounded corners=.5ex, minimum size=1.5cm, fill=white](m6) at (0,-3){$v^k\to v^*$};

\draw[thick] (m4)--(2,0)--(2,1.75);
\draw[thick,-to](2,1.75)--(m3);
\draw[thick] (m5)--(-2,-1.5)--(-2,3.5);
\draw[thick,-to](-2,3.5)-- (m2);

\foreach \from/\to in
{m1/m2,m2/m3,m3/m4,m4/m5,m5/m6}
\draw[-to,thick] (\from) -- (\to); 

\node[draw=none,fill=none,rotate=90] at (-3.5,0) {analysis};
\node[draw=none,fill=none,rotate=90] at (-3.5,4.25) {design};
\node[draw=none,fill=none] at (3.5,0){};
 

 \end{tikzpicture}
 \caption{Schematic representation of how the convergence lemmas for sequences can be used. Given the iterative process, a suitable nonnegative function (Lyapunov or distance-like) should be designed. Then, exploiting the properties of the application at hand, an inequality involving the iterates at times $k+1$ and $k$ can be retrieved. Hence, one should check if the inequality corresponds to a known result (Table \ref{table_lemmi_det} for sequences of real numbers) and use the corresponding result to prove convergence. The whole process may take repeated steps to find a suitable function and/or inequality. The same reasoning applies to the stochastic case, in which one should have an expected valued inequality (with $\EE[v^{k+1}]$) and refer to Table \ref{table_lemmi_stoc} for a convergence result on stochastic sequences. See also Figure \ref{fig_app_stoc} for an example.}\label{fig_app_det}
 \end{figure}
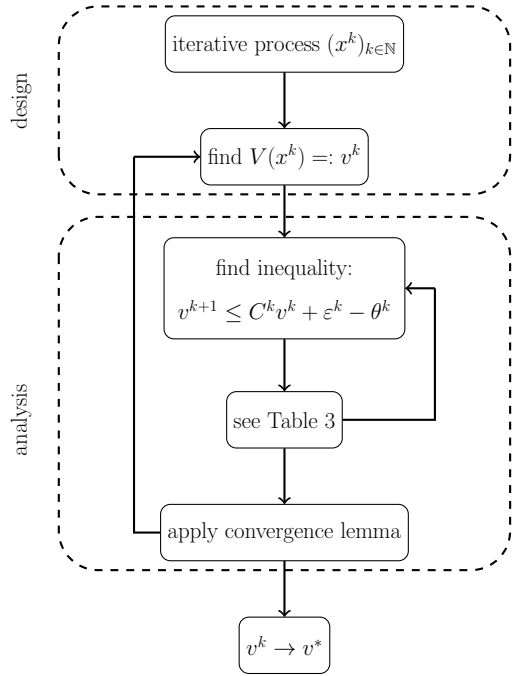

\subsection{Applications to Monotone Inclusions}

\paragraph*{Application of Lemma \ref{lemma_opial} and Lemma \ref{lemma_rate}}
Lemma \ref{lemma_opial} is used in \cite{malitsky2018fbf, malitsky2020siam} to prove convergence in the inclusion problem:
$$\text{find } x\in\mc X \text{ such that }0\in (A+B)(x)$$
where $A:\mc X\rightrightarrows \mc X$ and $B:\mc X\to \mc X$ are monotone operators. The sequence $(x^k)_{k\in\NN}$, generated by the algorithm, is defined according to
\begin{equation}\label{eq_mali_FRB}
x^{k+1}=J_{\alpha_kA}\left(x^k-\alpha_kB(x^k)-\alpha_{k-1}(B(x^k)-B(x^{k-1}))\right)
\end{equation}
where $J_{\alpha_kA}=(I+\alpha_k A)^{-1}$ is the resolvent of A (Definition \ref{def_projproxres}). The algorithm is named \textit{forward - reflected - backward splitting} and it is proven to converge to a zero of $A+B$.
\begin{thm}[Theorem 2.5, \cite{malitsky2020siam}]\label{malitzky_theo}
Let $A:\mc H\rightrightarrows \mc H$ be maximally monotone and $B:\mc H\to \mc H$ be monotone and $\ell$-Lipschitz continuous. Let $\epsilon >0$ and suppose $\alpha_k\in \left[\epsilon,\frac{1-2\epsilon}{2\ell}\right]$ for all $k\in\NN$.
Then, the sequence $(x^k)_{k\in\NN}$ generated by (\ref{eq_mali_FRB}) converges to a point $x^*\in \mc X$ such that $0\in (A+B)(x^*)$.
\end{thm}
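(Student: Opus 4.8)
The plan is to verify the two hypotheses of the Opial Lemma (Lemma \ref{lemma_opial}) for the iterates $(x^k)_{k\in\NN}$ relative to the zero set $\mc S=\{x : 0\in(A+B)(x)\}$, which we take to be nonempty (cf.\ Standing Assumption \ref{ass_sol}). First I would rewrite the update using the defining inclusion of the resolvent: $x^{k+1}=J_{\alpha_k A}(y^k)$ with $y^k=x^k-\alpha_k B(x^k)-\alpha_{k-1}(B(x^k)-B(x^{k-1}))$ is equivalent to $\alpha_k^{-1}(y^k-x^{k+1})\in A(x^{k+1})$. Fixing any $x^*\in\mc S$, so that $-B(x^*)\in A(x^*)$, and testing monotonicity of $A$ between $x^{k+1}$ and $x^*$, I would combine the result with the three-point identity $\langle x^k-x^{k+1},x^{k+1}-x^*\rangle=\tfrac12\big(\|x^k-x^*\|^2-\|x^{k+1}-x^*\|^2-\|x^{k+1}-x^k\|^2\big)$ to produce a recursion linking $\|x^{k+1}-x^*\|^2$ to $\|x^k-x^*\|^2$, with residual inner products involving $B$.

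The heart of the argument is the treatment of these residuals. The term $\langle B(x^k)-B(x^*),x^k-x^*\rangle\geq0$ is discarded by monotonicity of $B$, whereas the cross terms $\langle B(x^k)-B(x^*),x^{k+1}-x^k\rangle$ and the reflection contribution $\langle B(x^k)-B(x^{k-1}),x^{k+1}-x^*\rangle$ are bounded using $\ell$-Lipschitz continuity of $B$ together with Young's inequality. The key device is to \emph{telescope} the term $2\alpha_{k-1}\langle B(x^k)-B(x^{k-1}),x^k-x^*\rangle$ against its counterpart at step $k+1$, which suggests the energy functional
$$\phi_k=\|x^k-x^*\|^2+2\alpha_{k-1}\langle B(x^k)-B(x^{k-1}),\,x^k-x^*\rangle,$$
or a manifestly nonnegative variant obtained by completing the square and adding a small multiple of $\|x^k-x^{k-1}\|^2$ to dominate the indefinite cross term. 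The goal is a descent inequality $\phi_{k+1}+c\,\|x^{k+1}-x^k\|^2\leq\phi_k$ for some $c>0$; here the step-size bound $\alpha_k\leq\frac{1-2\epsilon}{2\ell}$ is exactly what forces the coefficient $c$ to remain strictly positive after the Lipschitz/Young estimates absorb the cross terms into $\|x^{k+1}-x^k\|^2$ and $\|x^k-x^{k-1}\|^2$.

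Once such a descent holds, a monotone-convergence/telescoping argument in the spirit of Corollary \ref{cor_mali1} gives that $\phi_k$ converges and $\sum_k\|x^{k+1}-x^k\|^2<\infty$, so in particular $\|x^{k+1}-x^k\|\to0$ and $(x^k)_{k\in\NN}$ is bounded. Since the correction term in $\phi_k$ then vanishes (by Lipschitz continuity of $B$ and $\|x^k-x^{k-1}\|\to0$), convergence of $\phi_k$ yields convergence of $\|x^k-x^*\|$, which is hypothesis~1 of Opial. For hypothesis~2, I would take any subsequence $x^{k_n}\to\bar x$; using $\|x^{k+1}-x^k\|\to0$ and Lipschitz continuity of $B$ to pass to the limit in the inclusion $\alpha_{k_n}^{-1}(y^{k_n}-x^{k_n+1})\in A(x^{k_n+1})$, and invoking the closedness of the graph of the maximally monotone operator $A$, one obtains $-B(\bar x)\in A(\bar x)$, i.e.\ $\bar x\in\mc S$. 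The Opial Lemma (Lemma \ref{lemma_opial}) then delivers convergence of $(x^k)_{k\in\NN}$ to a point of $\mc S$.

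The main obstacle is the construction in the second paragraph: selecting the correct Lyapunov functional $\phi_k$ and checking that the step-size restriction $\alpha_k\leq\frac{1-2\epsilon}{2\ell}$ is precisely strong enough to keep the residual coefficient $c$ positive. The telescoping of the reflection term couples consecutive indices, so the bookkeeping of the constants generated by Young's inequality must be balanced exactly against the factor $2\ell$ appearing in the step-size bound; the passage to the limit in hypothesis~2 is comparatively routine, relying only on graph closedness of the maximal monotone operator and the vanishing of successive differences.
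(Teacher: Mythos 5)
Your proposal is correct and follows essentially the same route as the paper's proof: the energy functional $\phi_k$ you construct (distance squared plus the telescoping reflection cross term plus $\tfrac12\|x^k-x^{k-1}\|^2$) is exactly the quantity appearing in the paper's inequality \eqref{FRB_eq}, and the subsequent steps — telescoping to get boundedness and $\sum_k\|x^{k+1}-x^k\|^2<\infty$, identifying cluster points as zeros of $A+B$ via graph closedness, establishing existence of $\lim_k\|x^k-\bar x\|$, and invoking the Opial Lemma (Lemma \ref{lemma_opial}) — match the paper's argument step for step.
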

\begin{proof}
Let $x^*\in (A+B)^{-1}(0)$. It is possible to show, by using monotonicity and some norm properties, that the following inequality holds:
\begin{equation}\label{FRB_eq}
\begin{aligned}
&\normsq{x^{k+1}-x^*}+2 \alpha_{k} \langle B(x^{k+1})-B(x^k), x^*-x^{k+1}\rangle+\\
&+\left(\frac{1}{2}+\epsilon\right)\normsq{x^{k+1}-x^k} \\ 
& \leq\normsq{x^k-x^*}+2 \alpha_{k-1}\langle B(x^k)-B(x^{k-1}), x^*-x^k\rangle\\
&+\frac{1}{2}\normsq{x^k-x^{k-1}} .
\end{aligned}
\end{equation}
Then, by doing a telescopic sum, using Lipschitz continuity and the properties of the parameters involved, the inequality in \eqref{FRB_eq} can be rewritten as
$$
\begin{aligned}
&\frac{1}{2}\|x_{k+1}-x^*\|^{2}+\varepsilon \sum_{i=0}^{k}\|x_{i+1}-x_{i}\|^{2} \\
&\quad \leq\|x_{0}-x^*\|^{2}+2 \lambda_{-1}\langle B(x_{0})-B(x_{-1}), x^*-x_{0}\rangle+\frac{1}{2}\|x_{0}-x_{-1}\|^{2}
\end{aligned}
$$
from which we deduce that $(x^k)_{k \in \mathbb{N}}$ is bounded and that $\lim_{k\to\infty}\norm{x^k-x^{k+1}} = 0$. 
Now, let $\bar x$ be a cluster point of $(x^k)_{k\in\NN}$. From the definition of the algorithm in \eqref{eq_mali_FRB} and the properties of $A+B$, it follows that $0\in(A+B)(\bar x)$. Using again \eqref{FRB_eq} and Lipschitz continuity it can be proven that $\lim_{k\to\infty}\normsq{x^k-\bar x}$ exists. Then, by Lemma \ref{lemma_opial}, the sequence is convergent.
 \end{proof}
 
The authors propose in the same paper also a variant of the algorithm with line search and a second one with inertia, but the convergence proof does not change its essence; in the first case, the authors use locally Lipschitz continuity \cite[Theorem 3.4]{malitsky2020siam}, while in the second they exploit the $1/\ell$-cocoercivity of the operator $B$ \cite[Theorem 4.3]{malitsky2020siam}. 
Moreover, under the assumption of strong monotonicity of the operator $A$, they also prove convergence with linear rate, using Lemma \ref{lemma_rate}.
\begin{thm}[Theorem 2.9, \cite{malitsky2020siam}]\label{theo_mali_rate}
Let $A:\mc H\rightrightarrows \mc H$ be maximally monotone and $\mu$-strongly monotone and $B:\mc H\to \mc H$ be monotone and $\ell$-Lipschitz continuous. Suppose $\alpha\in\left(0,\frac{1}{2\ell}\right)$. Then, the sequence $(x^k)_{k\in\NN}$ generated by \eqref{eq_mali_FRB} converges R-linearly to the unique point $\bar x\in \mc X$ such that $0\in (A+B)(\bar x)$.
\end{thm}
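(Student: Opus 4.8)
The plan is to reuse the derivation of the key inequality \eqref{FRB_eq} from the proof of Theorem \ref{malitzky_theo}, now exploiting the \emph{strong} monotonicity of $A$ to generate an extra contractive term, and then to invoke the rate Lemma \ref{lemma_rate}. First I would write the resolvent inclusion associated with \eqref{eq_mali_FRB} at the constant step size $\alpha$, namely $\tfrac{1}{\alpha}(x^k-x^{k+1})-2B(x^k)+B(x^{k-1})\in A(x^{k+1})$, and pair it with $-B(\bar x)\in A(\bar x)$, where $\bar x$ is a zero of $A+B$. Applying the $\mu$-strong monotonicity of $A$ (in place of plain monotonicity), together with monotonicity and $\ell$-Lipschitz continuity of $B$, and the cosine identity as in Theorem \ref{malitzky_theo}, I would obtain exactly the inequality \eqref{FRB_eq} augmented by the surplus term $2\alpha\mu\normsq{x^{k+1}-\bar x}$ on its left-hand side.

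Introducing the energy
$$\Phi^k:=\normsq{x^k-\bar x}+2\alpha\langle B(x^k)-B(x^{k-1}),\bar x-x^k\rangle+\tfrac12\normsq{x^k-x^{k-1}},$$
the augmented inequality reads $\Phi^{k+1}+\epsilon\normsq{x^{k+1}-x^k}+2\alpha\mu\normsq{x^{k+1}-\bar x}\le\Phi^k$ for some margin $\epsilon>0$ coming from $\alpha<\tfrac{1}{2\ell}$. Since then $\alpha\ell<\tfrac12$, a Young's inequality estimate on the cross term ($2\alpha\ell\,\|x^k-x^{k-1}\|\,\|x^k-\bar x\|\le\alpha\ell(\normsq{x^k-x^{k-1}}+\normsq{x^k-\bar x})$) sandwiches $\Phi^k$ between positive multiples of $\normsq{x^k-\bar x}+\normsq{x^k-x^{k-1}}$; in particular $\Phi^k\ge0$, with $(1-\alpha\ell)\normsq{x^k-\bar x}\le\Phi^k$ and $(\tfrac12-\alpha\ell)\normsq{x^k-x^{k-1}}\le\Phi^k$ below, and $\Phi^{k+1}\le(1+\alpha\ell)\normsq{x^{k+1}-\bar x}+(\tfrac12+\alpha\ell)\normsq{x^{k+1}-x^k}$ above.

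I would then set $v^k:=\Phi^k$ and $\beta^k:=c\,\normsq{x^k-x^{k-1}}$ for a small constant $c>0$ and verify the two hypotheses of Lemma \ref{lemma_rate}. The lower sandwich bound immediately gives $\delta\beta^k\le v^k$ with $\delta=(\tfrac12-\alpha\ell)/c>0$. For the contraction, dropping $\beta^k\ge0$ back in on the right and using the upper sandwich bound lets me absorb the surplus: it suffices that $(\gamma-1)(1+\alpha\ell)\le 2\alpha\mu$ and $(\gamma-1)(\tfrac12+\alpha\ell)\le\epsilon-c$, which I can always arrange by fixing $\gamma-1=\min\{\tfrac{2\alpha\mu}{1+\alpha\ell},\,\tfrac{\epsilon}{2(\frac12+\alpha\ell)}\}>0$ and then choosing $c\in(0,\epsilon/2]$, so that $\gamma v^{k+1}+\beta^{k+1}\le v^k+\beta^k$ with $\gamma>1$. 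Lemma \ref{lemma_rate} then yields that $v^k$ and $\beta^k$ converge to zero $R$-linearly; since $(1-\alpha\ell)\normsq{x^k-\bar x}\le\Phi^k$ with $1-\alpha\ell>0$, this forces $x^k\to\bar x$ $R$-linearly. Uniqueness of $\bar x$ follows because $A+B$ is $\mu$-strongly monotone and hence has at most one zero.

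The main obstacle will be the bookkeeping of constants rather than any conceptual difficulty: the margin $\epsilon$, the contraction gap $\gamma-1$, and the weight $c$ are coupled through the two coefficient conditions above, so they must be chosen in the right order (first $\gamma>1$ close to $1$, then $c$ small) to satisfy simultaneously $\gamma v^{k+1}+\beta^{k+1}\le v^k+\beta^k$ and $\delta\beta^k\le v^k$. Re-deriving \eqref{FRB_eq} with the $2\alpha\mu\normsq{x^{k+1}-\bar x}$ term retained is otherwise a routine repetition of the computation behind Theorem \ref{malitzky_theo}.
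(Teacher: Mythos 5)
Your proof is correct and rests on the same two ingredients as the paper's: the version of \eqref{FRB_eq} augmented by the strong-monotonicity surplus $2\alpha\mu\normsq{x^{k+1}-\bar x}$, and the rate Lemma \ref{lemma_rate}. The difference lies in the splitting used to invoke that lemma. The paper takes $v^k=\tfrac12\normsq{x^k-\bar x}$, puts the remainder of the energy into $\beta^k=\tfrac12\normsq{x^k-\bar x}+2\alpha\langle B(x^k)-B(x^{k-1}),\bar x-x^k\rangle+\tfrac12\normsq{x^k-x^{k-1}}$, and uses the natural factor $\gamma=1+2\mu\alpha$; this makes the contraction hypothesis of Lemma \ref{lemma_rate} immediate, but the second hypothesis $\delta\beta^k\le v^k$ then amounts to $\normsq{x^k-x^{k-1}}\le C\normsq{x^k-\bar x}$ along the iterates, which is nowhere established (nothing prevents an iterate from landing arbitrarily close to $\bar x$ right after a long step). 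Your splitting --- $v^k=\Phi^k$ the full energy, $\beta^k=c\normsq{x^k-x^{k-1}}$, and $\gamma$ slightly larger than $1$ --- trades the clean factor $1+2\mu\alpha$ for some bookkeeping in $(\epsilon,c,\gamma)$, but both hypotheses of the lemma are then verifiable from your Young's-inequality sandwich, so in this respect your argument is the more complete one. Note, finally, that with your bounds the auxiliary sequence $\beta^k$ is dispensable: the augmented inequality together with $\Phi^{k+1}\le(1+\alpha\ell)\normsq{x^{k+1}-\bar x}+(\tfrac12+\alpha\ell)\normsq{x^{k+1}-x^k}$ already gives $(1+\rho)\,\Phi^{k+1}\le\Phi^k$ with $\rho=\min\bigl\{2\alpha\mu/(1+\alpha\ell),\;\epsilon/(\tfrac12+\alpha\ell)\bigr\}>0$, i.e., Q-linear decay of $\Phi^k$, and the lower bound $(1-\alpha\ell)\normsq{x^k-\bar x}\le\Phi^k$ then yields R-linear convergence of $(x^k)_{k\in\NN}$ with no appeal to Lemma \ref{lemma_rate} at all.
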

\begin{proof}
Similarly to the proof of Theorem \ref{malitzky_theo} but using strong monotonicity, one obtains the inequality
\begin{equation}
\begin{aligned}
(1+2 \mu \alpha) &\| x^{k+1}-x^* \|^{2}+2 \alpha\langle B(x^{k+1})-B(x^k), x^*-x^{k+1}\rangle\\
&+(1-\alpha \ell)\| x^{k+1}-x^k \|^{2} \\
 \leq&\|x^k-x^*\|^{2}+2 \alpha\langle B(x^k)-B(x^{k-1}), x^*-x^k\rangle\\
&+\frac{1}{2}\|x^k-x^{k-1}\|^{2}.
\end{aligned}
\end{equation}
Setting $\gamma=(1+2 \mu \alpha)>1$, $v_{k}:=\frac{1}{2}\|x^k-x^*\|^{2}$ and $\beta_{k}:=\frac{1}{2}\|x^k-x^*\|^{2}+2 \alpha\langle B(x^k)-B\left(x^{k-1}\right), x^*-x^k\rangle+\frac{1}{2}\|x^k-x^{k-1}\|^{2}$, one can apply Lemma \ref{lemma_rate} to conclude that the sequence $(x^k)_{k\in\NN}$ converges to the unique solution $\bar x$ and with a linear rate.
 \end{proof}

\paragraph*{Application of Corollary \ref{cor_liu}}
As an application of Corollary \ref{cor_liu}, let us consider the \textit{inertial forward-backward} algorithm proposed in \cite{dadashi2019} for approximating a zero of an inclusion problem $x\in(A+B)^{-1}(0)$:
\begin{equation}\label{eq_dadashi}
\left\{\begin{array}{l}
y^k=J_{\alpha_{k}A}\left(x^k-\alpha_{k}B x^k\right) \\
x^{k+1}=\nu_{k} x^k+\beta_{k} y^k+\gamma_{k} e^{k}
\end{array}\right.
\end{equation}
where $J_{\alpha_{k}A}$ is the resolvent of $A$ (Definition \ref{def_projproxres}) and $e^k$ is an error vector. By using Corollary \ref{cor_liu}  the authors prove the following result.

\begin{thm}[Theorem 3.1, \cite{dadashi2019}]\label{theo_dadashi}
Let $B$ be $\alpha$-cocoercive and let $A$ be maximally monotone. Let $\nu_k, \beta_{k}, \gamma_{k} \in(0,1)$ be such that $\nu_k+\beta_{k}+\gamma_{k}=1$ and
\begin{enumerate}
\item[1.] $\lim _{k \rightarrow \infty} \gamma_{k}=0,$ and $\sum_{k=1}^\infty \gamma_{k}=\infty$,
\item[2.] $\lim_{k\to\infty}e^{k} = 0$,
\item[3.] $0<a \leq \nu_k \leq b<1$ and $0<c \leq \beta_{k} \leq d<1$,
\item[4.] $0<c \leq \alpha_{k}<2 \alpha$ and $\lim _{k \rightarrow \infty}\left(\alpha_{k}-\alpha_{k+1}\right)=0$.
\end{enumerate}
Then, the sequence $(x^k)_{k\in\NN}$ generated by \eqref{eq_dadashi} converges to the point $x^* \in(A+B)^{-1}(0),$ where $x^*=\op{proj}_{(A+B)^{-1}(0)}(0)$.
\end{thm}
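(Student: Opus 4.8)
The plan is to recast the three-term recursion \eqref{eq_dadashi} as a Halpern-type iteration driven by the forward--backward operator, and then to reduce the convergence of $v^k=\normsq{x^k-x^*}$, with $x^*=\op{proj}_{(A+B)^{-1}(0)}(0)$, to Corollary \ref{cor_liu}. First I would set $T_k=J_{\alpha_k A}(\op{Id}-\alpha_k B)$, so that $y^k=T_k x^k$. Since $B$ is $\alpha$-cocoercive and $0<\alpha_k<2\alpha$, the operator $\op{Id}-\alpha_k B$ is averaged and $J_{\alpha_k A}$ is firmly nonexpansive, so $T_k$ is nonexpansive with $\op{Fix}(T_k)=(A+B)^{-1}(0)=:\mc S$ for every $k$; in particular $\norm{y^k-x^*}\le\norm{x^k-x^*}$ for all $x^*\in\mc S$. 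Writing $\nu_k+\beta_k=1-\gamma_k$, I would rewrite \eqref{eq_dadashi} as $x^{k+1}=(1-\gamma_k)z^k+\gamma_k e^k$, where $z^k=\lambda_k x^k+(1-\lambda_k)y^k$ with $\lambda_k=\nu_k/(1-\gamma_k)\in(0,1)$ is a genuine convex combination of $x^k$ and $T_k x^k$.

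Next I would establish boundedness: from $\norm{x^{k+1}-x^*}\le(1-\gamma_k)\norm{x^k-x^*}+\gamma_k\norm{e^k-x^*}$ and the boundedness of $\norm{e^k-x^*}$ (as $e^k\to0$), an induction gives $\sup_k\norm{x^k-x^*}<\infty$. The convex-combination identity together with $\norm{y^k-x^*}\le\norm{x^k-x^*}$ yields $\normsq{z^k-x^*}\le\normsq{x^k-x^*}-\lambda_k(1-\lambda_k)\normsq{x^k-y^k}$, where $\lambda_k(1-\lambda_k)$ is bounded below by a positive constant $c$ by condition 3. Combining this with $x^{k+1}-x^*=(1-\gamma_k)(z^k-x^*)+\gamma_k(e^k-x^*)$ and the elementary inequality $\normsq{a+b}\le\normsq{a}+2\langle b,a+b\rangle$, I obtain
\begin{equation*}
v^{k+1}\le v^k-c\,\normsq{x^k-y^k}+2\gamma_k\langle e^k-x^*,\,x^{k+1}-x^*\rangle .
\end{equation*}
Since $\gamma_k\to0$ and the inner product is bounded, the last term tends to $0$, and a Maingé-type case analysis (as in the proof of Lemma \ref{lemma_he}) gives the asymptotic regularity $\norm{x^k-y^k}\to0$, hence $\norm{x^{k+1}-x^k}\to0$ because $x^{k+1}-x^k=\beta_k(y^k-x^k)+\gamma_k(e^k-x^k)$.

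The crux is to prove $\limsup_{k\to\infty}\langle -x^*,\,x^{k+1}-x^*\rangle\le0$. I would select a subsequence $(x^{k_n})$ attaining this $\limsup$ and, by boundedness, a further subsequence with $x^{k_n}\to\bar x$ and $\alpha_{k_n}\to\bar\alpha\in[c,2\alpha]$. From $\norm{x^k-y^k}\to0$ and $y^k=J_{\alpha_k A}(x^k-\alpha_k Bx^k)$, the closedness of the graph of $A$ and continuity of $B$ (the variation of the step size being absorbed because $\alpha_k-\alpha_{k+1}\to0$ by condition 4) force $\bar x=J_{\bar\alpha A}(\bar x-\bar\alpha B\bar x)$, i.e. $\bar x\in\mc S$. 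The variational characterization of $x^*=\op{proj}_{\mc S}(0)$, namely $\langle -x^*,\,s-x^*\rangle\le0$ for every $s\in\mc S$, then gives $\limsup_k\langle -x^*,x^{k+1}-x^*\rangle=\langle -x^*,\bar x-x^*\rangle\le0$.

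Finally, using instead $(1-\gamma_k)^2\le1-\gamma_k$ and $\norm{z^k-x^*}\le\norm{x^k-x^*}$ in the same expansion yields
\begin{equation*}
v^{k+1}\le(1-\gamma_k)v^k+2\gamma_k\langle e^k,\,x^{k+1}-x^*\rangle+2\gamma_k\langle -x^*,\,x^{k+1}-x^*\rangle .
\end{equation*}
Taking $\delta^k=\gamma_k$ (non-summable by condition 1), the term $2\gamma_k\langle e^k,x^{k+1}-x^*\rangle$ is $o(\delta^k)$ since $e^k\to0$ and $(x^k)$ is bounded, while the last term has the form $\delta^k\beta^k$ with $\limsup_k\beta^k\le0$ by the previous step; this is exactly the hypothesis of Corollary \ref{cor_liu} (through its $\limsup$ form inherited from Corollary \ref{cor_qin}), whence $v^k\to0$, i.e. $x^k\to x^*=\op{proj}_{(A+B)^{-1}(0)}(0)$. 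I expect the main obstacle to be the third step: controlling the variable step size $\alpha_k$ in the demiclosedness argument so that cluster points are genuine zeros of $A+B$, and extracting $\norm{x^k-y^k}\to0$ from a recursion whose only gain comes from the non-summable $\gamma_k$; the sign bookkeeping needed to present the final estimate strictly in the nonnegative form of Corollary \ref{cor_liu} is then routine.
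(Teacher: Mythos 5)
Your proposal follows essentially the same route as the paper's (very compressed) proof and the underlying argument of \cite{dadashi2019}: boundedness from the nonexpansiveness of $J_{\alpha_k A}$ combined with the cocoercivity of $B$, a Halpern-type inequality for $v^k=\normsq{x^k-x^*}$, and a final appeal to Corollary \ref{cor_liu}. Your individual estimates are sound: the averagedness of $\op{Id}-\alpha_k B$ for $\alpha_k<2\alpha$, the convex-combination identity giving the $-\lambda_k(1-\lambda_k)\normsq{x^k-y^k}$ term with $\lambda_k(1-\lambda_k)$ bounded away from zero by condition 3, the demiclosedness argument (which in fact only needs $\alpha_{k_n}\geq c$ and the closedness of $\op{gra}(A)$, since $u^{k_n}=(x^{k_n}-y^{k_n})/\alpha_{k_n}-Bx^{k_n}\to -B\bar x$), and the variational characterization of $x^*=\op{proj}_{(A+B)^{-1}(0)}(0)$.

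The one step that fails as literally written is the claim that the Maing\'e case analysis \emph{gives} the full-sequence asymptotic regularity $\norm{x^k-y^k}\to0$. From $v^{k+1}\le v^k-c\,\normsq{x^k-y^k}+\epsilon^k$ with $\epsilon^k\to 0$ but $\sum_k\epsilon^k=\infty$ (your error term is of order $\gamma_k$, which is not summable by condition 1), regularity of the whole sequence follows only when $(v^k)_{k\in\NN}$ is eventually decreasing, since only then does $v^k-v^{k+1}\to0$. In the non-monotone case, Maing\'e's technique yields $\norm{x^{\tau(k)}-y^{\tau(k)}}\to0$ merely along the auxiliary indices $\tau(k)=\max\{k_0\le n\le k:\ v^n\le v^{n+1}\}$; the limsup inequality is then proved along $\tau(k)$, and one concludes $v^{\tau(k)}\to0$ and hence $v^k\le v^{\tau(k)+1}\to0$ directly---Corollary \ref{cor_liu} is never applied to the whole sequence, nor could it be, because its hypothesis $\limsup_{k\to\infty}\beta^k\le 0$ presupposes exactly the full-sequence regularity that is missing there. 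So your linear pipeline (regularity $\Rightarrow$ limsup $\Rightarrow$ Corollary \ref{cor_liu}) covers only the eventually-decreasing case. The repair costs nothing beyond what you already derived: you hold precisely the two interdependent inequalities of Lemma \ref{lemma_he} (the F\'ejer-type one with $\eta^k=c\,\normsq{x^k-y^k}$ and the Halpern-type one with $\delta^k=\gamma_k$, $\beta^k=0$), and your demiclosedness-plus-projection argument is exactly the verification of its condition 3 (any subsequence along which $\eta^{k_n}\to0$ satisfies $\limsup_{n}\gamma^{k_n}\le0$). Invoking Lemma \ref{lemma_he} instead of Corollary \ref{cor_liu} closes the proof in both cases at once.
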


\begin{proof}
Using the nonexpansiveness of the resolvent of a maximally monotone operator \cite[Corollary 23.9]{bau2011} and the cocoercivity of the mapping $B$, one can prove that the sequence $(x^k)_{k\in\NN}$ is bounded. Then, using some properties of the resolvent \cite[Lemma 2.6]{dadashi2019} and of the convex combination of bounded sequences \cite[Lemma 2.8]{dadashi2019} and using the monotonicity of $A$, the following inequality hold:
$$\normsq{x^{k+1}-x^*}\leq\normsq{x^{k}-x^*}-\delta^k,$$
where $\delta^k$ is a quantity depending on the error $e_k$ and on $x^*$ and such that the assumption of Lemma \ref{cor_liu} are satisfied. Therefore, convergence holds.
 \end{proof}

\subsection{Applications to Variational Inequalities}

\paragraph*{Application of Lemma \ref{lemma_opial} and Lemma \ref{lemma_det_rs}}
The authors in \cite{malitsky2019} consider the general variational inequality problem in \eqref{gen_vi} where
$g : \mathcal{X} \rightarrow\bar\RR$ is a proper convex lower semicontinuous function and $F : \operatorname{dom} g \to \mathcal{X}$ is monotone. They propose the \textit{Golden Ratio Algorithm} (GRAAL) whose iterations are given by
\begin{equation}\label{mali_golden}
\begin{array}{c}
\tilde x^k =\frac{(\varphi-1) x^k+\bar{x}^{k-1}}{\varphi} \\ [10pt]
x^{k+1} =\operatorname{prox}_{\alpha g}(\tilde x^k-\alpha F(x^k)) \end{array}
\end{equation}
where $\varphi=\frac{\sqrt{5}+1}{2}$ is the golden ratio, i.e., $\varphi^{2}=1+\varphi .$
To prove convergence, they use Lemma \ref{lemma_opial} and Lemma \ref{lemma_det_rs}.
\begin{thm}[Theorem 1, \cite{malitsky2019}]\label{theo_mali2}
Let F be $\ell$-Lipchitz continuous and monotone that $g$ be lower semicontinuous and let $\alpha \in\left(0, \frac{\varphi}{2 \ell}\right]$. Then the sequences $(x^{k})_{k\in\NN}$ and $(\tilde x^{k})_{k\in\NN}$, generated by (\ref{mali_golden}), converge to a solution of the VI in (\ref{gen_vi}).
\end{thm}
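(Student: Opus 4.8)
The plan is to run the two-stage template of Table~\ref{table_det}: first produce a F\'ejer/Lyapunov inequality and hand it to Lemma~\ref{lemma_det_rs}, obtaining boundedness, convergence of the distances to every solution, and asymptotic regularity; then invoke the Opial Lemma~\ref{lemma_opial} to upgrade this into convergence of the whole sequence to a single solution of \eqref{gen_vi}.

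First I would exploit the variational characterisation of the proximal step. The update $x^{k+1}=\op{prox}_{\alpha g}(\tilde x^k-\alpha F(x^k))$ is equivalent to $\tilde x^k-\alpha F(x^k)-x^{k+1}\in\alpha\,\partial g(x^{k+1})$, so convexity of $g$ yields, for every $y\in\mc X$,
\begin{equation*}
\langle \tilde x^k-\alpha F(x^k)-x^{k+1},\,y-x^{k+1}\rangle\leq\alpha\bigl(g(y)-g(x^{k+1})\bigr).
\end{equation*}
Taking $y=x^*$ for a solution $x^*\in\op{SOL}(\mc X,F)$ (so that $\langle F(x^*),x^{k+1}-x^*\rangle+g(x^{k+1})-g(x^*)\geq0$ by \eqref{gen_vi}), cancelling the $g$-values and applying the polarisation identity to $2\langle\tilde x^k-x^{k+1},x^{k+1}-x^*\rangle$, I reach
\begin{equation*}
\normsq{x^{k+1}-x^*}\leq\normsq{\tilde x^k-x^*}-\normsq{\tilde x^k-x^{k+1}}-2\alpha\langle F(x^k)-F(x^*),x^{k+1}-x^*\rangle .
\end{equation*}
Monotonicity of $F$ disposes of the part $\langle F(x^k)-F(x^*),x^k-x^*\rangle\geq0$; the leftover $\langle F(x^k)-F(x^*),x^{k+1}-x^k\rangle$ would drag along the unwanted factor $\norm{x^k-x^*}$, so following the reflected/past-extragradient mechanism one instead carries a term $\langle F(x^{k})-F(x^{k-1}),\,\cdot\,\rangle$ inside the energy and telescopes it, leaving only the consecutive-increment residual $\langle F(x^k)-F(x^{k-1}),x^{k+1}-x^k\rangle$, which the $\ell$-Lipschitz bound controls by $\ell\,\norm{x^k-x^{k-1}}\,\norm{x^{k+1}-x^k}$.

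The main obstacle is the bookkeeping that turns these one-step relations into a genuinely monotone quantity, and this is exactly where the golden ratio is indispensable. Writing the averaging rule in \eqref{mali_golden} as $\varphi\tilde x^k=(\varphi-1)x^k+\tilde x^{k-1}$ and using $\varphi^2=\varphi+1$ gives the identity $x^{k+1}-\tilde x^{k+1}=\varphi(\tilde x^{k+1}-\tilde x^k)$, which lets me rewrite all the mixed norms in terms of $\normsq{\tilde x^{k+1}-\tilde x^k}$. After absorbing the Lipschitz products by Young's inequality, the terms regroup into
\begin{equation*}
v^{k+1}\leq v^k-\theta^k,\qquad v^k=\normsq{\tilde x^k-x^*}+(\text{coupling terms in consecutive increments}),
\end{equation*}
with $\theta^k\geq0$ a positive multiple of $\normsq{\tilde x^{k+1}-\tilde x^k}$. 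Verifying that $\theta^k\geq0$ is precisely where the step-size restriction $\alpha\in(0,\varphi/(2\ell)]$ is consumed, and making the golden-ratio coefficients cancel exactly is the real work of the argument.

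Since this is \eqref{eq_det_rs} with $\delta^k=\varepsilon^k=0$, Lemma~\ref{lemma_det_rs} applies: $(v^k)_{k\in\NN}$ is bounded and converges and $\sum_k\theta^k<\infty$, whence $\tilde x^{k+1}-\tilde x^k\to0$ and, through $x^{k+1}-\tilde x^{k+1}=\varphi(\tilde x^{k+1}-\tilde x^k)$, also $x^{k+1}-\tilde x^{k+1}\to0$ and $x^{k+1}-x^k\to0$. In particular $(x^k)$ and $(\tilde x^k)$ are bounded with the same cluster points, and running the estimate for each fixed $x^*\in\op{SOL}(\mc X,F)$ shows $\norm{\tilde x^k-x^*}$, hence $\norm{x^k-x^*}$, converges --- the first hypothesis of Lemma~\ref{lemma_opial}. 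For the second, let $x^{k_n}\to\hat x$; passing to the limit in $\tilde x^{k}-\alpha F(x^{k})-x^{k+1}\in\alpha\,\partial g(x^{k+1})$ along this subsequence and using $x^{k+1}-x^k\to0$, $\tilde x^k-x^k\to0$, continuity of $F$ and closedness of the graph of $\partial g$, I obtain $0\in F(\hat x)+\partial g(\hat x)$, i.e.\ $\hat x\in\op{SOL}(\mc X,F)$ by \eqref{eq_cone_gen}. Both hypotheses of the Opial Lemma being met, $(x^k)_{k\in\NN}$ converges to a solution, and $x^k-\tilde x^k\to0$ carries $(\tilde x^k)_{k\in\NN}$ to the same limit.
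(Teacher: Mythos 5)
Your proposal is correct and follows essentially the same route as the paper: both derive a golden-ratio energy inequality of the form $v^{k+1}\leq v^k-\theta^k$ (the paper's Equation \eqref{eq_step_mali}, obtained from the prox characterization, monotonicity, and the Lipschitz bound on the cross term $\langle F(x^k)-F(x^{k-1}),x^{k+1}-x^k\rangle$), feed it to Lemma \ref{lemma_det_rs} to get boundedness and vanishing increments, identify cluster points as solutions via continuity of $F$ and the closedness properties of $\partial g$, and conclude with the Opial Lemma \ref{lemma_opial}. Your golden-ratio identity $x^{k+1}-\tilde x^{k+1}=\varphi(\tilde x^{k+1}-\tilde x^k)$ and the polarization step are accurate renditions of the bookkeeping that the paper delegates to \cite{malitsky2019}.
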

\begin{proof}
Using the fact that $F$ is Lipschitz continuous and monotone and that the proximal operator is firmly nonexpansive, it holds that
\begin{equation}\label{eq_step_mali}
\begin{aligned}
&(1+\varphi)\left\|\tilde{x}^{k+1}-x^{*}\right\|^{2}+\frac{\varphi}{2}\left\|x^{k+1}-x^k\right\|^{2} \leq\\
&\leq(1+\varphi)\left\|\tilde x^k-x^{*}\right\|^{2}+\frac{\varphi}{2}\left\|x^k-x^{k-1}\right\|^{2}-\varphi\left\|x^k-\tilde x^k\right\|^{2}.
\end{aligned}
\end{equation}
Then, $(\tilde x^k)_{k\in\NN}$ is bounded and $\lim _{k \rightarrow \infty}\|x^k-\tilde x^k\|=0$ by Lemma \ref{lemma_det_rs}. Hence, $(x^k)_{k\in\NN}$ has at least one cluster point. 
Then, using the properties of $g$, all cluster points of $\tilde x^k$ are solutions of $\op{VI}(\mc X, F)$. Since the sequence on the righthandside is non increasing, it is also convergent to a point in the solution set $\op{SOL}(\mc X,F)$. Therefore, using the fact that $\lim _{k \rightarrow \infty}\|x^k-\tilde x^k\|=0$ and the definition of $\tilde x^k$ in \eqref{mali_golden}, Lemma \ref{lemma_opial} can be applied to conclude that $(x^k)_{k\in\NN}$ converges to a solution of \eqref{gen_vi}.
 \end{proof}
\begin{rem}
Interestingly, in a preliminary version of the paper \cite{malitsky2018}, the authors use Theorem \ref{theo_qfm_comb} to prove convergence. In fact, given Equation \eqref{eq_step_mali} and using the properties of the mapping $g$, they obtain that $(x^k)_{k\in\NN}$ has a cluster point and they can directly apply Theorem \ref{theo_qfm_comb} to conclude convergence.
 \end{rem}
In \cite{malitsky2019}, the authors prove convergence of the \textit{explicit GRAAL} which is a variation of algorithm in \eqref{mali_golden} with an adaptive step size rule. In this case, they only use locally Lipschitz continuity and conclude convergence via Lemma \ref{lemma_opial} \cite[Theorem 2]{malitsky2019}. 

The algorithm has been recently extended tot he stochastic case and for stochastic generalized Nash equilibrium problems \cite{franci2021} and generative adversarial networks \cite{franci2020gen} with a proof that relies on Lemma \ref{lemma_rs} on the same line of Section \ref{sec_app_rs}.

\paragraph*{Application of Corollary \ref{cor_mali1}}
Corollary \ref{cor_mali1} is used in \cite{malitsky2015} to prove convergence of the \textit{projected reflected gradient method} for variational inequalities as in \eqref{eq_vi}. In details, the algorithm reads as
\begin{equation}\label{mali_proj_ref}
x^{k+1}=\op{proj}_{\mc X}(x^{k}-\alpha F(2 x^{k}-x^{k-1}))
\end{equation}
and they show that the following result holds. 
\begin{thm}[Theorem 3.2, \cite{malitsky2015}]\label{theo_mali3}
Let $F$ be monotone and $\ell$-Lipschitz continuous and $\alpha \in(0, \tfrac{\sqrt{2}-1}{\ell})$. Then the sequence $(x^k)_{k \in \NN}$ generated by (\ref{mali_proj_ref}) converges to a solution of $\op{VI}(\mc X,F)$ in \eqref{eq_vi}.
\end{thm}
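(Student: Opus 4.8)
The plan is to run a Fej\'er-type argument anchored on Corollary~\ref{cor_mali1} and closed with the Opial Lemma (Lemma~\ref{lemma_opial}). Fix an arbitrary $x^*\in\op{SOL}(\mc X,F)$ and abbreviate the reflected point by $\bar x^k=2x^k-x^{k-1}$, so the update reads $x^{k+1}=\op{proj}_{\mc X}(x^k-\alpha F(\bar x^k))$. First I would invoke the variational characterization of the projection onto the closed convex set $\mc X$ (equivalently, firm nonexpansiveness, \cite[Proposition 4.16]{bau2011}): testing at $y=x^*\in\mc X$ gives $\langle x^k-\alpha F(\bar x^k)-x^{k+1},x^*-x^{k+1}\rangle\le0$, and combining with the polarization identity $2\langle x^{k+1}-x^k,x^{k+1}-x^*\rangle=\normsq{x^{k+1}-x^k}+\normsq{x^{k+1}-x^*}-\normsq{x^k-x^*}$ yields the raw estimate $\normsq{x^{k+1}-x^*}\le\normsq{x^k-x^*}-\normsq{x^{k+1}-x^k}+2\alpha\langle F(\bar x^k),x^*-x^{k+1}\rangle$.

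The next step is to turn the inner-product term into something that telescopes. The monotonicity of $F$ together with the defining inequality of $x^*$ (used at the iterates, which lie in $\mc X$) makes the ``solution-facing'' part of $\langle F(\bar x^k),x^*-x^{k+1}\rangle$ nonpositive, while the mismatch between $F$ evaluated at the reflected point and at the iterates is controlled through $\ell$-Lipschitz continuity and the identity $\bar x^k-x^k=x^k-x^{k-1}$, which replaces it by products of the consecutive increments $\norm{x^k-x^{k-1}}$ and $\norm{x^{k+1}-x^k}$. The reflection is designed precisely so that the leftover pieces assemble into an energy carrying an inner-product correction and an increment term, exactly as in the forward--reflected--backward computation displayed in \eqref{FRB_eq}. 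Concretely, I expect an inequality of the form $v^{k+1}\le v^k-\theta^k$ for all $k\in\NN$, where $v^k=\normsq{x^k-x^*}+(\text{an inner-product correction})+(\text{a multiple of }\normsq{x^k-x^{k-1}})$ and $\theta^k$ is a nonnegative multiple of $\normsq{x^{k+1}-x^k}$.

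The hard part will be the bookkeeping in the previous paragraph: one must tune the coefficients of $v^k$ so that $v^k\ge0$ \emph{and} simultaneously the residual quadratic form $\theta^k$ is positive semidefinite. A Young/Cauchy--Schwarz estimate of the Lipschitz cross term shows that both nonnegativity requirements hold only when $\alpha$ is small enough, and a direct computation pins the admissible range down to $\alpha\in(0,\tfrac{\sqrt2-1}{\ell})$; this is exactly where the stated step-size bound is forced (the analogue of the threshold $\tfrac{1-2\epsilon}{2\ell}$ in \eqref{FRB_eq}). Granting $v^k,\theta^k\ge0$ and $v^{k+1}\le v^k-\theta^k$, Corollary~\ref{cor_mali1} gives that $(v^k)_{k\in\NN}$ is bounded and $\theta^k\to0$, hence $\norm{x^{k+1}-x^k}\to0$ and $(x^k)_{k\in\NN}$ is bounded. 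Since $(v^k)_{k\in\NN}$ is moreover non-increasing and bounded below, it converges, and because the correction terms vanish (the inner-product term is $O(\norm{x^k-x^{k-1}})\to0$ by Lipschitz continuity), $\lim_{k\to\infty}\norm{x^k-x^*}$ exists.

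Finally I would verify the two Opial hypotheses. The existence of $\lim_{k\to\infty}\norm{x^k-x^*}$ was obtained for an arbitrary $x^*\in\op{SOL}(\mc X,F)$, so condition~1 of Lemma~\ref{lemma_opial} holds with $\mc X$ replaced by $\op{SOL}(\mc X,F)$. For condition~2, let $\bar x$ be a cluster point, say $x^{k_n}\to\bar x$; from $\norm{x^{k+1}-x^k}\to0$ we get $x^{k_n-1}\to\bar x$ and thus $\bar x^{k_n}\to\bar x$. Passing to the limit in $\langle x^{k_n}-x^{k_n+1}-\alpha F(\bar x^{k_n}),y-x^{k_n+1}\rangle\le0$, using continuity of $F$ and $x^{k_n+1}\to\bar x$, yields $\langle F(\bar x),y-\bar x\rangle\ge0$ for every $y\in\mc X$, i.e.\ $\bar x\in\op{SOL}(\mc X,F)$. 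With both hypotheses met, Lemma~\ref{lemma_opial} gives that $(x^k)_{k\in\NN}$ converges to a solution of $\op{VI}(\mc X,F)$ in \eqref{eq_vi}.
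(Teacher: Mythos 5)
Your first three paragraphs match the paper's proof: the projection inequality, the Fej\'er-type energy, the appeal to Corollary \ref{cor_mali1}, and the step-size threshold $\alpha\ell<\tfrac{1}{1+\sqrt2}=\sqrt2-1$ are exactly the paper's route, and your verification that every cluster point solves the VI (direct limit passage in the projection inequality, rather than the paper's citation of Minty's theorem) is a legitimate variant. The genuine gap is in the finish. For the projected reflected gradient method the inner-product correction produced by the bookkeeping is \emph{not} a forward--reflected--backward-type difference term $\langle F(x^k)-F(x^{k-1}),\cdot\rangle$: since the reflected point $\bar x^k=2x^k-x^{k-1}$ may lie outside $\mc X$, monotonicity must be invoked against the solution, $\langle F(\bar x^k),x^*-\bar x^k\rangle\le\langle F(x^*),x^*-\bar x^k\rangle$, and splitting $\bar x^k-x^*=(x^k-x^*)+(x^k-x^{k-1})$ yields the telescoping cross term $2\alpha\langle F(x^*),x^{k-1}-x^*\rangle$ that appears in the paper's energy $v^k$. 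This term involves $F$ evaluated \emph{at the solution}, is merely nonnegative (because $x^{k-1}\in\mc X$ and $x^*$ solves the VI), and is not $O(\norm{x^k-x^{k-1}})$; it need not vanish, nor even converge, unless the iterates themselves converge. Hence your claim that ``the correction terms vanish, so $\lim_{k\to\infty}\norm{x^k-x^*}$ exists for every solution $x^*$'' is unsupported, condition 1 of Lemma \ref{lemma_opial} is never verified, and the Opial-based conclusion collapses at exactly this point. Your expectation of a vanishing correction ``exactly as in \eqref{FRB_eq}'' conflates two different methods: \eqref{eq_mali_FRB} reflects operator values, while \eqref{mali_proj_ref} reflects points, and their energies differ in precisely this respect.

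The repair is the paper's finish, which avoids Opial altogether. Once Corollary \ref{cor_mali1} gives boundedness and $\norm{x^k-x^{k-1}}\to0$, pick a cluster point $\bar x$ (a solution, as you showed) and run the energy with $x^*=\bar x$: all terms of $v^k(\bar x)$ are nonnegative, and along the subsequence $x^{k_n}\to\bar x$ each tends to $0$ (in particular $\langle F(\bar x),x^{k_n-1}-\bar x\rangle\to0$ since $x^{k_n-1}\to\bar x$); as $(v^k(\bar x))_{k\in\NN}$ is non-increasing and bounded below, it converges, so its limit is $0$, and then $\normsq{x^k-\bar x}\le v^k(\bar x)\to0$ gives convergence of the whole sequence to $\bar x$. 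This is the constructive form of the paper's ``there cannot be two cluster points'' contradiction argument.
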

\begin{proof}
Using the firmly nonexpansiveness of the projection, the fact that the mapping is monotone and $\ell$-Lipschitz continuous and the bound on the step sizes, the following inequality holds:
$$
\begin{aligned}
\left\|x^{k+1}-x^*\right\|^{2}+& \alpha \ell\left\|x^{k+1}-y_k\right\|^{2}+2 \alpha\langle F(z), x^{k}-x^*\rangle \\
\leq &\left\|x^{k}-x^*\right\|^{2}+\alpha \ell\left\|x^{k}-y^{k-1}\right\|^{2}\\
&+2 \alpha\langle F(x^*), x^{k-1}-x^*\rangle \\
&-(1-\alpha\ell(1+\sqrt{2}))\left\|x^{k}-x^{k-1}\right\|^{2},
\end{aligned}
$$
where $x^*\in\op{SOL}(\mc X,F)$. Now, by letting
$$
\begin{array}{l}
v^k=\|x^{k}-x^*\|^{2}+\alpha \ell\|x^{k}-y^{k-1}\|^{2}+2 \alpha\langle F(x^*), x^{k-1}-x^*\rangle \\
\theta^k=(1-\alpha \ell(1+\sqrt{2}))\|x^{k}-x^{k-1}\|^{2},
\end{array}
$$
it follows that $v^{k+1}\leq v^k-\theta^k$ as in Corollary \ref{cor_mali1}, from which it is possible to deduce that $(x^k)_{k\in\NN}$ is bounded and has at least one cluster point $\bar x$ and that $\lim_{k\to\infty}\norm{x^k-x^{k-1}}=0$. By Minty Theorem \cite[Lemma 2.2]{malitsky2015} one have that any cluster point $\bar x$  is also a solution of the VI. By contradiction, it is possible to prove that $(x^k)_{k\in\NN}$ cannot have two cluster points, therefore $\lim_{k\to\infty}x^k= \bar x\in\op{SOL}(\mc X,F)$.
 \end{proof}

Since the constant $\ell$ can be hard to compute, to avoid using $\ell$-Lipschitz continuity, in the same paper, the authors also propose a variant of the algorithm in \eqref{mali_proj_ref} that includes a prediction-correction technique to select the step sizes. The convergence result \cite[Theorem 4.4]{malitsky2015} is proved similarly to the original result, using Corollary \ref{cor_mali1}.
Moreover, they also provide an estimation of the convergence rate when the mapping $F$ is strongly monotone (similarly to Theorem \ref{theo_mali_rate}) using a result similar to Lemma \ref{lemma_rate} \cite[Lemma 2.9]{malitsky2015}.

This algorithm has been recently extended to the stochastic case \cite{cui2019, cui2016} and proved similarly, by exploiting Lemma \ref{lemma_fake_rs}.

\subsection{Applications to Nash equilibrium problems}
\paragraph*{Application of Lemma \ref{lemma_xu02}}
The fact that Lemma \ref{lemma_xu02} guarantees convergence to zero (Remark \ref{remark_tozero}) is used in \cite{duvocelle2019} to compute a Nash equilibrium in traffic networks.
In a dynamic traffic assignment problem, travelers participate in a non-cooperative Nash game choosing a departure time and a route.
The author propose a \textit{forward-backward-forward} algorithm (inspired by \cite{tseng2000}), given by
\begin{equation}\label{algo_traffic}
\begin{array}{l}
z^{k}=\op{proj}_{\mathcal{X}}[x^{k}-\alpha F(x^{k})] \\
y^{k}=z^{k}+\alpha(F(x^{k})-F(z^{k})) \\
x^{k+1}=(1-\nu_{k}-\gamma_{k}) x^{k}+\gamma_{k} z^{k},
\end{array}
\end{equation}
to solve the associated variational problem. The convergence result is stated next and it shows convergence to the solution of the VI associated to the Nash equilibrium problem \cite[Proposition 1.4.2]{facchinei2007}.
\begin{thm}[Theorem 3.1, \cite{duvocelle2019}] \label{thoe_duvo}
Let F be pseudomonotone and $\ell$-Lipschitz continuous. Let $(\nu^{k})_{k \in \mathbb{N}}$ and $(\gamma_{k})_{k \in \mathbb{N}}$ be sequences in $(0,1),$ such that $(\gamma_{k})_{k \in \mathbb{N}} \subset\left(\nu, 1-\nu_{k}\right)$ for some $\nu>0,$ and let $\lim _{k \rightarrow \infty} \nu_{k}=0$ and $\sum_{k=1}^{\infty} \nu_{k}=\infty$. Then, the sequence $(x^{k})_{k \in \mathbb{N}}$ generated by \eqref{algo_traffic} converges to $x^* \in \op{SOL}(\mc X,F)$ where $x^*=\operatorname{argmin}\left\{\|z\|: z \in \op{SOL}(\mc X,F)\right\}$.
\end{thm}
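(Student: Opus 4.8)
The plan is to apply Lemma \ref{lemma_xu02} to the scalar sequence $v^k=\normsq{x^k-x^*}$, taking $\delta^k=\nu_k$ and $\varepsilon^k=0$. First I would record the variational characterization of the target: since $x^*=\op{proj}_{\op{SOL}(\mc X,F)}(0)$, the projection inequality at the origin gives $\langle x^*,z-x^*\rangle\geq0$ for every $z\in\op{SOL}(\mc X,F)$. This is precisely the property that will later force the ``$\beta^k$'' term of Lemma \ref{lemma_xu02} to have nonpositive limit superior, thereby singling out the minimum-norm solution among all possible limits.

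The core step is the fundamental estimate for the forward--backward--forward (Tseng) point $y^k=z^k+\alpha(F(x^k)-F(z^k))$, where $z^k=\op{proj}_{\mc X}(x^k-\alpha F(x^k))$. Expanding $\normsq{y^k-x^*}$, using the projection inequality for $z^k$ evaluated at $x^*$, invoking pseudomonotonicity of $F$ at the solution (so that $\langle F(z^k),z^k-x^*\rangle\geq0$), and bounding $\alpha^2\normsq{F(x^k)-F(z^k)}\leq\alpha^2\ell^2\normsq{x^k-z^k}$ by $\ell$-Lipschitz continuity, I would obtain
$$\normsq{y^k-x^*}\leq\normsq{x^k-x^*}-(1-\alpha^2\ell^2)\normsq{x^k-z^k},$$
valid under the step-size condition $\alpha\ell<1$, which keeps the coefficient $1-\alpha^2\ell^2$ positive. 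Writing the anchored update as $x^{k+1}=\nu_k\cdot0+(1-\nu_k)w^k$ with $w^k=((1-\nu_k-\gamma_k)x^k+\gamma_k y^k)/(1-\nu_k)$, convexity of $\normsq{\cdot}$ applied to $w^k$ together with the Halpern expansion $\normsq{x^{k+1}-x^*}\leq(1-\nu_k)\normsq{w^k-x^*}+2\nu_k\langle x^*,x^*-x^{k+1}\rangle$ yields
$$v^{k+1}\leq(1-\nu_k)v^k+2\nu_k\langle x^*,x^*-x^{k+1}\rangle-\gamma_k(1-\alpha^2\ell^2)\normsq{x^k-z^k}.$$

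From here I would proceed in the standard Halpern manner. Discarding the nonpositive term shows that $(v^k)_{k\in\NN}$, hence $(x^k)_{k\in\NN}$, is bounded. Using the negative term one shows $\normsq{x^k-z^k}\to0$, so that, because $\gamma_k>\nu>0$, every sequential cluster point of $(x^k)_{k\in\NN}$ is a fixed point of the projection step and therefore, by the (pseudomonotone) Minty characterization, lies in $\op{SOL}(\mc X,F)$. Combining this with the variational inequality $\langle x^*,z-x^*\rangle\geq0$ then gives $\limsup_{k\to\infty}\langle x^*,x^*-x^{k+1}\rangle\leq0$, which is exactly condition $2a$ of Lemma \ref{lemma_xu02} for $\beta^k=2\langle x^*,x^*-x^{k+1}\rangle$; conditions $1$ and $3$ follow from $\sum_k\nu_k=\infty$, $\nu_k\to0$ and $\varepsilon^k=0$. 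Lemma \ref{lemma_xu02} then delivers $v^k\to0$, i.e.\ $x^k\to x^*$.

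The delicate point, and the main obstacle, is the verification of $\limsup_{k\to\infty}\langle x^*,x^*-x^{k+1}\rangle\leq0$. Because $\sum_k\nu_k=\infty$, the negative term cannot simply be telescoped to conclude $\normsq{x^k-z^k}\to0$ outright, and $(x^k)_{k\in\NN}$ is not known to converge a priori. The standard remedy is a subsequence argument interlocked with a two-case (monotone / non-monotone, à la Maingé) treatment of $(v^k)_{k\in\NN}$: one selects indices along which $\langle x^*,x^*-x^{k+1}\rangle$ approaches its limit superior, extracts a convergent sub-subsequence whose limit is a solution (exploiting $\normsq{x^k-z^k}\to0$), and then applies the minimum-norm inequality to that limit. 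A secondary subtlety is that $F$ is only pseudomonotone rather than monotone, so the sign condition $\langle F(z^k),z^k-x^*\rangle\geq0$ and the identification of cluster points as solutions must both rely on pseudomonotonicity at $x^*$ and the pseudomonotone Minty lemma, rather than on a global monotone inequality.
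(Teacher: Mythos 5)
Your proposal is correct and follows essentially the same route as the paper: both apply Lemma \ref{lemma_xu02} to $v^k=\normsq{x^k-x^*}$ with $\delta^k=\nu_k$ and $\beta^k$ built from $2\langle x^*,x^*-x^{k+1}\rangle$, both establish the $\limsup$ condition by extracting a cluster point (identified as a solution because $\norm{x^k-z^k}\to0$) and invoking the projection characterization of the minimum-norm point $x^*$, and both rest on the Maing\'e-type eventually-decreasing/non-decreasing case distinction, of which the paper's sketch spells out only the first case before deferring to \cite{duvocelle2019}. The only substantive difference is that you derive the one-step inequality from scratch via the Tseng forward--backward--forward estimate plus the Halpern expansion, thereby making explicit both the role of pseudomonotonicity at $x^*$ and the step-size condition $\alpha\ell<1$ (which the survey's statement omits), whereas the paper imports the slightly different inequality \eqref{eq_traffic} wholesale from \cite[Lemmas 4.1--4.3]{duvocelle2019}.
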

\begin{proof}
Using the definition of the algorithm in \eqref{algo_traffic} and some preliminary inequalities \cite[Lemma 4.1]{duvocelle2019}, it holds that \cite[Lemma 4.3]{duvocelle2019}
\begin{equation}\label{eq_traffic}
\begin{aligned}
&\|x^{k+1}-x^*\|^{2} \leq(1-\nu_{k})\|x^{k}-x^*\|^{2}+\\
&\nu_{k}[2 \gamma_{k}\|x^{k}-y^{k}\| \cdot\|x^{k+1}-x^*\|+2\langle x^*, x^*-x^{k+1}\rangle]
\end{aligned}
\end{equation}
To apply Lemma \ref{lemma_xu02} to the sequence $v^k=\normsq{x^{k+1}-x^*}$, the authors check the conditions on $\beta^k=2 \gamma_{k}\|x^{k}-y^{k}\| \cdot\|x^{k+1}-x^*\|+2\langle x^*, x^*-x^{k+1}\rangle$. First, note that since $\mc X$ is closed and convex, there exists a unique $x^*\in\op{SOL}(\mc X,F)$ such that $x^*=\op{proj}_{\op{SOL}(\mc X,F)}(0)$. Now, suppose that there exists $k_0\in\NN$ such that $\normsq{x^{k+1}-x^*}\leq\normsq{x^k-x^*}$ for all $k\geq k_0$. Then, $\lim_{k\to\infty}\normsq{x^k-x^*}$ exists. Then, exploiting the properties of the step size and using monotonicity and Lipschitz continuity, it can be proven that $\lim_{k\to\infty}\normsq{x^k-z^k}=0$ and that, by the definition of $y_k$, $\lim_{k\to\infty}\normsq{y^k-x^k}=0$. Therefore, $\lim_{k\to\infty}\normsq{x^{k+1}-x^k}=0$. Since the sequence is bounded \cite[Lemma 4.2]{duvocelle2019}, there exists a subsequence $(x^{k_j})$ such that $x^{k_j}\to \bar y$ and $\limsup _{k \rightarrow \infty}\langle x^*,x^*-x^{k}\rangle=\langle x^*,x^*-\bar y\rangle \leq 0$, by the definition of $x^*$. Therefore, by \cite[Lemma 4.4]{duvocelle2019}, also for a subsequence $(z^{k_j})$ it holds $z^{k_j}\to \bar y$. Then, $\lim_{k \rightarrow \infty}\langle x^*, x^*-x^{k+1}\rangle=\langle x^*,x^*-\bar y\rangle \leq 0$ and by Lemma \ref{lemma_xu02}, $\lim_{k\to\infty}\normsq{x^k-x^*}=0$. For more details, we refer to \cite{duvocelle2019}.
 \end{proof}

\paragraph*{Application of Lemma \ref{lemma_polyak1}}
An instance of how Lemma \ref{lemma_polyak1} can be used to prove convergence is given in \cite{kannan2012} where the authors propose a Nash equilibrium seeking algorithm via a Tikhonov regularization. The iterations, for each agent $i\in\mc I=\{1,\dots,N\}$, read as 
\begin{equation}\label{eq_tik}
x_{i}^{k+1}=\op{proj}_{\mc C_{i}}(x_{i}^{k}-\gamma_{i}^{k}(F_{i}(x^{k})+\epsilon_{i}^{k} x_{i}^{k}))
\end{equation}
where $\gamma_k=(\gamma_i^k)_{i=1}^N$ and $\epsilon_k=(\epsilon_i^k)_{i=1}^N$ are the step size and regularization sequences, respectively, and $\mc C_i$ is the local feasible set for each player $i$. Then, the following result holds.
\begin{thm}[Theorem 2.4, \cite{kannan2012}]\label{theo_kannan}
Suppose $F$ is monotone and $\ell$-Lipschitz continuous over a closed convex set $\mc C$ and let $(\gamma^k)_{k\in\NN}$ and $(\epsilon^k)_{k\in\NN}$ be such that
\begin{enumerate}
\item[1.] $\sum_{k=1}^{\infty} \gamma_{j}^{k} \epsilon_{j}^{k}=\infty$
\item[2.] $\lim _{k \rightarrow \infty} \frac{(\gamma_{\max }^{k})^{2}}{\gamma_{\min }^{k} \epsilon_{\min }^{k}}=0$
\item[3.] $\sum_{k=1}^{\infty}(\gamma_{j}^{k})^{2}<\infty$
\item[4.] $\sum_{k=1}^{\infty}(\epsilon_{j}^{k} \gamma_{j}^{k})^{2}<\infty$
\item[5.] $\lim _{k \rightarrow \infty} \frac{\epsilon_{\max }^{k-1}-\epsilon_{\min }^{k}}{\epsilon_{\min }^{k}(\gamma_{\min }^{k})^{2}}=0$
\item[6.] $\lim _{k \rightarrow \infty} \frac{\gamma_{\max }^{k} \epsilon_{\max }^{k}-\gamma_{\min }^{k} \epsilon_{\min }^{k}}{\gamma_{\min }^{k} \epsilon_{\min }^{k}}=0$
\item[7.] $\lim _{k \rightarrow \infty} \epsilon_{j}^{k}=0$ for all $j=1, \ldots, N$.
\end{enumerate}
Then, the sequence $(x^{k})_{k\in\NN}$ generated by \eqref{eq_tik} converges to a Nash equilibrium as $k \rightarrow \infty$.
\end{thm}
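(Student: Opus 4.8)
The plan is to treat this as a Tikhonov-regularization \emph{tracking} argument and to reduce the vector iteration to the scalar recursion of Lemma \ref{lemma_polyak1}. For each $k$ the regularized map $G^k:=F+\op{diag}(\epsilon^k)\op{Id}$ is the sum of the monotone $F$ and a positive-definite diagonal perturbation, hence it is $\epsilon_{\min}^k$-strongly monotone and $(\ell+\epsilon_{\max}^k)$-Lipschitz; consequently $\op{VI}(\mc C,G^k)$ has a unique solution $x^*_k$, and since $\mc C=\prod_i\mc C_i$ is a product set this $x^*_k$ is exactly the fixed point of the per-agent projected map in \eqref{eq_tik} at the $k$-th parameters. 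I would track $v^k:=\norm{x^k-x^*_k}$ and aim at an inequality $v^{k+1}\leq\gamma_k v^k+\varepsilon^k$ with $\gamma_k\in[0,1)$, so that Lemma \ref{lemma_polyak1} applies.

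First I would establish the one-step contraction toward the current trajectory point. Writing $x^*_k$ as a fixed point of the regularized projected-gradient map and using the nonexpansiveness of $\op{proj}_{\mc C_i}$, I expand $\norm{x^{k+1}-x^*_k}^2$ and invoke strong monotonicity and Lipschitz continuity of $G^k$; in the homogeneous-step case this yields a per-coordinate factor $1-2\gamma_i^k\epsilon_i^k+(\gamma_i^k)^2(\ell+\epsilon_i^k)^2$, and taking a uniform bound gives $\norm{x^{k+1}-x^*_k}\leq\sqrt{q^k}\,\norm{x^k-x^*_k}$ with $q^k<1$ eventually. The subtlety is that the step sizes $\gamma_i^k$ are per-agent, so the monotone cross term $\sum_i\gamma_i^k\langle F_i(x^k)-F_i(x^*_k),\,x_i^k-(x^*_k)_i\rangle$ does not factor through the joint monotonicity of $F$; splitting $\gamma_i^k=\gamma_{\min}^k+(\gamma_i^k-\gamma_{\min}^k)$ leaves an error of order $\gamma_{\max}^k-\gamma_{\min}^k$, which conditions 2 and 6 are designed to absorb.

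Next I would bound the drift of the trajectory. Pairing the two variational inequalities satisfied by $x^*_k$ and $x^*_{k+1}$ and discarding the monotone part of $F$ gives the classical estimate $\norm{x^*_k-x^*_{k+1}}\leq\frac{\max_i|\epsilon_i^k-\epsilon_i^{k+1}|}{\epsilon_{\min}^k}\norm{x^*_{k+1}}$. Condition 7 forces $\epsilon^k\to0$, so by the standard monotone-operator argument the trajectory converges, $x^*_k\to x^*$ with $x^*=\op{argmin}\{\norm{z}:z\in\op{SOL}(\mc C,F)\}$ (the least-norm solution, which exists by Standing Assumption \ref{ass_sol}); in particular $(x^*_k)_{k\in\NN}$ is bounded, so the drift is of order $\max_i|\epsilon_i^k-\epsilon_i^{k+1}|/\epsilon_{\min}^k$. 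Combining via the triangle inequality, $v^{k+1}\leq\sqrt{q^k}\,v^k+\norm{x^*_k-x^*_{k+1}}$, I set $\gamma_k=\sqrt{q^k}$ and $\varepsilon^k=\norm{x^*_k-x^*_{k+1}}$ and verify the three hypotheses of Lemma \ref{lemma_polyak1}: $\gamma_k\in[0,1)$ and $\sum_k(1-\gamma_k)=\infty$ follow from conditions 1--4 and 6, while the ratio test $\varepsilon^k/(1-\gamma_k)\to0$ is what condition 5 supplies. Lemma \ref{lemma_polyak1} then yields $v^k\to0$, and since $x^*_k\to x^*$ we conclude $x^k\to x^*$, a Nash equilibrium.

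The main obstacle will be the componentwise heterogeneity of the step sizes and regularization parameters. Because each agent contracts at its own rate, the faithful one-step estimate is not a single scalar contraction but a bundle of per-coordinate factors, and folding them into one recursion forces uniform control of both the effective contraction $1-\gamma_k$ and the residual $\varepsilon^k$. Conditions 2 and 6 quantify the admissible spread between $\gamma_{\max}^k,\gamma_{\min}^k$ and between $\epsilon_{\max}^k,\epsilon_{\min}^k$, and are precisely what keep $\gamma_k$ below $1$ while preserving $\sum_k(1-\gamma_k)=\infty$; the genuinely delicate step is checking that $\varepsilon^k/(1-\gamma_k)\to0$ rather than merely $\varepsilon^k\to0$, which is where the interplay of conditions 3--6 with the drift estimate must be tracked carefully.
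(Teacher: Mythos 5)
Your proposal is correct and follows essentially the same route as the paper's proof: both track the distance from the iterates to the Tikhonov trajectory (the exact solutions of the regularized VIs, which converge to the least-norm solution), derive a recursion $v^{k+1}\leq q_k v^k+\varepsilon^k$ whose contraction factor becomes smaller than $1$ for large $k$ (with conditions 2 and 6 absorbing the per-agent heterogeneity, and the shifted-process argument of Remark \ref{remark_kannan} handling the ``eventually'' caveat), and then apply Lemma \ref{lemma_polyak1}. The differences are only cosmetic: the paper measures the distance to the previous trajectory point $y^{k-1}$ and states the combined inequality with an explicit constant $q_k^2=(1-\gamma_{\min}^k\epsilon_{\min}^k)^2+(\gamma_{\max}^k)^2\ell^2+2(\gamma_{\max}^k\epsilon_{\max}^k-\gamma_{\min}^k\epsilon_{\min}^k)\ell$, whereas you split the same estimate into a one-step contraction plus a drift bound.
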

\begin{proof}
Since the classic Tikhonov relaxation, i.e., the iterative process where $y^{k+1}$ solves $\op{VI}(\mc X,F^k)$ and $F^k(y)=F(y)+\epsilon^ky$, is convergent \cite{tikhonov1963,bau2011}, the authors first show that \cite[Proposition 2.3]{kannan2012}
$$
\|z^{k+1}-y^{k}\|  \leq q_{k}\|z^{k}-y^{k-1}\|+\frac{q_{k} M \sqrt{N}(\epsilon_{\max }^{k-1}-\epsilon_{\min }^{k})}{\epsilon_{\min }^{k}},
$$
where $q_{k}^{2}  =(1-\gamma_{\min }^{k} \epsilon_{\min }^{k})^{2}+(\gamma_{\max }^{k})^{2} \ell^{2}+2(\gamma_{\max }^{k} \epsilon_{\max }^{k}-\gamma_{\min }^{k} \epsilon_{\min }^{k}) \ell$. Then, once they have $v^{k+1}\leq q^kv^k+\varepsilon^k$ with $v^k=\norm{x^k-y^{k-1}}$, they prove that there exists a $\bar k$ such that $q^k<1$ for all $k\geq\bar k$ (as in Remark \ref{remark_kannan}). Thus, Lemma \ref{lemma_polyak1} can be applied to conclude convergence.
 \end{proof}

\subsection{Application to Lyapunov decrease}\label{sec_lyap}
\paragraph*{Application of Corollary \ref{cor_mali1}}
In this application, we show how the convergence results can be used in combination with a Lyapunov function. Let us consider the classic gradient method \cite{polyak1987,bau2011}
\begin{equation}\label{eq_grad_method}
x^{k+1}=x^k-\gamma \nabla f(x^k)
\end{equation}
to find the minimum of a function $f:\mc X\subseteq\RR^n\to\RR$. 
\begin{thm}[Theorem 1.4.1, \cite{polyak1987}]\label{thoe_lyap}
Let $f(x)$ be differentiable on $\RR^{n}$ and bounded from below, i.e., $f(x) \geq f^{*}>-\infty $. Let $\nabla f$ be $\ell$-Lipschitz continuous and let $\gamma\in\left(0,\frac{2}{\ell}\right)$. Then, in method \eqref{eq_grad_method} the gradient tends to zero, i.e., $\lim _{k \rightarrow \infty} \nabla f(x^{k})=0$ and the function $f(x)$ monotonically decreases, i.e., $f(x^{k+1}) \leq f(x^{k})$.
\end{thm}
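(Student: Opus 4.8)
The plan is to reduce the claim to a direct application of Corollary~\ref{cor_mali1}, using the classical energy sequence $v^k = f(x^k) - f^*$. This is nonnegative exactly because $f$ is bounded from below by $f^*$, so it serves as a legitimate Lyapunov-type function in the sense of Remark~\ref{remark_Fejer}, with the squared gradient norm playing the role of the negative term.

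The key analytical ingredient is the descent lemma, the standard quadratic upper bound that follows from $\ell$-Lipschitz continuity of $\nabla f$: for all $x,y\in\RR^n$,
$$f(y) \leq f(x) + \langle \nabla f(x), y-x\rangle + \frac{\ell}{2}\norm{y-x}^2.$$
I would apply this with $x = x^k$ and $y = x^{k+1} = x^k - \gamma\nabla f(x^k)$. Substituting $x^{k+1}-x^k = -\gamma\nabla f(x^k)$ and collecting the quadratic terms gives
$$f(x^{k+1}) \leq f(x^k) - \gamma\left(1 - \frac{\gamma\ell}{2}\right)\norm{\nabla f(x^k)}^2.$$

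Next I would exploit the step size restriction. Since $\gamma\in(0,2/\ell)$ we have $\gamma\ell<2$, hence the constant $c := \gamma\left(1 - \tfrac{\gamma\ell}{2}\right)$ is strictly positive. The inequality above then already yields the monotone decrease $f(x^{k+1})\leq f(x^k)$, because a nonnegative quantity is subtracted at each iteration. Setting $\theta^k := c\,\norm{\nabla f(x^k)}^2 \geq 0$, the same inequality reads $v^{k+1}\leq v^k - \theta^k$, which is precisely the hypothesis of Corollary~\ref{cor_mali1}. That corollary guarantees $\lim_{k\to\infty}\theta^k = 0$; dividing out the positive constant $c$ gives $\lim_{k\to\infty}\norm{\nabla f(x^k)}^2 = 0$, that is, $\lim_{k\to\infty}\nabla f(x^k) = 0$, which completes the argument.

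I do not expect a substantive obstacle: the only nonroutine step is the descent lemma, which follows from the integral identity $f(y)-f(x) = \int_0^1 \langle \nabla f(x+t(y-x)), y-x\rangle\,dt$ together with Cauchy--Schwarz and the Lipschitz bound $\norm{\nabla f(x+t(y-x)) - \nabla f(x)} \leq \ell\, t\,\norm{y-x}$. The remaining effort is the elementary algebra producing the coefficient $c$ and the verification that the prescribed step size range keeps it positive.
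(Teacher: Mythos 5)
Your proposal is correct and takes essentially the same route as the paper: derive the descent inequality $f(x^{k+1}) \leq f(x^k) - \gamma\bigl(1-\tfrac{\ell\gamma}{2}\bigr)\|\nabla f(x^k)\|^2$ from Lipschitz continuity of $\nabla f$, then apply Corollary~\ref{cor_mali1}. Your explicit shift to $v^k = f(x^k)-f^*$ is a nice touch, since it makes the nonnegativity hypothesis of the corollary visible, which the paper's terse proof leaves implicit.
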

\begin{proof}
Using differentiability and Lipschitz continuity, we obtain
$$
f(x^{k+1}) \leq f(x^{k})-\gamma\left(1-\frac{\ell \gamma}{2}\right)\|\nabla f(x^{k})\|^{2},
$$
Then, applying Corollary \ref{cor_mali1} the claim follows.
 \end{proof}

Let us now note that the function $V(x)=f(x)-f^*$ is a Lyapunov function for the problem and that, in the proof of Theorem \ref{thoe_lyap}, we show that $V$ is decreasing along the discrete-time state trajectory $( x^k )_{k \in \mathbb{N}}$ (see also \cite[Section 2.2]{polyak1987}).

\subsection{Other applications}

Opial Lemma (Lemma \ref{lemma_opial}) is widely used for deterministic problems, in discrete \cite{boct2016,csetnek2019} and continuous time \cite{csetnek2019,bot2016}.
Moreover, another application of Lemma \ref{lemma_opial} can be found in \cite{bot2020gan} where the authors propose a forward-backward-forward algorithm \cite{tseng2000,bot2020} with an application to generative adversarial networks \cite{goodfellow2014,goodfellow2016}. 

Concerning inclusion problems, the interested reader may find an application of Lemma \ref{lemma_meno} in \cite{boct2016} while, for a different iterative scheme, Corollary \ref{cor_qin} is used in \cite{dadashi2019}; finally, an application of Lemma \ref{lemma_he} can be found in \cite{cholamjiak2018}.\\
Lemma \ref{lemma_he} is used also for a variational problem in \cite{he2013}, along with Lemma \ref{lemma_xu03}.
Moving to Nash equilibrium problems, Lemma \ref{lemma_polyak1} is used in \cite{kannan2012,lei2020br} while Lemma \ref{lemma_xu02} is used in \cite{lei2020cdc}.

\section{Applications of convergent stochastic sequences}\label{sec_app_stoc}

Similarly to the deterministic case, many applications of the lemmas for random sequences concern the study of convergent algorithms for stochastic variational inequalities.
Most of the literature relies on Robbins--Siegmund Lemma and on the monotone and Lipschitz properties of the operator (see Definitions \ref{def_mono} and \ref{def_lip} in Appendix \ref{appendix_op}). 

Before entering the details on how the lemmas are applied, we recall some preliminary notions on stochastic VIs (SVIs). For an extensive overview, we refer to \cite{shanbhag2013} and reference therein.
More precisely, we are interested in solving $\op{SVI}(\mc X,\FF)$, where $\FF$ is an expected value function $\FF(x)=\EE[f(x,\xi(\omega))]$, for some measurable mapping $f:\mc X\times\RR^d\to\RR$.
$\xi:\Omega\to\RR^d$ is a random variable and $(\Omega,\mc F,\PP)$ is the probability space. For brevity, $\xi$ is used to denote $\xi(\omega)$. Analogously to \eqref{eq_vi}, we say that $x^*\in \mc X$ solves the $\op{SVI}(\mc X,\FF)$ if
\begin{equation}\label{eq_svi}
\langle \FF(x^*),y-x^*\rangle\geq 0, \text{ for all } y\in \mc X,
\end{equation}
and analogously to the deterministic case, we can consider the general variational inequality as in \eqref{gen_vi}
$$\text { find } x^* \in \mc X \text { s.t. } \langle \FF(x^*), y- x^*\rangle+ g(y)-g(x^*) \geq 0  \text{ for all } y \in \mc X
$$
or a monotone inclusion as in \eqref{mono_incl}, i.e., find $x^*\in\mc X$ such that $0\in(\FF+\partial g)(x^*)$. We do not consider the case of stochastic functions $g$.

If the expected value of $f(x,\xi)$ is known, then the stochastic variational inequality can be solved with a standard solution technique for deterministic variational problems. However, the operator $\FF(x)$ is usually not directly accessible, due to the computational burden or lack of information on the distribution of the random variable. 
Therefore, in general the focus is on $\hat F(x,\xi)$, an approximation of $\FF(x)$, given some realizations $\xi$ of the random variable. 

There are two main methodologies available: stochastic approximation (SA) and sample average approximation (SAA). In the first case, $\FF(x)$ is approximated by considering only one (or a finite number of) realization, at each iteration, of the random variable $\xi$ \cite{koshal2013,robbins1951,iusem2017,cui2019,kushner2003}. In the second approach, instead, an infinite number of samples is taken at each iteration, then the approximation is given by the average over all the samples. The SAA scheme is mostly used to study existence of a solution \cite{shapiro2003, kleywegt2002,shapiro2008}, and it is essentially a deterministic problem, therefore, in this work, we focus on the SA scheme. Hence, let us formalize it. If only one sample is available, the expected value mapping is approximated at each iteration as
\begin{equation}\label{eq_sa}
F^{\textup{SA}}(x^k,\xi^k)=f(x^k,\xi^k),
\end{equation}
where $\xi^k$ is a realization of the random variable at time $k$.
This approach is computationally cheap, but it requires, in general, stronger assumptions on the monotonicity of the mappings involved.
Therefore, sometimes it is used in combination with the so-called variance reduction (VR).
In this case, at each iteration, the approximation of $\FF(x)$ has the form
\begin{equation}\label{eq_saa}
\begin{aligned}
F^{\textup{VR}}(x,\xi^k)=& \frac{1}{\mc N_k} \sum_{i=1}^{\mc N_k} f(x, \xi_i^k)\\
=&\frac{1}{\mc N_k} \sum_{i=1}^{\mc N_k} F^\textup{SA}(x, \xi_i^k) \text { for all } x \in \mc X.
\end{aligned}
\end{equation}
The batch size sequence $(\mc N_k)_{k\in\NN}$ determines the number of samples taken at each iteration. The sequence $\xi^k=(\xi_1^k,\dots,\xi_{\mc N_k}^k)$ is an i.i.d. random sequence. We suppose that $\mc N_k$ satisfy the following assumption any time the approximation scheme in \eqref{eq_saa} is used.

\begin{standass}\label{ass_batch}
The batch size sequence $(\mc N_k)_{k\geq 1}$ is such that, for some $c,k_0,a>0$, 
\begin{equation}\label{eq_batch}
\mc N_k\geq c(k+k_0)^{a+1}, \text{ for all } k\in\NN.
\end{equation}
\end{standass}
It follows from Standing Assumption \ref{ass_batch} that the batch size sequence is summable and this is fundamental to control the error committed in the approximation (see also Lemma \ref{lemma_variance}).

From now on, whenever we refer to an approximation without specifying the type, we use the symbol $\hat F$, while if it is one of the two schemes we explicitly use $F^{\textup{SA}}$  or $F^{\textup{VR}}$.

Since we study an approximation (independently on the scheme), let us indicate the stochastic error, that is, the distance between the expected value and its approximation, with
$$\epsilon^k=F^\textup{SA}(x^k,\xi^k)-\FF(x^k),$$
where $\xi^k$ is a (vector of) realization of the random variable at iteration $k\in\NN$.
Sometimes this term is also called martingale difference (Definition \ref{def_marti}) \cite{kushner2003,ljung2012}.



Standard assumptions on the stochastic error $\epsilon^k$ are that it has zero mean and bounded variance \cite{iusem2017,bot2020,franci2020fb,lei2018}.
\begin{standass}\label{ass_variance}
The stochastic error is such that
$$\EEk{\epsilon^k}=0 \text{ a.s., for all } k\in\NN.$$
Moreover, for all $x\in \mc X$ and $p\geq 1$ let
$$
s_{p}(x) = \mathbb{E}\left[\| F^\textup{SA}(x, \xi)-\FF(x)\|^{p}\right]^{\frac{1}{p}}.
$$
There exist $p \geq 2$, $\sigma_{0} \geq 0$ and a measurable locally bounded function $\sigma : \op{SOL}(\mc X,\FF) \rightarrow \mathbb{R} $ such that for all $ x \in \mc X $ and all $x^{*} \in \op{SOL}(\mc X,\FF)$
\begin{equation}\label{eq_variance}
s_{p}(x) \leq \sigma\left(x^{*}\right)+\sigma_{0}\left\|x-x^{*}\right\|.
\end{equation}
\end{standass}
In the following, for ease of reading, we use a stronger condition than that in  \eqref{eq_variance}, namely,
\begin{equation}\label{variance}
\EEx{\norm{ F^\textup{SA}(x, \xi)-\FF(x)}^{p}}^{\frac{1}{p}}\leq \sigma.
\end{equation}
While Condition \eqref{eq_variance} is known in the literature as variance reduction, the stronger formulation \eqref{variance} is called uniform bounded variance. 
Assumption \eqref{eq_variance} is more realistic in those cases where the feasible set $\mc X$ is unbounded, and it is always satisfied when the mapping $f$ is Carath\'eodory and random Lipschitz continuous \cite[Example 1]{bot2020}.
Since in many realistic examples the feasible set is bounded, we use \eqref{variance} as a variance control assumption. We also remark that many of the following results hold also in the more general case given by Assumption \ref{ass_variance} and using the $L_p$ norm for any $p\geq2$. We refer to \cite{iusem2017, bot2020} and references therein for a more detailed insight on this general case.

\begin{rem}
When we use the SA scheme with variance reduction, the following relation between the stochastic error and the batch size sequence holds (see Lemma \ref{lemma_variance}):
for all $k\geq 0$, $c>0$, $\sigma$ as in \eqref{variance} and $\mc N_k$ as in \eqref{eq_batch}, 
\begin{equation}\label{eq_error}
\EE\left[\norm{\epsilon^k}^2|\mc F_k\right]\leq\frac{c\sigma^2}{\mc N_k} a.s..
\end{equation}
Essentially, Lemma \ref{lemma_variance} says that the second moment of the error decreases with the increasing number of samples of the random variable.\\
Sometimes more general results hold for the bound in \eqref{eq_error} (see, e.g., \cite{iusem2017,bot2020,franci2020fb}) but they lie outside the scopes of the survey.
 \end{rem}

We are now ready to describe how the lemmas are used. The first applications that we present are all related to Robbins--Siegmund Lemma (Lemma \ref{lemma_rs}). We differentiate the applications on how the negative term $-\theta_k$ is exploited (Remark \ref{remark_theta}). Nonetheless, in all of them, the summability of the term is used differently to obtain convergence. For the first application we also provide a scheme (inspired by Figure \ref{fig_app_det}) of the step that should be taken to use a lemma for sequences of random numbers (Figure \ref{fig_app_stoc}). The section ends with an application of Lemma \ref{lemma_fake_rs}. As the reader may note, the forthcoming applications rely on the existence of a martingale, associated to the process, that the lemmas prove to be convergent \cite{benaim1996}.

\begin{figure}[t]
\centering
 \begin{tikzpicture}[scale=1,every node/.style={circle,draw=black,scale=.5,minimum size=.8cm}]
 \LARGE
\draw[rounded corners=2ex,draw=black,dashed,thick,fill=white,fill=none] (-2.5,3) rectangle (2.5,5.8);
\draw[rounded corners=2ex,draw=black,dashed,thick,fill=white,fill=none] (-2.5,2.7) rectangle (2.5,-2.1);

\node[rectangle, rounded corners=.5ex, minimum size=1.5cm, fill=white](m1) at (0,5){$\begin{array}{c}\text{iterative process } (x^k)_{k\in\NN}\\ \text{generated by \eqref{eq_fbf}}\end{array}$};
\node[rectangle, rounded corners=.5ex, minimum size=1.5cm, fill=white](m2) at (0,3.5){define $v^k:=\normsq{x^k-x^*}$};
\node[rectangle, rounded corners=.5ex, minimum size=1.5cm, fill=white](m3) at (0,1.75){$\begin{array}{c}\text{find inequality as in \eqref{eq_tab_fbf}:}\\v^{k+1}\leq C^kv^k+\varepsilon^k-\theta^k\end{array}$};
\node[rectangle, rounded corners=.5ex, minimum size=1.5cm, fill=white](m4) at (0,0){see Table \ref{table_lemmi_stoc}};
\node[rectangle, rounded corners=.5ex, minimum size=1.5cm, fill=white](m5) at (0,-1.5){apply Lemma \ref{lemma_rs}};
\node[rectangle, rounded corners=.5ex, minimum size=1.5cm, fill=white](m6) at (0,-3){$\begin{array}{c}x^k\to x^* \text{ and} \sum_{k=1}^\infty\theta^k<\infty\\
\text{imply } r_{\alpha_k}(x^k) \to 0\end{array}$};

\foreach \from/\to in
{m1/m2,m2/m3,m3/m4,m4/m5,m5/m6}
\draw[-to,thick] (\from) -- (\to); 

\node[draw=none,fill=none,rotate=90] at (-3,0.5) {analysis};
\node[draw=none,fill=none,rotate=90] at (-3,4.25) {design};
\node[draw=none,fill=none,rotate=90] at (-3,-3) {Theorem \ref{theo_bot}};

 \end{tikzpicture}
 \caption{Schematic representation of Theorem \ref{theo_bot}. First, a distance like function is defined, to obtain a quasi-Fejer inequality. Since the inequality correspond to Robbins-Siegmund Lemma, Lemma \ref{lemma_rs} can be applied. Not only convergence is proved, but also the fact that the negative term is summable contributes to showing that, asymptotically, a solution is reached. For general guidelines, see also Figure \ref{fig_app_det}.}\label{fig_app_stoc}
 \end{figure}
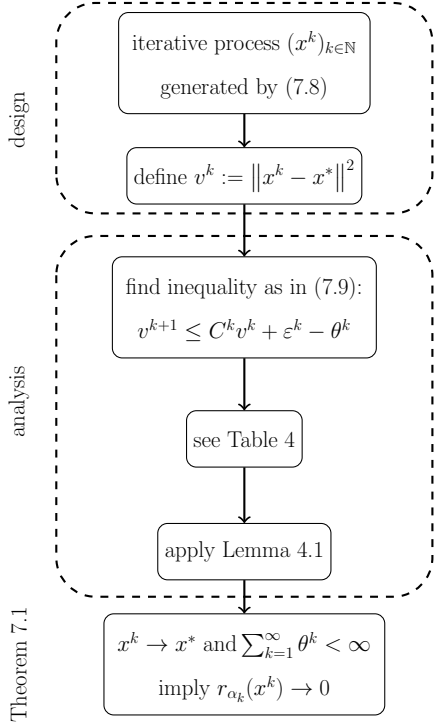
 
\subsection{Applications of Robbins-Siegmund Lemma}\label{sec_app_rs}
\paragraph*{Application of Lemma \ref{lemma_rs} with residual}\label{app_SFBF}
In \cite{bot2020,iusem2017}, the residual ($r_{\alpha}(x)$) is used to prove convergence (see Appendix \ref{appendix} for a definition and Remark \ref{remark_res}).
Specifically, in \cite{bot2020}, the authors formulate a \textit{stochastic forward-backward-forward algorithm}, inspired by \cite{tseng2000}, given by the following updating rule:
\begin{equation}\label{eq_fbf}
\begin{aligned}
y^k&=\op{proj}_{\mc X}(x^k-\alpha^k F^{\textup{VR}}(x^k,\xi^k))\\
x^{k+1}&=y^k+\alpha^k(F^{\textup{VR}}(x^k,\xi^k)-F^{\textup{VR}}(y^k,\eta^k))
\end{aligned}
\end{equation}
where $\xi_n$ and $\eta^k$ are i.i.d. random variables and $F^{\textup{VR}}$ is as in \eqref{eq_saa}. 

Robbins--Siegmund Lemma is used for concluding that the sequence $(x^k)_{k \in \NN}$ converges a.s. to a solution of the SVI in \eqref{eq_svi}, proving that the residual goes to zero (Remark \ref{remark_res}). A scheme of the proof and of how Lemma \ref{lemma_rs} is used can be found in Figure \ref{fig_app_stoc}.

\begin{thm}[Theorem 1, \cite{bot2020}]\label{theo_bot}
Let $f$ be a Carath\'eodory map and let $\FF$ be pseudomonotone and $\ell$-Lipschitz continuous with $\ell>0$. Let $0< \inf _{k \geq 0} \alpha_{k} \leq \alpha_k\leq \sup _{k \geq 1} \alpha_{k}<\frac{1}{\sqrt{2} \ell}$.
Then, the sequence $(x^k)_{k \in \NN}$ generated by \eqref{eq_fbf} converges a.s. to a limit random variable $x^*\in \op{SOL}(\mc X,\FF)$, and $\lim_{k\to\infty} \EE[r_{\alpha_k} (x^k)^2] = 0$.
\end{thm}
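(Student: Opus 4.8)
The plan is to follow the scheme of Figure \ref{fig_app_stoc}: fix a solution $x^*\in\op{SOL}(\mc X,\FF)$, set $v^k=\normsq{x^k-x^*}$, reduce the stochastic recursion \eqref{eq_fbf} to an inequality of Robbins--Siegmund type, and then read off both the almost sure convergence and the vanishing of the residual. First I would derive the deterministic skeleton of the estimate. Writing $x^{k+1}=y^k+\alpha_k(F^{\textup{VR}}(x^k,\xi^k)-F^{\textup{VR}}(y^k,\eta^k))$ and using the characterization of $\op{proj}_{\mc X}$ as a firmly nonexpansive operator together with the definition of $y^k$, one expands $\normsq{x^{k+1}-x^*}$ exactly as in Tseng's forward--backward--forward analysis. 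Splitting each evaluation of $F^{\textup{VR}}$ into $\FF(\cdot)$ plus its approximation error, invoking $\ell$-Lipschitz continuity to bound $\norm{\FF(x^k)-\FF(y^k)}\le\ell\norm{x^k-y^k}$, and using pseudomonotonicity of $\FF$ (so that $\langle\FF(x^*),y^k-x^*\rangle\ge0$ forces $\langle\FF(y^k),y^k-x^*\rangle\ge0$) yields a bound of the form
$$\normsq{x^{k+1}-x^*}\le\normsq{x^k-x^*}-(1-2\alpha_k^2\ell^2)\normsq{x^k-y^k}+R^k,$$
where $R^k$ collects all terms containing the stochastic errors. The step-size bound $\sup_k\alpha_k<\tfrac{1}{\sqrt2\,\ell}$ is exactly what keeps $1-2\alpha_k^2\ell^2$ bounded away from zero, so the quadratic term will play the role of the negative term $-\theta^k$.

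Next I would take the conditional expectation $\EE[\,\cdot\mid\mc F_k]$ of this inequality. The zero-mean property of Standing Assumption \ref{ass_variance} removes the cross terms that are linear in the error, while the variance control \eqref{eq_error}, $\EE[\norm{\epsilon^k}^2\mid\mc F_k]\le c\sigma^2/\mc N_k$, bounds the quadratic error contributions in $R^k$; since the batch size grows polynomially by Standing Assumption \ref{ass_batch}, the resulting noise sequence $\varepsilon^k$ is summable a.s. This produces
$$\EEk{v^{k+1}}\le v^k-\theta^k+\varepsilon^k,\qquad \theta^k=(1-2\alpha_k^2\ell^2)\normsq{x^k-y^k}\ge0,$$
which is precisely the hypothesis of Lemma \ref{lemma_rs} with $\delta^k\equiv0$. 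Applying it gives, almost surely, that $(v^k)_{k\in\NN}$ converges to a nonnegative random variable and that $\sum_k\theta^k<\infty$.

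Finally I would convert these two conclusions into the claim. Summability of $\theta^k$ together with $\inf_k(1-2\alpha_k^2\ell^2)>0$ forces $\norm{x^k-y^k}\to0$ a.s.; convergence of $(v^k)_{k\in\NN}$ makes $(x^k)_{k\in\NN}$ bounded, so it possesses cluster points, and along any convergent subsequence the relation $\norm{x^k-y^k}\to0$, continuity of $\FF$, and the projection identity defining $y^k$ show (again via pseudomonotonicity) that every cluster point solves the SVI \eqref{eq_svi}. Since $(\norm{x^k-x^*})_{k\in\NN}$ converges a.s. for each solution $x^*$, an Opial-type argument (Lemma \ref{lemma_opial}, applied pathwise over a countable dense subset of $\op{SOL}(\mc X,\FF)$) rules out a second cluster point and yields a.s. convergence of $(x^k)_{k\in\NN}$ to a random $x^*\in\op{SOL}(\mc X,\FF)$. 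For the residual, comparing $y^k$ with $\op{proj}_{\mc X}(x^k-\alpha_k\FF(x^k))$ and using nonexpansiveness of the projection bounds $r_{\alpha_k}(x^k)$ by $\norm{x^k-y^k}+\alpha_k\norm{\epsilon^k}$, so taking expectations and using \eqref{eq_error} gives $\lim_k\EE[r_{\alpha_k}(x^k)^2]=0$.

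I expect the main obstacle to be the careful bookkeeping of the error terms in $R^k$. Because $y^k$ is built from the same sample $\xi^k$ used at $x^k$ while a fresh sample $\eta^k$ is drawn at $y^k$, the iterate $y^k$ is not $\mc F_k$-measurable, so the zero-mean cancellation of the error at $y^k$ must be obtained through the tower property rather than directly, and one has to verify that no non-summable, nonnegative remainder survives after conditioning; this is also where the factor $2$ in $1-2\alpha_k^2\ell^2$ arises, through the Young-type splitting used to absorb these errors.
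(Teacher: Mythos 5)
Your proposal is correct and follows essentially the same route as the paper's proof: fix $x^*\in\op{SOL}(\mc X,\FF)$, set $v^k=\|x^k-x^*\|^2$, derive a quasi-F\'ejer recursion from pseudomonotonicity, $\ell$-Lipschitz continuity and the variance bound \eqref{eq_error}, and apply the Robbins--Siegmund Lemma (Lemma \ref{lemma_rs}) so that a.s. convergence of $v^k$ and summability of the negative term together give convergence to a solution and the vanishing residual. The only cosmetic difference is that the paper's recursion \eqref{eq_tab_fbf} carries the deterministic residual $\tfrac{\rho_k}{2}\,r_{\alpha_k}(x^k)^2$ directly as the negative term, whereas you carry $(1-2\alpha_k^2\ell^2)\|x^k-y^k\|^2$ and transfer to $r_{\alpha_k}(x^k)$ afterwards via nonexpansiveness of the projection and the bound on $\alpha_k\|\epsilon^k\|$ --- both are valid bookkeepings of the same estimate.
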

\begin{proof}
Using monotonicity and Lipschitz continuity of the mapping $\FF$ and the definition of the algorithm in \eqref{eq_fbf}, it is possible to prove a recursion \cite[Lemma 5]{bot2020} that, taking the expected value \cite[Proposition 1]{bot2020} and using some bounds on the stochastic error \cite[Lemma 6]{bot2020} (see also Lemma \ref{lemma_variance}), reads as
\begin{equation}\label{eq_tab_fbf}
\mathbb{E}[\|x^{k+1}-x^*\|^{2} | \mathcal{F}_k] \leq\|x^{k}-x^*\|^{2}-\frac{\rho_{k}}{2} r_{\alpha_{k}}(x^{k})^{2}+\frac{\kappa_{k}\sigma^2}{\mc N_k},
\end{equation}
where $\rho_k=1-2\ell^2\alpha_k^2$ and $\kappa_k$ is a constant that depends on the Lipschitz constant and on the step size. To use Lemma \ref{lemma_rs}, let $v^{k}=\|x^{k}-x^*\|^{2}$, $\theta_{k} =\frac{\rho_{k}}{2} r_{\alpha_{k}}(x^{k})^{2}$ and $\varepsilon^k =\frac{\kappa_{k} \sigma_{0}^{2}}{\mc N_k}$. Then the claim follows using the fact that $\theta^k$ is summable and therefore the residual tends to zero.
 \end{proof}


The use of the residual to prove convergence to the solution of the SVI in \eqref{eq_svi} was previously introduced in \cite{iusem2017} where the authors propose a \textit{stochastic extragradient method} inspired by \cite{korpelevich1976}. The iterations are given by
\begin{equation}\label{iusem_algo}
\begin{aligned}
&z_i^k=\op{proj}_{\mc X}\left[x_i^k-\alpha^k F^{\textup{VR}}_i(x^k,\xi_{i}^k)\right]\\
&x_i^{k+1}=\op{proj}_{\mc X}\left[x_i^k-\alpha^k F^{\textup{VR}}_i(z^k,\eta_{i}^k)\right],
\end{aligned}
\end{equation}
where $(\xi^k)_{k\in\NN}$ and $(\eta^k)_{k\in\NN}$ are i.i.d. samples of the random variable such that $(\xi^k)_{k\in\NN}$ and $(\eta^k)_{k\in\NN}$ are independent of each other.
They have assumptions on the parameters similar to \cite{bot2020} and the variance reduction hypothesis.
The main result is the asymptotic convergence of the algorithm.
\begin{thm}[Theorem 3.18, \cite{iusem2017}]\label{theo_iusem}
Let $f$ be a Carath\'eodory map such that $\EE[\norm{f(x,\xi)}]<\infty$. Let $\FF$ be pseudomonotone and $\ell$-Lipschitz continuous mapping. Let $0<\inf _{k \in \mathbb{N}} \alpha_{k} \leq \alpha_k\leq\sup _{k \in \mathbb{N}} \alpha_{k}<\frac{1}{\sqrt{6} \ell}$ for all $k\in\NN$. Then, the sequence $(x^k)_{k \in \NN}$ generated by (\ref{iusem_algo}) is bounded, $\lim_{k\to\infty}d(x^k,\op{SOL}(\mc X,\FF))=0$ and $r_{\alpha_k}(x^k)$ converges to 0. In particular, any cluster point of $(x^k)_{k\in\NN}$ belongs to $\op{SOL}(\mc X,\FF)$.
\end{thm}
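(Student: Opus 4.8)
The plan is to mirror the proof of Theorem \ref{theo_bot}: derive a stochastic quasi-Fej\'er recursion anchored at a solution, and then invoke the Robbins--Siegmund Lemma (Lemma \ref{lemma_rs}). First I would fix $x^*\in\op{SOL}(\mc X,\FF)$ and set $v^k=\normsq{x^k-x^*}$. The starting point is to expand $v^{k+1}$ using the firm nonexpansiveness and the variational characterization of $\op{proj}_{\mc X}$ applied to the corrector step in \eqref{iusem_algo}. The two-step extragradient structure is what makes this work: the predictor $z^k$ is the point at which the map is evaluated in the corrector update, so after reorganizing the cross terms one extracts a \emph{negative} contribution proportional to the squared residual $r_{\alpha_k}(x^k)^2$ together with a term in $\normsq{x^k-z^k}$.

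The decisive algebraic point is to exploit pseudomonotonicity rather than plain monotonicity. Since $x^*$ solves the VI, $\langle\FF(x^*),z^k-x^*\rangle\geq0$, and pseudomonotonicity upgrades this to $\langle\FF(z^k),z^k-x^*\rangle\geq0$, which is precisely the sign needed to dispose of the inner-product term generated by the projection expansion. Splitting the true map $\FF$ from its variance-reduced surrogate $F^{\textup{VR}}$ introduces the stochastic error $\epsilon^k$; here I would take the conditional expectation $\EE[\,\cdot\mid\mc F_k]$, use the zero-mean property $\EEk{\epsilon^k}=0$ from Standing Assumption \ref{ass_variance} to annihilate the first-order error, and the variance-reduction estimate $\EE[\normsq{\epsilon^k}\mid\mc F_k]\leq c\sigma^2/\mc N_k$ of \eqref{eq_error} to bound the second-order error. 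The step-size bound $\sup_k\alpha_k<\tfrac{1}{\sqrt6\,\ell}$ is exactly what keeps the coefficient $\rho_k$ of the residual term strictly positive once the $\ell$-Lipschitz and Young-inequality estimates are absorbed. The output is a recursion of the form
$$
\EE[v^{k+1}\mid\mc F_k]\leq v^k-\tfrac{\rho_k}{2}\,r_{\alpha_k}(x^k)^2+\frac{\kappa_k\sigma^2}{\mc N_k},
$$
with $\rho_k>0$ and $\sum_k\kappa_k\sigma^2/\mc N_k<\infty$ by the summability of the batch size in Standing Assumption \ref{ass_batch}.

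With this recursion in place, Lemma \ref{lemma_rs} applies with $\theta^k=\tfrac{\rho_k}{2}r_{\alpha_k}(x^k)^2$ and $\varepsilon^k=\kappa_k\sigma^2/\mc N_k$: it gives that $(v^k)_{k\in\NN}$ converges a.s. (so $(x^k)_{k\in\NN}$ is bounded) and that $\sum_k\theta^k<\infty$ a.s., which forces $r_{\alpha_k}(x^k)\to0$ a.s.. Because the residual characterizes solutions (Remark \ref{remark_res}), every cluster point $\bar x$ of $(x^k)_{k\in\NN}$ satisfies $r_{\alpha_k}(\bar x)=0$ and hence belongs to $\op{SOL}(\mc X,\FF)$. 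Finally, since $\normsq{x^k-x^*}$ converges a.s. for \emph{every} solution $x^*$, and every cluster point of the bounded sequence lies in $\op{SOL}(\mc X,\FF)$, a compactness argument yields $d(x^k,\op{SOL}(\mc X,\FF))\to0$ a.s.: were the distance bounded away from the solution set along a subsequence, that subsequence would produce a cluster point outside $\op{SOL}(\mc X,\FF)$, a contradiction.

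The main obstacle I anticipate is the derivation of the recursion itself under pseudomonotonicity while simultaneously carrying the variance-reduced error. One must arrange the Lipschitz and Young estimates so that all error contributions are either killed by the zero-mean property or rendered summable through the batch size, and at the same time keep $\rho_k$ bounded away from zero; it is this balancing that pins down the constant $\sqrt6$ in the admissible step-size range, and it is more delicate here than in the monotone forward-backward-forward case of Theorem \ref{theo_bot}.
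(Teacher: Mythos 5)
Your proposal follows essentially the same route as the paper's proof: it derives the same stochastic quasi-Fej\'er recursion anchored at $x^*$ (which the paper imports from Lemmas 3.11--3.12 and Proposition 3.15 of the cited source, with the same $\rho_k=1-6\ell^2\alpha_k^2$ controlled by the $\tfrac{1}{\sqrt{6}\ell}$ step-size bound) and then applies the Robbins--Siegmund Lemma exactly as in Theorem \ref{theo_bot}, with summability of the negative term forcing $r_{\alpha_k}(x^k)\to0$ and the residual characterization of Remark \ref{remark_res} placing all cluster points in $\op{SOL}(\mc X,\FF)$. The only cosmetic discrepancy is that the paper's recursion carries a multiplicative factor $\left(1+C(\sigma^2,x^*)/\mc N_k\right)$ on $\|x^k-x^*\|^2$, needed under the general variance condition \eqref{eq_variance}, whereas your coefficient-one form is what results under the uniform bound \eqref{variance}; both forms are covered by Lemma \ref{lemma_rs}.
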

\begin{proof}
Given the properties of the operator $\FF$ \cite[Lemma 3.11]{iusem2017} and of the parameters involved \cite[Lemma 3.12]{iusem2017}, it holds that \cite[Proposition 3.15]{iusem2017}
$$
\begin{aligned}
\mathbb{E}\left[\left\|x^{k+1}-x^{*}\right\|^{2} | \mathcal{F}_{k}\right]& \leq\left(1+\frac{C(\sigma^2,x^*)}{\mathcal{N}_{k}}\right)\left\|x^{k}-x^{*}\right\|^{2}\\
&-\frac{\rho_{k}}{2} r_{\alpha_{k}}(x^{k})^{2}+\frac{C(\sigma^2,x^*)}{\mathcal{N}_{k}}
\end{aligned}
$$
where $\rho_k=(1-6\ell^2\alpha_k^2)$, $C(\sigma^2,x^*)$ is a bounded quantity that depends on the solution $x^*$ and on the variance \cite[Remark 3.17]{iusem2017}, and $r_{\alpha_k}(x^k)$ is the residual of $x^k$. Then the claim follows as in the proof of Theorem \ref{theo_bot}, using Robbins--Siegmund Lemma.
 \end{proof}

\paragraph*{Application of Lemma \ref{lemma_rs} with strict monotonicity}
Robbins--Siegmund Lemma can also be used to prove the convergence of the \textit{partially coordinated iterative proximal point scheme} to a Nash equilibrium \cite{koshal2013}. The possibility to reach a Nash equilibrium in a game theoretic framework is related to the fact that they can be obtained as the solution of a suitable (S)VI \cite[Proposition 1.4.2]{facchinei2007}. The updating rule of the algorithm is given by:
\begin{equation}\label{koshal_algo}
x^{k+1}=\op{proj}_{\mc X}[x^k-\alpha_k(\hat F(x^k,\xi^k)+\mu^k(x^k-x^{k-1}))]
\end{equation}
where $\alpha_k\in\RR^n$ and $\mu^k\in\RR^n$ are the step size and the centering parameters, respectively, and $n$ is the number of agents in the Nash equilibrium problem. 

\begin{prop}[Proposition 3, \cite{koshal2013}]\label{prop_koshal}
Let $\FF:\mc X\to\RR^n$ be strictly monotone and $\ell$-Lipschitz continuous over $\mc X$. Let the following conditions hold: 
\begin{enumerate}
\item $\alpha_{k, \max } \mu_{k, \max } \leq (1+2 \alpha_{k, \max }^{2} \ell^{2})\cdot$ $ \alpha_{k-1, \min } \mu_{k-1, \min }$ for all $k \in\NN$;
\item $\lim _{k \rightarrow \infty} \frac{\alpha_{k, \max }^{2} \mu_{k, \min }^{2}}{\alpha_{k, \min } \mu_{k, \min }}=c$ with $c \in\left[0, \frac{1}{2}\right)$;
\item $\sum_{k=0}^{\infty} \alpha_{k, i}=\infty \text { and } \sum_{k=0}^{\infty} \alpha_{k, i}^{2}<\infty$ for all $i\leq n$; 
\item $\sum_{k=0}^{\infty}\left(\alpha_{k, \max }-\alpha_{k, \min }\right)<\infty$;
\item $\sum_{k=0}^{\infty} \alpha_{k, \max }^{2} \mathbb{E}[\|\epsilon_{k}\|^{2} | \mc{F}_k]<\infty$ a.s..
\end{enumerate}
Then, the sequence $(x^k)_{k \in \NN}$ generated by (\ref{koshal_algo}) converges a.s. to a solution of $\op{SVI}(\mc X,\FF)$.
\end{prop}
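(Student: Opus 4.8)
The plan is to reduce the whole argument to a single application of the Robbins--Siegmund Lemma (Lemma \ref{lemma_rs}) to the squared distance $v^k=\normsq{x^k-x^*}$, following the template of Figure \ref{fig_app_det}. First, since $\FF$ is strictly monotone and $\op{SOL}(\mc X,\FF)\neq\varnothing$, the solution $x^*$ is unique, so it suffices to prove a.s. convergence to this one point. Because the feasible set of a Nash game decomposes as a product $\mc X=\prod_{i=1}^N\mc C_i$, the projection splits agent-wise, which is what makes the per-agent parameters $\alpha_{k,i},\mu_i^k$ meaningful and lets me use the fixed-point identity $x_i^*=\op{proj}_{\mc C_i}[x_i^*-\alpha_{k,i}\FF_i(x^*)]$ for each agent $i$. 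Expanding $\normsq{x_i^{k+1}-x_i^*}$ by nonexpansiveness of the projection and writing $\hat F=\FF(x^k)+\epsilon^k$ with the martingale error $\epsilon^k$, I obtain a per-agent recursion whose term linear in $\epsilon^k$ vanishes after taking $\EEk{\cdot}$ because $\EEk{\epsilon^k}=0$.

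The core work is to sum these per-agent inequalities and cast the result as $\EEk{v^{k+1}}\leq(1+\delta^k)v^k+\varepsilon^k-\theta^k$. Three mechanisms must be controlled. First, the monotonicity term yields $\sum_i\alpha_{k,i}\langle\FF_i(x^k)-\FF_i(x^*),x_i^k-x_i^*\rangle$, which does not factor through a single scalar because the step sizes are heterogeneous; I would bound it below by $\alpha_{k,\min}\langle\FF(x^k)-\FF(x^*),x^k-x^*\rangle$ minus a correction $(\alpha_{k,\max}-\alpha_{k,\min})\ell\,v^k$ obtained from Lipschitz continuity and Cauchy--Schwarz, so that condition~4 renders this correction a summable $\delta^k$-coefficient, while the nonnegative gap becomes the negative term $\theta^k\geq\alpha_{k,\min}\langle\FF(x^k)-\FF(x^*),x^k-x^*\rangle$. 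Second, the quadratic terms $\alpha_{k,i}^2\normsq{\cdots}$ together with the conditional second moment of $\epsilon^k$ form the noise $\varepsilon^k$, summable by conditions~3 and~5. Third, the centering term $\mu^k(x^k-x^{k-1})$ is handled via the three-point identity $\langle x^k-x^{k-1},x^k-x^*\rangle=\tfrac12(\normsq{x^k-x^{k-1}}+\normsq{x^k-x^*}-\normsq{x^{k-1}-x^*})$, which splits it into a shrinking term $-\alpha_{k,i}\mu_i^k\normsq{x_i^k-x_i^*}$, a backward term $+\alpha_{k,i}\mu_i^k\normsq{x_i^{k-1}-x_i^*}$, and a quadratic $\alpha_{k,i}^2(\mu_i^k)^2\normsq{x_i^k-x_i^{k-1}}$; conditions~1 and~2 are precisely what let the backward and quadratic pieces telescope against the preceding step's shrinking term and stay within the Robbins--Siegmund template (the coupling of consecutive products $\alpha\mu$ in condition~1 providing the telescoping slack, and the threshold $c<\tfrac12$ in condition~2 keeping the net coefficient admissible), possibly after passing to a slightly augmented Lyapunov sequence carrying $\normsq{x^{k-1}-x^*}$.

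Once the inequality is in this form, Lemma \ref{lemma_rs} gives a.s. that $v^k$ converges to some $\bar v\geq0$ and that $\sum_{k}\theta^k<\infty$. Because $\sum_k\alpha_{k,\max}=\infty$ (from condition~3) while $\sum_k(\alpha_{k,\max}-\alpha_{k,\min})<\infty$ (condition~4), one also has $\sum_k\alpha_{k,\min}=\infty$; combined with $\sum_k\alpha_{k,\min}\langle\FF(x^k)-\FF(x^*),x^k-x^*\rangle<\infty$ this forces $\liminf_k\langle\FF(x^k)-\FF(x^*),x^k-x^*\rangle=0$ a.s.

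To identify the limit I would argue along a subsequence. Convergence of $v^k$ makes $(x^k)_{k\in\NN}$ bounded a.s., so a subsequence realizing the $\liminf$ has a cluster point $\bar x$; continuity of $\FF$ gives $\langle\FF(\bar x)-\FF(x^*),\bar x-x^*\rangle=0$, and strict monotonicity then forces $\bar x=x^*$. Hence $v^k=\normsq{x^k-x^*}$ has a subsequence tending to $0$, and since $v^k$ converges, $\bar v=0$, i.e.\ $x^k\to x^*$ a.s. I expect the main obstacle to be the third mechanism above: the careful bookkeeping of the inertial term $\mu^k(x^k-x^{k-1})$ so that, despite both the two-step dependence and the per-agent scaling, conditions~1 and~2 produce one genuine Robbins--Siegmund recursion rather than a coupled pair of inequalities; the heterogeneous step sizes in the first mechanism are a secondary difficulty, tamed cleanly by condition~4.
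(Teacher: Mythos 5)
Your proposal is correct and follows essentially the same route as the paper's proof: both derive a Robbins--Siegmund inequality (Lemma \ref{lemma_rs}) for the augmented Lyapunov sequence $v^k=\normsq{x^k-x^*}+\alpha_{k-1,\min}\,\mu_{k-1,\min}\normsq{x^{k-1}-x^*}$, with the monotonicity gap and the $\normsq{x^k-x^{k-1}}$ term serving as the negative term $\theta^k$ and the step-size-weighted error variance as the summable noise, and both identify the limit via boundedness, a cluster point, and strict monotonicity forcing $\bar x=x^*$. If anything, your liminf-along-a-subsequence argument for the identification step is more carefully stated than the paper's sketch, which loosely asserts that the monotonicity gap tends to zero along the whole sequence.
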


\begin{proof}
Using the nonexpansiveness of the projection and some norm properties, one can obtain
$$
\begin{aligned} 
\mathbb{E} & [ \|x^{k+1}-x^{*} \|^{2} | \mathcal{F}_{k} ]+\alpha_{k, \min } \mu_{k, \min } \|x^{k}-x^{*} \|^{2} \\ 
\leq &(1+\delta_{k})(\|x^{k}-x^{*}\|^{2}+\alpha_{k-1, \min} \mu_{k-1, \min }\|x^{k-1}-x^{*}\|^{2}) \\ 
&-\alpha_{k, \min } \mu_{k, \min }(1-d) \|x^{k}-x^{k-1} \|^{2}+\alpha_{k, \max }^{2} \mathbb{E} [ \|\epsilon_{k} \|^{2} | \mathcal{F}_{k} ] \\ 
&-2 \alpha_{k, \min } (x^{k}-x^{*} )^{T} (\FF(x^{k})-\FF(x^{*} ) ),
\end{aligned}
$$
where $x^*\in\op{SOL}(\mc X,\FF)$, $\delta^k$ depends on the Lipschitz constant and on the step sizes and $d\in (0,1)$.
To apply Lemma \ref{lemma_rs}, let $v^k= \|x^{k}-x^{*} \|^{2}+\alpha_{k-1, \min} \mu_{k-1, \min } \|x^{k-1}-x^{*} \|^{2}$, $\theta^k=\alpha_{k, \min } \mu_{k, \min }(1-d) \|x^{k}-x^{k-1} \|^{2}+2 \alpha_{k, \min } \langle x^{k}-x^{*},\FF(x^{k})-\FF(x^{*})\rangle$ and $\varepsilon^k=\alpha_{k, \max }^{2} \mathbb{E} [ \|\epsilon_{k} \|^{2} | \mathcal{F}_{k} ]$. Then, it folows that $(x^k)_{k\in\NN}$ is bounded and has a cluster point $\bar x$. Since $\theta^k$ is summable, $\langle x^{k}-x^{*},\FF(x^{k})-\FF(x^{*} ) \rangle\to0$ and taking the limit for $k\to\infty$, $\langle\bar x-x^{*} ,\FF(\bar x)-\FF(x^{*} ) \rangle=0$. Since the mapping is strictly monotone (Definition \ref{def_mono}) and the solution set is not empty (Standing Assumption \ref{ass_sol}), there is only one solution $x^*$ \cite[Theorem 2.3.3]{facchinei2007}, and we have that $\bar x=x^*$.
 \end{proof}


\subsection{Applications of Robbins-Siegmund Lemma to specific problems}

\paragraph*{Application of Lemma \ref{lemma_rs} to model predictive control}
An interesting application of Robbins-Siegmund Lemma is provided in \cite{lee2015}, where the authors propose the \textit{gossip-based random projections} (GRP) algorithm for distributed robust model predictive control (MPC). In their problem, $m$ private facilities aim at finding an optimal control law $u=\op{col}(u(1),\dots,u(T))$ of a dynamic system such that the resulting trajectory $x(t)$, for $t = 1, \dots, T$, remains close to the locally known facilities and the terminal state $x(T)$ is inside some uncertain box with minimum control effort. Formally, the distributed MPC optimization problem is given by 
$$\begin{aligned}
\min_u \quad& f(u)=\sum_{i=1}^m f_i(u)\\
s.t. \quad& x(t+1)=Ax(t)+Bu(t)\\
& u\in\mc X\\
& \max _{\ell=1,2,3,4}\left\{(a_{\ell}+c_{\ell}\right)^{\top} x(T)-b_{\ell}\} \leq 0
\end{aligned}$$
where $u\in\mc X$ represent the uncertain input constraint and the last inequality describe the random terminal constraint ($\mc T$ from now on). We refer to \cite{lee2015} for a specific choice of $f$, $A$ and $B$. 
The algorithm is based on random projections and a gossip communication protocol inspired by \cite{boyd2006}. At each time $k$, only an agent $I_k\in\mc I$ and its neighbor $J_k\in\mc I$ wake up. They draw a sample of one of the linear inequality terminal constraints and they update their estimate while the other agents do nothing. Then, they project their current iterate on the selected constraint $\mc T$ and on $\mc X$.
The GRP algorithm reads, for $i\in\{I_k,J_k\}$, as 
\begin{equation}\label{eq_opdyn}
\begin{aligned}
w_{i}^k &=\frac{u_{I_{k}}^{k-1}+u_{J_{k}}^{k-1}}{2} \\
u_{i}^k &=\op{proj}_{\mathcal{X}\cap\mc T}[w_{i}^k-\alpha_{i}^k \nabla f_{i}(w_{i}^k)]
\end{aligned}
\end{equation}
where $\{\alpha_i^k\}_{k\in\NN}$ is the step size sequence, defined such that $\alpha_{i}^k=1 / \Gamma_{i}^k$, where $\Gamma_i^k$ is the number of updates $i$ has performed until time $k$.\\
Let us denote $z_i^k=\op{proj}_{\mc X}(w_{i}^k)$ and let $\tilde w=\frac{1}{m}\sum_{i=1}^mw_i$ (analogously $\tilde z$). Then, convergence of the algorithm is proven as follows.
\begin{prop}\label{prop_mpc}\cite[Proposition 1]{lee2015}
Let the communication graph be connected and let the set $\mc X$ be closed and convex. Let the functions $f_i:\RR^d\to\RR$ be convex and differentiable and let their gradients $\nabla f_i$ be Lipschitz continuous and bounded over $\mc X$, i.e., $\|\nabla f_{i}(u)\| \leq G_{f}$ for all $u \in \mathcal{X}$ and all $i \in \mc I$.
Let $\mc X^*$ be a nonempty optimal set. Then, the sequences $\{u_{i}^k\}_{k\in\NN}$, $i \in \mc I$, generated by \eqref{eq_opdyn} converge to some random point $u^*\in\mathcal{X}^{*}$ a.s., i.e., $\lim _{k \rightarrow \infty} u_{i}^k=u^*$ a.s. for all $i \in \mc I$.
\end{prop}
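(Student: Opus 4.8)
The plan is to build a stochastic Lyapunov function measuring the aggregate squared distance of all agents' iterates from a fixed optimal point, derive a recursion in the Robbins--Siegmund form, and invoke Lemma \ref{lemma_rs}, following the scheme of Figure \ref{fig_app_det}.

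First I would fix $u^*\in\mc X^*$. Since $\mc X$ is closed convex and the sampled terminal half-space $\mc T^k$ is closed convex, the set $\mc X\cap\mc T^k$ is closed convex with $u^*\in\mc X\cap\mc T^k$ (as $u^*$ is feasible for every terminal inequality). Because the projection is firmly nonexpansive \cite[Proposition 4.16]{bau2011}, for each active agent $i\in\{I_k,J_k\}$ I start from
\begin{equation}\label{eq_mpc_proof1}
\|u_i^k-u^*\|^2\leq\|w_i^k-\alpha_i^k\nabla f_i(w_i^k)-u^*\|^2.
\end{equation}
Expanding the square, using convexity of $f_i$ so that $\langle\nabla f_i(w_i^k),w_i^k-u^*\rangle\geq f_i(w_i^k)-f_i(u^*)$, and the gradient bound $\|\nabla f_i\|\leq G_f$, inequality \eqref{eq_mpc_proof1} gives
\begin{equation}\label{eq_mpc_proof2}
\|u_i^k-u^*\|^2\leq\|w_i^k-u^*\|^2-2\alpha_i^k\big(f_i(w_i^k)-f_i(u^*)\big)+(\alpha_i^k)^2G_f^2.
\end{equation}

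Next I would encode the gossip averaging. For the active pair, $w_{I_k}^k=w_{J_k}^k$ equals the average of $u_{I_k}^{k-1}$ and $u_{J_k}^{k-1}$, while inactive agents are unchanged; the averaging identity gives $\sum_{i\in\{I_k,J_k\}}\|w_i^k-u^*\|^2=\|u_{I_k}^{k-1}-u^*\|^2+\|u_{J_k}^{k-1}-u^*\|^2-\tfrac12\|u_{I_k}^{k-1}-u_{J_k}^{k-1}\|^2$. Setting $v^k=\sum_{i=1}^m\|u_i^k-u^*\|^2$ and summing \eqref{eq_mpc_proof2} over the active agents, the disagreement term enters with a negative sign. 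Taking the conditional expectation over the random active pair and the sampled half-space, and using the error bound for the random projection (the expected squared distance to $\mc T^k$ lower-bounds a fraction of the squared distance to the full terminal set $\mc T$), I arrive at a recursion
\begin{equation}\label{eq_mpc_proof3}
\EE[v^k\mid\mc F_{k-1}]\leq(1+\delta^k)v^{k-1}-\theta^k+\varepsilon^k,
\end{equation}
where $\theta^k\geq0$ aggregates the global optimality gap, the expected consensus disagreement, and the feasibility (distance-to-$\mc T$) term, while $\delta^k,\varepsilon^k\geq0$ collect the quadratic step-size contributions. Because the graph is connected and the gossip is uniform, each agent is activated a positive fraction of the time, so $\Gamma_i^k$ grows linearly in $k$ and the step sizes $\alpha_i^k=1/\Gamma_i^k$ satisfy $\sum_k(\alpha_i^k)^2<\infty$ and $\sum_k\alpha_i^k=\infty$ a.s.; hence $\delta^k$ and $\varepsilon^k$ are summable and \eqref{eq_mpc_proof3} is exactly the form required by Lemma \ref{lemma_rs}.

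By Lemma \ref{lemma_rs}, $(v^k)_{k\in\NN}$ converges a.s.\ and $\sum_k\theta^k<\infty$ a.s. The summability of $\theta^k$ together with $\sum_k\alpha_i^k=\infty$ forces the disagreement to vanish, the iterates to approach $\mc X\cap\mc T$, and the global optimality gap to vanish along a subsequence, so every cluster point lies in $\mc X^*$. Since $v^k$ converges for the arbitrary $u^*\in\mc X^*$ while every cluster point is optimal, a pathwise application of Lemma \ref{lemma247} gives convergence of each $\{u_i^k\}_{k\in\NN}$ to a common random point $u^*\in\mc X^*$. The principal obstacle is assembling the nonnegative term $\theta^k$ in \eqref{eq_mpc_proof3}: the local objective gaps $f_i(w_i^k)-f_i(u^*)$ are not individually signed, so one must convert them into the global gap $f(\bar u)-f(u^*)\geq0$ by controlling the disagreement via Lipschitz continuity, and one must supply an error bound for the random projection onto the sampled half-space; the per-agent step sizes $\alpha_i^k=1/\Gamma_i^k$, whose harmonic behavior is derived from the activation statistics on the connected graph, must then be matched to the global counter to verify summability.
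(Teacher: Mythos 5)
Your proposal follows essentially the same route as the paper's proof: both build a Robbins--Siegmund recursion (Lemma \ref{lemma_rs}) for the aggregate distance $\sum_{i}\|u_i^k-u^*\|^2$, with the global optimality gap as the negative term and the step-size/random-projection errors as summable noise, and both conclude by combining summability of the negative term with vanishing disagreement and a cluster-point argument. The only notable difference is organizational: the paper (following \cite{lee2015}) first proves vanishing disagreement and approach to $\mc X$ as separate preliminary lemmas, so that the disagreement enters the main recursion \eqref{eq_lee} as a summable \emph{positive} noise term $\tfrac{4G_f}{k}\sum_i\EE[\|w_i^k-\tilde w^k\|\mid\mc F_{k-1}]$, whereas you absorb it through the negative parallelogram-law term of the gossip average inside a single recursion --- a workable variant of the same argument.
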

\begin{proof}
First, it is possible to show (by using Lemma \ref{lemma_rs}) that $w_{i}^k$ approaches $\mc X$ \cite[Lemma 3]{lee2015}, and that any two sequences $\{w_i ^k\}_{k\in\NN}$ and $\{w_j^k\}_{k\in\NN}$ have the same limit points a.s. \cite[Lemma 4]{lee2015}. Then, it holds by \cite[Lemma 2]{lee2015} and some properties of the projection, of the norms and of the mappings involved, that 
\begin{equation}\label{eq_lee}
\begin{aligned}
\sum_{i=1}^{m} \EE&[\|u_{i}^k-u^{*}\|^{2} \mid \mathcal{F}_{k-1}] \\
\leq &\left(1+\frac{a_{4}}{k^{\frac{3}{2}}-q}\right) \sum_{j=1}^{m}\left\|u_{j}^{k-1}-u^{*}\right\|^{2} \\
&-\frac{2}{k} \EE\left[f(\tilde{z}^k)-f^{*}\right) \mid \mathcal{F}_{k-1}] \\
&+\frac{4 G_{f}}{k} \sum_{i=1}^{m} \EE\left[\left\|w_{i}^k-\tilde{w}^k\right\| \mid \mathcal{F}_{k-1}\right]+\frac{a_2 m}{k^{\frac{3}{2}}-q}
\end{aligned}
\end{equation}
where $q$, $a_2$ and $a_4$ are constants. Equation \eqref{eq_lee} satisfies the conditions from Lemma \ref{lemma_rs} \cite[Lemma 4]{lee2015}. Hence, $\{\normsq{u_i^k-u^*}\}_{k\in\NN}$ is convergent a.s. for any $i\in \mc I$ and $u^*\in\mc X^*$. Moreover, by Lemma \ref{lemma_rs}, $\liminf _{k \rightarrow \infty}f(\tilde{z}^k)-f^{*}=0$ a.s. and by \cite[Lemma 3]{lee2015} $\lim _{k \rightarrow \infty}\left\|w_{i}^k-z_{i}^k\right\|=0$ for all $i \in \mc I$ a.s.. Hence also the sequences $\{\normsq{w_{i}^k-u^*}\}_{k\in\NN}$ and $\{\normsq{z_i^k-u^*}\}_{k\in\NN}$ and their averages $\{\normsq{\tilde w^k-u^*}\}_{k\in\NN}$ and $\{\normsq{\tilde z^k-u^*}\}_{k\in\NN}$ are convergent and the sequences $\{\tilde w^k\}_{k\in\NN}$ and $\{\tilde z^k\}_{k\in\NN}$ are bounded and have an accumulation point in $\mc X^*$. Since $\liminf _{k \rightarrow \infty}\left\|w_{i}^k-\tilde{w}^k\right\|=0$ for all $i \in \mc I$ a.s., it follows that $\lim _{k \rightarrow \infty}\left\|w_{i}^k-u^{*}\right\|=0$ for all $i \in \mc I$ a.s.. 
Finally, by \cite[Lemma 3]{lee2015}, $\lim _{k \rightarrow \infty}\left\|u_{i}^k-w_{i}^k\right\|=0$
for all $i \in \mc I$ a.s., which leads to, $\lim _{k \rightarrow \infty} u_{i}^k=u^*$ for all $i \in \mc I$ a.s.. 
\end{proof}

\paragraph*{Application of Corollary \ref{cor_duflo} to the Law of Large Numbers}
Remarkably, the convergence results for sequences can be used also for others scopes beside convergence of an algorithm. This is the case of Corollary \ref{cor_duflo} which is used to prove the Law of Large Numbers. To introduce this application, let us define the notion of increasing process associated to a martingale \cite{duflo2013,chung1990}. 

Let $y_k$ be a martingale such that $\EE[y_k^2]<\infty$ for all $k\in\NN$. Then, its increasing process is the sequence $(\langle y\rangle_{k})_{k\in\NN}$ defined by $\langle y\rangle_{0}=0$ and 
$\langle y\rangle_{k+1}-\langle y\rangle_{k}=\EE[y_{k+1}^{2}-y_{k}^{2} \mid \mathcal{F}_{k}]$ 
\cite[Proposition 1.3.7]{duflo2013}. For instance, if $(z_k)_{k\in\NN}$ is a sequence of identically distributed random variables with mean $\mu$ and variance $\sigma^{2}$ then $y_{k}=z_{1}+\ldots+z_{k}-k \mu$ satisfies $\EE[y_k^2]<\infty$ and is such that $\langle y\rangle_{k}=k \sigma^{2} .$ Then, the following generalization of the Law of Large Numbers for martingales holds.
\begin{thm}[Theorem 1.3.15, \cite{duflo2013}]\label{lln}
Let $(y_k)_{k\in\NN}$ be a martingale such that $\EE[y_k^2]<\infty$ for all $k\in\NN$ and let $(\langle y\rangle_k)_{k\in\NN}$ be its increasing process. Then, a.s. $\lim_{k\to\infty}\frac{y_k}{\langle y\rangle_k}= 0$.
\end{thm}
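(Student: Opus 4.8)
The plan is to apply Corollary~\ref{cor_duflo} to the process $v^k := y_k^2$. Since $(y_k)_{k\in\NN}$ is a martingale we have $\EEk{y_{k+1}}=y_k$, so expanding the square gives
$$\EEk{y_{k+1}^2} = y_k^2 + \EEk{(y_{k+1}-y_k)^2}.$$
By the very definition of the increasing process, $\langle y\rangle_{k+1}-\langle y\rangle_k = \EEk{y_{k+1}^2-y_k^2}=\EEk{(y_{k+1}-y_k)^2}$, so with $v^k=y_k^2$ I obtain the exact recursion
$$\EEk{v^{k+1}} = v^k + (\langle y\rangle_{k+1}-\langle y\rangle_k),$$
which is precisely the form demanded by Corollary~\ref{cor_duflo}, with $\varepsilon^k := \langle y\rangle_{k+1}-\langle y\rangle_k \ge 0$ and $\theta^k := 0$.

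First I would record that the increasing process is one step predictable: since $\langle y\rangle_{k+1}-\langle y\rangle_k = \EEk{y_{k+1}^2-y_k^2}$ is $\mc F_k$-measurable and $\langle y\rangle_0=0$, an easy induction shows $\langle y\rangle_{k+1}$ is $\mc F_k$-measurable. The crucial choice is then $a_k := (1+\langle y\rangle_{k+1})^2$, which is strictly positive, nondecreasing in $k$, adapted to $\mc F$ by the predictability just noted, and, on the relevant event $\{\langle y\rangle_\infty=\infty\}$ (e.g. $\langle y\rangle_k=k\sigma^2$ in the motivating law of large numbers), divergent. The summability hypothesis of Corollary~\ref{cor_duflo} then follows from a telescoping estimate: because $\langle y\rangle$ is nondecreasing,
$$\frac{\varepsilon^k}{a_k} = \frac{\langle y\rangle_{k+1}-\langle y\rangle_k}{(1+\langle y\rangle_{k+1})^2} \le \frac{\langle y\rangle_{k+1}-\langle y\rangle_k}{(1+\langle y\rangle_k)(1+\langle y\rangle_{k+1})} = \frac{1}{1+\langle y\rangle_k}-\frac{1}{1+\langle y\rangle_{k+1}},$$
so that $\sum_{k=1}^\infty a_k^{-1}\varepsilon^k \le \frac{1}{1+\langle y\rangle_1}<\infty$ almost surely.

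With these checks in place, Corollary~\ref{cor_duflo}(iii) applies (the sequence $(a_k)$ diverges) and yields in particular $\lim_{k\to\infty} a_k^{-1}v^{k+1} = 0$, that is $\frac{y_{k+1}^2}{(1+\langle y\rangle_{k+1})^2}\to 0$. Reindexing $k+1\mapsto k$ gives $\frac{y_k^2}{(1+\langle y\rangle_k)^2}\to 0$, hence $\frac{y_k}{1+\langle y\rangle_k}\to 0$; since $\langle y\rangle_k\to\infty$ we have $\frac{\langle y\rangle_k}{1+\langle y\rangle_k}\to 1$, and therefore $\frac{y_k}{\langle y\rangle_k}\to 0$ almost surely, as claimed.

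The hard part will be choosing $a_k$ so that the summability $\sum a_k^{-1}\varepsilon^k<\infty$ holds while the conclusion still reads against $\langle y\rangle_k$ rather than some looser quantity. Evaluating the weight at time $k+1$ (legitimate precisely because the increasing process is predictable) is what makes the telescoping bound work: had I used $(1+\langle y\rangle_k)^2$, the increment $\langle y\rangle_{k+1}-\langle y\rangle_k$ in the numerator would not telescope and the series could genuinely diverge when $\langle y\rangle$ has large jumps. The device that compensates for the shifted denominator is to read the conclusion off the $v^{k+1}$ statement in Corollary~\ref{cor_duflo}(iii) and then reindex, which realigns numerator and denominator. A secondary point to treat honestly is the restriction to $\{\langle y\rangle_\infty=\infty\}$: on the complementary event $(y_k)_{k\in\NN}$ converges almost surely and the ratio need not vanish, so the statement is really asserted on the divergence set, which carries full probability in the law of large numbers application where $\langle y\rangle_k=k\sigma^2$.
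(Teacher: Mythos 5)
Your proof is correct, and it follows the same skeleton as the paper's: both set $v^k=y_k^2$, identify $\varepsilon^k=\langle y\rangle_{k+1}-\langle y\rangle_k$ via the martingale property, and conclude through Corollary~\ref{cor_duflo}(iii). The genuine difference is the choice of the weight sequence $a_k$, which is where the actual work lies. The paper (following Duflo) takes $a_k=\langle y\rangle_{k+1}(\ln\langle y\rangle_{k+1})^{1+\gamma}$, whose summability check $\sum_k a_k^{-1}\varepsilon^k<\infty$ rests on a logarithmic series comparison (hence the proviso $\langle y\rangle_{k_0}>1$); in exchange it delivers the sharper estimate $y_k^2=o\left(\langle y\rangle_k(\ln\langle y\rangle_k)^{1+\gamma}\right)$, a rate statement that implies the claim. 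Your choice $a_k=(1+\langle y\rangle_{k+1})^2$ replaces that comparison with a fully elementary telescoping bound, valid with no lower bound on $\langle y\rangle$, and yields exactly the claim $y_k/\langle y\rangle_k\to 0$ and nothing more; both proofs exploit predictability of the increasing process so that $a_k$ remains adapted even though it is evaluated at index $k+1$, and both realign indices by reading the $v^{k+1}$ half of conclusion (iii). One further point in your favor: you state explicitly that the conclusion lives on the event $\{\langle y\rangle_\infty=\infty\}$, and that on the complement $(y_k)_{k\in\NN}$ converges so the ratio need not vanish; this restriction is present in Duflo's original theorem but is glossed over in the paper's statement and proof sketch, so your version is the more carefully quantified one.
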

\begin{proof}
To apply Corollary \ref{cor_duflo}, let $v_k=y_k^2$, $\varepsilon_k=\langle y\rangle_{k+1}-\langle y\rangle_k$ and $a_k=\langle y\rangle_{k+1}(\op{ln}(\langle y\rangle_{k+1}))^{1+\gamma}$. Then, if $\langle y\rangle_{k_0}>1$, $\sum_{k=k_0}^\infty a_k^{-1}\varepsilon_k<\infty$, $\lim_{k\to\infty}a_k^{-1}v_k= 0$ and the claim follows.
 \end{proof}
\begin{rem}
We note that, even if it does not involve an algorithm, Theorem \ref{lln} is in agreement with Figures \ref{fig_app_det} and \ref{fig_app_stoc}. In fact, we have an iterative process ($(y^k)_{k\in\NN}$), to which we assign a sequence ($v^k=y_k^2$) to obtain an inequality as in Corollary \ref{cor_duflo}, retrieved from Table \ref{table_lemmi_stoc}, and prove the result.
\end{rem}

\subsection{Applications of Lemma \ref{lemma_fake_rs}}
\paragraph*{Application of Lemma \ref{lemma_fake_rs} to a variational problem}
In \cite{yousefian2014}, a \textit{smoothing extragradient scheme} with stochastic approximation, similar to \eqref{iusem_algo}, is proposed. The iterations read as 
\begin{equation}\label{eq_smooth_eg}
\begin{aligned}
y^{k+1}&=\op{proj}_{\mc X}[x^{k}-\gamma_{k} \hat F(x^{k}+w^{k}, \eta^{k})] \\
x^{k+1}&=\op{proj}_{\mc X}[x^{k}-\gamma_{k} \hat F(y^{k+1}+z^{k}, \xi^{k})]
\end{aligned}
\end{equation}
where $(\gamma_k)_{k\in\NN}$ is the step size sequence, $\eta^k$ and $\xi^k$ are i.i.d. samples of the random variable and the sequences $(w^k)_{k\in\NN}$ and $(z^k)_{k\in\NN}$ are also i.i.d. random variables drawn from an uniform distribution on $\left[-\frac{\delta_k}{2},\frac{\delta_k}{2}\right]$ where $\delta^k$ is the smoothing sequence. Let $\mc X_\delta=\mc X+C_n(0,\delta)$ where $C_n$ is a $n$-dimensional cube centered at the origin and $\delta$ is an upper bound on $\delta_k$. Then, the following holds.
\begin{thm}[Theorem 2, \cite{yousefian2014}]\label{theo_yousef}
Let $\mc X\subseteq\RR^n$ be closed, convex, and $M$-bounded.
Let $\FF$ be strictly monotone over $\mc X$ and bounded on $\mc X^\delta$ for some $C>0$. Suppose the sequence $(\gamma_k)_{k\in\NN}$ is such that $\sum_{k=1}^\infty \gamma_k^2<\infty$ and $\sum_{k=1}^\infty\gamma^k=\infty$ that the sequence $\delta_k$ is diminishing according to \cite[Equation 19]{yousefian2014}. Then, the sequence $(x_k)_{k\in\NN}$ generated by \eqref{eq_smooth_eg} converges to a solution $x^*$ of $\op{SVI}(\mc X,\FF)$ a.s. as $k \rightarrow \infty$.
\end{thm}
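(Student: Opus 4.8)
The plan is to reduce the convergence analysis to the scalar recursion governed by Lemma~\ref{lemma_fake_rs}, applied to $v^k=\normsq{x^k-x^*}$ for the (unique) solution $x^*\in\op{SOL}(\mc X,\FF)$. A preliminary step must dispose of the smoothing. Since $w^k$ and $z^k$ are drawn independently of $\mc F_k$ and uniformly on the cube, conditioning on $\mc F_k$ and averaging over both the sampling noise $\eta^k,\xi^k$ and the smoothing noise shows that $\hat F(x^k+w^k,\eta^k)$ and $\hat F(y^{k+1}+z^k,\xi^k)$ are unbiased estimators of the \emph{smoothed} operator $\FF_{\delta_k}(x):=\EE_u[\FF(x+u)]$ evaluated at $x^k$ and $y^{k+1}$, respectively. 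I would record three facts about this smoothing: (i) $\FF_{\delta_k}$ inherits strict monotonicity from $\FF$; (ii) the smoothing bias is controlled, $\norm{\FF_{\delta_k}(x)-\FF(x)}\leq C\delta_k$, using the bound on $\FF$ over $\mc X^\delta$; and (iii) the residual error $\epsilon^k$ has zero conditional mean and uniformly bounded second moment, as in Standing Assumption~\ref{ass_variance}.

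With the algorithm rewritten in terms of $\FF_{\delta_k}$ plus a martingale-difference noise, the second step is the familiar extragradient energy estimate. Expanding $\normsq{x^{k+1}-x^*}$ and using firm nonexpansiveness of $\op{proj}_{\mc X}$ together with the monotonicity of $\FF_{\delta_k}$ (exactly as in the proofs of Theorems~\ref{theo_mali3} and~\ref{theo_iusem}), the extragradient correction absorbs the Lipschitz term and leaves a nonpositive variational-inequality contribution coming from \eqref{eq_svi}. Taking conditional expectation, the sampling variance contributes an $O(\gamma_k^2\sigma^2)$ term and the smoothing bias an $O(\gamma_k\delta_k)$ term, so the estimate takes the form
\begin{equation*}
\EEk{\normsq{x^{k+1}-x^*}}\leq\normsq{x^k-x^*}-2\gamma_k\,g^k+\varepsilon^k,
\end{equation*}
where $g^k\geq0$ collects the variational-inequality gap evaluated along the extragradient step and $\sum_k\varepsilon^k<\infty$ by $\sum_k\gamma_k^2<\infty$ and the decay of $\delta_k$.

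It then remains to match this inequality to the hypotheses of Lemma~\ref{lemma_fake_rs} with $\delta^k$ proportional to $\gamma_k$ times a strict-monotonicity modulus. Because $\mc X$ is $M$-bounded and $\FF$ is strictly monotone, a compactness argument yields $g^k\geq\mu(\normsq{x^k-x^*})$ for a modulus $\mu$ that is strictly positive away from $x^*$, which is precisely what upgrades the additive gap into the multiplicative $(1-\delta^k)$ decrease required by the lemma; since $\sum_k\gamma_k=\infty$ gives $\sum_k\delta^k=\infty$ while $\varepsilon^k/\delta^k=O(\gamma_k)+O(\delta_k)\to0$, Lemma~\ref{lemma_fake_rs} yields $v^k\to0$ a.s., i.e.\ $x^k\to x^*$. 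Uniqueness of $x^*$ follows from strict monotonicity over the nonempty solution set (Standing Assumption~\ref{ass_sol}, cf.\ \cite[Theorem~2.3.3]{facchinei2007}).

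The main obstacle is exactly this last upgrade: strict monotonicity alone furnishes only $g^k>0$ for $x^k\neq x^*$, with no a~priori quantitative lower bound, whereas Lemma~\ref{lemma_fake_rs} wants a genuine contraction factor. Over the bounded feasible set one recovers a uniform modulus by continuity and compactness, but this must be balanced against the smoothing, since the Lipschitz constant of $\FF_{\delta_k}$ degrades as $\delta_k\to0$; the triple $(\gamma_k,\delta_k,\sigma)$ therefore has to be tuned so that $\varepsilon^k$ remains summable with $\varepsilon^k/\delta^k\to0$. If one prefers to avoid the compactness argument, an alternative route keeps the negative term explicit and invokes Lemma~\ref{lemma_rs}: Robbins--Siegmund gives a.s.\ convergence of $(\normsq{x^k-x^*})_{k\in\NN}$ together with $\sum_k\gamma_k g^k<\infty$, and then $\sum_k\gamma_k=\infty$ forces $\liminf_k g^k=0$ along a subsequence whose cluster point must be $x^*$ by strict monotonicity, so the full sequence converges---mirroring the argument used in Proposition~\ref{prop_koshal}.
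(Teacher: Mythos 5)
Your endgame coincides with the paper's: the survey's own proof simply imports the recursion $\EE[\normsq{x^{k+1}-x^*}\mid\mc F_k]\leq\left(1-\tfrac{\alpha\gamma_k}{M}\right)\normsq{x^k-x^*}+q\gamma_k^2$ from \cite[Lemma 4]{yousefian2014} and feeds it to Lemma \ref{lemma_fake_rs} with $\delta^k=\alpha\gamma_k/M$ and $\varepsilon^k=q\gamma_k^2$, exactly as you intend. The genuine gap is in your attempt to manufacture that contraction factor yourself. Strict monotonicity plus compactness of $\mc X$ gives, for every $r>0$, $\inf\{\langle\FF(x)-\FF(x^*),x-x^*\rangle: x\in\mc X,\ \norm{x-x^*}\geq r\}>0$, i.e.\ a positive modulus, but it does \emph{not} give the linear bound $\langle\FF(x)-\FF(x^*),x-x^*\rangle\geq c\,\normsq{x-x^*}$ that the multiplicative form $(1-\delta^k)v^k$ with a deterministic $\delta^k\propto\gamma_k$ requires. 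Take $n=1$, $\FF(x)=x^3$, $x^*=0$, $\mc X=[-1,1]$: the gap equals $x^4=(v^k)^2$, so no constant $c>0$ works near the solution, and your energy estimate becomes $\EE[v^{k+1}\mid\mc F_k]\leq v^k-2\gamma_k (v^k)^2+\varepsilon^k$, which lies outside the scope of Lemma \ref{lemma_fake_rs}. The sentence claiming that a compactness argument "upgrades the additive gap into the multiplicative $(1-\delta^k)$ decrease" is therefore false as stated, and you half-recognized this when you flagged it as the main obstacle.

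What actually powers \cite[Lemma 4]{yousefian2014} is the smoothing itself, which your step (i) throws away: averaging over the cube upgrades strict monotonicity to \emph{strong} monotonicity of $\FF_{\delta}$ (for $\FF(x)=x^3$ one computes $\FF_\delta(x)=x^3+\tfrac{\delta^2}{4}x$), with a constant tied to $\delta_k$, and the coupling in \cite[Equation 19]{yousefian2014} between $\gamma_k$ and $\delta_k$ is what makes the resulting $\delta^k$, $\varepsilon^k$ satisfy the hypotheses of Lemma \ref{lemma_fake_rs}; recording only that $\FF_{\delta_k}$ "inherits strict monotonicity" is precisely where the crucial property is lost. (A smaller issue: your bias bound $\norm{\FF_{\delta_k}(x)-\FF(x)}\leq C\delta_k$ presupposes Lipschitz continuity of $\FF$, which is not assumed here — the smoothing exists precisely because $\FF$ may be non-Lipschitz.) By contrast, your fallback route is sound and would complete the proof without any contraction factor: Robbins--Siegmund (Lemma \ref{lemma_rs}) applied to the additive estimate gives a.s.\ convergence of $(\normsq{x^k-x^*})_{k\in\NN}$ and $\sum_k\gamma_k g^k<\infty$ a.s.; then $\sum_k\gamma_k=\infty$ forces $\liminf_k g^k=0$, boundedness of $\mc X$, continuity of $\FF$ and strict monotonicity identify a subsequential limit with $x^*$, and a.s.\ convergence of the distance upgrades this to convergence of the whole sequence generated by \eqref{eq_smooth_eg} — the same pattern as Proposition \ref{prop_koshal}. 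If you promote that fallback to the main argument and delete the compactness "upgrade", you obtain a correct proof, albeit one that genuinely differs from the paper's, which runs through Lemma \ref{lemma_fake_rs} via the imported recursion.
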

\begin{proof}
Using the assumptions, from \cite[Lemma 4]{yousefian2014} the following inequality holds:
$$
\mathrm{E}[\|x^{k+1}- x^*\|^{2} \mid\mathcal{F}_k] \leq\left(1-\frac{\alpha \gamma_k}{M}\right)\|x^k-x^*\|^{2}+q \gamma_k^{2}$$
where $q$ is a constant that depends on the constant $C$ and on the variance of the stochastic error and $M$ is the bound on the set $\mc X$. Then, convergence follows applying Lemma \ref{lemma_fake_rs} to $v^k=\normsq{x^k-x^*}$, $\epsilon^k=q \gamma_k^{2}$ and $\delta^k=\frac{\alpha \gamma_k}{M}$.
 \end{proof}
\paragraph*{Application of Lemma \ref{lemma_fake_rs} to opinion dynamics}

The fact the Lemma \ref{lemma_fake_rs} provides convergence to zero is used in \cite{shi2013} to prove agreement in an opinion dynamics model. Let us consider the spreading of true or false information over a communication network or faults propagations in large scale control systems. In these models, there are $n$ nodes and each of them (node $i$) activates with a probability $1/n$, then it picks a neighbor $j$ with probability $a_{ij}$. The probabilities are collected in the interaction matrix $A=[a_{ij}]_{i,j=1}^n$.
The dynamics is described as follows, given $\alpha+\beta+\gamma=1$:
\begin{itemize}
\item[(i)] (Attraction) With probability $\alpha$, node $i$ updates its opinion toward that of its neighbor $j$,
$$
x_{i}^{k+1}=x_{i}^{k}+T_{k}(x_{j}^{k}-x_{i}^{k}),
$$
where $0<T_{k} \leq 1$ is the trust level;
\item[(ii)] (Neglect) With probability $\beta$, node $i$ keeps its own opinion,
$$
x_{i}^{k+1}=x_{i}^{k};
$$
\item[(iii)] (Repulsion) With probability $\gamma$, node $i$ moves away from $j$, i.e., it updates with a negative coefficient,
$$
x_{i}^{k+1}=x_{i}^{k}-S_{k}(x_{j}^{k}-x_{i}^{k})
$$
where $S_{k}>0$.
\end{itemize}
The authors propose in \cite{shi2013} some conditions on the quantities involved under which agreement or disagreement can be obtained with a time-invariant trust level. As a measure of disagreement, let $L^k=\sum_{i=1}^{n} \normsq{x_{i}^{k}-x_{\text {ave}}}$, where $x_{\text {ave }}=\sum_{i=1}^{n} \frac{x_{i}^0}{n}$ is the average of the initial values. Then, the following result holds.
\begin{thm}\label{theo_op_dyn}\cite[Theorem 5]{shi2013} 
Let the communication graph be weakly connected and suppose that the updates are symmetric. Let $\lambda_{2}^{*}$ be the second smallest eigenvalue of $D-(A+A^\top)$ with $D=\operatorname{diag}\left(d_{1} \ldots d_{n}\right), d_{i}=\sum_{j=1}^{n}\left(a_{i j}+a_{j i}\right)$.
Let $T_{k} \equiv T^*\in[0,1]$ and $S_{k} \equiv S^*>0$. Then
$$
D^{*}=S^*\left(1+S^*\right) \gamma-T^*\left(1-T^*\right) \alpha
$$
is a critical convergence measure regarding the state convergence of the considered network. Specifically, if $D_{*}<0$, then global agreement convergence is achieved, i.e., $\lim _{k \rightarrow \infty} \EE[L^k]=0$ a.s..
\end{thm}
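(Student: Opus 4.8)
The plan is to apply Lemma \ref{lemma_fake_rs} to the disagreement sequence $v^k=L^k$, following the scheme of Figure \ref{fig_app_det}. First I would introduce the deviation vector $y^k$ with components $y_i^k = x_i^k - x_{\text{ave}}$, so that $L^k = \sum_{i=1}^n (y_i^k)^2 = \normsq{y^k}$. Since $x_{\text{ave}}$ is the average of the initial opinions, $\sum_i y_i^0 = 0$; the symmetry of the updates conserves the total opinion mass $\sum_i x_i^k$, so $\sum_i y_i^k = 0$ for every $k$, i.e., $y^k$ is orthogonal to the all-ones vector $\uno$ throughout the evolution.

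The heart of the argument is the one-step conditional expectation. Conditioning on $\mathcal F_k$, node $i$ activates with probability $1/n$, picks neighbor $j$ with probability $a_{ij}$, and then one of the three behaviors occurs with probabilities $\alpha,\beta,\gamma$. Only the coordinates $i$ and $j$ are touched, and a direct expansion shows that a symmetric attraction step changes $L^k$ by $-2T^*(1-T^*)(y_i^k-y_j^k)^2$, a neglect step by $0$, and a symmetric repulsion step by $+2S^*(1+S^*)(y_i^k-y_j^k)^2$. Hence the expected change contributed by the pair $(i,j)$ is $2\big[\gamma S^*(1+S^*)-\alpha T^*(1-T^*)\big](y_i^k-y_j^k)^2 = 2D^*(y_i^k-y_j^k)^2$. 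Averaging over $(i,j)$ and using the symmetry $a_{ij}=a_{ji}$ to recognize the graph Laplacian,
$$
\EEk{L^{k+1}} = L^k + \frac{2D^*}{n}\,(y^k)^\top\big(D-(A+A^\top)\big)y^k .
$$

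Once this identity is established the conclusion is quick. Weak connectivity makes the symmetrized Laplacian $D-(A+A^\top)$ positive semidefinite with kernel $\op{span}\{\uno\}$ and second smallest eigenvalue $\lambda_2^*>0$; since $y^k\perp\uno$, the Courant--Fischer inequality gives $(y^k)^\top(D-(A+A^\top))y^k \ge \lambda_2^*\normsq{y^k}=\lambda_2^* L^k$. As $D^*<0$ by hypothesis, this turns the identity into the contraction
$$
\EEk{L^{k+1}} \le \Big(1-\tfrac{2|D^*|\lambda_2^*}{n}\Big)L^k ,
$$
which is exactly the hypothesis of Lemma \ref{lemma_fake_rs} with $v^k=L^k$, the constant coefficient $\delta^k = 2|D^*|\lambda_2^*/n$ (necessarily in $(0,1]$, since the left-hand side is nonnegative) and $\varepsilon^k\equiv 0$. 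The requirements $\sum_k\delta^k=\infty$, $\sum_k\varepsilon^k<\infty$ and $\varepsilon^k/\delta^k\to0$ then hold trivially, so the lemma yields $L^k\to0$ a.s.\ and therefore $\EEx{L^k}\to0$, which is the claimed global agreement.

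The step I expect to be the main obstacle is the one-step expectation computation: carefully accounting for the fact that a symmetric interaction moves both endpoints, reducing each behavior to a multiple of $(y_i^k-y_j^k)^2$, and then performing the weighted average over the random pair so that the coefficient $D^*$ and the Laplacian of $A+A^\top$ appear precisely as in the statement. The second, more conceptual point is recognizing that the symmetry hypothesis is exactly what pins $y^k$ to $\uno^\perp$, allowing the Fiedler bound to control the whole of $L^k$ rather than merely its component orthogonal to $\uno$; without it one would only obtain a supermartingale inequality with a residual term and a weaker conclusion.
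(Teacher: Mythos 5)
Your proof is correct and follows essentially the same route as the paper: both reduce the theorem to a conditional contraction inequality for $L^k$ and then invoke Lemma \ref{lemma_fake_rs} with a constant coefficient sequence and $\varepsilon^k\equiv 0$. The difference is one of completeness: the paper simply cites \cite[Proposition 3]{shi2013} for the contraction, whereas you derive it from scratch --- the per-interaction changes $-2T^*(1-T^*)(y_i^k-y_j^k)^2$ (attraction) and $+2S^*(1+S^*)(y_i^k-y_j^k)^2$ (repulsion), the averaging over the random pair that produces the symmetrized Laplacian quadratic form, the conservation of the mean under symmetric updates pinning $y^k$ to $\uno^\perp$, and the Fiedler bound; all of these steps check out. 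Three remarks. First, your contraction reads $\EEk{L^{k+1}}\le\bigl(1+\tfrac{2}{n}D^*\lambda_2^*\bigr)L^k$, i.e., coefficient $1-\tfrac{2}{n}|D^*|\lambda_2^*$ under $D^*<0$, while the paper displays $\bigl(1-\tfrac{2}{n}D^*\lambda_2^*\bigr)L^k$; with $D^*<0$ the paper's coefficient exceeds $1$ and could not yield convergence, so your sign is the correct one and the paper's display appears to be a typo. Second, your parenthetical that $\delta^k=2|D^*|\lambda_2^*/n$ is ``necessarily in $(0,1]$'' is loose; the clean fix is to take $\delta^k=\min\{1,\,2|D^*|\lambda_2^*/n\}$, which is harmless because $L^k\ge 0$ makes the weakened inequality still valid. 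Third, your last inference ``$L^k\to 0$ a.s., therefore $\EE[L^k]\to 0$'' is not automatic (almost sure convergence does not imply convergence of expectations); it is fixed in one line by taking total expectations in the same inequality, which gives $\EE[L^{k+1}]\le\bigl(1-\tfrac{2}{n}|D^*|\lambda_2^*\bigr)\EE[L^k]$ and hence geometric decay of $\EE[L^k]$, matching the stated conclusion.
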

\begin{proof}
Given some preliminary results \cite[Proposition 3]{shi2013}, it holds that
$$
\EE[L^{k+1} \mid ^k] \leq\left(1-\frac{2}{n} D^{*} \lambda_{2}^{*}\right) L^k
$$
then $\lim_{k\to\infty}\EE[L^k]= 0$ a.s. by Lemma \ref{lemma_fake_rs}.
\end{proof}

\subsection{Other applications}

Other applications of Robbins-Siegmund Lemma (Lemma \ref{lemma_rs}) can be found in \cite{cui2019, iusem2019, iusem2017, jiang2008, wang2015,kannan2019,alacaoglu2021} for variational problems and monotone inclusions. 
Concerning Nash equilibrium problems, it is used in \cite{franci2020fb, koshal2013, franci2021}. In the specific case of generative adversarial networks, Lemma \ref{lemma_rs} is used in \cite{franci2020gan,franci2020gen}. For an application of this stochastic result to a deterministic problem, we refer to \cite{koshal2016}. Regarding dynamic systems and Lyapunov analysis, other utilizations of Robbins-Siegmund Lemma are in \cite[Section 3]{benaim1996}, \cite[Section I.1]{ljung2012} and \cite{bharath1999}.\\
For other applications of Lemma \ref{lemma_fake_rs} instead, the interested reader may refer to \cite{koshal2013,yousefian2017,iusem2019,kannan2019,lei2020br,kannan2019,cui2019}.\\

\section{Application of convergent sequences with variable metric}\label{sec_app_vm}
The variable metric framework is not studied as much as the classic setup. Therefore, we propose only the following application. For other references see \cite{combettes2013,vu2013,cui2019vm}.

\paragraph*{Application of Proposition \ref{prop_vm} and Theorem \ref{theo_comb_var}} 
A study of the \textit{forward-backward-forward} algorithm \cite{bot2020,tseng2000} with variable metric is considered in \cite{vu2013}. There, the authors consider the splitting of a sum of a maximally monotone operator $A$ and a monotone, Lipschitz continuous operator $B$ of the form \eqref{mono_incl} and they suppose that multiple errors (sequences $a^k$, $b^k$ and $c^k$) can be made at each iteration. Formally, their proposed algorithm reads as
\begin{equation}\label{eq_fbf_vm}
\begin{aligned}
&y^k=x^k-\gamma^kW_k (Bx^k+a^k)\\
&v^k=(\op{Id}+\gamma^kW_kA)^{-1}y^k+b^k\\
&u^k=y^k+\gamma^kW_k(Bv^k+c^k)\\
&x^{k+1}=x^k-y^k+u^k
\end{aligned}
\end{equation}
where $(W_k)_{k\in\NN}$ is the sequence of operators used to induce the metric. Then, they prove the following convergence result.
\begin{thm}[Theorem 3.1, \cite{vu2013}]\label{theo_vu}
Let $\beta,\ell>0$, let $ (\eta^k )_{k\in\NN}$ be a nonnegative sequence such that $\sum_{k=1}^\infty\eta^k<\infty$ and let $(W_k )_{k\in\NN}$ be a sequence in $\mc P_\beta$ such that 
$$\mu=\sup _{k\in\NN} \|W_k \|<+\infty$$ 
$$(1+\eta^k ) W_{k+1} \succeq W_k, \text{ for all } k\in\NN.$$
Let $A: \RR^n  \rightrightarrows \RR^n$ be maximally monotone, let $B: \RR^n  \to \RR^n$ be monotone and $\ell$-Lipschitz continuous. Let $ (a^k )_{k\in\NN}, (b^k )_{k\in\NN}$ and $ (c^k )_{k\in\NN}$ be such that $\sum_{k=1}^\infty |a^k|<\infty$, $\sum_{k=1}^\infty |b^k|<\infty$ and $\sum_{k=1}^\infty |c^k|<\infty$. Let $x_{0} \in \RR^n,$ $\varepsilon \in(0,1 /(\ell \mu+1) )$ and let $(\gamma^k )_{k\in\NN}$ be a sequence in $[\varepsilon,(1-\varepsilon) /(\ell \mu)]$. Let $x^* \in \op{zer}(A+B)$ and let $(x^k)_{k\in\NN}$ be the sequence generated by \eqref{eq_fbf_vm}. Then, the following hold:
\begin{itemize}
\item[(i)] $\sum_{k=1}^\infty\| x^k-v^k\|^{2}<+\infty$,
\item[(ii)] $\lim_{k\to\infty}x^k  = x^*$.
\end{itemize}
\end{thm}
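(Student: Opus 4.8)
The plan is to establish a variable-metric quasi-Fej\'er inequality for $\norm{x^k-x^*}_{W_k^{-1}}^2$ and then invoke Proposition \ref{prop_vm} together with Theorem \ref{theo_comb_var}. First I would record the metric facts: since $W_k\in\mc P_\beta$ with $\sup_k\norm{W_k}\leq\mu$, one has $\tfrac1\mu\op{Id}\preceq W_k^{-1}\preceq\tfrac1\beta\op{Id}$, so $(W_k^{-1})_{k\in\NN}$ is a bounded sequence in $\mc P_{1/\mu}$; inverting $(1+\eta^k)W_{k+1}\succeq W_k$ gives $(1+\eta^k)W_k^{-1}\succeq W_{k+1}^{-1}$, which together with $\sum_k\eta^k<\infty$ forces $(W_k^{-1})_{k\in\NN}$, hence $(W_k)_{k\in\NN}$, to converge pointwise to some $W\in\mc P_\beta$. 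Thus the correct metric for the analysis is $W_k^{-1}$, and the convergence of the metric demanded by Theorem \ref{theo_comb_var} is automatic.

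Next I would unfold the iteration. Setting $p^k:=v^k-b^k=(\op{Id}+\gamma^kW_kA)^{-1}y^k$, the resolvent inclusion reads $\tfrac1{\gamma^k}W_k^{-1}(y^k-p^k)\in Ap^k$; substituting $y^k=x^k-\gamma^kW_k(Bx^k+a^k)$ exhibits the selection $\zeta^k:=\tfrac1{\gamma^k}W_k^{-1}(x^k-p^k)-Bx^k-a^k\in Ap^k$. Pairing the monotonicity of $A$ at $(p^k,\zeta^k)$ and $(x^*,-Bx^*)$ with the monotonicity of $B$, all written in the $W_k^{-1}$ inner product, and controlling the cross terms through the $\ell$-Lipschitz continuity of $B$ and the bound $\gamma^k\leq(1-\varepsilon)/(\ell\mu)$, I expect the recursion
$$\norm{x^{k+1}-x^*}_{W_{k+1}^{-1}}^2\leq(1+\delta^k)\norm{x^k-x^*}_{W_k^{-1}}^2-\rho\,\norm{x^k-p^k}^2+\varepsilon^k,$$
with $\rho>0$ depending only on $\varepsilon$. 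Here the perturbations $a^k,b^k,c^k$ enter only through inner products of the form $\langle u^k,x^k-x^*\rangle$ with $\sum_k\norm{u^k}<\infty$; applying $2\norm{u^k}\,\norm{x^k-x^*}\leq\norm{u^k}(1+\norm{x^k-x^*}^2)$ splits each such term into a summable multiplicative contribution to $\delta^k$ and a summable additive contribution to $\varepsilon^k$, so that $\sum_k\delta^k<\infty$ and $\sum_k\varepsilon^k<\infty$ without any a priori boundedness of $(x^k)_{k\in\NN}$. This is exactly the variable-metric quasi-Fej\'er property (with $\phi=|\cdot|^2$, relative to $(W_k^{-1})_{k\in\NN}$).

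Then I would conclude. Proposition \ref{prop_vm}, applied with the metric $(W_k^{-1})_{k\in\NN}$, yields that $(x^k)_{k\in\NN}$ is bounded and that $\norm{x^k-x^*}_{W_k^{-1}}^2$ converges; telescoping the recursion gives $\sum_k\rho\,\norm{x^k-p^k}^2<\infty$, and since $\norm{x^k-v^k}\leq\norm{x^k-p^k}+\norm{b^k}$ with $\sum_k\norm{b^k}<\infty$, claim (i) follows. For claim (ii) it remains to show that every cluster point is a zero of $A+B$: along a subsequence $x^k\to\bar x$ one has $p^k\to\bar x$ (because $x^k-p^k\to0$) and $\zeta^k+Bp^k\in(A+B)p^k$, while $\zeta^k+Bp^k\to0$ since $\tfrac1{\gamma^k}W_k^{-1}(x^k-p^k)\to0$, $a^k\to0$, and $Bx^k,Bp^k\to B\bar x$ by continuity; as $A+B$ is maximally monotone (sum of a maximally monotone operator and a monotone continuous operator with full domain), its graph is closed and $0\in(A+B)\bar x$. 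Theorem \ref{theo_comb_var} then delivers convergence of $(x^k)_{k\in\NN}$ to a point of $\op{zer}(A+B)$.

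The main obstacle will be the second step, namely producing the recursion with a genuinely positive constant $\rho$ while simultaneously carrying the variable metric and the three perturbation sequences. The delicate point is that the monotonicity and Lipschitz estimates must be expressed in the $W_k^{-1}$ geometry, so that the mismatch between the metrics at steps $k$ and $k+1$ is absorbed precisely into the factor $(1+\eta^k)$; the error bookkeeping via $2\norm{u^k}\,\norm{x^k-x^*}\leq\norm{u^k}(1+\norm{x^k-x^*}^2)$ is what lets Proposition \ref{prop_vm} (through Corollary \ref{cor_polyak}) apply even before boundedness is known.
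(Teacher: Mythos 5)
Your proposal follows essentially the same route as the paper's proof: establish a variable-metric quasi-F\'ejer inequality for $\normsq{x^k-x^*}_{W_k^{-1}}$ with a summable error term and a negative term proportional to $\normsq{x^k-v^k}$, apply Proposition \ref{prop_vm} to obtain boundedness and convergence of the weighted distances, sum the negative terms to get claim (i), and conclude claim (ii) via Theorem \ref{theo_comb_var} once every cluster point is identified as a zero of $A+B$. Your write-up merely fills in steps the paper delegates to \cite{combettes2013} (pointwise convergence of the metrics, the resolvent inclusion and graph-closedness argument, and the bookkeeping of the perturbations $a^k,b^k,c^k$), so it is a more detailed rendering of the same argument rather than a different one.
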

\begin{proof}
After using some results from \cite{combettes2013} to guarantee that the sequences are well defined and that the monotonicity properties of the operators $W_kA$ and $W_kB$ hold, a quasi-F\'ejer inequality can be proven, i.e.,
\begin{equation}
\begin{aligned}
\normsq{x^{k+1}-x^*}_{W_{k+1}^{-1}}&\leq(1+\eta^k)\normsq{x^{k}-x^*}_{W_{k}^{-1}}\\
&-\mu^{-1}(1-\gamma_k^2\ell^2\mu^2)\normsq{x^k-v^k}+\varepsilon^k,
\end{aligned}
\end{equation}
where $(\varepsilon^k)_{k\in\NN}$ is a summable sequence depending on the error sequences $(a^k)_{k\in\NN}$, $(b^k)_{k\in\NN}$ and $(c^k)_{k\in\NN}$.
Then, from Proposition \ref{prop_vm}, it holds that $(\norm{x^{k}-x^*}_{W_{k}^{-1}})_{k\in\NN}$ is bounded and from Lemma \ref{lemma_comb},it follows that $\sum_{k=1}^\infty\normsq{x^k-v^k}<\infty$. Convergence holds as a consequence of Theorem \ref{theo_comb_var} by letting $\bar x$ be a cluster point.
 \end{proof}

\section{Conclusion}

In this survey, we tried to answer the question posed by Polyak in 1987. We show that the importance of convergence theorems for mathematical system theory lays on their connection to Lyapunov analysis and F\'ejer monotonicity and in the variety of areas and applications where they are used.

Thanks to the notions of (quasi) F\'ejer monotonicity and Lyapunov decrease, results showing the convergence of sequences of (random) real numbers can be exploited in Nash equilibrium problems, machine learning and optimization. In these contexts, these results are fundamental to analyze and design convergent learning processes.

\appendix
\gdef\thesection{\Alph{section}} 
\makeatletter
\renewcommand\@seccntformat[1]{Appendix \csname the#1\endcsname.\hspace{0.5em}}
\makeatother
\section{Auxiliary notions}\label{appendix}
\subsection{Preliminaries}

In this section, we recall some notions from operator theory \cite{bau2011,ryu2016,facchinei2007}. Let us start with some notation.

$\iota_{\mc X}$ is the indicator function of the set $\mc X$, that is, $\iota_{\mc X}(x)=0$ if $x\in \mc X$ and $\iota_{\mc X}(x)=\infty$ otherwise. The set-valued mapping $\mathrm{N}_{\mc X} : \RR^{n} \to \RR^{n}$ denotes the normal cone operator of the set $\mc X$, i.e., $\mathrm{N}_{\mc X}(x)=\varnothing$ if $x \notin \mc X$, $\mathrm{N}_{\mc X}(x)=\left\{v \in \RR^{n} | \sup _{z \in \mc X} \langle v,z-x\rangle \leq 0\right\}$ otherwise. Given $\psi : \RR^{n} \to \RR$, $\op{dom}(\psi) :=\{x \in \RR^{n} | \psi(x)<\infty\}$ is the domain of $\psi$ and its subdifferential is the set-valued mapping $\partial \psi(x) :=\{v \in \RR^{n} | \psi(z) \geq \psi(x)+\langle v,z-x\rangle,\; \forall z \in \operatorname{dom}(\psi)\}$.

Let us now provide the definition of projection, proximal operator and resolvent which are used in many algorithms.

\begin{defn}\label{def_projproxres}
Let $x\in\RR^n$ and let $\mc C\subseteq\RR^n$ be nonempty closed and convex.
\begin{itemize}
\item The projection operator onto $\mc C$ is the operator defined as
$$\op{proj}_{\mc C}(x)=\underset{z \in\mc  C}{\op{argmin}}\normsq{z-x}.$$
The point $\op{proj}_{\mc C}(x)$ is the closest point to x in $\mc C$. It always exists and it is unique.
\item Given a proper lower semicontinuous convex function $f:\RR^n\to\bar\RR$, the proximity operator of f is the operator defined as 
$$
\op{prox}_{f}(x) = \underset{y \in \RR^n}{\op{argmin}}\left(f(y)+\frac{1}{2}\|x-y\|^{2}\right)
$$
\item Given $A : \RR^n \rightrightarrows \RR^n$, the resolvent of $A$ is the operator defined as
$$J_{A}=(\operatorname{Id}+A)^{-1}$$
where $\op{Id}$ is the identity function. 
\end{itemize}
\end{defn}

The notions discussed until now are related by the following example.
\begin{exmp}[Example 23.4, \cite{bau2011}]
Let $f =\iota_{\mc C}$ to be the indicator function of the set $\mc C\subseteq\RR^n$, then
$$J_{\op{N}_{\mc C}}=\left(\mathrm{Id}+\op{N}_{\mc C}\right)^{-1}=\operatorname{prox}_{\iota_{\mc C}}=\op{proj}_{\mc C}$$
where $\op{N}_{\mc C}$ is the normal cone of $\mc C$.
 \end{exmp}


Given a closed and convex set $\mc C\subseteq\RR^n$, $F:\mc C\to\RR^n$ and $\alpha>0$, let $\mathrm{res}_{\alpha}(x)=\norm{x-\op{proj}_{\mc X}(x-\alpha F(x))}$ be the residual function.
\begin{rem}\label{remark_res}
Let $F:\mc C\to\RR^n$ and $\mc C\subseteq\RR^n$ be closed and convex. Then, \cite[Proposition 1.5.8]{facchinei2007}
$$\mathrm{res}(x^*)=0 \text{ if and only if } x^*\in\op{SOL}(\mc C,F).$$
Moreover, it holds that
$$x^*\in\op{SOL}(\mc C,F) \text{ if and only if }x^*=\op{proj}_{\mc C}(x^*-\alpha F(x^*)).$$
\end{rem}




\subsection{Operator Theory}\label{appendix_op}

The convergence properties of the algorithms proposed for VIs or monotone inclusions are strictly related to the properties of the operators, to its monotonicity in particular. For this reason, here we recall some definition that are useful for the applications. 

\begin{defn}\label{def_mono}
Let $\op{gra}(F)=\{(x,u):u\in F(x)\}$ be the graph of $F : \mc X \subseteq \RR^{n} \to \RR^{n}$. Then, F is said to be
\begin{itemize}
\item[(a)] pseudomonotone on $\mc X$ if 
$\langle F(y),x-y\rangle \geq 0 \Rightarrow \langle F(x),x-y\rangle \geq 0 \text{ for all } x, y \in \mc X;$
\item[(b)] monotone on $\mc X$ if
$\langle F(x)-F(y),x-y\rangle \geq 0, \text{ for all } x, y \in \mc X;$
\item[(c)] strictly monotone on $\mc X$ if
$\langle F(x)-F(y),x-y\rangle>0, \text{ for all } x, y \in \mc X \text{ and } x \neq y;$
\item[(e)] $\mu$-strongly monotone on $\mc X$ if there exists a constant $\mu>0$ such that
$\langle F(x)-F(y),x-y\rangle \geq \mu\|x-y\|^{2}, \text{ for all } x, y \in \mc X;$
\item[(f)] maximally monotone (or maximal monotone) if there exists no monotone operator 
$G : \mc X \to \RR^n$ such that $\operatorname{gra} G$ properly contains $\operatorname{gra} F$, i.e.,
$(x, u) \in \operatorname{gra} F$  $\Leftrightarrow$ for all $(y, v) \in \operatorname{gra} F$ it holds $ \langle x-y | u-v\rangle \geq 0$;
\item[(g)] uniformly monotone at $y$ if there exist an increasing function $\phi:\RR_{\geq0}\to\RR_{\geq0}$ vanishing only at 0 such that
$\langle F(x)-F(y),x-y\rangle \geq\phi(\norm{x-y}).$ 
\end{itemize}
\end{defn}
The weakest assumption is pseudomonotonicity while strong monotonicity implies all the other notions. Strictly monotone operators are widely used in variational inequalities problems since this is the weaker assumption that guarantees uniqueness of the solution \cite[Theorem 2.3.3]{facchinei2007}. It implies monotonicity that, in turn, implies pseudomonotonicity.

Many results are also related to the Lipschitz and cocoercivity constants of the operator.
\begin{defn}\label{def_lip}
A mapping $F : \mc X \subseteq \RR^{n} \to \RR^{n}$ is said to be
\begin{itemize}
\item[(a)] $\ell$-Lipschitz continuous with constant $\ell>0$ if for all $x, y\in \mc X$ it holds
$\|F(x)-F(y)\| \leq \ell\|x-y\|$;
if $\ell<1$, F is called a contraction;
\item[(b)] nonexpansive if it is $1$-Lipschitz continuous, i.e., 
$\|F(x)-F(y)\| \leq \|x-y\|$ for all $x,y\in\mc X$;
\item[(c)] firmly nonexpansive if
$\|F(x)-F(y)\|^{2}+\|(\mathrm{Id}-F) (x)-(\mathrm{Id}-F) (y)\|^{2} \leq\|x-y\|^{2},\text{ for all } x, y \in \mc X$
\item[(d)] $\beta$-cocoercive on $\mc X$ if there exists a constant $\beta > 0$ such that
$\langle F(x)-F(y),x-y\rangle \geq \beta\normsq{F(x)-F(y)},$ for all $x, y \in \mc X. $
\end{itemize}
\end{defn}
It follows (using Cauchy-Schwartz inequality) that if a map is $\beta$-cocoercive, it is also $1/\beta$-Lipschitz continuous. Moreover, cocoercivity implies monotonicity.

Sometimes, in the stochastic case, we mention that the map is Carath\'eodory.
\begin{defn}
A mapping $F:\RR^d\times\Xi\to\RR^n$ is a Carath\'eodory map if $x \mapsto F(x, \xi)$ is continuous for almost every $\xi \in \Xi$, and $\xi \mapsto F(x, \xi)$ is measurable for all $x \in \RR^{d}$ where $\xi$ is a random variable with values in $\Xi,$ defined on a probability space $(\Omega, \mathcal{F}, \mathbb{P}).$ 
 \end{defn}

\subsection{Auxiliary results}\label{appendix_stoc}

Let us recall some results on martingales as this property for $L_p$ norms, known as Burkholder-Davis-Gundy inequality \cite{stroock2010,kushner2003}.
\begin{lem}[Burkholder-Davis-Gundy inequality]\label{BDG_Lemma}
Let $(\mc F_k)_{k\in\NN}$ be a filtration and $\{u^k\}_{k\geq0}$ a vector-valued martingale relative to this filtration. Then, for all $p\in [1,\infty)$, there exists a universal constant $c_p > 0$ such that for every $k\geq1$
$$\EE\left[\left(\sup _{0 \leq i \leq N}\left\|u_{i}\right\|\right)^{p}\right]^{\frac{1}{p}} \leq c_{p} \EE\left[\left(\sum_{i=1}^{N}\left\|u_{i}-u_{i-1}\right\|^{2}\right)^{\frac{p}{2}}\right]^{\frac{1}{p}}.$$
\end{lem}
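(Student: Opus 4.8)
The plan is to reduce the statement to a comparison between the \emph{maximal function} $M_N=\max_{0\le i\le N}\norm{u_i}$ and the \emph{square function} $S_N=\big(\sum_{i=1}^N\norm{u_i-u_{i-1}}^2\big)^{1/2}$ of the martingale, and to prove $\EEx{M_N^p}^{1/p}\le c_p\,\EEx{S_N^p}^{1/p}$ for every $p\in[1,\infty)$, with a constant independent of $N$ and of the dimension $n$. Since the horizon $N$ is fixed and finite, under the standing integrability hypothesis all quantities below are finite, and I may assume $u_0=0$ after translating. As a base case I would settle $p=2$ explicitly: the martingale property together with the tower rule gives orthogonality of the increments, $\EEx{\langle u_i-u_{i-1},\,u_j-u_{j-1}\rangle}=0$ for $i\ne j$, whence $\EEx{\norm{u_N}^2}=\sum_{i=1}^N\EEx{\norm{u_i-u_{i-1}}^2}=\EEx{S_N^2}$. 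Because $\norm{u_i}$ is a nonnegative submartingale (the norm is convex, so Jensen applies to the $\RR^n$-valued martingale), Doob's $L^2$ maximal inequality yields $\EEx{M_N^2}\le 4\,\EEx{\norm{u_N}^2}=4\,\EEx{S_N^2}$, which is the claim for $p=2$ with $c_2=2$.

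For general $p$ the engine is a \emph{good-$\lambda$ inequality}: I would show that there is a constant $C$ such that, for all $\lambda>0$, $\beta>1$ and $\delta\in(0,\beta-1)$,
$$\PP\big(M_N>\beta\lambda,\ S_N\le\delta\lambda\big)\le \frac{C\,\delta^2}{(\beta-1)^2}\,\PP\big(M_N>\lambda\big).$$
The construction uses the stopping times $\mu=\inf\{i:\norm{u_i}>\lambda\}$, $\nu=\inf\{i:\norm{u_i}>\beta\lambda\}$ and $\rho=\inf\{i:S_i>\delta\lambda\}$ (each adapted, since $\norm{u_i}$ and $S_i$ are $\mc F_i$-measurable). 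On the event in question one has $\mu\le\nu\le N<\rho$, so the doubly stopped increment $v_i=u_{i\wedge\nu\wedge\rho}-u_{i\wedge\mu\wedge\rho}$ is a martingale whose square function is controlled by $S_{\nu\wedge\rho}^2-S_{\mu\wedge\rho}^2$. Applying the $L^2$ estimate from the base case to $v$ conditionally on $\mc F_\mu$, and combining it with Chebyshev's inequality on $\{\mu\le N\}=\{M_N>\lambda\}$, produces the displayed bound.

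The final step is the \emph{layer-cake integration}. Writing $\EEx{M_N^p}=\int_0^\infty p\lambda^{p-1}\PP(M_N>\lambda)\,d\lambda$, rescaling $\lambda\mapsto\beta\lambda$, and splitting $\{M_N>\beta\lambda\}$ according to whether $S_N\le\delta\lambda$, the good-$\lambda$ inequality gives
$$\EEx{M_N^p}\le \beta^p\Big(\delta^{-p}\,\EEx{S_N^p}+\tfrac{C\delta^2}{(\beta-1)^2}\,\EEx{M_N^p}\Big).$$
Fixing $\beta$ and then choosing $\delta$ small enough that $\beta^pC\delta^2/(\beta-1)^2<1$, the self-referential term is absorbed into the left-hand side — here the finiteness of $\EEx{M_N^p}$, guaranteed by $N<\infty$, is essential — leaving $\EEx{M_N^p}\le c_p^p\,\EEx{S_N^p}$, i.e.\ the asserted inequality for every $p\in[1,\infty)$.

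I expect the good-$\lambda$ inequality to be the main obstacle, and specifically its discrete-time subtlety: at the crossing time $\rho$ the square function may overshoot $\delta\lambda$ through a single large increment $\norm{u_\rho-u_{\rho-1}}$, so the bound on the square function of $v$ is not automatic and the clean overshoot-free estimate of the continuous-time setting fails. The standard remedy is the \emph{Davis decomposition}, which splits $u$ into a martingale whose increments are bounded by $2\max_i\norm{u_i-u_{i-1}}$ plus a predictable process of integrable variation; this isolates the large jumps and is exactly what makes the endpoint $p=1$ work, where Doob's $L^1$ maximal inequality is unavailable and a more delicate argument is required.
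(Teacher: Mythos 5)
The paper itself offers no proof of this lemma: it is recalled as a classical result with pointers to \cite{stroock2010,kushner2003}, so your proposal cannot be compared against an internal argument and must stand on its own. Its architecture is the standard Burkholder route and is sound in outline: orthogonality of martingale increments plus Doob's $L^2$ maximal inequality for the base case $p=2$, then a good-$\lambda$ inequality combined with layer-cake integration and absorption (legitimate here because $N<\infty$ makes $\EEx{M_N^p}$ finite). It is worth noting that for everything the paper actually uses (Lemma \ref{lemma_variance} invokes only $p=2$), your base case alone is already a complete proof with an explicit constant. One genuine defect in the statement itself that you half-notice: as printed, the inequality is false for a constant martingale $u_i\equiv u_0\neq 0$ (left side positive, right side zero), so the hypothesis $u_0=0$ is tacitly required; your ``translate so that $u_0=0$'' proves the inequality only for the translated martingale, i.e., it repairs the proof exactly by adopting that hypothesis, which should be made explicit.

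The substantive gap is the one you name but do not close: the good-$\lambda$ step, as described, does not go through in discrete time. The square function of $v_i=u_{i\wedge\nu\wedge\rho}-u_{i\wedge\mu\wedge\rho}$ is bounded by $S_{\nu\wedge\rho}^2-S_{\mu\wedge\rho}^2$, but $S_\rho^2=S_{\rho-1}^2+\norm{u_\rho-u_{\rho-1}}^2$ and the jump at $\rho$ is not controlled by $\delta\lambda$, since $\rho$ is not predictable; hence ``applying the $L^2$ estimate conditionally on $\mc F_\mu$ and combining with Chebyshev'' does not by itself produce the displayed bound. Observe that in the direction you need the maximal jump $D_N=\max_{1\le i\le N}\norm{u_i-u_{i-1}}$ satisfies $D_N\le S_N$ pointwise, so on the good event $\{S_N\le\delta\lambda\}$ one gets $\norm{u_\mu}\le(1+\delta)\lambda$ for free --- but this controls the overshoot of $u$ at $\mu$, not the overshoot of the square function at the stopping time $\rho$, which is precisely where the estimate $\EEx{S(v)^2}\lesssim\delta^2\lambda^2\,\PP(\mu\le N)$ breaks. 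The repair (Burkholder's version of the good-$\lambda$ inequality with the jump adjoined to the good event together with a more careful stopping construction, or the Davis decomposition you cite, which is indeed what rescues the endpoint $p=1$) is the technical heart of the theorem, and the proposal defers it rather than executing it. So: correct strategy, correct base case, honest identification of the obstruction, but the key lemma of the extrapolation is asserted, not proved.
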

We also recall the Minkowski inequality: for given functions $f,g\in L^p(\Xi,\mc F, \PP)$, $\mc G\subseteq \mc F$ and $p\in[0;\infty]$
$$\EE\left[\norm{f+g}^{p} | \mc{G}\right]^{\frac{1}{p}} \leq \EE\left[\norm{f}^{p} | \mc{G}\right]^{\frac{1}{p}}+\EE\left[\norm{g}^{p} | \mc{G}\right]^{\frac{1}{p}}.$$
When combined with the Burkholder-Davis-Gundy inequality, it leads to the fact that for all $p\geq2$, there exists a constant $c_p >0 $ such that, for every $k\geq1$,
$$\EE\left[\left(\sup _{0 \leq i \leq N}\norm{u_{i}}\right)^{p}\right]^{\frac{1}{p}} \leq c_p\sqrt{\sum_{k=1}^{N} \EE\left(\norm{u_{i}-u_{i-1}}^{p}\right)^{\frac{2}{p}}}.$$

The following result is presented for uniformly bounded variance but holds also for more general assumptions. For similar results, one can refer to \cite{iusem2017, bot2020,franci2020fb}.
\begin{lem}\label{lemma_variance}
Let $c>0$. Let $\sigma$ be as in Equation \eqref{variance} and $\mc N_k$ as in Standing Assumption \ref{ass_batch}. Then, it holds a.s. that
$$\EE\left[\norm{\epsilon^k}^2|\mc F_k\right]\leq\frac{c\sigma^2}{\mc N_k}, \text{ for all } k\in\NN.$$
\end{lem}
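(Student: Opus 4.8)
The plan is to unwind the definition of the stochastic error under the variance-reduction scheme and then exploit the conditional independence of the fresh samples drawn at iteration $k$. Recall from \eqref{eq_saa} that $F^{\textup{VR}}(x^k,\xi^k)=\frac{1}{\mc N_k}\sum_{i=1}^{\mc N_k}F^{\textup{SA}}(x^k,\xi_i^k)$, so the error splits as a sample average,
\[
\epsilon^k=F^{\textup{VR}}(x^k,\xi^k)-\FF(x^k)=\frac{1}{\mc N_k}\sum_{i=1}^{\mc N_k}e_i^k,\qquad e_i^k:=F^{\textup{SA}}(x^k,\xi_i^k)-\FF(x^k).
\]
Since $x^k$ is $\mc F_k$-measurable while the samples $\xi_1^k,\dots,\xi_{\mc N_k}^k$ are i.i.d.\ and independent of $\mc F_k$, conditionally on $\mc F_k$ the summands $(e_i^k)_i$ are i.i.d.; each has zero conditional mean by Standing Assumption \ref{ass_variance}, i.e.\ $\EEk{e_i^k}=0$, and, by \eqref{variance} together with Jensen's inequality to pass from the $L_p$ ($p\geq2$) bound to the $L_2$ bound, conditional second moment at most $\sigma^2$.

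Next I would expand the squared norm of the average and take the conditional expectation,
\[
\EEk{\normsq{\epsilon^k}}=\frac{1}{\mc N_k^2}\sum_{i=1}^{\mc N_k}\sum_{j=1}^{\mc N_k}\EEk{\langle e_i^k,e_j^k\rangle}.
\]
For $i\neq j$, conditional independence together with $\EEk{e_i^k}=0$ gives $\EEk{\langle e_i^k,e_j^k\rangle}=\langle\EEk{e_i^k},\EEk{e_j^k}\rangle=0$, so only the $\mc N_k$ diagonal terms survive. Bounding each of them by $\sigma^2$ yields
\[
\EEk{\normsq{\epsilon^k}}=\frac{1}{\mc N_k^2}\sum_{i=1}^{\mc N_k}\EEk{\normsq{e_i^k}}\leq\frac{\mc N_k\,\sigma^2}{\mc N_k^2}=\frac{\sigma^2}{\mc N_k},
\]
which is the asserted bound with $c=1$, and a fortiori for any $c\geq1$.

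The computation above is just the orthogonality of martingale increments, and for the stated second-moment bound it is routine; the only genuine care needed is the freezing step, namely that conditioning on $\mc F_k$ legitimately treats $x^k$ as fixed while keeping the $\xi_i^k$ as independent draws, which is exactly what the independence of $\xi^k$ from the past and Standing Assumption \ref{ass_variance} provide. The free constant $c$ appears because the same conclusion persists under the more general variance condition \eqref{eq_variance} and for higher moments: viewing the partial sums $\sum_{i\leq m}e_i^k$ as a martingale and invoking the Burkholder--Davis--Gundy inequality (Lemma \ref{BDG_Lemma}) with Minkowski's inequality bounds $\EEk{\norm{\sum_{i=1}^{\mc N_k}e_i^k}^p}$ by $c_p^{p}\big(\sum_{i=1}^{\mc N_k}\EEk{\norm{e_i^k}^p}^{2/p}\big)^{p/2}\leq c_p^{p}(\mc N_k\sigma^2)^{p/2}$, and dividing by $\mc N_k^{p}$ gives $\EEk{\norm{\epsilon^k}^p}\leq c_p^{p}\sigma^{p}/\mc N_k^{p/2}$; specialising to $p=2$ reproduces the bound with the universal constant $c=c_2^2$. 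I expect the main (and only minor) obstacle to be handling these measurability and conditioning technicalities cleanly, rather than the estimate itself.
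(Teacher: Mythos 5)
Your proof is correct, but it takes a more elementary route than the paper. The paper's own proof views the normalized partial sums $M_i^S(x)=\frac{1}{S}\sum_{j\leq i}\bigl(F^{\textup{SA}}(x,\xi_j)-\FF(x)\bigr)$ as a martingale started at $0$ and bounds $\EE[\normsq{M_S^S(x)}]^{1/2}$ via the Burkholder--Davis--Gundy inequality (Lemma \ref{BDG_Lemma}) combined with Minkowski, each increment being controlled in $L_p$ by $\sigma/S$ through \eqref{variance}; squaring and identifying $M_S^S(x^k)=\epsilon^k$ gives the bound with $c=c_2^2$. You instead expand $\normsq{\epsilon^k}$ directly and kill the cross terms by conditional independence and the zero conditional mean from Standing Assumption \ref{ass_variance} (orthogonality of the increments), which is the special case $p=2$ of the same underlying fact but needs no BDG machinery and yields the sharper constant $c=1$. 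What the paper's heavier argument buys is exactly what your closing remark reconstructs: the BDG-plus-Minkowski route extends verbatim to $L_p$ bounds $\EEk{\norm{\epsilon^k}^p}\leq c_p^p\sigma^p/\mc N_k^{p/2}$ for any $p\geq 2$, which is the setting of Standing Assumption \ref{ass_variance} in its general form \eqref{eq_variance} and explains why the lemma is stated with a free constant $c$. Your handling of the conditioning (freezing $x^k$ as $\mc F_k$-measurable while the fresh samples $\xi_i^k$ remain i.i.d.\ and independent of $\mc F_k$) and the Jensen step from the $L_p$ bound of \eqref{variance} down to $L_2$ are both sound, so there is no gap; the two proofs differ only in generality and in the constant obtained.
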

\begin{proof}
We first prove that 
$$\EE\left[\norm{\epsilon^k}^2|\mc F_k\right]^{\frac{1}{2}}\leq\frac{c_2\sigma}{\sqrt{\mc N_k}}$$
then the claim follows immediately. To this aim, let us first notice that $F^\textup{VR}(x,\xi)=\frac{1}{\mc N}\sum_{k=1}^{\mc N}F^\textup{SA}(x,\xi^k)$. Then, let us define the process $\{M_S^S(x)\}_{i=0}^S$ as $M_0(x)=0$ and for $1\leq i\leq S$
$$M_i^S(x)=\frac{1}{S}\sum_{j=1}^iF^\textup{SA}(x,\xi_j)-\FF(x).$$
Let $\mc F_i=\sigma(\xi_1,\dots,\xi_i)$. Then $\{M_i^S(x),\mc F_i\}_{i=1}^S$ is a martingale starting at $0$.
Let
$$\begin{aligned}
\Delta M_{i-1}^S(x)&=M_i^S(x)-M_{i-1}^S(x)\\
&=\frac{1}{S}(F^\textup{SA}(\bs x,\xi_i)-\FF(\bs x)).
\end{aligned}$$
Then, by Equation (\ref{variance}), we have
\begin{equation*}
\EEx{\normsq{\Delta M_{i-1}^S}}^{\frac{1}{2}}=\frac{1}{S}\EEx{\normsq{F^\textup{SA}(x,\xi_i)-\FF(x)}}^{\frac{1}{2}}\leq \frac{\sigma}{S}.
\end{equation*}
By applying Lemma \ref{BDG_Lemma}, we have
\begin{equation*}
\begin{aligned}
\EEx{\normsq{M_S^S(x)}}^{\frac{1}{2}}&\leq c_2 \sqrt{\sum_{i=1}^{N} \EEx{\normsq{\frac{F^\textup{SA}(\bs x, \xi_i)-\FF(x)}{S}}}}\\
&\leq c_2 \sqrt{\frac{1}{S^2} \sum_{i=1}^{N} \EEx{\normsq{F^\textup{SA}(\bs x, \xi_i)-\FF(x)}}}\\
&\leq\frac{c_2\sigma}{\sqrt{S}}.
\end{aligned}
\end{equation*}
We note that $M_{S}^{S}(x^k)=\epsilon^k$, hence by taking the square we conclude that 
\begin{equation*}
\EEk{\normsq{\epsilon^k}}\leq\frac{c\sigma^2}{\mc N_k}.  
\end{equation*}
\end{proof}

\section*{References}

\bibliography{Biblio}

\begin{thebibliography}{100}
\expandafter\ifx\csname url\endcsname\relax
  \def\url#1{\texttt{#1}}\fi
\expandafter\ifx\csname urlprefix\endcsname\relax\def\urlprefix{URL }\fi
\expandafter\ifx\csname href\endcsname\relax
  \def\href#1#2{#2} \def\path#1{#1}\fi

\bibitem{polyak1987}
B.~Polyak, Introduction to {O}ptimization, New York: Optimization Software,
  Inc., 1987.

\bibitem{khalil2002}
H.~K. Khalil, J.~W. Grizzle, Nonlinear systems, Vol.~3, Prentice hall Upper
  Saddle River, NJ, 2002.

\bibitem{benaim1996}
M.~Benaim, A dynamical system approach to stochastic approximations, SIAM
  Journal on Control and Optimization 34~(2) (1996) 437--472.

\bibitem{benaim1999}
M.~Bena{\"\i}m, Dynamics of stochastic approximation algorithms, in: Seminaire
  de probabilites XXXIII, Springer, 1999, pp. 1--68.

\bibitem{malitsky2015}
Y.~Malitsky, Projected reflected gradient methods for monotone variational
  inequalities, SIAM Journal on Optimization 25~(1) (2015) 502--520.

\bibitem{malitsky2019}
Y.~Malitsky, Golden ratio algorithms for variational inequalities, Mathematical
  Programming\href {http://dx.doi.org/10.1007/s10107-019-01416-w}
  {\path{doi:10.1007/s10107-019-01416-w}}.

\bibitem{iusem2017}
A.~Iusem, A.~Jofr{\'e}, R.~I. Oliveira, P.~Thompson, Extragradient method with
  variance reduction for stochastic variational inequalities, SIAM Journal on
  Optimization 27~(2) (2017) 686--724.

\bibitem{iusem2019}
A.~N. Iusem, A.~Jofr{\'e}, R.~I. Oliveira, P.~Thompson, Variance-based
  extragradient methods with line search for stochastic variational
  inequalities, SIAM Journal on Optimization 29~(1) (2019) 175--206.

\bibitem{yousefian2017}
F.~Yousefian, A.~Nedi{\'c}, U.~V. Shanbhag, On smoothing, regularization, and
  averaging in stochastic approximation methods for stochastic variational
  inequality problems, Mathematical Programming 165~(1) (2017) 391--431.

\bibitem{yousefian2014}
F.~Yousefian, A.~Nedi{\'c}, U.~V. Shanbhag, Optimal robust smoothing
  extragradient algorithms for stochastic variational inequality problems, in:
  53rd IEEE Conference on Decision and Control, IEEE, 2014, pp. 5831--5836.

\bibitem{franci2020fb}
B.~Franci, S.~Grammatico, A distributed forward-backward algorithm for
  stochastic generalized {Nash} equilibrium seeking, IEEE Transactions on
  Automatic Control\href {http://dx.doi.org/10.1109/TAC.2020.3047369}
  {\path{doi:10.1109/TAC.2020.3047369}}.

\bibitem{koshal2013}
J.~Koshal, A.~Nedic, U.~V. Shanbhag, Regularized iterative stochastic
  approximation methods for stochastic variational inequality problems, IEEE
  Transactions on Automatic Control 58~(3) (2013) 594--609.

\bibitem{facchinei2007}
F.~Facchinei, J.-S. Pang, Finite-dimensional variational inequalities and
  complementarity problems, Springer Science \& Business Media, 2007.

\bibitem{franci2019fbhf}
B.~Franci, M.~Staudigl, S.~Grammatico, Distributed forward-backward (half)
  forward algorithms for generalized {Nash} equilibrium seeking, in: 2020
  European Control Conference (ECC), IEEE, 2020, pp. 1274--1279.

\bibitem{lee2015}
S.~Lee, A.~Nedi{\'c}, Asynchronous gossip-based random projection algorithms
  over networks, IEEE Transactions on Automatic Control 61~(4) (2015) 953--968.

\bibitem{shi2013}
G.~Shi, M.~Johansson, K.~H. Johansson, How agreement and disagreement evolve
  over random dynamic networks, IEEE Journal on Selected Areas in
  Communications 31~(6) (2013) 1061--1071.
\newblock \href {http://dx.doi.org/10.1109/JSAC.2013.130608}
  {\path{doi:10.1109/JSAC.2013.130608}}.

\bibitem{franci2020gan}
B.~Franci, S.~Grammatico, A game--theoretic approach for generative adversarial
  networks, in: 2020 59th IEEE Conference on Decision and Control (CDC), IEEE,
  2020, pp. 1646--1651.

\bibitem{franci2020gen}
B.~Franci, S.~Grammatico, Training generative adversarial networks via
  stochastic nash games, IEEE Transactions on Neural Networks and Learning
  Systems (2021) 1--10\href {http://dx.doi.org/10.1109/TNNLS.2021.3105227}
  {\path{doi:10.1109/TNNLS.2021.3105227}}.

\bibitem{bot2020gan}
R.~I. Bot, M.~Sedlmayer, P.~T. Vuong, A relaxed inertial
  forward-backward-forward algorithm for solving monotone inclusions with
  application to gans, arXiv preprint arXiv:2003.07886.

\bibitem{duvocelle2019}
B.~Duvocelle, D.~Meier, M.~Staudigl, P.~T. Vuong, Strong convergence of
  forward-backward-forward methods for pseudo-monotone variational inequalities
  with applications to dynamic user equilibrium in traffic networks, arXiv
  preprint arXiv:1908.07211.

\bibitem{yi2019}
P.~Yi, L.~Pavel, An operator splitting approach for distributed generalized
  {Nash} equilibria computation, Automatica 102 (2019) 111--121.

\bibitem{kannan2013}
A.~Kannan, U.~V. Shanbhag, H.~M. Kim, Addressing supply-side risk in uncertain
  power markets: stochastic {Nash} models, scalable algorithms and error
  analysis, Optimization Methods and Software 28~(5) (2013) 1095--1138.

\bibitem{bau2011}
H.~H. Bauschke, P.~L. Combettes, et~al., Convex analysis and monotone operator
  theory in Hilbert spaces, Vol. 408, Springer, 2011.

\bibitem{combettes2001}
P.~L. Combettes, Quasi-{Fej\'er}ian analysis of some optimization algorithms,
  in: Studies in Computational Mathematics, Vol.~8, Elsevier, 2001, pp.
  115--152.

\bibitem{eremin2009}
I.~I. Eremin, L.~D. Popov, {Fej\'er} processes in theory and practice: Recent
  results, Russian Mathematics 53~(1) (2009) 36--55.

\bibitem{fejer1922}
L.~{Fej\'er}, {\"U}ber die lage der nullstellen von polynomen, die aus
  minimumforderungen gewisser art entspringen, in: Festschrift David Hilbert zu
  Seinem Sechzigsten Geburtstag am 23. Januar 1922, Springer, 1922, pp. 41--48.

\bibitem{motzkin1954}
T.~S. Motzkin, I.~J. Schoenberg, The relaxation method for linear inequalities,
  Canadian Journal of Mathematics 6 (1954) 393--404.

\bibitem{eremin1969}
I.~Eremin, {Fej\'er} mappings and convex programming, Siberian Mathematical
  Journal 10~(5) (1969) 762--772.

\bibitem{eremin1968}
I.~I. Eremin, Methods of {F\'ejer}'s approximations in convex programming,
  Mathematical Notes of the Academy of Sciences of the USSR 3~(2) (1968)
  139--149.

\bibitem{eremin1968speed}
I.~I. Eremin, On the speed of convergence in the method of {F\'ejer}
  approximations, Mathematical notes of the Academy of Sciences of the USSR
  4~(1) (1968) 522--527.

\bibitem{combettes2013}
P.~L. Combettes, B.~C. V{\~u}, Variable metric quasi-{Fej\'er} monotonicity,
  Nonlinear Analysis: Theory, Methods \& Applications 78 (2013) 17--31.

\bibitem{combettes2015}
P.~Combettes, J.~Pesquet, Stochastic quasi-fejér block-coordinate fixed point
  iterations with random sweeping, SIAM Journal on Optimization 25~(2) (2015)
  1221--1248.
\newblock \href {http://dx.doi.org/10.1137/140971233}
  {\path{doi:10.1137/140971233}}.

\bibitem{combettes2000}
P.~Combettes, {Fej\'er}-monotonicity in convex optimization, Encyclopedia of
  Optimization 2 (2001) 106--114.

\bibitem{kohlenbach2018}
U.~Kohlenbach, L.~Leu{\c{s}}tean, A.~Nicolae, Quantitative results on {Fej\'er}
  monotone sequences, Communications in Contemporary Mathematics 20~(02) (2018)
  1750015.

\bibitem{malitsky2020siam}
Y.~Malitsky, M.~K. Tam, A forward-backward splitting method for monotone
  inclusions without cocoercivity, SIAM Journal on Optimization 30~(2) (2020)
  1451--1472.

\bibitem{nguyen2017}
Q.~Van~Nguyen, Forward-backward splitting with bregman distances, Vietnam
  Journal of Mathematics 45~(3) (2017) 519--539.

\bibitem{duflo2013}
M.~Duflo, Random iterative models, Vol.~34, Springer Science \& Business Media,
  2013.

\bibitem{ermoliev1988}
Y.~M. Ermoliev, R.-B. Wets, Numerical techniques for stochastic optimization,
  Springer-Verlag, 1988.

\bibitem{ermol1969}
Y.~M. Ermol'Ev, On the method of generalized stochastic gradients and
  quasi-{Fej\'er} sequences, Cybernetics 5~(2) (1969) 208--220.

\bibitem{opial1967}
Z.~Opial, et~al., Weak convergence of the sequence of successive approximations
  for nonexpansive mappings, Bulletin of the American Mathematical Society
  73~(4) (1967) 591--597.

\bibitem{kannan2012}
A.~Kannan, U.~V. Shanbhag, Distributed computation of equilibria in monotone
  {Nash} games via iterative regularization techniques, SIAM Journal on
  Optimization 22~(4) (2012) 1177--1205.

\bibitem{xu2003}
H.-K. Xu, An iterative approach to quadratic optimization, Journal of
  Optimization Theory and Applications 116~(3) (2003) 659--678.

\bibitem{xu2002}
H.-K. Xu, Iterative algorithms for nonlinear operators, Journal of the London
  Mathematical Society 66~(1) (2002) 240--256.

\bibitem{lei2020cdc}
J.~Lei, U.~V. Shanbhag, J.~Chen, Distributed computation of {Nash} equilibria
  for monotone aggregative games via iterative regularization, in: 2020 59th
  IEEE Conference on Decision and Control (CDC), IEEE, 2020, pp. 2285--2290.

\bibitem{qin2008}
X.~Qin, M.~Shang, Y.~Su, Strong convergence of a general iterative algorithm
  for equilibrium problems and variational inequality problems, Mathematical
  and Computer Modelling 48~(7-8) (2008) 1033--1046.

\bibitem{xu1998}
Y.~Xu, Ishikawa and mann iterative processes with errors for nonlinear strongly
  accretive operator equations, Journal of Mathematical Analysis and
  Applications 224~(1) (1998) 91--101.

\bibitem{dadashi2019}
V.~Dadashi, M.~Postolache, Forward--backward splitting algorithm for fixed
  point problems and zeros of the sum of monotone operators, Arabian Journal of
  Mathematics (2019) 1--11.

\bibitem{alber1998}
Y.~I. Alber, A.~N. Iusem, M.~V. Solodov, On the projected subgradient method
  for nonsmooth convex optimization in a hilbert space, Mathematical
  Programming 81~(1) (1998) 23--35.

\bibitem{he2013}
S.~He, C.~Yang, Solving the variational inequality problem defined on
  intersection of finite level sets, in: Abstract and Applied Analysis, Vol.
  2013, Hindawi, 2013.

\bibitem{mainge2008}
P.-E. Maing{\'e}, Convergence theorems for inertial {KM}-type algorithms,
  Journal of Computational and Applied Mathematics 219~(1) (2008) 223--236.

\bibitem{malitsky2018fbf}
Y.~Malitsky, M.~K. Tam, A forward-backward splitting method for monotone
  inclusions without cocoercivity, arXiv preprint arXiv:1808.04162.

\bibitem{vu2013}
B.~C. V{\~u}, A variable metric extension of the forward--backward--forward
  algorithm for monotone operators, Numerical Functional Analysis and
  Optimization 34~(9) (2013) 1050--1065.

\bibitem{RS1971}
H.~Robbins, D.~Siegmund, A convergence theorem for non negative almost
  supermartingales and some applications, in: Optimizing methods in statistics,
  Elsevier, 1971, pp. 233--257.

\bibitem{bot2020}
R.~Bot, P.~Mertikopoulos, M.~Staudigl, P.~Vuong, Mini-batch
  forward-backward-forward methods for solving stochastic variational
  inequalities, Stochastic Systems.

\bibitem{poggio2011}
T.~Poggio, S.~Voinea, L.~Rosasco, Online learning, stability, and stochastic
  gradient descent, arXiv preprint arXiv:1105.4701.

\bibitem{combettes2019}
P.~L. Combettes, J.-C. Pesquet, Stochastic quasi-{Fej\'er} block-coordinate
  fixed point iterations with random sweeping ii: mean-square and linear
  convergence, Mathematical Programming 174~(1) (2019) 433--451.

\bibitem{vu2016}
B.~C. Vu, Almost sure convergence of the forward--backward--forward splitting
  algorithm, Optimization Letters 10~(4) (2016) 781--803.

\bibitem{combettes2020}
P.~L. Combettes, J.-C. Pesquet, Fixed point strategies in data science, IEEE
  Transactions on Signal Processing 69 (2021) 3878--3905.
\newblock \href {http://dx.doi.org/10.1109/TSP.2021.3069677}
  {\path{doi:10.1109/TSP.2021.3069677}}.

\bibitem{doob1953}
J.~L. Doob, Stochastic processes, Vol. 101, New York Wiley, 1953.

\bibitem{rockafellar1970}
R.~T. Rockafellar, Monotone operators associated with saddle-functions and
  minimax problems, Nonlinear functional analysis 18~(part 1) (1970) 397--407.

\bibitem{berg1995}
L.~Berg, W.~Engel, G.~Pazderski, H.~Stolle, Basic properties of {F\'ejer}
  monotone sequences, in: Rostok. Math. Kolloq, Vol.~49, Citeseer, 1995, pp.
  57--74.

\bibitem{chung1990}
K.~L. Chung, R.~J. Williams, R.~Williams, Introduction to stochastic
  integration, Vol.~2, Springer, 1990.

\bibitem{kushner2003}
H.~Kushner, G.~G. Yin, Stochastic approximation and recursive algorithms and
  applications, Vol.~35, Springer Science \& Business Media, 2003.

\bibitem{stroock2010}
D.~W. Stroock, Probability theory: an analytic view, Cambridge university
  press, 2010.

\bibitem{borkar1995}
V.~S. Borkar, Probability theory: an advanced course, Springer Science \&
  Business Media, 1995.

\bibitem{bauschke2003}
H.~H. Bauschke, J.~M. Borwein, P.~L. Combettes, Bregman monotone optimization
  algorithms, SIAM Journal on control and optimization 42~(2) (2003) 596--636.

\bibitem{gubin1967}
L.~Gubin, B.~T. Polyak, E.~Raik, The method of projections for finding the
  common point of convex sets, USSR Computational Mathematics and Mathematical
  Physics 7~(6) (1967) 1--24.

\bibitem{lin2018}
J.~Lin, L.~Rosasco, S.~Villa, D.-X. Zhou, Modified {Fej\'er} sequences and
  applications, Computational Optimization and Applications 71~(1) (2018)
  95--113.

\bibitem{combettes2004}
P.~L. Combettes, Solving monotone inclusions via compositions of nonexpansive
  averaged operators, Optimization 53~(5-6) (2004) 475--504.

\bibitem{barty2007}
K.~Barty, J.-S. Roy, C.~Strugarek, Hilbert-valued perturbed subgradient
  algorithms, Mathematics of Operations Research 32~(3) (2007) 551--562.

\bibitem{nguyen2016}
Q.~Van~Nguyen, Variable quasi-bregman monotone sequences, Numerical Algorithms
  73~(4) (2016) 1107--1130.

\bibitem{bau2015}
H.~H. Bauschke, M.~N. Dao, W.~M. Moursi, On {Fej\'er} monotone sequences and
  nonexpansive mappings, arXiv preprint arXiv:1507.05585.

\bibitem{bregman1967}
L.~M. Bregman, The relaxation method of finding the common point of convex sets
  and its application to the solution of problems in convex programming, USSR
  computational mathematics and mathematical physics 7~(3) (1967) 200--217.

\bibitem{ananduta2021}
W.~Ananduta, S.~Grammatico, Bregman algorithms for a class of mixed-integer
  generalized nash equilibrium problems, arXiv preprint arXiv:2105.05687.

\bibitem{bravo2018}
M.~Bravo, D.~S. Leslie, P.~Mertikopoulos, Bandit learning in concave $ n
  $-person games, arXiv preprint arXiv:1810.01925.

\bibitem{benning2017}
M.~Benning, M.~M. Betcke, M.~J. Ehrhardt, C.-B. Sch{\"o}nlieb, Choose your path
  wisely: gradient descent in a bregman distance framework, SIAM Journal on
  Imaging Sciences 14~(2) (2021) 814--843.

\bibitem{alacaoglu2021}
A.~Alacaoglu, Y.~Malitsky, Stochastic variance reduction for variational
  inequality methods, arXiv preprint arXiv:2102.08352.

\bibitem{mertikopoulos2018}
P.~Mertikopoulos, B.~Lecouat, H.~Zenati, C.-S. Foo, V.~Chandrasekhar,
  G.~Piliouras, Optimistic mirror descent in saddle-point problems: Going the
  extra (gradient) mile, arXiv preprint arXiv:1807.02629.

\bibitem{kullback1951}
S.~Kullback, R.~A. Leibler, On information and sufficiency, The annals of
  mathematical statistics 22~(1) (1951) 79--86.

\bibitem{kullback1997}
S.~Kullback, Information theory and statistics, Courier Corporation, 1997.

\bibitem{goodfellow2014}
I.~Goodfellow, J.~Pouget-Abadie, M.~Mirza, B.~Xu, D.~Warde-Farley, S.~Ozair,
  A.~Courville, Y.~Bengio, Generative adversarial nets, in: Advances in neural
  information processing systems, 2014, pp. 2672--2680.

\bibitem{goodfellow2016}
I.~Goodfellow, Nips 2016 tutorial: Generative adversarial networks, arXiv
  preprint arXiv:1701.00160.

\bibitem{schaefer1957}
H.~Schaefer, {\"U}ber die methode sukzessiver approximationen., Jahresbericht
  der Deutschen Mathematiker-Vereinigung 59 (1957) 131--140.

\bibitem{abbas2014}
B.~Abbas, H.~Attouch, B.~F. Svaiter, Newton-like dynamics and forward-backward
  methods for structured monotone inclusions in hilbert spaces, Journal of
  Optimization Theory and Applications 161~(2) (2014) 331--360.

\bibitem{abbas2015}
B.~Abbas, H.~Attouch, Dynamical systems and forward--backward algorithms
  associated with the sum of a convex subdifferential and a monotone cocoercive
  operator, Optimization 64~(10) (2015) 2223--2252.

\bibitem{mainge2007}
P.-E. Maing{\'e}, Inertial iterative process for fixed points of certain
  quasi-nonexpansive mappings, Set-Valued Analysis 15~(1) (2007) 67--79.

\bibitem{naraghirad2020}
E.~Naraghirad, L.~Shi, N.-C. Wong, The bregman--opial property and bregman
  generalized hybrid maps of reflexive banach spaces, Mathematics 8~(6) (2020)
  1022.

\bibitem{peypouquet2009}
J.~Peypouquet, S.~Sorin, Evolution equations for maximal monotone operators:
  asymptotic analysis in continuous and discrete time, arXiv preprint
  arXiv:0905.1270.

\bibitem{boct2016}
R.~I. Bo{\c{t}}, E.~R. Csetnek, An inertial forward-backward-forward
  primal-dual splitting algorithm for solving monotone inclusion problems,
  Numerical Algorithms 71~(3) (2016) 519--540.

\bibitem{csetnek2019}
E.~R. Csetnek, Y.~Malitsky, M.~K. Tam, Shadow douglas--rachford splitting for
  monotone inclusions, Applied Mathematics \& Optimization 80~(3) (2019)
  665--678.

\bibitem{attouch2019}
H.~Attouch, J.~Peypouquet, Convergence of inertial dynamics and proximal
  algorithms governed by maximally monotone operators, Mathematical Programming
  174~(1) (2019) 391--432.

\bibitem{bot2016}
R.~I. Bot, E.~R. Csetnek, Second order forward-backward dynamical systems for
  monotone inclusion problems, SIAM Journal on Control and Optimization 54~(3)
  (2016) 1423--1443.

\bibitem{naraghirad2014}
E.~Naraghirad, N.-C. Wong, J.-C. Yao, Applications of bregman-opial property to
  bregman nonspreading mappings in banach spaces, in: Abstract and Applied
  Analysis, Vol. 2014, Hindawi, 2014.

\bibitem{huang2011}
Y.-Y. Huang, J.-C. Jeng, T.-Y. Kuo, C.-C. Hong, Fixed point and weak
  convergence theorems for point-dependent $\lambda$-hybrid mappings in banach
  spaces, Fixed Point Theory and Applications 2011~(1) (2011) 1--15.

\bibitem{chen2006}
H.-F. Chen, Stochastic approximation and its applications, Vol.~64, Springer
  Science \& Business Media, 2006.

\bibitem{cholamjiak2018}
W.~Cholamjiak, P.~Cholamjiak, S.~Suantai, An inertial forward--backward
  splitting method for solving inclusion problems in hilbert spaces, Journal of
  Fixed Point Theory and Applications 20~(1) (2018) 1--17.

\bibitem{lei2018}
J.~Lei, U.~V. Shanbhag, Distributed variable sample-size gradient-response and
  best-response schemes for stochastic nash equilibrium problems over graphs,
  arXiv preprint arXiv:1811.11246.

\bibitem{ljung2012}
L.~Ljung, G.~Pflug, H.~Walk, Stochastic approximation and optimization of
  random systems, Vol.~17, Birkh{\"a}user, 2012.

\bibitem{glad1965}
E.~Gladyshev, On stochastic approximation, Theory of Probability \& Its
  Applications 10~(2) (1965) 275--278.

\bibitem{robbins1951}
H.~Robbins, S.~Monro, A stochastic approximation method, The annals of
  mathematical statistics (1951) 400--407.

\bibitem{lei2020br}
J.~Lei, U.~V. Shanbhag, J.-S. Pang, S.~Sen, On synchronous, asynchronous, and
  randomized best-response schemes for stochastic {Nash} games, Mathematics of
  Operations Research 45~(1) (2020) 157--190.

\bibitem{combettes2014}
P.~L. Combettes, B.~C. V{\~u}, Variable metric forward--backward splitting with
  applications to monotone inclusions in duality, Optimization 63~(9) (2014)
  1289--1318.

\bibitem{cui2019vm}
F.~Cui, Y.~Tang, C.~Zhu, Convergence analysis of a variable metric
  forward--backward splitting algorithm with applications, Journal of
  Inequalities and Applications 2019~(1) (2019) 1--27.

\bibitem{jofre2019}
A.~Jofr{\'e}, P.~Thompson, On variance reduction for stochastic smooth convex
  optimization with multiplicative noise, Mathematical Programming 174~(1)
  (2019) 253--292.

\bibitem{gidel2018}
G.~Gidel, H.~Berard, G.~Vignoud, P.~Vincent, S.~Lacoste-Julien, A variational
  inequality perspective on generative adversarial networks, arXiv preprint
  arXiv:1802.10551.

\bibitem{ryu2016}
E.~K. Ryu, S.~Boyd, Primer on monotone operator methods, Appl. Comput. Math
  15~(1) (2016) 3--43.

\bibitem{pavel2019}
L.~Pavel, Distributed {GNE} seeking under partial-decision information over
  networks via a doubly-augmented operator splitting approach, IEEE
  Transactions on Automatic Control 65~(4) (2019) 1584--1597.

\bibitem{gadjov2019}
D.~Gadjov, L.~Pavel, Distributed {GNE} seeking over networks in aggregative
  games with coupled constraints via forward-backward operator splitting, in:
  2019 IEEE 58th Conference on Decision and Control (CDC), IEEE, 2019, pp.
  5020--5025.

\bibitem{gadjov2020}
D.~Gadjov, L.~Pavel, Single-timescale distributed gne seeking for aggregative
  games over networks via forward-backward operator splitting, IEEE
  Transactions on Automatic Control.

\bibitem{malitsky2018}
Y.~Malitsky, Golden ratio algorithms for variational inequalities, arXiv
  preprint arXiv:1803.08832.

\bibitem{franci2021}
B.~Franci, S.~Grammatico, Stochastic generalized nash equilibrium seeking in
  merely monotone games, IEEE Transactions on Automatic Control.

\bibitem{cui2019}
S.~Cui, U.~V. Shanbhag, On the analysis of variance-reduced and randomized
  projection variants of single projection schemes for monotone stochastic
  variational inequality problems, Set-Valued and Variational Analysis 29~(2)
  (2021) 453--499.

\bibitem{cui2016}
S.~Cui, U.~V. Shanbhag, On the analysis of reflected gradient and splitting
  methods for monotone stochastic variational inequality problems, in: 2016
  IEEE 55th Conference on Decision and Control (CDC), IEEE, 2016, pp.
  4510--4515.

\bibitem{tseng2000}
P.~Tseng, A modified forward-backward splitting method for maximal monotone
  mappings, SIAM Journal on Control and Optimization 38~(2) (2000) 431--446.

\bibitem{tikhonov1963}
A.~Tikhonov, On the solution of incorrectly put problems and the regularisation
  method, Outlines Joint Sympos. Partial Differential Equations (Novosibirsk,
  1963) (1963) 261--265.

\bibitem{shanbhag2013}
U.~V. Shanbhag, Stochastic variational inequality problems: Applications,
  analysis, and algorithms, in: Theory Driven by Influential Applications,
  INFORMS, 2013, pp. 71--107.

\bibitem{shapiro2003}
A.~Shapiro, Monte carlo sampling methods, Handbooks in operations research and
  management science 10 (2003) 353--425.

\bibitem{kleywegt2002}
A.~J. Kleywegt, A.~Shapiro, T.~Homem-de Mello, The sample average approximation
  method for stochastic discrete optimization, SIAM Journal on Optimization
  12~(2) (2002) 479--502.

\bibitem{shapiro2008}
A.~Shapiro, H.~Xu, Stochastic mathematical programs with equilibrium
  constraints, modelling and sample average approximation, Optimization 57~(3)
  (2008) 395--418.

\bibitem{korpelevich1976}
G.~M. Korpelevich, The extragradient method for finding saddle points and other
  problems, Matecon 12 (1976) 747--756.

\bibitem{boyd2006}
S.~Boyd, A.~Ghosh, B.~Prabhakar, D.~Shah, Randomized gossip algorithms, IEEE
  transactions on information theory 52~(6) (2006) 2508--2530.

\bibitem{jiang2008}
H.~Jiang, H.~Xu, Stochastic approximation approaches to the stochastic
  variational inequality problem, IEEE Transactions on Automatic Control 53~(6)
  (2008) 1462--1475.

\bibitem{wang2015}
M.~Wang, D.~P. Bertsekas, Incremental constraint projection methods for
  variational inequalities, Mathematical Programming 150~(2) (2015) 321--363.

\bibitem{kannan2019}
A.~Kannan, U.~V. Shanbhag, Optimal stochastic extragradient schemes for
  pseudomonotone stochastic variational inequality problems and their variants,
  Computational Optimization and Applications 74~(3) (2019) 779--820.

\bibitem{koshal2016}
J.~Koshal, A.~Nedi{\'c}, U.~V. Shanbhag, Distributed algorithms for aggregative
  games on graphs, Operations Research 64~(3) (2016) 680--704.

\bibitem{bharath1999}
B.~Bharath, V.~S. Borkar, Stochastic approximation algorithms: Overview and
  recent trends, Sadhana 24~(4) (1999) 425--452.

\end{thebibliography}

\end{document}